\documentclass{amsart}
\usepackage{enumerate}
\usepackage{color}
\usepackage{amssymb}
\usepackage{amsmath}

\newtheorem{thm}{Theorem}[section]
\newtheorem{lem}[thm]{Lemma}
\newtheorem{prop}[thm]{Proposition}

\newtheorem{conv}[thm]{Convention}
\newtheorem{cor}[thm]{Corollary}
\newtheorem{rem}[thm]{Remark}
\newtheorem{fact}[thm]{Fact}
\newtheorem{construction}[thm]{Construction}
\newtheorem{example}[thm]{Example}

\newtheorem*{feco}{First Extrapolation Condition}
\newtheorem*{seco}{Second Extrapolation Condition}

\begin{document}

\title[Distributional inequalities for noncommutative martingales]{Distributional inequalities for noncommutative martingales}

\author[]{Yong Jiao}
\address{School of Mathematics and Statistics, Central South University, Changsha 410075, People's Republic of China}
\curraddr{}
\email{jiaoyong@csu.edu.cn}

\author[]{Fedor Sukochev}
\address{School of Mathematics and Statistics, University of New South Wales, Kensington,  2052, Australia}
\email{f.sukochev@unsw.edu.au}

\author[]{Lian Wu}
\address{School of Mathematics and Statistics, Central South University, Changsha 410075, People's Republic of China}
\email{wulian@csu.edu.cn}

\author[]{Dmitriy Zanin}
\address{School of Mathematics and Statistics, University of New South Wales, Kensington,  2052, Australia}
\email{d.zanin@unsw.edu.au}

\subjclass[2010]{Primary: 46L53, 60G42; Secondary: 60G50; 46L50}

\keywords{Noncommutative martingale, distributional inequalities, extrapolation, optimal range}

\thanks{Corresponding author: Lian Wu}

\thanks{Yong Jiao is supported by the NSFC (No.11722114, No.11961131003); Fedor Sukochev and Dmitriy Zanin are supported by the Australian Research Council; Lian Wu is supported by the NSFC (No.11971484, No.11961131003)}

\begin{abstract} We establish distributional estimates for noncommutative martingales, in the sense of decreasing rearrangements of the spectra of unbounded operators, which generalises the study of distributions of random variables. Our results include distributional versions of the noncommutative Stein, dual Doob, martingale transform and Burkholder-Gundy inequalities. Our proof relies upon new and powerful extrapolation theorems. As an application, we obtain some new martingale inequalities in symmetric quasi-Banach operator spaces and some interesting endpoint estimates.
Our main approach demonstrates a method to build the noncommutative and classical probabilistic inequalities in an entirely operator theoretic way.
\end{abstract}

\maketitle

\section{Introduction}
The main purpose of the present paper is to study distributional estimates for noncommutative martingales. Recall that the investigation on noncommutative martingale inequalities originated from the work of Pisier and Xu \cite{PX2013}. Let $(\mathcal{M},\tau)$ be a von Neumann algebra, paired with a trace satisfying some continuity conditions. The pair $(\mathcal M,\tau)$ is called a noncommutative measure space, and extends the notion of a classical measure space to the study of spectra of operators affiliated to the von Neumann algebra $\mathcal M$. Let $(\mathcal{M}_k)_{k\geq0}$ be an increasing sequence of von Neumann subalgebras of $\mathcal{M}$ such that the union $\bigcup_{k\geq 0} \mathcal{M}_k$ is weak$^{\ast}$ dense in $\mathcal{M}.$ For $k\geq 0$, denote by $\mathcal{E}_{k}$ the conditional expectation with respect to $\mathcal{M}_k.$ The main result in \cite{PX2013} can be stated as follows: if $2\leq p<\infty$ and if $x\in L_p(\mathcal{M}),$ then
$$\|x\|_p \approx_p \Big\|\Big(\sum_{k\geq 0}x_k^*x_k\Big)^{1/2}\Big\|_p+\Big\|\Big(\sum_{k\geq 0}x_kx_k^*\Big)^{1/2}\Big\|_p,\eqno{\rm (BG)}$$
where $x_k=\mathcal{E}_{k}x-\mathcal{E}_{{k-1}}x $ (using the convention that $\mathcal{E}_{-1}=0$),
and where the notation $A\approx_p B$ means that there exists a constant $C_p$, dependent upon $p$, such that $C_p^{-1} B\le A\le C_p B$.
 The above equivalence is customarily referred to as noncommutative Burkholder-Gundy inequalities. Since the appearance of (BG), the theory of noncommutative martingale inequalities develops rapidly. A lot of classical martingale inequalities have been generalised to the noncommutative setting (see e.g. \cite{Junge-doob, JX2003, JX-best, Randrianantoanina, R2, R3}). Among these articles, the work due to Junge and Xu \cite{JX-best} is of special importance. It contains discussion on optimal orders for the best constants in the noncommutative Stein, dualised Doob and Burkholder-Gundy inequalities. In the light of the essential role these results perform in the proof of our main theorems, we review them below. As established in \cite[Theorem 8]{JX-best}, we have the following versions of Stein's inequality, the dualised Doob inequality, and estimates for martingale transforms:
\begin{enumerate}[\rm (i)]
\item {[Stein's inequality for the noncommutative $L_p$-spaces]} If $1< p<\infty$ and if $(x_k)_{k\geq 0}$ is a sequence in $L_p(\mathcal{M}),$ then
$$\Big\|\Big(\sum_{k\geq0}|\mathcal{E}_{k}x_k|^2\Big)^{\frac12}\Big\|_p\leq c_{{\rm abs}}\max\{p,p'\}\Big\|\Big(\sum_{k\geq0}|x_k|^2\Big)^{\frac12}\Big\|_p,\eqno{\rm (ST)}$$
where $p'=\frac{p}{p-1}$ is the conjugate index of $p.$ Here and in the following, the notation $c_{\rm abs}$ stands for an absolute constant.

\item {[Dualised Doob inequality for the noncommutative $L_p$-spaces]} If $1\leq p<\infty$ and if $(a_k)_{k\geq 0}$ is a sequence of positive operators in $L_p(\mathcal{M}),$ then
$$\big\|\sum_{k\geq0}\mathcal{E}_{k}a_k\big\|_p\leq c_{{\rm abs}}p^2\big\|\sum_{k\geq0}a_k\big\|_p.\eqno{\rm (DD)}$$

\item {[Martingale transform estimate for the noncommutative $L_p$-spaces]} Let $1<p<\infty$ and let $x\in L_p(\mathcal{M}).$ For every choice of signs $(\epsilon_k)_{k\geq0},$ we have
$$\Big\|\sum_{k\geq0}\epsilon_k\big(\mathcal{E}_{k}x-\mathcal{E}_{{k-1}}x\big)\Big\|_p\leq c_{{\rm abs}} \max\{p,p'\}\|x\|_p. \eqno{\rm (MT)}$$
\end{enumerate}

Strictly speaking, the paper \cite{JX-best} deals with noncommutative probability spaces. However, the estimates remain true in the setting of noncommutative measure spaces. Indeed, in the case of (DD), Junge and Xu refer to \cite{Junge-doob}, where the estimate is established for an arbitrary von Neumann algebra (not even semifinite). (ST) for $p\geq 2$ is an easy corollary of (DD) for $\frac{p}{2}.$ (ST) for $1<p\leq 2$ follows from (ST) for $p\geq2$ by duality. For (MT), Junge and Xu refer to \cite{Randrianantoanina}, where the estimate is established for semifinite von Neumann algebras.

All the constants that appeared in (ST), (DD) and (MT) are of optimal order within the realm of noncommutative probability spaces. Note that the optimal order for noncommutative martingale inequalities is sometimes different from that for the corresponding commutative inequalities.

Motivated by considerable progress made for martingale inequalities in noncommutative $L_p$-spaces,  more and more attention has been paid to exploring possible extensions of martingale inequalities for more general function spaces (such as noncommutative Lorentz spaces, noncommutative Orlicz spaces, symmetric Banach operator spaces, etc). We refer the reader to \cite{BCO, Dirksen2, Jiao1, Jiao2, JSZZ, RW, RW17, RWX} and references therein for more details.

A common feature of studying martingale inequalities in the spaces mentioned above is that one may often avoid having to deal with the distribution functions of measurable operators in question. This is in sharp contrast with the classical probability theory which typically operates with the estimates on the distribution function. To the best of our knowledge, such estimates have not appeared in noncommutative probability theory so far.

In this paper, we consider distributional estimates for noncommutative martingales. More precisely, we prove the following distributional versions of (ST), (DD), (MT) and (BG) (see Subsection \ref{svf subsection} for the definition of the singular value function (decreasing rearrangement) $\mu(x)$ of $\tau$-measurable operator and for the notion of Hardy-Littlewood-Polya submajorization $\prec\prec$; see also Subsection \ref{cesaro-calderon-def} for dual Ces\`{a}ro operator $C^{\ast}$ and the Calder\'{o}n operator $S$, and Subsection~\ref{lorentz subsection} for the definition of the Lorentz space $\Lambda_{\mathrm{log}}(\mathcal M)$.).
The following are our primary results, where the singular value function $\mu$ serves as a noncommutative analogue for the distribution function.

\begin{thm}\label{stein thm} {\rm [Distributional Stein's inequality]} For all sequences $(x_k)_{k\geq 0}\subset\Lambda_{{\rm log}}(\mathcal{M}),$ we have
$$\mu^{\frac12}\Big(\sum_{k\geq0}|\mathcal{E}_{k}x_k|^2\Big)\leq c_{{\rm abs}} \cdot S\mu^{\frac12}\Big(\sum_{k\geq0}|x_k|^2\Big).\eqno{\rm (DST)}$$
\end{thm}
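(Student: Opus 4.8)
The key conceptual point is that distributional inequalities of the form $\mu(T) \prec\prec S\mu(x)$ (or pointwise $\leq$ after applying $S$) are equivalent, via the Calderón–Mityagin theorem, to a family of $L_p$-type estimates whose constants grow in a controlled way as $p \to 1$ and $p \to \infty$. So the plan is to reverse-engineer (DST) from the scalar-constant version (ST): the constant in (ST) is $c_{\rm abs}\max\{p,p'\}$, which blows up linearly at both endpoints, and the Calderón operator $S = C + C^*$ is precisely the operator that encodes this linear blow-up at both ends simultaneously. This is what the authors' "extrapolation theorems" (alluded to in the abstract and introduction) are designed to deliver: a mechanism converting a scale of $L_p$ bounds with prescribed endpoint growth into a single submajorization statement.

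Concretely, I would proceed as follows. First, fix the sequence $(x_k)$ and set $y = \big(\sum_k |x_k|^2\big)^{1/2}$ and $T = \big(\sum_k |\mathcal{E}_k x_k|^2\big)^{1/2}$, both viewed as $\tau$-measurable operators (after the usual reduction to finitely many terms by an approximation/monotone-convergence argument, and after checking $y \in \Lambda_{\rm log}(\mathcal M)$ guarantees everything is well-defined). The inequality (ST) says $\|T\|_p \leq c_{\rm abs}\max\{p,p'\}\,\|y\|_p$ for all $1 < p < \infty$. I would then feed this into an extrapolation principle: a statement asserting that if $\|T\|_p \leq C\max\{p,p'\}\|y\|_p$ for all $p\in(1,\infty)$ then $\mu(T) \leq c_{\rm abs}C\, S\mu(y)$ pointwise, where $S$ is the Calderón operator. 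The proof of that principle is the technical heart and would be done once, abstractly, earlier in the paper; here I would simply invoke it. The $\frac12$-powers in (DST) are cosmetic: $\mu^{1/2}(y^2) = \mu(y)$ since $t \mapsto t^{1/2}$ is monotone and $\mu$ commutes with continuous increasing functions of positive operators, so (DST) is literally $\mu(T) \leq c_{\rm abs} S\mu(y)$.

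The main obstacle — assuming the abstract extrapolation theorem is in hand — is verifying its hypotheses uniformly, in particular handling the two endpoints $p\to 1^+$ and $p\to\infty$ correctly and making sure the reduction to a finite sum does not cost anything in the constant. One must be careful that (ST) as quoted is stated for $1<p<\infty$ with the conjugate-index factor, so near $p=1$ the relevant growth is $p' \sim (p-1)^{-1}$ and near $p=\infty$ it is $p$; the operator $S=C+C^*$ is exactly matched to this double blow-up, and any mismatch (e.g. if only one endpoint genuinely degenerated) would force a smaller operator like $C$ or $C^*$ alone. A secondary subtlety is that the left side involves the nonlinear map $(x_k)\mapsto(\mathcal E_k x_k)$ on column sequences; one should phrase the extrapolation input in terms of the sublinear operator on the Hilbert-space-valued column space $L_p(\mathcal M;\ell_2^c)$ so that interpolation/extrapolation applies cleanly, and then check that the column norm is what appears in both (ST) and (DST). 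Once these bookkeeping points are settled, (DST) follows immediately from the extrapolation machinery applied to (ST).
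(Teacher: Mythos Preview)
Your overall strategy --- derive (DST) by applying an abstract extrapolation principle to the $L_p$ Stein inequality (ST) --- matches the paper's approach. However, there is a genuine gap in the extrapolation principle you plan to invoke. You claim: if $\|T\|_p \leq C\max\{p,p'\}\|y\|_p$ for all $1<p<\infty$, then $\mu(T) \leq c_{\rm abs}C\,S\mu(y)$ \emph{pointwise}. But the paper's First Extrapolation Theorem (Theorem~\ref{first extrapolation theorem}) only yields the submajorization $Tx\prec\prec c_{\rm abs}S\mu(x)$ from those $L_p$ hypotheses alone; upgrading this to a pointwise bound (Corollary~\ref{first extrapolation corollary}) requires the \emph{additional} input $\|T\|_{L_1\to L_{1,\infty}}\leq c_{\rm abs}$. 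In the proof of Theorem~\ref{stein thm} this weak-type $(1,1)$ estimate for the column Stein operator is supplied separately, by citing \cite[Theorem~3.10]{Randrianantoanina}. Your plan never mentions a weak $(1,1)$ bound, and without it you would obtain only $\mu^{1/2}\big(\sum_k|\mathcal E_kx_k|^2\big)\prec\prec c_{\rm abs}\,S\mu^{1/2}\big(\sum_k|x_k|^2\big)$, strictly weaker than (DST).

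A secondary point: rather than treating the map $(x_k)\mapsto(\mathcal E_kx_k)$ as a sublinear operator on a column space, the paper linearises by passing to $\mathcal M\bar\otimes B(\ell_2)$ and defining the genuinely linear operator $T:x\mapsto\sum_{k\geq0}\mathcal E_kx_{k0}\otimes e_{k0}$ on $L_p(\mathcal M\bar\otimes B(\ell_2))$. The extrapolation machinery of Section~\ref{extrapolation section} is stated for linear operators between noncommutative $L_p$-spaces, so this reduction is what makes it applicable; your proposal gestures at this but does not carry it out.
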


\begin{thm}\label{dual doob thm} {\rm [Distributional dualised Doob inequality]} For all sequences of positive operators $(a_k)_{k\geq0}\subset(L_1+L_{\infty})(\mathcal{M})$ such that $(a_k^{\frac12})_{k\geq0}\subset \Lambda_{{\rm log}}(\mathcal{M}),$ we have
$$\mu\Big(\sum_{k\geq0}\mathcal{E}_{k}a_k\Big)\prec\prec c_{{\rm abs}} \Big(C^{\ast}\mu^{\frac12}\Big(\sum_{k\geq 0}a_k\Big)\Big)^2,\eqno{\rm (DDD)}$$
\end{thm}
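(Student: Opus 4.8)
The plan is to derive the distributional estimate (DDD) from the scalar-indexed family of $L_p$-estimates (DD) by an extrapolation argument. What makes (DD) suitable is that it holds for \emph{every} finite $p\ge 1$ and for a general semifinite $(\mathcal M,\tau)$, that its constant is bounded at the left endpoint $p=1$ and blows up only as $p\to\infty$, and that this blow-up has the \emph{optimal} quadratic rate $p^2$. Heuristically, one-sided extrapolation (only $p=\infty$ singular, so the dual Ces\`aro operator $C^\ast$ appears rather than the full Calder\'on operator $S$) of a family $\|Tx\|_p\le c\,\phi(p)\|x\|_p$, $1\le p<\infty$, yields a distributional estimate $\mu(Tx)\prec\prec c'\,\Psi_\phi(\mu(x))$, where the operator $\Psi_\phi$ is built from the weight $\phi$; the linear weight $\phi(p)=p$ gives $\Psi_\phi=C^\ast$, and the quadratic weight $\phi(p)=p^2$ gives $\Psi_\phi=(C^\ast)^2$, that is, $C^\ast$ applied twice. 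Finally, for every nonincreasing $g$ the functions $(C^\ast)^2(g^2)$ and $(C^\ast g)^2$ are comparable up to an absolute constant (an elementary Fubini computation), which is why the output $(C^\ast)^2\mu(\sum_k a_k)$ can be rewritten as $\big(C^\ast\mu^{1/2}(\sum_k a_k)\big)^2$, exactly the right-hand side of (DDD). The hypothesis $(a_k^{1/2})_{k\ge0}\subset\Lambda_{\log}(\mathcal M)$ is the natural domain condition: it guarantees that $\sum_k a_k$ is a genuine element of $(L_1+L_\infty)(\mathcal M)$, that $C^\ast\mu^{1/2}(\sum_k a_k)$ is finite a.e., and that its square lies in $(L_1+L_\infty)(0,\infty)$, so that the submajorisation in (DDD) is meaningful.

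Concretely, I would proceed as follows. By a standard monotone-approximation argument it suffices to treat a finite sum, with a constant independent of the number of terms; so fix $a_0,\dots,a_N\ge0$, put $y=\sum_{k=0}^N a_k\ge0$ and $z=\sum_{k=0}^N\mathcal E_k a_k\ge0$, and fix $t>0$. Let $e$ be the spectral projection of $y$ onto $(\mu_t(y),\infty)$, so $y_{\mathrm{peak}}:=eye=ye$ satisfies $\|y_{\mathrm{peak}}\|_1\le\int_0^t\mu_s(y)\,ds$, while $y_{\mathrm{tail}}:=(1-e)y(1-e)$ satisfies $\|y_{\mathrm{tail}}\|_\infty\le\mu_t(y)$. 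The operator inequality $a_k\le 2\,ea_ke+2(1-e)a_k(1-e)$ — valid for any positive $a_k$ and any projection $e$, since $(b_ke-b_k(1-e))^{\ast}(b_ke-b_k(1-e))\ge0$ with $b_k=a_k^{1/2}$ — gives, after applying the positive maps $\mathcal E_k$ and summing,
$$z\ \le\ 2\sum_{k=0}^N\mathcal E_k(ea_ke)\ +\ 2\sum_{k=0}^N\mathcal E_k\big((1-e)a_k(1-e)\big)=:2P+2Q,$$
where $\sum_k ea_ke=y_{\mathrm{peak}}$ and $\sum_k(1-e)a_k(1-e)=y_{\mathrm{tail}}$. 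Now apply (DD) to each summand: in $L_1$ to obtain $\|P\|_1\le c_{\rm abs}\,\|y_{\mathrm{peak}}\|_1\le c_{\rm abs}\int_0^t\mu_s(y)\,ds$, and in $L_p$ (with $p=p(t)\ge 2$ at our disposal) to obtain $\|Q\|_p\le c_{\rm abs}\,p^2\,\|y_{\mathrm{tail}}\|_p$. Using the subadditivity $\int_0^\tau\mu_s(z)\,ds\le 2\int_0^\tau\mu_s(P)\,ds+2\int_0^\tau\mu_s(Q)\,ds$, bounding the first term by $\|P\|_1$ and the second by Chebyshev's inequality, and then optimising over the cutoff $t$ and the exponent $p$ (roughly, $p$ chosen as the logarithm of the relevant ratio of singular values of $y$ at scale $\tau$), one is left with the scalar inequality $\int_0^\tau\mu_s(z)\,ds\le c_{\rm abs}\int_0^\tau\big(C^\ast\mu^{1/2}(y)\big)^2(s)\,ds$ for all $\tau>0$, which is precisely (DDD) for the finite sum; letting $N\to\infty$ completes the proof.

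I expect the main obstacle to be this very extrapolation step — converting the one-parameter family of $L_p$-bounds into a single submajorisation with the correct operator $(C^\ast\,\cdot)^2$ and an \emph{absolute} constant — for two reasons. First, the map $(a_k)_k\mapsto\sum_k\mathcal E_k a_k$ is not linear; it is only positively homogeneous and compatible with splittings of its argument into positive summands, so the classical Calder\'on--Mityagin interpolation machinery for linear operators does not apply directly and must be replaced by the truncation above, performed at the level of the positive operators $a_k$; correspondingly, the conclusion is a submajorisation rather than a pointwise comparison of rearrangements. Second, and more delicate, is the bookkeeping in the final optimisation: one must choose the exponent $p$, and possibly also decouple the spectral cutoff from the scale $\tau$, in such a way that the factor $p^2$ produced by the optimal constant in (DD) is exactly absorbed into the logarithmic integral defining $C^\ast$, and then sum the resulting contributions over a dyadic family of scales without losing the absolute constant. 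Isolating the abstract conditions on a map under which this optimisation succeeds — which is presumably the content of the extrapolation theorems of this paper — and verifying that $(a_k)_k\mapsto\sum_k\mathcal E_k a_k$ meets them is the substance of the argument; the remaining ingredients (the reduction to finite sums, Chebyshev's inequality, and the scalar comparison $(C^\ast g)^2\approx(C^\ast)^2(g^2)$) are routine.
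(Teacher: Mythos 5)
Your decomposition is sound as far as it goes: the spectral cutoff $e=\chi_{(\mu(t,y),\infty)}(y)$, the operator inequality $a_k\le 2ea_ke+2(1-e)a_k(1-e)$, the application of (DD) at $p=1$ and at a large exponent, and Chebyshev are all legitimate. The gap is that the step carrying the entire content of the theorem is never carried out: you assert that optimising over the cutoff $t$ and the exponent $p$ in $\int_0^t\mu(s,y)\,ds+p^2\tau^{1-1/p}\|y_{\mathrm{tail}}\|_p$ "leaves" the inequality $\int_0^\tau\mu(s,z)\,ds\le c_{\rm abs}\int_0^\tau\big(C^{\ast}\mu^{1/2}(y)\big)^2(s)\,ds$, and you yourself flag this optimisation as the main difficulty. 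It is not a routine verification; with the natural choice $t=\tau$ it is in fact false. Take $\mu^{1/2}(y)=1$ on $(0,2]$, $=1/\log N$ on $(2,N]$, $0$ beyond, and $\tau=1$. Then $\int_0^1(C^{\ast}\mu^{1/2}(y))^2(s)\,ds\le\int_0^1(\log(2/s)+1)^2\,ds$ is bounded by an absolute constant, while with $t=\tau$ one has $\|y_{\mathrm{tail}}\|_p^p\ge\max\{2,(N-2)(\log N)^{-2p}\}$, and a short case distinction gives $\inf_{p\ge2}p^2\|y_{\mathrm{tail}}\|_p\gtrsim(\log N/\log\log N)^2\to\infty$. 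So the cutoff must be decoupled from $\tau$ and chosen in a $\mu(y)$-dependent way (here $t>2$ works), and once $\mu(y)$ has several well-separated plateaus the different pieces of the tail demand incompatible exponents, which pushes you toward a multi-scale decomposition with summable errors --- exactly the bookkeeping you acknowledge but do not supply. Until that scalar extrapolation inequality is proved for all decreasing rearrangements, all $\tau$, with an absolute constant, the proposal is a plan rather than a proof.

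For the record, the paper sidesteps this optimisation entirely: it linearises the dual Doob map using Junge's isometries $u_k$ with $|u_k(x)|^2=\mathcal{E}_k(x^{\ast}x)\otimes e_{00}$ (Proposition \ref{junge expectation theorem}), so that $x=\sum_k a_k^{1/2}\otimes e_{k0}\mapsto Tx=\sum_k u_k(x_{k0})\otimes e_{k0}$ is a single linear operator with $\|T\|_{L_p\to L_p}\le c_{\rm abs}p$ for $2\le p<\infty$ (via (DD) at $p/2$), and then invokes the Second Extrapolation Theorem --- itself proved by duality, the moment description of Marcinkiewicz norms, and trace scaling, not by a Gundy-type splitting --- to get $\mu^2(Tx)\prec\prec c_{\rm abs}(C^{\ast}\mu(x))^2$, which is (DDD) after computing $|x|^2$ and $|Tx|^2$. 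One smaller inaccuracy in your sketch: the claimed two-sided comparability of $(C^{\ast})^2(g^2)$ and $(C^{\ast}g)^2$ for decreasing $g$ is false; only $(C^{\ast})^2(g^2)\le\tfrac12(C^{\ast}g)^2$ holds (fortunately the direction you need), while the reverse fails, e.g.\ for $g(v)\approx1/\log(ev)$ truncated at $N$ the ratio grows like $\log\log N$.
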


\begin{thm}\label{mt theorem} {\rm [Distributional martingale transform estimate]} For all $x\in\Lambda_{{\rm log}}(\mathcal{M})$ and for every choice of signs $(\epsilon_k)_{k\geq0}$, we have
$$\mu\Big(\sum_{k\geq0}\epsilon_k\big(\mathcal{E}_{k}x-\mathcal{E}_{{k-1}}x\big)\Big)\leq c_{{\rm abs}}S\mu(x).\eqno{\rm (DMT)}$$
\end{thm}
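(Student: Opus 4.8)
The plan is to derive (DMT) from the operator-level estimate (MT) by means of an extrapolation principle that upgrades a family of $L_p$-bounds with the sharp constant growth $\max\{p,p'\}$ into a pointwise estimate on singular value functions governed by the Calder\'{o}n operator $S$. Fix a choice of signs $(\epsilon_k)_{k\ge0}$ and set $T_\epsilon x:=\sum_{k\ge0}\epsilon_k\big(\mathcal E_k x-\mathcal E_{k-1}x\big)$. The decisive structural facts are that $T_\epsilon$ is \emph{linear} and that, by (MT), it satisfies $\|T_\epsilon x\|_p\le c_{\rm abs}\max\{p,p'\}\|x\|_p$ for every $1<p<\infty$, with a constant independent of $(\epsilon_k)$, of $p$, and of $(\mathcal M,\tau)$. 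Thus (DMT) follows once one has the extrapolation statement: if $T$ is linear with $\|Tx\|_p\le c\max\{p,p'\}\|x\|_p$ for all $p\in(1,\infty)$, then $\mu(Tx)\le c_{\rm abs}\,c\,S\mu(x)$ for every $x\in\Lambda_{\rm log}(\mathcal M)$. Applying this to $T=T_\epsilon$ (with $c=c_{\rm abs}$) gives the theorem.

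To prove the extrapolation statement I would argue pointwise in $t>0$, decomposing $x$ according to the level sets of $\mu(x)$ via the polar decomposition of $x$ and the spectral decomposition of $|x|$: write $x=x^{(1)}+x^{(2)}$, where $x^{(1)}$ is the part on which $|x|>\mu_t(x)$ and $x^{(2)}$ is the complement, so that $\mu(x^{(1)})$ is supported in $(0,t)$ with $\|x^{(1)}\|_1\le\int_0^t\mu_s(x)\,ds$, while $\|x^{(2)}\|_\infty\le\mu_t(x)$ and $\mu_s(x^{(2)})\le\mu_s(x)$. From $\mu_t(Tx)\le\mu_{t/2}(Tx^{(1)})+\mu_{t/2}(Tx^{(2)})$ and $\mu_{t/2}(Ty)\le(t/2)^{-1/p}\|Ty\|_p\le c\,(t/2)^{-1/p}\max\{p,p'\}\|y\|_p$, one then sums over exponents: for $x^{(1)}$ over a geometric sequence $p_n\downarrow1$ (a Yano-type summation, refined dyadically over the layers of $\mu(x)$ inside $(0,t)$), which reproduces the Ces\`{a}ro term $\tfrac1t\int_0^t\mu_s(x)\,ds$; for $x^{(2)}$ over $p_n\uparrow\infty$, which reproduces the dual term $\int_t^\infty\mu_s(x)\,\tfrac{ds}{s}$. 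Together the two contributions reconstitute $S\mu(x)$, and the factor $\max\{p,p'\}$ is exactly what makes these series converge against the natural geometric weights; this is also why the threshold space is the $L\log L$-type Lorentz space $\Lambda_{\rm log}(\mathcal M)$.

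Two technical points need attention. First, $T_\epsilon x$ must be given a meaning for general $x\in\Lambda_{\rm log}(\mathcal M)$: the defining series converges a priori only on a dense subspace, say $L_2\cap\Lambda_{\rm log}$ (using that $\bigcup_k\mathcal M_k$ is weak$^{\ast}$-dense and that $\big\|\big(\sum_k|d_kx|^2\big)^{1/2}\big\|_2<\infty$ there), and the inequality for $\mu$ is extended to all of $\Lambda_{\rm log}$ by a standard approximation and lower-semicontinuity argument for singular value functions, using $\Lambda_{\rm log}\subset(L_1+L_\infty)(\mathcal M)$ and the continuity of $S$ on the cone of decreasing functions. Second, (MT) as quoted holds for semifinite von Neumann algebras by \cite{Randrianantoanina}, which already covers the present measure-space setting, so no new martingale theory is needed; all the novelty lies in the extrapolation. (One could alternatively route (DMT) through the distributional Stein inequality of Theorem~\ref{stein thm} combined with a martingale-transform decomposition, but the passage through (MT) is the most direct.)

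I expect the main obstacle to be the quantitative precision of the extrapolation step: arranging the Yano-type summation so that it lands exactly on $S\mu(x)$, rather than on $S\mu(x)$ times a spurious logarithmic factor, and carrying out the splitting $x=x^{(1)}+x^{(2)}$ cleanly in the noncommutative setting, where the rearrangement decomposition is only approximate. The linearity of $T_\epsilon$ is what permits a genuine pointwise inequality for $\mu$, in contrast with the weaker submajorization $\prec\prec$ that appears in (DDD).
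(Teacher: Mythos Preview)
Your overall strategy---feed the $L_p$ bounds (MT) into an extrapolation principle---is exactly the paper's route, but you are missing one essential ingredient: the weak $(1,1)$ estimate for the martingale transform. The paper's First Extrapolation Theorem (Theorem~\ref{first extrapolation theorem}), which uses only the hypothesis $\|T\|_{L_p\to L_p}\le c\max\{p,p'\}$, yields only the \emph{submajorization} $Tx\prec\prec c_{\rm abs}S\mu(x)$. To upgrade this to the pointwise inequality $\mu(Tx)\le c_{\rm abs}S\mu(x)$ the paper invokes Corollary~\ref{first extrapolation corollary}, whose hypothesis includes $\|T\|_{L_1\to L_{1,\infty}}\le c_{\rm abs}$; this weak-type bound for $T_\epsilon$ is supplied by Theorem~\ref{mt 11 theorem} in the Appendix (a Gundy-decomposition argument). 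Your proposal never mentions a weak $(1,1)$ bound.

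This is not a cosmetic omission: your Yano-type treatment of $x^{(1)}$ cannot recover the Ces\`aro term without it. Take the model case $\mu(x)=M\chi_{(0,t_0)}$ with $t_0\ll t$; then $x=x^{(1)}$, and optimizing $\mu_{t/2}(Tx^{(1)})\le c\,p'(2/t)^{1/p}\|x^{(1)}\|_p=c\,p'M(2t_0/t)^{1/p}$ over $p\in(1,2]$ gives a bound of order $M(t_0/t)\log(t/t_0)$, whereas $(C\mu(x))(t)=Mt_0/t$. So the ``spurious logarithmic factor'' you worry about in your last paragraph is real and unavoidable on this route. The paper's fix is to bound $\mu(t,Tx^{(1)})$ directly via $t^{-1}\|x^{(1)}\|_1\le (C\mu(x))(t)$ using $\|T\|_{L_1\to L_{1,\infty}}$, and to use the submajorization result only for the bounded piece $x^{(2)}$ (see the proof of Corollary~\ref{first extrapolation corollary}). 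Finally, your closing remark that linearity is what distinguishes (DMT) from (DDD) is not correct: the operator in the proof of (DDD) is equally linear; what distinguishes the two is the presence of the weak-type endpoint estimate here and its absence there.
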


\begin{thm}\label{bg theorem}  {\rm [Distributional Burkholder-Gundy inequalities]} Let $x\in\Lambda_{{\rm log}}(\mathcal{M})$ and let $(x_k)_{k\geq0}$ be the respective sequence of martingale differences (i.e., $x_0=\mathcal{E}_{_0} x$ and $x_k=\mathcal{E}_{k} x-\mathcal{E}_{k-1}x$ for $k\geq 1$).
\begin{enumerate}[{\rm (i)}]
\item if $x\in (L_2+L_{\infty})(\mathcal{M}),$ then
$$\mu\Big(\sum\limits_{k\geq 0}x_kx_k^{\ast}+x_k^{\ast}x_k\Big)\prec\prec c_{{\rm abs}} \big(C^{\ast}\mu(x)\big)^2\eqno{\rm (Lower -DBG)}.$$
\item if
$$\sum_{k\geq0}x_k^{\ast}x_k+x_kx_k^{\ast}\in (L_1+L_{\infty})(\mathcal{M}),\quad \big(\sum_{k\geq0}x_k^{\ast}x_k+x_kx_k^{\ast}\big)^{\frac12}\in\Lambda_{{\rm log}}(\mathcal{M}),$$
then $x\in (L_2+L_{\infty})(\mathcal{M})$ and
$$\mu^2(x)\prec\prec c_{{\rm abs}} \Big(C^{\ast}\mu^{\frac12}\Big(\sum\limits_{k\geq 0}x_kx_k^{\ast}+x_k^{\ast}x_k\Big)\Big)^2\eqno{\rm (Upper-DBG)}.$$
\end{enumerate}	
\end{thm}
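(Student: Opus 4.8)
The plan is to derive both parts of Theorem~\ref{bg theorem} from one-sided $L_p$-versions of the Burkholder--Gundy inequalities, run through the same extrapolation argument that converts (DD) into Theorem~\ref{dual doob thm}. The first task is to pin down the sharp $p$-dependence of the constants in (BG) on the range $2\le p<\infty$: writing $w:=\big(\sum_{k\ge 0}x_kx_k^{\ast}+x_k^{\ast}x_k\big)^{1/2}$, one has $\|w\|_p\le c_{\rm abs}\,p\,\|x\|_p$ and $\|x\|_p\le c_{\rm abs}\,p\,\|w\|_p$ for all $2\le p<\infty$. For the first estimate I would use the identity $\mathbb{E}_{\epsilon}\big|\sum_k\epsilon_kx_k\big|^2=\sum_kx_k^{\ast}x_k$, so that, $L_{p/2}(\mathcal M)$ being a Banach space when $p\ge 2$, $\big\|\big(\sum_kx_k^{\ast}x_k\big)^{1/2}\big\|_p=\big\|\mathbb{E}_{\epsilon}\big|\sum_k\epsilon_kx_k\big|^2\big\|_{p/2}^{1/2}\le\big(\mathbb{E}_{\epsilon}\big\|\sum_k\epsilon_kx_k\big\|_p^2\big)^{1/2}$; since $\sum_k\epsilon_kx_k$ is a martingale transform of $x$, (MT) bounds this by $c_{\rm abs}\,p\,\|x\|_p$, the row term is treated symmetrically, and $\|w\|_p^2=\|w^2\|_{p/2}\le\big\|\big(\sum_kx_k^{\ast}x_k\big)^{1/2}\big\|_p^2+\big\|\big(\sum_kx_kx_k^{\ast}\big)^{1/2}\big\|_p^2$. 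The second estimate follows by duality: expand $\tau(x^{\ast}z)=\sum_k\tau(x_k^{\ast}z_k)$ into the martingale differences of $z$, split $z$ into its column and row parts, apply the noncommutative Hölder inequality for square functions, and invoke the first estimate at the conjugate index $p'\le 2$ (both orders are in any case available in \cite{JX-best}).

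Passing to positive operators, put $F_1:=w^2=\sum_{k\ge 0}x_kx_k^{\ast}+x_k^{\ast}x_k$ and $F_2:=x^{\ast}x$. Squaring the two estimates above and using $\|g^2\|_q=\|g\|_{2q}^2$ for $g\ge 0$, we get, for all $1\le q<\infty$, $\|F_1\|_q\le c_{\rm abs}\,q^2\,\|F_2\|_q$ and $\|F_2\|_q\le c_{\rm abs}\,q^2\,\|F_1\|_q$. These are exactly of the shape to which the extrapolation mechanism behind Theorem~\ref{dual doob thm} applies: a family of $L_q$-bounds of order $q^2$ between positive operators $P$ and $Q$, valid for $1\le q<\infty$, self-improves to $\mu(P)\prec\prec c_{\rm abs}\big(C^{\ast}\mu^{1/2}(Q)\big)^2$. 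Applied to the first bound this yields $\mu(F_1)\prec\prec c_{\rm abs}\big(C^{\ast}\mu^{1/2}(F_2)\big)^2=c_{\rm abs}\big(C^{\ast}\mu(x)\big)^2$, which is (Lower-DBG), using $\mu^{1/2}(x^{\ast}x)=\mu(x)$; applied to the second it yields $\mu^2(x)=\mu(F_2)\prec\prec c_{\rm abs}\big(C^{\ast}\mu^{1/2}(F_1)\big)^2$, which is (Upper-DBG). The membership hypotheses --- $x\in(L_2+L_\infty)(\mathcal M)$ in (i), and $F_1\in(L_1+L_\infty)(\mathcal M)$ with $F_1^{1/2}\in\Lambda_{\mathrm{log}}(\mathcal M)$ in (ii) --- are precisely what the extrapolation step needs and what keeps the right-hand sides finite; the assertion ``$x\in(L_2+L_\infty)(\mathcal M)$'' in (ii) is obtained by running the entire argument first on the truncated martingales $x_N=\sum_{k\le N}x_k$ (for which all the operators involved lie in $L_2(\mathcal M)$), producing a bound on $\mu(x_N)$ that is uniform in $N$, and then letting $N\to\infty$.

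I expect the passage from the truncations back to $x$ to be the main obstacle. One has to check that the relevant series converge in the measure topology of $(\mathcal M,\tau)$ tightly enough that $\mu(x_N)\to\mu(x)$ and $\mu\big(\sum_{k\le N}x_kx_k^{\ast}+x_k^{\ast}x_k\big)\to\mu(F_1)$, and, for part (ii), that the uniform estimate $\mu(x_N)\prec\prec c_{\rm abs}\,C^{\ast}\mu^{1/2}(F_1)$ actually forces the $x_N$ to converge to some $x\in(L_2+L_\infty)(\mathcal M)$; this is where $F_1^{1/2}\in\Lambda_{\mathrm{log}}(\mathcal M)$ is used, namely to place $C^{\ast}\mu^{1/2}(F_1)$ in a space small enough for such a closedness argument. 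A secondary point requiring care is that the two $L_p$-inputs must carry the \emph{optimal} order $p$, not $p^2$, in each direction: any loss there would enlarge the extrapolated operator beyond $C^{\ast}$, which is the reason for routing the lower estimate through (MT) and the identity $\mathbb{E}_{\epsilon}\big|\sum_k\epsilon_kx_k\big|^2=\sum_kx_k^{\ast}x_k$ rather than through a cruder square-function argument.
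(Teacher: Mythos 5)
Your overall strategy---prove $L_p$ inequalities with constants of order $p$ on $2\le p<\infty$ and then ``extrapolate''---is the right general philosophy, but the central step as you state it is a genuine gap. Theorem \ref{second extrapolation theorem} is a statement about a \emph{linear operator} $T$ satisfying $\|T\|_{L_p\to L_p}\le p$ for all $2\le p<\infty$; its proof goes through the adjoint $T^{\ast}$ (Lemma \ref{second duality operator lemma}), the momenta characterisation on $L_1\cap L_2$ (Lemma \ref{second t minimal lemma}), and the extension of $T$ by density/K\"othe duality to the maximal domain $\Lambda_{\rm log}\cap(L_2+L_{\infty})$ (Lemma \ref{second t maximal lemma}), before the scaling trick is run. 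None of this makes sense for a \emph{fixed pair} of positive operators $F_1,F_2$ related by $\|F_1\|_q\le c\,q^2\|F_2\|_q$: there is no operator to extend, and, worse, under the hypotheses of Theorem \ref{bg theorem} this family of inequalities can be entirely vacuous. In case (i), $x\in(L_2+L_\infty)(\mathcal M)$ need not lie in any $L_p$ with $p<\infty$; in case (ii) one can have $\mu(t,F_1)\sim e^{-2\sqrt{\log t}}$, so that $F_1^{1/2}\in\Lambda_{\rm log}$ and $F_1\in(L_1+L_\infty)$ while $F_1\notin L_q$ for every finite $q$ --- then both sides of your $L_q$ inequalities are infinite and nothing can be extrapolated. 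This is exactly the case the theorem is designed to cover, and it is why the paper linearises first: for (Lower-DBG) it applies Theorem \ref{second extrapolation theorem} to the column embedding $x\mapsto\sum_k x_k\otimes e_{k0}$ (and its row analogue, adding the two submajorizations), and for (Upper-DBG) to $T=A\circ({\rm id}\otimes P)$ acting on $L_p(\mathcal M\bar\otimes\mathbb F_\infty)$, where $A:\sum_k y_k\otimes\gamma_k\mapsto\sum_k(\mathcal E_ky_k-\mathcal E_{k-1}y_k)$ (Proposition \ref{bg best constant theorem}, which rests on the weak $(1,1)$ transform estimate of the Appendix), $P$ is the Haagerup--Pisier projection onto the span of the free generators, and the passage from $\mu(\sum_kx_k\otimes\gamma_k)$ to $\mu^{1/2}(\sum_kx_kx_k^{\ast}+x_k^{\ast}x_k)$ is the distributional free Khintchine inequality of Dirksen--Ricard. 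In this formulation the conclusion ``$x\in(L_2+L_\infty)(\mathcal M)$'' in (ii) is automatic from Lemma \ref{second t maximal lemma}; your substitute --- truncating and claiming the truncated martingales lie in $L_2(\mathcal M)$ --- is unjustified, since $\mathcal E_N x$ for $x\in\Lambda_{\rm log}(\mathcal M)$ (indeed, in (ii) $x$ is a priori only in $\Lambda_{\rm log}$) need not belong to $L_2$, so the limiting argument you defer to does not repair the vacuity problem.

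A secondary flaw: your duality proof of $\|x\|_p\le c\,p\,\|w\|_p$ for $p\ge 2$ ``invokes the first estimate at the conjugate index $p'\le 2$'', but the column/row square-function upper bound you proved via the Khintchine identity and (MT) uses the triangle inequality in $L_{p/2}$ and is only valid for $p\ge 2$; for $p'<2$ the analogous one-sided estimate fails in the noncommutative setting (only the decomposition form of the lower BG inequality holds there, with its own constant behaviour as $p'\to1$), so this step does not go through as written. Your first estimate (order-$p$ bound for the square function via (MT) and averaging over signs) is fine and is essentially what the paper extracts from (BG) for the lower inequality; but the upper inequality needs the free-generator duality of Proposition \ref{bg best constant theorem} (or an equivalent substitute), not a naive dualisation of the column estimate.
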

The greatest advantage of these results is that they reduce the desired estimate to the verification of the corresponding boundedness of the involved operators $S$ and $C^*.$

Our method of proof mainly relies upon several optimal range theorems established in Section \ref{extrapolation section}. A related result, in more restrictive setting, was given in \cite[Theorem 14]{STZ}. In this paper, based on some results from Lykov \cite{Lykov} and the scaling technique, we provide new and more powerful optimal bound results, even when reduced to the classical case, which strengthen the Yano's extrapolation theorem \cite{Yano}.
According to these results, we can consider some standard inequalities with constants of optimal order through the lenses of operator theory and functional analysis, which allows one to construct the desired distributional inequalities. This demonstrates a method to build the noncommutative and classical probabilistic inequalities in an entirely operator theoretic way.

The paper is organised as follows. In Section \ref{prelims section}, we recall some background on the subject including any necessary notation and preliminary results. Section \ref{mar-sec} provides an equivalent description of a certain Marcinkiewicz norm. Noncommutative extrapolation theorems are stated and proved in Section \ref{extrapolation section}. These extrapolation results are important ingredients in the proof, key to showing the necessary optimal bounds, and are of independent interest. Section \ref{main section} is devoted to the proof of Theorems \ref{stein thm}, \ref{dual doob thm}, \ref{mt theorem} and \ref{bg theorem}. In Section \ref{optimal section}, we show that our results are optimal (at least for Stein inequality in the case $\mathcal{M}=B(L_2(0,\infty))$).  In Section \ref{application section}, we obtain new martingale inequalities in symmetric quasi-Banach operator spaces (including Orlicz and weak Orlicz spaces) as applications of our main results. The proof of Theorem \ref{bg theorem} relies on weak (1,1) estimates for generalised martingale transforms. We include the proof of these estimates in the Appendix.

\section{Preliminaries}\label{prelims section}

Throughout, we use $c_{abs}$ to denote some absolute constant which may change from line to line. We write $A\lesssim B$ if there is some absolute constant $c_{abs}$ such that $A\leq c_{abs} B$. We say that $A$ is equivalent to $B$ (written $A\thickapprox B$) if there exists some absolute constant $c_{\rm abs}$ such that $c_{\rm abs}^{-1} A\leq B\leq c_{\rm abs} A$. The notation $A\lesssim_{p}B$ (or, $A\thickapprox_p B$) means that the inequality (or, equivalence) holds true for some constant depending on the  parameter $p.$

\subsection{Generalised singular value functions}\label{svf subsection}

In what follows, $H$ is a separable Hilbert space and $\mathcal{M}\subset B(H)$ denotes a semifinite  von Neumann algebra equipped with a faithful normal semifinite trace $\tau.$ The pair $(\mathcal{M},\tau)$ is called a {\it noncommutative measure space}. A closed and densely defined operator $a$ on $H$ is said to be {\it affiliated} with $\mathcal{M}$ if $u^{\ast}au=a$ for each unitary operator $u$ in the commutant $\mathcal{M}'$ of $\mathcal{M}$. The operator $x$ is called {\it $\tau$-measurable} if $x$ is affiliated with $\mathcal{M}$ and for every $\varepsilon>0$, there exists a projection $p\in \mathcal{M}$ such that $p(H)\subset {\mathrm{dom}}(x)$ and $\tau(1-p)<\varepsilon.$
The set of all  $\tau$-measurable operators will be denoted by $L_0(\mathcal{M})$. Given a self adjoint operator $x\in L_0(\mathcal{M})$ and Borel set $B\subset\mathbb{R}$, we denote by $\chi_{B}(x)$ its spectral projection. For a projection $e\in \mathcal{M}$, if $\tau(e)<\infty$, we say that $e$ is $\tau$-finite.

Let $x=x^{\ast}\in L_0(\mathcal{M})$.  The {\it distribution function} of $x$ is defined by
$$n_x(s)=\tau\left(\chi_{(s,\infty)}(x)\right), \quad -\infty<s<\infty.$$
For $x=x^*,$ $y=y^*\in L_0(\mathcal M)$, we have (see e.g. \cite[Lemma 2.1]{JRWZ}):
\begin{equation}\label{distribution-triangle}
n_{x+y}(s+t)\leq n_{x}(s)+n_{y}(t),\quad -\infty<s,t<\infty.
\end{equation}
For $x\in L_0(\mathcal{M})$, the {\it generalised singular value function} of $x$ is defined by
$$\mu(t,x)=\inf\left\{s>0: n_{|x|}(s)\leq t\right\}, \quad t>0.$$
Similarly, for $x, y\in L_0(\mathcal{M})$, we also have (see e.g. \cite{FK}):
\begin{equation}\label{singular-triangle}
\mu(t+s,x+y)\leq \mu(t,x)+\mu(s,y),\quad s,t>0.
\end{equation}
The function $t\mapsto\mu(t,x)$ is decreasing and right-continuous. In the case that $\mathcal{M}$ is the abelian von Neumann algebra $L_{\infty}(0,\alpha)$ ($0<\alpha\leq \infty$) with the trace given by integration with respect to the Lebesgue measure,  $L_0(\mathcal{M})$ is the space of all measurable functions, with non-trivial distribution, and $\mu(f)$ is the decreasing rearrangement of a measurable function $f$; see  \cite{KPS}. In the abelian case, we write $L_0(0,\alpha)$ instead of
$L_0(L_\infty(0,\alpha))$ ($0<\alpha\leq \infty$). For more discussion on generalised singular value functions, we refer the reader to
\cite{FK, LSZ}.

Given $x, y\in L_0(\mathcal M)$, we say that $y$ is {\it submajorized} in the sense of Hardy-Littlewood-P\'{o}lya by $x$ (written $y\prec\prec x$) if
$$\int_0^t \mu(s,y)ds\leq \int_0^t \mu(s,x)ds,\quad t>0.$$

For $f\in L_0(0,\infty)$, the {\it dilation} operator $\sigma$ is defined by
$$\sigma_sf(t):=f\Big(\frac ts\Big),\quad s>0.$$

Let $(x_i)_{i\in I}\in L_0(\mathcal{M})$ be an increasing net of positive elements. If there exists $y\in L_0(\mathcal{M})$ such that $x_i\leq y$ for all $i\in I,$ then there exists $x=\sup_{i\in I}x_i$ (see Proposition 1.1 in \cite{DPS}). In this case, we write $x_i\uparrow x$ and say that the net $(x_i)_{i\in I}$ converges to $x$ in order. Proposition 1.7 in \cite{DPS} states that $\mu(x_i)\uparrow\mu(x)$ in such setting. Similarly, there exists $\inf_{i\in I}x_i$ of decreasing net $(x_i)_{i\in I}$ of positive elements in $L_0(\mathcal{M}).$ In this case, we write $x_i\downarrow x$ and also say that the net $(x_i)_{i\in I}$ converges to $x$ in order. If, in addition, each $x_i$ is bounded, then Vigier's theorem (see e.g. \cite[Theorem 4.1.1]{Murphy90}) states that $x_i\to x$ in strong operator topology.

\subsection{Symmetric function and operator spaces}
A Banach (or quasi-Banach) function space $(E,\|\cdot\|_E)$ on $(0,\alpha)$, $0<\alpha\leq \infty$ is called {\it symmetric} if for every $g\in E$ and for every measurable function $f$ with $\mu(f)\leq\mu(g)$, we have $f\in E$ and $\|f\|_E\leq\|g\|_E.$

We say that a (quasi-)norm $\|\cdot\|_E$ is order-continuous if
$$\inf_{i\in I}\|x_i\|_I=\|\inf_{i\in I}x_i\|_E$$
for an arbitrary decreasing net $(x_i)_{i\in I}\subset E$ of positive elements.

We say that $E$ possesses Fatou property if, for every increasing bounded net $(x_i)_{i\in I}\subset E$ of positive elements, we have
$$\sup_{i\in I}x_i\in E,\quad \|\sup_{i\in I}x_i\|_E=\sup_{i\in I}\|x_i\|_E.$$

The well known Lebesgue spaces $L_p$, Orlicz spaces $L_{\Phi}$ (see Subsection \ref{orlicz subsection}), Lorentz and Marcinkiewicz spaces (see Subsection \ref{lorentz subsection}) are symmetric, and have the Fatou property.

\begin{example} The symmetric space below has a special name ``weak $L_1$-space''. Define
$$L_{1,\infty}(0,\infty):=\left\{f\in L_0(0,\infty):\sup_{\lambda>0}\lambda n_{|f|}(\lambda)<\infty\right\}.$$
Then $L_{1,\infty}(0,\infty)$ becomes a symmetric quasi-Banach function space when equipped with the quasi-norm
$$\|f\|_{1,\infty}=\sup_{\lambda>0}\lambda n_{|f|}(\lambda)=\sup_{t>0}t\mu(t,f).$$
It can be verified that
$$\|f+g\|_{1,\infty}\leq 2\|f\|_{1,\infty}+2\|g\|_{1,\infty},\quad f,\,g\in L_{1,\infty}(0,\infty).$$
\end{example}

For $0<r<\infty,$  the \emph{$r$-convexification} of a (quasi-)Banach function space $E$ is defined by
\begin{equation*}
E^{(r)} :=\left\{ f \in L_0(0,\infty) : \big|f\big|^r \in E \right\}
\end{equation*}
equipped with the (quasi-)norm
 \begin{equation*}
 \|f \|_{E^{(r)}} = \big\| |f|^r\ \big\|_E^{\frac1r}.
\end{equation*}
It is easy to see that if $E$ is symmetric then so is  $E^{(r)}.$

Given a symmetric Banach function space $E$ on $(0,\infty)$,  the {\it K\"{o}the dual} of $E$ is defined by setting
$$E(0,\infty)^\times:=\left\{y\in L_0(0,\infty): \int_0^\infty|x(t)y(t)|dt<\infty,\,\,\forall x\in E(0,\infty)\right\}.$$
It can be verified that $E(0,\infty)^\times$ is a Banach space when equipped with the norm
$$\|y\|_{E^\times}:=\sup\left\{\int_0^\infty|x(t)y(t)|dt:x\in E(0,\infty),\|x\|_{E(0,\infty)}\leq 1\right\}.$$
The following general fact for K\"{o}the dual spaces is used later.
\begin{lem}\label{general koethe fact}
Let $E_1$, $E_2$ be symmetric Banach function spaces on $(0,\infty)$. We have
\begin{equation*}
(E_1+E_2)^\times=E_1^\times\cap E_2^\times,\quad (E_1\cap E_2)^\times = E_1^\times +E_2^\times.
\end{equation*}
\end{lem}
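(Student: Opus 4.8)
The plan is to establish the two identities in turn, the second being the substantive part; I freely use that a symmetric Banach function space on $(0,\infty)$ satisfies $\|x\|_{E_1+E_2}\le\min\{\|x\|_{E_1},\|x\|_{E_2}\}$ and $\|x\|_{E_1\cap E_2}=\max\{\|x\|_{E_1},\|x\|_{E_2}\}$ by definition, together with the elementary bound $\int_0^\infty|xy|\le\|x\|_E\|y\|_{E^\times}$. For the first identity: since $\|x\|_{E_1+E_2}\le\min\{\|x\|_{E_1},\|x\|_{E_2}\}$, the unit balls of $E_1$ and of $E_2$ lie inside that of $E_1+E_2$, whence $\|y\|_{(E_1+E_2)^\times}\ge\max\{\|y\|_{E_1^\times},\|y\|_{E_2^\times}\}$; conversely, for $y\in E_1^\times\cap E_2^\times$ and a splitting $x=g+h$ with $g\in E_1,\ h\in E_2$, the triangle inequality gives $\int_0^\infty|xy|\le(\|g\|_{E_1}+\|h\|_{E_2})\max\{\|y\|_{E_1^\times},\|y\|_{E_2^\times}\}$, and taking the infimum over splittings yields $\|y\|_{(E_1+E_2)^\times}\le\|y\|_{E_1^\times\cap E_2^\times}$. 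The same computation, applied to a splitting $y=y_1+y_2$ with $y_j\in E_j^\times$ and to $x\in E_1\cap E_2$, gives $\int_0^\infty|xy|\le\|x\|_{E_1\cap E_2}(\|y_1\|_{E_1^\times}+\|y_2\|_{E_2^\times})$, i.e.\ the inclusion $E_1^\times+E_2^\times\subseteq(E_1\cap E_2)^\times$ with $\|y\|_{(E_1\cap E_2)^\times}\le\|y\|_{E_1^\times+E_2^\times}$.

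The heart of the matter is the reverse inclusion $(E_1\cap E_2)^\times\subseteq E_1^\times+E_2^\times$, which I would obtain by convex duality. Fix $y\in(E_1\cap E_2)^\times$; replacing $y$ by $|y|$ (which has the same $(E_1\cap E_2)^\times$-norm) we may assume $y\ge 0$, and normalise $\|y\|_{(E_1\cap E_2)^\times}=1$. The diagonal map $f\mapsto(f,f)$ identifies $E_1\cap E_2$ isometrically with a sublattice $\Delta$ of the $\ell^\infty$-direct sum $X:=E_1\oplus_\infty E_2$ (whose norm is $\|(f,g)\|_X=\max\{\|f\|_{E_1},\|g\|_{E_2}\}$), on which $\varphi(f,f):=\int_0^\infty fy$ is a functional of norm $\le 1$. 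Extend $\varphi$ to $\Phi\in X^*$ with $\|\Phi\|\le 1$ by Hahn--Banach and write $\Phi(f,g)=\Phi_1(f)+\Phi_2(g)$ with $\Phi_j\in E_j^*$; since the coordinates vary independently, $\|\Phi_1\|+\|\Phi_2\|=\|\Phi\|\le 1$. One then takes the normal (order-continuous) part of each $\Phi_j$, represented by a function $y_j\in E_j^\times$ with $\|y_1\|_{E_1^\times}+\|y_2\|_{E_2^\times}\le 1$, and deduces $y=y_1+y_2$ from $\Phi_1(f)+\Phi_2(f)=\int_0^\infty fy$ by testing against characteristic functions of finite-measure sets (which belong to $E_1\cap E_2$). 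This yields $y\in E_1^\times+E_2^\times$ with $\|y\|_{E_1^\times+E_2^\times}\le\|y\|_{(E_1\cap E_2)^\times}$, completing the argument.

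I expect the main obstacle to be the last deduction: the Hahn--Banach extension $\Phi$ need not be order-continuous, so a priori one cannot discard the singular parts of $\Phi_1,\Phi_2$ upon restriction to $\Delta$. I would circumvent this by first establishing the inclusion for $y$ bounded and supported on a set $\Omega$ of finite measure --- where $E_1$, $E_2$ and $E_1\cap E_2$ all contain $L_\infty(\Omega)$, every functional in play is given by a finitely additive measure admitting a norm-additive Yosida--Hewitt decomposition, and the pairing is transparent --- and then passing to a general $0\le y\in(E_1\cap E_2)^\times$ through the monotone truncations $y_n:=\min(y,n)\chi_{(0,n)}\uparrow y$, using the Fatou property of $E_1^\times$ and $E_2^\times$ (valid for any K\"othe dual) to control the norms of the pieces in the limit. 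Equivalently, the inclusion can be recast as the minimax identity $\inf\{\|y_1\|_{E_1^\times}+\|y_2\|_{E_2^\times}:y_1+y_2=y\}=\|y\|_{(E_1\cap E_2)^\times}$, whose nontrivial inequality is established by a minimax theorem: interchanging $\inf_{y_1+y_2=y}$ with $\sup_{\|x_1\|_{E_1}\le 1,\ \|x_2\|_{E_2}\le 1}$, the inner infimum over the splitting forces $x_1=x_2$, at which point the supremum equals exactly $\|y\|_{(E_1\cap E_2)^\times}$.
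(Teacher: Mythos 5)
The paper itself gives no proof of this lemma: it is quoted as a known fact (it is Lozanovskii's duality theorem for intersections and sums of Banach function spaces), so your attempt is measured against the literature rather than an in-paper argument. Your first identity and the easy inclusion $E_1^\times+E_2^\times\subseteq(E_1\cap E_2)^\times$ are correct. The gap is exactly where you flag it, and neither of your proposed repairs closes it. After Hahn--Banach you only know $\Phi_1(f)+\Phi_2(f)=\int_0^\infty fy$ for $f\in E_1\cap E_2$, and the singular parts of $\Phi_1,\Phi_2$ cannot be discarded by ``testing against characteristic functions of finite-measure sets'': singular functionals need not vanish there (e.g.\ any Banach-limit extension of $f\mapsto \mathrm{LIM}_n\, n\int_0^{1/n}f$ on $L_\infty$ is singular yet equals $1$ on $\chi_{(0,1)}$). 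Reducing to bounded $y$ with finite-measure support does not help: for such $y$ membership in $E_1^\times+E_2^\times$ is trivial anyway (since $L_1\cap L_\infty\subseteq E_j^\times$), and what your truncation/Fatou passage actually requires is the uniform bound $\|y_{1,n}\|_{E_1^\times}+\|y_{2,n}\|_{E_2^\times}\leq\|y\|_{(E_1\cap E_2)^\times}$ for the truncations --- which is precisely the inequality you have not established; the Yosida--Hewitt decomposition alone does not show that dropping the purely finitely additive parts preserves the identity on the diagonal. The minimax reformulation is likewise not a proof: the interchange of the infimum over splittings with the supremum over the two unit balls is the nontrivial content of the lemma, and you identify no compactness/semicontinuity structure that a minimax theorem could use (the unit balls of $E_1,E_2$ are not weak-$*$ compact in general, and the set of splittings is unbounded).

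The Hahn--Banach route can in fact be completed, but the missing ingredient is a lemma you never state: the restriction of a singular functional on $E_j$ to the order-dense ideal $E_1\cap E_2$ (which contains $L_1\cap L_\infty$) remains singular. Indeed, if $0\leq\theta\leq|\phi^s|$ on $E_1\cap E_2$ with $\theta(f)=\int fh$ order continuous, then for $0\leq f\in E_j$ the truncations $f\wedge n\chi_{(0,n)}$ lie in $E_1\cap E_2$ and increase to $f$, so monotone convergence gives $\int fh\leq|\phi^s|(f)$; hence $h\in E_j^\times$ and the normal functional it defines on $E_j$ is dominated by the singular $|\phi^s|$, forcing $h=0$. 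Granting this, the sum of the restricted singular parts of $\Phi_1,\Phi_2$ is a singular functional on $E_1\cap E_2$ which coincides with the order-continuous functional $f\mapsto\int f(y-y_1-y_2)$, hence vanishes, and testing against $L_1\cap L_\infty$ yields $y=y_1+y_2$ with $\|y_1\|_{E_1^\times}+\|y_2\|_{E_2^\times}\leq 1$, which is the desired inclusion (set equality then upgrades to norm equivalence by a closed-graph argument, or one cites Lozanovskii directly, e.g.\ via the Krein--Petunin--Semenov monograph already in the bibliography). As written, however, your argument has a genuine gap at its decisive step.
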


For a given symmetric Banach (or quasi-Banach) space $(E,\|\cdot\|_E)$, we define the corresponding non-commutative space on $(\mathcal{M},\tau)$ by setting (\cite{KS})
\begin{equation*}\label{ks def}
E(\mathcal{M},\tau):=\big\{x\in L_0(\mathcal{M}):\mu(x)\in E\big\}.
\end{equation*}
Endowed with the quasi-norm $\|x\|_{E(\mathcal M,\tau)}:=\|\mu(x)\|_E,$ the space $E(\mathcal{M},\tau)$ is called the noncommutative symmetric space associated with $(\mathcal{M},\tau)$ corresponding to the function space $(E,\|\cdot\|_E)$. It is shown in \cite{Su2014} that the quasi-normed space $(E(\mathcal{M},\tau),\|\cdot\|_{E(\mathcal M)})$ is complete if $(E,\|\cdot\|_E)$ is complete.

It is known from \cite{DPS} that $E(\mathcal{M},\tau)$ is order-continuous (respectively, has Fatou property) whenever $E$ is order-continuous (respectively, has the Fatou property).

When $E=L_p(0,\tau(1))$ for some $0<p<\infty$, then $E(\mathcal{M},\tau)$ coincides with the noncommutative Lebesgue space $L_p(\mathcal{M},\tau)$. In the case that $E=L_p+L_q$ for $0<p<q\leq \infty$, we have the following well known Holmstedt formula (see \cite[Theorem 4.1]{Holmstedt}):

\begin{lem}\label{lem-L1-Linfty} Let $0<p<q\leq \infty$. If $x\in (L _p+L_q)(\mathcal M,\tau)$, then
$$\|x\|_{(L_p+L_q)(\mathcal M,\tau)}\approx\Big(\int_0^1\mu(s,x)^pds\Big)^{1/p}+\Big(\int_1^\infty\mu(s,x)^qds\Big)^{1/q},\quad q<\infty$$
and
$$\|x\|_{(L_p+L_\infty)(\mathcal M,\tau)}\approx\Big(\int_0^1\mu(s,x)^pds\Big)^{1/p}.$$
\end{lem}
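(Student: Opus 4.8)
The plan is to reduce the assertion to the classical Holmstedt formula. First I would use that $L_p+L_q$ is a symmetric quasi-Banach function space on $(0,\infty)$, so that by the very definition of the noncommutative symmetric space $\|x\|_{(L_p+L_q)(\mathcal M,\tau)}=\|\mu(x)\|_{(L_p+L_q)(0,\infty)}$; since moreover $\mu(t,x)=0$ for $t\geq\tau(1)$, the rearrangement $f:=\mu(x)$ may be treated as a nonnegative decreasing element of $L_0(0,\infty)$. Thus it suffices to prove, for an arbitrary nonnegative decreasing $f\in L_0(0,\infty)$, that $\|f\|_{(L_p+L_q)(0,\infty)}\approx\big(\int_0^1 f(s)^p\,ds\big)^{1/p}+\big(\int_1^\infty f(s)^q\,ds\big)^{1/q}$, together with the analogous statement when $q=\infty$ (the second term being absent).

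Next I would eliminate the parameter $p$ by $p$-convexification. Since $\mu(t,|x|^p)=\mu(t,x)^p$ and $(L_1+L_{q/p})^{(p)}=L_p+L_q$ (a standard identity, so that $\|g\|_{L_p+L_q}=\big\||g|^p\big\|_{L_1+L_{q/p}}^{1/p}$ and note $q/p>1$), the whole lemma follows once the two base cases $\|f\|_{L_1+L_\infty}\approx\int_0^1 f$ and $\|f\|_{L_1+L_r}\approx\int_0^1 f+\big(\int_1^\infty f(s)^r\,ds\big)^{1/r}$ for $1<r<\infty$ are established: one applies them to $|x|^p$ with $r=q/p$ and raises to the power $1/p$, using $(a+b)^{1/p}\approx a^{1/p}+b^{1/p}$ for $a,b\geq 0$.

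For the base cases I would argue as follows. The upper bound is immediate from the splitting $f=f\chi_{(0,1)}+f\chi_{(1,\infty)}$: the first summand lies in $L_1$ with norm $\int_0^1 f$, and the decreasing rearrangement of the second is $s\mapsto f(s+1)$, which places it in $L_r$ with norm $\big(\int_1^\infty f^r\big)^{1/r}$ (respectively $\|f\chi_{(1,\infty)}\|_\infty=f(1)\leq\int_0^1 f$ when $r=\infty$). For the lower bound, take any decomposition $f=g+h$ with $g\in L_1$, $h\in L_r$; by subadditivity of the singular value function $\mu(2s,f)\leq\mu(s,g)+\mu(s,h)$, and then a short computation — H\"older's inequality on $(0,1)$ to compare the $L_1$ and $L_r$ scales, and the elementary bound $\mu(s,g)\leq 2\|g\|_1$ for $s\geq 1/2$ which gives $\int_{1/2}^\infty\mu(u,g)^r\,du\leq(2\|g\|_1)^{r-1}\|g\|_1$ — shows that each of $\int_0^1 f$ and $\big(\int_1^\infty f^r\big)^{1/r}$ is $\lesssim\|g\|_1+\|h\|_r$; taking the infimum over decompositions finishes it. The case $L_1+L_\infty$ is analogous, and in fact sharp: use the decomposition $g=(f-f(1))_+$, $h=\min(f,f(1))$ for the upper bound and $\mu(s,g+h)\leq\mu(s,g)+\|h\|_\infty$ for the lower one, obtaining $\|f\|_{L_1+L_\infty}=\int_0^1 f$.

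The only genuinely delicate point is the bookkeeping of the absolute constants through the convexification step and the normalisation at the point $t=1$; everything else is routine. As a shortcut, one may instead simply invoke Holmstedt's explicit computation of the $K$-functional \cite[Theorem~4.1]{Holmstedt} together with the identity $\|f\|_{L_p+L_q}=K(1,f;L_p,L_q)$, from which the stated equivalence is the specialisation to $t=1$.
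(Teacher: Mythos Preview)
Your proposal is correct, and in fact considerably more detailed than what the paper does: the paper offers no proof at all for this lemma, merely attributing it to Holmstedt \cite[Theorem~4.1]{Holmstedt}. Your final ``shortcut'' --- invoke Holmstedt's computation of the $K$-functional and specialise to $t=1$ --- is precisely the paper's entire treatment.

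The self-contained argument you sketch (reduce to the commutative case via $\|x\|_{E(\mathcal M)}=\|\mu(x)\|_E$, then $p$-convexify to reduce to $L_1+L_r$, then handle the base case by the explicit splitting $f\chi_{(0,1)}+f\chi_{(1,\infty)}$ and the subadditivity of $\mu$) is sound and is essentially how one would reprove Holmstedt's formula from scratch at the single point $t=1$. One minor remark: the implicit constants produced by your convexification step depend on $p$ (through $(a+b)^{1/p}\approx a^{1/p}+b^{1/p}$), so the equivalence in the lemma is really $\approx_{p,q}$ rather than $\approx$ with an absolute constant; this is harmless for the paper's applications, where the lemma is only invoked with fixed exponents.
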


Similarly, we may define the K\"{o}the dual of  symmetric Banach operator spaces. Assume that $E$ is a symmetric Banach function space. The {\it K\"{o}the dual} of $E(\mathcal M,\tau)$ is defined (see \cite{DDP}) as
$$E(\mathcal M,\tau)^\times:=\left\{x\in L_0(\mathcal M,\tau): \tau(|xy|)<\infty,\,\, \forall y\in E(\mathcal M,\tau)\right\}$$
equipped with the norm
$$\|x\|_{E(\mathcal M,\tau)^\times}:=\sup\left\{|\tau(xy)|: y\in E(\mathcal M,\tau),\|y\|_{E(\mathcal M,\tau)}\leq 1\right\}.$$
It can be shown (see \cite{DDP}) that $(E(\mathcal M,\tau)^\times,\|\cdot\|_{E(\mathcal M,\tau)^\times})$ is a symmetric Banach operator space. Moreover, since $E$ is a symmetric Banach function space, we have
\begin{equation}\label{nc koethe eq}
E(\mathcal M,\tau)^\times=E^\times(\mathcal M,\tau).
\end{equation}
We also state the H\"{o}lder inequality here for further use.
\begin{lem}\label{holder} Let $E$ be a symmetric Banach function space. For $x\in E(\mathcal M,\tau)$, $y\in E(\mathcal M,\tau)^\times$, we have
$$\tau(|xy|)\leq \|x\|_{E(\mathcal M,\tau)}\|y\|_{E(\mathcal M,\tau)^\times}.$$
\end{lem}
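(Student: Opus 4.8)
The plan is to deduce this from nothing more than the definition of the Köthe dual norm, using a polar decomposition to move the operator into the correct slot. Since $x\in E(\mathcal M,\tau)\subset L_0(\mathcal M)$ and $y\in E(\mathcal M,\tau)^\times\subset L_0(\mathcal M)$, the product $xy$ is again $\tau$-measurable, so it admits a polar decomposition $xy=w|xy|$ with $w$ a partial isometry in $\mathcal M$; consequently $|xy|=w^{\ast}xy=zy$, where I set $z:=w^{\ast}x$. The whole proof then turns on controlling $z$.

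First I would check that $z\in E(\mathcal M,\tau)$ with $\|z\|_{E(\mathcal M,\tau)}\le\|x\|_{E(\mathcal M,\tau)}$. Since $w^{\ast}\in\mathcal M$ is a contraction, the standard estimate $\mu(t,w^{\ast}x)\le\|w^{\ast}\|\,\mu(t,x)\le\mu(t,x)$ (see \cite{FK, LSZ}) gives $\mu(z)\le\mu(x)$, and the symmetry of $E$ then yields $z\in E(\mathcal M,\tau)$ and $\|z\|_{E(\mathcal M,\tau)}=\|\mu(z)\|_E\le\|\mu(x)\|_E=\|x\|_{E(\mathcal M,\tau)}$. Now $z\in E(\mathcal M,\tau)$ and $y\in E(\mathcal M,\tau)^\times$, so $\tau(|zy|)<\infty$, the pairing $\tau(zy)$ is well defined, and by the tracial property $\tau(|xy|)=\tau(zy)=\tau(yz)$; this number is a nonnegative real (because $|xy|\ge 0$), hence equals $|\tau(yz)|$. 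Testing the defining supremum for $\|y\|_{E(\mathcal M,\tau)^\times}$ against $z/\|z\|_{E(\mathcal M,\tau)}$ (the case $z=0$ being trivial) gives $|\tau(yz)|\le\|y\|_{E(\mathcal M,\tau)^\times}\|z\|_{E(\mathcal M,\tau)}$, and combining with the bound for $\|z\|_{E(\mathcal M,\tau)}$ above yields
$$\tau(|xy|)=|\tau(yz)|\le\|y\|_{E(\mathcal M,\tau)^\times}\|z\|_{E(\mathcal M,\tau)}\le\|x\|_{E(\mathcal M,\tau)}\|y\|_{E(\mathcal M,\tau)^\times},$$
which is the assertion.

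An alternative route, perhaps more in keeping with the preceding material, is to invoke the scalar rearrangement inequality $\tau(|xy|)\le\int_0^\infty\mu(t,x)\mu(t,y)\,dt$ for $\tau$-measurable $x,y$ (see \cite{FK}) together with the identity \eqref{nc koethe eq}, which gives $\|y\|_{E(\mathcal M,\tau)^\times}=\|\mu(y)\|_{E^\times}$; then $\int_0^\infty\mu(t,x)\mu(t,y)\,dt\le\|\mu(x)\|_E\|\mu(y)\|_{E^\times}$ is immediate from the definition of the Köthe dual of the function space $E$. The only point requiring care in either route is routine measurability and integrability bookkeeping---that $xy$ is $\tau$-measurable (so the polar decomposition is legitimate), that $z=w^{\ast}x$ genuinely lies in $E(\mathcal M,\tau)$ (this is exactly where symmetry of $E$ and the contraction estimate enter), and that $\tau(zy)$ is finite and tracially symmetric---after which the inequality is automatic. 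I do not expect any substantive obstacle here: the lemma is a soft consequence of the definitions recalled above.
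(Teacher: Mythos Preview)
Your argument is correct. Note, however, that the paper does not actually supply a proof of this lemma: it is stated as a standard fact (the surrounding discussion cites \cite{DDP} for the noncommutative K\"othe dual), so there is nothing to compare against. Both of your routes are the customary ones; the second, via the rearrangement inequality $\tau(|xy|)\le\int_0^\infty\mu(t,x)\mu(t,y)\,dt$ from \cite{FK} together with \eqref{nc koethe eq}, is probably what the authors have in mind, while your polar-decomposition argument is equally valid and has the minor advantage of not invoking \eqref{nc koethe eq}.
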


In the sequel, without causing any confusion, we use $E(\mathcal M)$, $E^\times(\mathcal M)$, $\|\cdot\|_{E}$, $\|\cdot\|_{E^\times}$ to denote $E(\mathcal M,\tau)$, $E^\times(\mathcal M,\tau)$, $ \|\cdot\|_{E(\mathcal M)}$, $\|\cdot\|_{E^\times(\mathcal M)}$.

\subsection{Lorentz spaces and Marcinkiewicz spaces}\label{lorentz subsection}
Let $\phi:[0,\infty)\to [0,\infty)$ be an increasing concave continuous function such that $\lim_{t\to 0^+}\phi(t)=0$ and $\lim_{t\to \infty}\phi(t)=\infty$.
The {\it Lorentz} space $\Lambda_{\phi}$ is defined by setting
$$\Lambda_{\phi}(0,\infty):=\left\{x\in L_0(0,\infty):\int_0^\infty\mu(s,x)d\phi(s)<\infty\right\}$$
equipped with the norm
$$\|x\|_{\Lambda_{\phi}(0,\infty)}:=\int_0^\infty\mu(s,x)d\phi(s).$$
The {\it Lorentz sequence} space $\Lambda_{\phi}(\mathbb Z_+)$ is defined by setting
$$\Lambda_{\phi}(\mathbb Z_+):=\left\{a\in c_0(\mathbb Z_+):\|a\|_{\Lambda_{\phi}(\mathbb Z_+)} = \sum_{n=0}^\infty \mu(n,a)(\phi(n+1)-\phi(n))<\infty\right\},$$
where $c_0$ is the space of sequences converging to zero. The spaces $\Lambda_\phi(\mathbb R)$, $\Lambda_\phi(\mathbb Z)$ can be defined in a similar way. In the case $\phi(t)=\log(1+t)$, we use $\Lambda_{\log}(0,\infty)$, $\Lambda_{\log}(\mathbb Z_+)$, $\Lambda_{\log}(\mathbb R)$, $\Lambda_{\log}(\mathbb Z)$ to denote $\Lambda_{\phi}(0,\infty)$, $\Lambda_{\phi}(\mathbb Z_+)$, $\Lambda_\phi(\mathbb R)$, $\Lambda_{\phi}(\mathbb Z)$, respectively.
Note that Lorentz spaces have order-continuous norms.

The {\it Marcinkiewicz} space $M_{\phi}$ is defined by setting
$$M_{\phi}(0,\infty):=\left\{x\in L_0(0,\infty):\sup_{t>0}\frac1{\phi(t)}\int_0^t\mu(s,x)ds<\infty\right\}$$
equipped with the norm
$$\|x\|_{M_{\phi}(0,\infty)}:=\sup_{t>0}\frac1{\phi(t)}\int_0^t\mu(s,x)ds.$$
The {\it Marcinkiewicz sequence} space $m_{\phi}$  is defined as
$$m_{\phi}(\mathbb{Z}_+):=\Big\{z\in \ell_\infty(\mathbb{Z}_+):\|z\|_{m_\phi}=\sup_{k\geq 0}\frac1{\phi(k+1)}\sum_{j=0}^k\mu(j,z)<\infty\Big\}$$
where $\ell_\infty$ is the space of all bounded sequences.
In contrast to Lorentz spaces, Marcinkiewicz spaces need not have order-continuous norm.

We  collect some properties of Lorentz and Marcinkiewicz spaces below. For more detailed proof information, we refer the reader to \cite[Chapter II]{KPS}.
\begin{prop}\label{M-L-Properties}
 Let $\phi:[0,\infty)\to [0,\infty)$ be an increasing concave continuous function such that $\lim_{t\to 0^+}\phi(t)=0$ and $\lim_{t\to \infty}\phi(t)=\infty$.
\begin{enumerate}[\rm (i)]
\item The Lorentz space $\Lambda_{\phi}$ and the Marcinkiewicz space $M_{\phi}$ are symmetric.
\item The K\"othe dual of the Lorentz space $\Lambda_{\phi}$ is the Marcinkiewicz space $M_{\phi}.$
\item The K\"othe dual of the Marcinkiewicz space $M_{\phi}$ is the Lorentz space $\Lambda_{\phi}.$
\item Lorentz spaces and Marcinkiewicz spaces are closed with respect to the Hardy-Littlewood-P\'{o}lya submajorization.
\end{enumerate}
\end{prop}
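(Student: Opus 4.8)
The plan is to derive all four items from standard properties of decreasing rearrangements, in the spirit of \cite[Chapter II]{KPS}; I indicate the main computations for the function-space versions on $(0,\infty)$, the sequence-space versions being identical with Abel summation replacing integration by parts.

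For \emph{(i)}, symmetry is immediate from the definitions: if $\mu(f)\le\mu(g)$ pointwise then $\int_0^\infty\mu(s,f)\,d\phi(s)\le\int_0^\infty\mu(s,g)\,d\phi(s)$ since $d\phi$ is a nonnegative measure, so $\|f\|_{\Lambda_\phi}\le\|g\|_{\Lambda_\phi}$, while $t\mapsto\frac1{\phi(t)}\int_0^t\mu(s,f)\,ds$ is pointwise dominated by the same expression with $g$, so $\|f\|_{M_\phi}\le\|g\|_{M_\phi}$. Completeness of the resulting spaces is classical.

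For \emph{(ii)} and \emph{(iii)}, recall the Hardy--Littlewood inequality $\int_0^\infty|x(t)y(t)|\,dt\le\int_0^\infty\mu(s,x)\mu(s,y)\,ds$, with equality for decreasing $x,y\ge0$; hence for any symmetric space $X$ on $(0,\infty)$ one has $\|y\|_{X^\times}=\sup\{\int_0^\infty\mu(s,x)\mu(s,y)\,ds:\|x\|_X\le1\}$. The heart of the matter is the identity, valid whenever $x\in\Lambda_\phi$ (so that $\mu(\infty,x)=0$),
\[
\int_0^\infty\mu(s,x)\mu(s,y)\,ds=\int_{(0,\infty)}\Big(\int_0^u\mu(s,y)\,ds\Big)\,(-d\mu(u,x)),
\]
obtained by writing $\mu(s,x)=\int_{(s,\infty)}(-d\mu(u,x))$ and using Fubini, together with
\[
\int_{(0,\infty)}\phi(u)\,(-d\mu(u,x))=\int_0^\infty\mu(u,x)\,d\phi(u)=\|x\|_{\Lambda_\phi},
\]
which follows from $\phi(u)=\int_0^u d\phi(v)$ and a second application of Fubini. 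Inserting $\int_0^u\mu(s,y)\,ds\le\phi(u)\|y\|_{M_\phi}$ gives $\|y\|_{\Lambda_\phi^\times}\le\|y\|_{M_\phi}$; testing against $x=\phi(t)^{-1}\chi_{(0,t)}$ (which has $\|x\|_{\Lambda_\phi}=1$) produces $\phi(t)^{-1}\int_0^t\mu(s,y)\,ds$, hence the reverse bound on taking the supremum over $t$. This proves (ii). For (iii) one may either repeat the argument with the roles of $\Lambda_\phi$ and $M_\phi$ exchanged --- now testing against the decreasing function $\phi'$, which satisfies $\|x\|_{M_\phi}=1$ and $\int_0^\infty\mu(s,x)\mu(s,y)\,ds=\|y\|_{\Lambda_\phi}$ --- or simply invoke the Fatou property of $\Lambda_\phi$: since $\Lambda_\phi$ is then Köthe-perfect, $M_\phi^\times=(\Lambda_\phi^\times)^\times=\Lambda_\phi^{\times\times}=\Lambda_\phi$.

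For \emph{(iv)}, suppose $y\prec\prec x$, i.e.\ $\int_0^t\mu(s,y)\,ds\le\int_0^t\mu(s,x)\,ds$ for all $t>0$. For $M_\phi$ this at once gives $\frac1{\phi(t)}\int_0^t\mu(s,y)\,ds\le\frac1{\phi(t)}\int_0^t\mu(s,x)\,ds$, hence $\|y\|_{M_\phi}\le\|x\|_{M_\phi}$. For $\Lambda_\phi$, write $d\phi(s)=w(s)\,ds$ with $w$ positive and decreasing (concavity of $\phi$), so $w(s)=w(\infty)+\int_{(s,\infty)}(-dw(u))$; with $F(t)=\int_0^t\mu(s,y)\,ds$, $G(t)=\int_0^t\mu(s,x)\,ds$ and Fubini,
\[
\int_0^\infty\mu(s,y)\,d\phi(s)=w(\infty)F(\infty)+\int_{(0,\infty)}F(u)\,(-dw(u))
\le w(\infty)G(\infty)+\int_{(0,\infty)}G(u)\,(-dw(u))=\int_0^\infty\mu(s,x)\,d\phi(s),
\]
using $F\le G$ and $-dw\ge0$; thus $\|y\|_{\Lambda_\phi}\le\|x\|_{\Lambda_\phi}$. (When $w(\infty)>0$ and $x\in\Lambda_\phi$ one automatically has $F(\infty)\le G(\infty)<\infty$, so no term is ambiguous.) Everything here is rearrangement bookkeeping; the one point needing genuine care is the reverse inequality in (ii)--(iii), i.e.\ that the Marcinkiewicz (resp.\ Lorentz) norm is really recovered as a supremum, which rests on the two Fubini identities above and on $\mu(\infty,x)=0$ for $x\in\Lambda_\phi$ making the boundary terms harmless.
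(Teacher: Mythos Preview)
The paper does not supply its own proof of this proposition; it simply refers the reader to \cite[Chapter II]{KPS}. Your argument is correct and is precisely the standard route taken there: the Hardy--Littlewood rearrangement inequality reduces the K\"othe dual computation to decreasing functions, the two Fubini/integration-by-parts identities (passing between $\int\mu(x)\mu(y)$ and $\int\big(\int_0^u\mu(y)\big)(-d\mu(u,x))$, and between $\int\phi\,(-d\mu(x))$ and $\int\mu(x)\,d\phi$) deliver the upper bound in (ii)--(iii), and testing against $\phi(t)^{-1}\chi_{(0,t)}$ and $\phi'$ gives the lower bounds. The submajorization closure in (iv) is handled exactly as one expects---trivially for $M_\phi$, and for $\Lambda_\phi$ by decomposing the decreasing weight $\phi'$ as $\phi'(\infty)+\int_{(\cdot,\infty)}(-d\phi')$ and applying Fubini once more. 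In short, you have written out what the paper only cites.
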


\subsection{Orlicz and weak Orlicz spaces}\label{orlicz subsection}
By an {\it Orlicz function} $\Phi$ on $[0,\infty)$, we mean a continuous increasing convex function such that $\Phi(0)=0$ and $\lim_{t\to \infty} \Phi(t)=\infty.$ An Orlicz function $\Phi$ is said to be {\it $p$-convex} if the function $t \mapsto\Phi(t^{1/p})$ is convex, and to be {\it$q$-concave} if the function $t\mapsto \Phi(t^{1/q})$ is concave. For a given Orlicz function $\Phi,$ the associated {\it Orlicz space} $L_{\Phi}$ is defined by setting
$$L_{\Phi}(0,\infty):=\left\{f\in L_0(0,\infty):\ \int_0^\infty \Phi\Big(\frac{|f(s)|}{\lambda}\Big)ds<\infty\mbox{ for some }\lambda>0\right\}$$
equipped with the norm
$$\|f\|_\Phi:=\inf\left\{\lambda>0:\int_0^\infty \Phi\Big(\frac{|f(s)|}{\lambda}\Big)ds\leq 1\right\}.$$
The associated {\it weak Orlicz space} $L_{\Phi,\infty}$ is defined by
$$L_{\Phi,\infty}(0,\infty):=\left\{f\in L_0(0,\infty):\sup_\lambda \Phi(\lambda)n_{|f|}(c\lambda) <\infty\mbox{ for some }c>0\right\}$$
equipped the quasi-norm
$$\|x\|_{\Phi,\infty}:=\inf\left\{c>0:\sup_\lambda \Phi(\lambda)n_{|x|}(c\lambda)\leq 1\right\}.$$
Then $L_\Phi$ is a symmetric Banach function space and $L_{\Phi,\infty}$ is a symmetric quasi-Banach function spaces. We refer to \cite{Kras-Rutickii, Maligranda2} for more details on  Orlicz functions and  Orlicz spaces.

\subsection{Conditional expectations and martingales}
 Let us first recall a well known result on the existence of conditional expectation (see \cite{Umegaki} or \cite[Proposition 2.1]{DPPS2011}).

\begin{prop}\label{exp def prop} Let
\begin{enumerate}[{\rm (i)}]
\item $\mathcal{M}$ be a von Neumann algebra equipped with a normal, semifinite, faithful trace $\tau;$
\item $\mathcal{N}$ be a von Neumann subalgebra such that the restriction of $\tau$ to $\mathcal{N}$ is again semifinite;
\end{enumerate}
There exists a unique linear map $\mathcal{E}: (L_1+L_{\infty})(\mathcal{M})\to (L_1+L_{\infty})(\mathcal{N})$ such that
$$\tau(xy)=\tau(\mathcal{E}(x)y),\quad y\in (L_1\cap L_{\infty})(\mathcal{N}).$$
This map is called the conditional expectation. It satisfies the following properties:
\begin{enumerate}[{\rm (a)}]
\item $\mathcal{E}(x^{\ast})=(\mathcal{E}(x))^{\ast}$ for all $x\in (L_1+L_{\infty})(\mathcal{M});$
\item $\mathcal{E}(x)\geq0$ for all $0\leq x\in (L_1+L_{\infty})(\mathcal{M});$
\item if $0\leq x\in (L_1+L_{\infty})(\mathcal{M})$ is such that $\mathcal{E}(x)=0,$ then $x=0;$
\item $\mathcal{E}(x)=x$ for any $x\in (L_1+L_{\infty})(\mathcal{N});$
\item $\mathcal{E}(x^{\ast}x)\geq\mathcal{E}(x^{\ast})\cdot \mathcal{E}(x)$ for all $x\in (L_2+L_{\infty})(\mathcal{M});$
\item $\mathcal{E}(x)\in L_1(\mathcal{M})$ and $\tau(\mathcal{E}(x))=\tau(x)$ for all $x\in L_1(\mathcal{M});$
\item $\|\mathcal{E}(x)\|_1\leq \|x\|_1$ for all $x\in L_1(\mathcal{M})$ and $\|\mathcal{E}(x)\|_{\infty}\leq \|x\|_{\infty}$ for all $x\in L_{\infty}(\mathcal{M});$
\item $\mathcal{E}(xy)=x\mathcal{E}(y)$ $(\mathcal{E}(yx)=\mathcal{E}(y)x)$ for $x\in L_{\infty}(\mathcal{N}),$ $y\in L_1(\mathcal{M})$ (and also for $x\in L_1(\mathcal{N})$ and $y\in L_{\infty}(\mathcal{M})$).
\end{enumerate}
\end{prop}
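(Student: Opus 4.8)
The statement is the standard existence/properties package for the trace-preserving conditional expectation, and I would prove it by constructing $\mathcal{E}$ separately on the two endpoint spaces $L_1(\mathcal{M})$ and $L_\infty(\mathcal{M})=\mathcal{M}$, checking that the two partial definitions agree on the intersection $(L_1\cap L_\infty)(\mathcal{M})$, and then gluing them into a single linear contraction $(L_1+L_\infty)(\mathcal{M})\to(L_1+L_\infty)(\mathcal{N})$. For the $L_1$-endpoint: since $\tau|_{\mathcal{N}}$ is faithful, normal and semifinite it identifies $\mathcal{N}_*$ with $L_1(\mathcal{N})$, so for $x\in L_1(\mathcal{M})$ the functional $y\mapsto\tau(xy)$ on $\mathcal{M}$ is normal, hence so is its restriction to $\mathcal{N}$, and there is a unique $\mathcal{E}_1(x)\in L_1(\mathcal{N})$ with $\tau(\mathcal{E}_1(x)y)=\tau(xy)$ for all $y\in\mathcal{N}$; restricting a normal functional does not increase its norm, so $\|\mathcal{E}_1(x)\|_1\le\|x\|_1$, and $\mathcal{E}_1$ is plainly linear. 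For the $L_\infty$-endpoint: because $\tau$ is a trace its modular automorphism group is trivial, so $\mathcal{N}$ is invariant and Takesaki's theorem yields a unique normal, $\tau$-preserving, positive, contractive $\mathcal{N}$-bimodule conditional expectation $\mathcal{E}_\infty\colon\mathcal{M}\to\mathcal{N}$ with $\tau(\mathcal{E}_\infty(x)y)=\tau(xy)$ for $x\in\mathcal{M}$, $y\in L_1(\mathcal{N})$; equivalently, $\mathcal{E}_\infty$ is the Banach-space adjoint of the isometric inclusion $L_1(\mathcal{N})\hookrightarrow L_1(\mathcal{M})$.

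\textbf{Compatibility, gluing and uniqueness.} The one technical point feeding both the gluing and the uniqueness is the non-degeneracy of the H\"older pairing (Lemma~\ref{holder}) between $(L_1+L_\infty)(\mathcal{N})$ and its K\"othe dual $(L_1\cap L_\infty)(\mathcal{N})$: if $z\in(L_1+L_\infty)(\mathcal{N})$ satisfies $\tau(zy)=0$ for every $y\in(L_1\cap L_\infty)(\mathcal{N})$ then $z=0$. Indeed, if $z\ne0$ choose $s_0>0$ with $e:=\chi_{(s_0,\infty)}(|z|)\ne0$, a nonzero $\tau$-finite subprojection $e'\le e$ (possible since $\tau|_{\mathcal{N}}$ is semifinite) and the partial isometry $u$ from $z=u|z|$; then $y:=e'u^*\in(L_1\cap L_\infty)(\mathcal{N})$ and $\tau(zy)=\tau(e'|z|e')\ge s_0\tau(e')>0$. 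Applying this with $z=\mathcal{E}_1(x)-\mathcal{E}_\infty(x)$ for $x\in(L_1\cap L_\infty)(\mathcal{M})$ shows the two constructions agree there; since $(L_1+L_\infty)(\mathcal{M})=L_1(\mathcal{M})+L_\infty(\mathcal{M})$ as a set, the formula $\mathcal{E}(x_1+x_\infty):=\mathcal{E}_1(x_1)+\mathcal{E}_\infty(x_\infty)$ is well defined, linear, lands in $(L_1+L_\infty)(\mathcal{N})$ and obeys $\tau(\mathcal{E}(x)y)=\tau(xy)$ for $y\in(L_1\cap L_\infty)(\mathcal{N})$; applying the non-degeneracy again shows it is the unique linear map with that property.

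\textbf{The listed properties.} Properties (a), (d) and (h) come from manipulating the defining relation and invoking uniqueness: e.g. for (h), if $a\in L_\infty(\mathcal{N})$, $y\in L_1(\mathcal{M})$ and $z\in(L_1\cap L_\infty)(\mathcal{N})$ then $\tau(\mathcal{E}(ay)z)=\tau(ayz)=\tau(y(za))=\tau(\mathcal{E}(y)za)=\tau(a\mathcal{E}(y)z)$, whence $\mathcal{E}(ay)=a\mathcal{E}(y)$; the other module case, and (a), (d), are analogous. Property (g) is just the two endpoint contraction estimates, and (f) follows by testing $\tau(\mathcal{E}(x)e_i)=\tau(xe_i)$ against a net of $\tau$-finite projections $e_i\uparrow1$ in $\mathcal{N}$ and letting $i\to\infty$, using $\mathcal{E}(x)\in L_1(\mathcal{N})$ for $x\in L_1(\mathcal{M})$. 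For (b), split a positive $x$ as $x\chi_{[0,s_0)}(x)+x\chi_{[s_0,\infty)}(x)$ with the first summand in $L_\infty^+$ and the second in $L_1^+$; positivity holds on each summand ($\mathcal{E}_\infty$ is positive, and $\mathcal{E}_1(x)$ is self-adjoint by (a) with $\tau(\mathcal{E}_1(x)y)=\tau(xy)\ge0$ for positive $y\in(L_1\cap L_\infty)(\mathcal{N})$). For (c), if $x\ge0$ and $\mathcal{E}(x)=0$ then for each $\tau$-finite projection $e\in\mathcal{N}$ one gets $\|x^{1/2}e\|_2^2=\tau(exe)=\tau(xe)=0$, so $xe=0$; letting $e\uparrow1$ and using that $x$ is closed gives $x=0$. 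Finally (e) is the operator Cauchy--Schwarz inequality: $\mathcal{E}$ is $2$-positive (apply the whole construction to $M_2(\mathcal{M})\supset M_2(\mathcal{N})$), so $(\mathcal{E}\otimes\mathrm{id}_{M_2})$ sends the positive matrix $\bigl(\begin{smallmatrix} x^*x & x^*\\ x & 1\end{smallmatrix}\bigr)$ to a positive matrix, and its Schur complement is $\mathcal{E}(x^*x)-\mathcal{E}(x^*)\mathcal{E}(x)\ge0$, first for $x\in\mathcal{M}$ and then, by splitting $x=x_2+x_\infty$ with $x_2\in L_2$, for $x\in(L_2+L_\infty)(\mathcal{M})$.

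\textbf{Main obstacle.} No single deep input is needed; the difficulty is the bookkeeping forced by the mismatch between the natural constructions (on $L_1$ and on $L_\infty$) and the target space $(L_1+L_\infty)$. One must repeatedly decompose an operator into its $L_1$- and $L_\infty$-parts, verify consistency, and upgrade ``pointwise in $\mathcal{M}$'' facts (positivity, faithfulness, Cauchy--Schwarz, trace preservation) to the genuinely unbounded setting by approximating $1\in\mathcal{M}$ by $\tau$-finite projections in $\mathcal{N}$ and by exploiting the non-degeneracy of the H\"older pairing. The most delicate point is making the $2$-positivity argument for property (e) rigorous for unbounded $x\in(L_2+L_\infty)(\mathcal{M})$.
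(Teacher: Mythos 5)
The paper does not prove this proposition at all: it is recalled as a known result, with the proof delegated to Umegaki \cite{Umegaki} and to \cite[Proposition 2.1]{DPPS2011}. So the only comparison available is with those sources, and your construction is essentially the standard one found there: define $\mathcal{E}$ on $L_1(\mathcal{M})$ by restricting the normal functional $y\mapsto\tau(xy)$ to $\mathcal{N}$ and using $\mathcal{N}_*\cong L_1(\mathcal{N})$, define it on $\mathcal{M}$ via Takesaki's theorem (equivalently, as the adjoint of the isometric inclusion $L_1(\mathcal{N})\subset L_1(\mathcal{M})$), and glue along $(L_1\cap L_\infty)(\mathcal{M})$ using non-degeneracy of the trace pairing against $(L_1\cap L_\infty)(\mathcal{N})$, which also yields uniqueness. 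Your finite-subprojection proof of the non-degeneracy lemma is correct (the cyclicity step $\tau(u|z|e'u^*)=\tau(|z|e')$ is legitimate because $|z|e'\in L_1$), and the verifications of (a), (c), (d), (f), (g), (h) are sound.

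Two points need tightening. First, in (b) you still must pass from ``$\mathcal{E}_1(x)$ is self-adjoint and $\tau(\mathcal{E}_1(x)y)\ge0$ for all $0\le y\in(L_1\cap L_\infty)(\mathcal{N})$'' to $\mathcal{E}_1(x)\ge0$; this is true but requires the same device as your non-degeneracy lemma (test against a non-zero $\tau$-finite subprojection of $\chi_{(-\infty,-s)}(\mathcal{E}_1(x))$), so it should be said. Second, and more substantively, in (e) the phrase ``first for $x\in\mathcal{M}$ and then, by splitting $x=x_2+x_\infty$'' does not work as stated: the Kadison--Schwarz inequality is quadratic in $x$, so the cross terms $\mathcal{E}(x_2^*x_\infty)+\mathcal{E}(x_\infty^*x_2)$ do not reduce the unbounded case to the bounded one. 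The repair is simply to run your $2\times2$ argument directly for $x\in(L_2+L_\infty)(\mathcal{M})$: the matrix $\bigl(\begin{smallmatrix}x^*x & x^*\\ x & 1\end{smallmatrix}\bigr)=R^*R$ with $R=\bigl(\begin{smallmatrix}x & 1\\ 0 & 0\end{smallmatrix}\bigr)$ has all entries in $(L_1+L_\infty)(\mathcal{M})$, hence lies in $(L_1+L_\infty)(M_2(\mathcal{M}))$ and is positive there; the conditional expectation of $(M_2(\mathcal{M}),\tau\otimes{\rm tr})$ onto $M_2(\mathcal{N})$ acts entrywise as $\mathcal{E}$ (check the defining relation on test elements $y\otimes e_{kl}$, $y\in(L_1\cap L_\infty)(\mathcal{N})$, and invoke uniqueness); property (b) applied in $M_2$ gives positivity of the image matrix, and conjugating it by $c=\bigl(\begin{smallmatrix}1 & 0\\ -\mathcal{E}(x) & 1\end{smallmatrix}\bigr)$, i.e. forming $c^*(\cdot)c$ inside the $*$-algebra $L_0(M_2(\mathcal{N}))$ of measurable operators, yields $\mathcal{E}(x^*x)-\mathcal{E}(x^*)\mathcal{E}(x)\ge0$ with no boundedness assumption. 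With these two repairs your proposal is a complete, self-contained proof along the same lines as the references the paper cites.
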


The following assertion is claimed on p.211 in \cite{Takesaki2} and is, in fact, established in the proof of Theorem IX.4.2 there.

\begin{prop}\label{exp cpp prop} In the conditions of Proposition \ref{exp def prop}, the mapping $\mathcal{E}:\mathcal{M}\to\mathcal{N}$ is completely positive.
\end{prop}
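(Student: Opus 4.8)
The plan is to deduce complete positivity from the already-established positivity (Proposition~\ref{exp def prop}(b)) by passing to matrix amplifications and invoking the uniqueness clause of Proposition~\ref{exp def prop}. Fix $n\geq 1$ and consider the semifinite von Neumann algebra $M_n(\mathcal M)=\mathcal M\,\overline{\otimes}\,M_n(\mathbb C)$ equipped with the faithful normal semifinite trace $\tau_n:=\tau\otimes\mathrm{Tr}_n$, where $\mathrm{Tr}_n$ is the standard (unnormalised) trace on $M_n(\mathbb C)$. Then $M_n(\mathcal N)$ is a von Neumann subalgebra of $M_n(\mathcal M)$, and the restriction of $\tau_n$ to it is $\tau|_{\mathcal N}\otimes\mathrm{Tr}_n$, which is semifinite since $\tau|_{\mathcal N}$ is. Hence Proposition~\ref{exp def prop} applies to this pair and yields a \emph{unique} conditional expectation $\mathcal E_n\colon (L_1+L_\infty)(M_n(\mathcal M))\to (L_1+L_\infty)(M_n(\mathcal N))$ characterised by $\tau_n(XY)=\tau_n(\mathcal E_n(X)Y)$ for all $Y\in (L_1\cap L_\infty)(M_n(\mathcal N))$.

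Next I would identify $\mathcal E_n$ with the entrywise amplification $\widetilde{\mathcal E}:=\mathcal E\otimes\mathrm{id}_{M_n}$, i.e. $\widetilde{\mathcal E}\big((x_{ij})_{i,j=1}^n\big)=(\mathcal E(x_{ij}))_{i,j=1}^n$. Since $\mathcal E$ maps $(L_1+L_\infty)(\mathcal M)$ into $(L_1+L_\infty)(\mathcal N)$, and $(L_1+L_\infty)(M_n(\mathcal M))$ is, up to equivalence of norms, the space of $n\times n$ matrices over $(L_1+L_\infty)(\mathcal M)$, the map $\widetilde{\mathcal E}$ is a well-defined linear map between the required spaces and is clearly the identity on $(L_1+L_\infty)(M_n(\mathcal N))$. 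For $X=(x_{ij})\in (L_1+L_\infty)(M_n(\mathcal M))$ and $Y=(y_{ij})\in (L_1\cap L_\infty)(M_n(\mathcal N))$, one computes, using $y_{ji}\in (L_1\cap L_\infty)(\mathcal N)$ and the defining relation for $\mathcal E$,
\[
\tau_n(XY)=\sum_{i,j=1}^n\tau(x_{ij}y_{ji})=\sum_{i,j=1}^n\tau(\mathcal E(x_{ij})y_{ji})=\tau_n\big(\widetilde{\mathcal E}(X)Y\big).
\]
Since $\widetilde{\mathcal E}(X)\in (L_1+L_\infty)(M_n(\mathcal N))$, the uniqueness part of Proposition~\ref{exp def prop} forces $\widetilde{\mathcal E}=\mathcal E_n$.

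Finally, applying Proposition~\ref{exp def prop}(b) to $\mathcal E_n$ shows that $\mathcal E_n(Z)\geq 0$ for every $0\leq Z\in (L_1+L_\infty)(M_n(\mathcal M))$; restricting to $0\leq Z\in M_n(\mathcal M)$ gives $(\mathcal E\otimes\mathrm{id}_{M_n})(Z)\geq 0$. As $n\geq 1$ was arbitrary, $\mathcal E\colon\mathcal M\to\mathcal N$ is completely positive. (Alternatively, one could appeal to Tomiyama's theorem, since $\mathcal E|_{\mathcal M}$ is a norm-one projection of $\mathcal M$ onto $\mathcal N$ by Proposition~\ref{exp def prop}(d),(g).) The only mildly technical point is the bookkeeping in the second step — the identification of $(L_1+L_\infty)(M_n(\mathcal M))$ with matrices over $(L_1+L_\infty)(\mathcal M)$ and the check that $\widetilde{\mathcal E}$ lands in the correct space — but this is routine; the substance of the argument is the combination of the uniqueness clause with the positivity statement already in hand.
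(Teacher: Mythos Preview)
Your argument is correct. The matrix-amplification strategy is the natural way to bootstrap the positivity clause (b) of Proposition~\ref{exp def prop} to complete positivity, and your verification that the entrywise map $\widetilde{\mathcal E}$ satisfies the defining trace identity (hence coincides with $\mathcal E_n$ by uniqueness) is clean. The parenthetical appeal to Tomiyama's theorem is equally valid and arguably shorter.

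The paper, however, does not give a proof at all: it simply records that the statement is asserted on p.~211 of Takesaki~\cite{Takesaki2} and established in the course of the proof of Theorem~IX.4.2 there. So your write-up is not a variant of the paper's argument but a genuine replacement for a bare citation. What your approach buys is self-containment and transparency --- the reader sees exactly why the uniqueness clause in Proposition~\ref{exp def prop} does the work --- at the cost of a paragraph of routine bookkeeping; the paper's citation is terser but requires the reader to chase the reference.
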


Let $(\mathcal{M}_n)_{n\geq0}$ be an increasing sequence of von Neumann subalgebras of $\mathcal{M}$ such that the union $\bigcup_{n\geq 0}\mathcal M_n$ is weak$^{\ast}$ dense in $\mathcal{M}.$ Assume that for every $n\geq 0,$ the restriction $\tau|_{\mathcal{M}_n}$ is semifinite, so that there exists a trace preserving conditional expectation $\mathcal{E}_{n}$ from  $\mathcal{M}$ onto  $\mathcal{M}_n.$ Since each $\mathcal{E}_{n}$ preserves the trace, it extends to a contractive projection from $L_p(\mathcal{M},\tau)$ onto $L_p(\mathcal{M}_n,\tau)$ for all $1\leq p\leq \infty.$ Throughout the paper,  for convenience, we assume that $\mathcal{E}_{-1}=0.$

The first two statements are established in \cite[Theorem 2]{Tsukada}. The third one is a combination of the first two.

\begin{fact}\label{exp convergence fact} Let $\mathcal{E}_{n}$ be the conditional expectation as above.
\begin{enumerate}[{\rm (1)}]
\item If $x\in L_1(\mathcal{M})$, then $\mathcal{E}_{n}(x)\to x$ in $L_1(\mathcal{M}).$
\item If $x\in L_{\infty}(\mathcal{M})$, then  $\mathcal{E}_{n}(x)\to x$ ultraweakly.
\item If $x\in (L_1+L_{\infty})(\mathcal{M}),$ then $\mathcal{E}_{n}(x)\to x$ in $\sigma(L_1+L_{\infty},L_1\cap L_{\infty}).$
\end{enumerate}
\end{fact}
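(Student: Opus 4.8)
I would reduce all three statements to one density fact: that $\bigcup_{m\geq 0}L_1(\mathcal{M}_m)$ is norm-dense in $L_1(\mathcal{M})$. Granting this, part~(1) follows by an $\varepsilon/3$ argument: $\mathcal{E}_n$ is a contraction on $L_1(\mathcal{M})$ by Proposition~\ref{exp def prop}(g), and by Proposition~\ref{exp def prop}(d) we have $\mathcal{E}_n(x')=x'$ whenever $x'\in L_1(\mathcal{M}_m)$ and $n\geq m$; so for $x\in L_1(\mathcal{M})$ and $x'$ in the dense subspace with $\|x-x'\|_1$ small, $\|\mathcal{E}_n(x)-x\|_1\leq 2\|x-x'\|_1$ for $n$ large. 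For part~(2), $\|\mathcal{E}_n(x)\|_\infty\leq\|x\|_\infty$, so it suffices to test ultraweak convergence against $\rho$ in a dense subset of $L_1(\mathcal{M})$; for $\rho\in L_1(\mathcal{M}_m)$ and $n\geq m$, the $\tau$-symmetry $\tau(\mathcal{E}_n(x)\rho)=\tau(x\mathcal{E}_n(\rho))$ (a consequence of Proposition~\ref{exp def prop}(f),(h)) gives $\tau(\mathcal{E}_n(x)\rho)=\tau(x\rho)$, so convergence on the dense set is trivial. Part~(3) is then the announced combination: writing $x=y+z$ with $y\in L_1(\mathcal{M})$, $z\in L_\infty(\mathcal{M})$, and taking $w\in(L_1\cap L_\infty)(\mathcal{M})$, one has $\tau(\mathcal{E}_n(x)w)=\tau(\mathcal{E}_n(y)w)+\tau(\mathcal{E}_n(z)w)\to\tau(yw)+\tau(zw)=\tau(xw)$, the first term by part~(1) and boundedness of $w$, the second by part~(2) and $w\in L_1(\mathcal{M})$.

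It remains to prove the density fact, which I would do in two stages. First, the $L_2$-analogue: the closed subspace $K:=\overline{\bigcup_n L_2(\mathcal{M}_n)}$ of $L_2(\mathcal{M})$ is invariant under left and under right multiplication by elements of $\bigcup_n\mathcal{M}_n$, hence — these actions being normal — under left and right multiplication by all of $\mathcal{M}$. Therefore the orthogonal projection $P$ onto $K$ commutes both with the left action of $\mathcal{M}$ and with its commutant (the right action of $\mathcal{M}$, via the standard form of the tracial GNS representation), so $P$ is left multiplication by a central projection $z\in\mathcal{M}$. Since every $b\in\mathcal{M}_n$ with $\tau(b^{\ast}b)<\infty$ lies, as an element of $L_2(\mathcal{M})$, in $K$, we get $zb=b$ for all such $b$, and these are $\sigma$-weakly dense in $\mathcal{M}$ (semifiniteness of each $\tau|_{\mathcal{M}_m}$ together with weak$^{\ast}$-density of $\bigcup_n\mathcal{M}_n$); hence $z=1$ and $K=L_2(\mathcal{M})$. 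Consequently $\mathcal{E}_n|_{L_2(\mathcal{M})}$, which is the orthogonal projection onto $L_2(\mathcal{M}_n)$, converges strongly to the identity.

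Second, I would bootstrap to $L_1$. By Hahn--Banach and $L_1(\mathcal{M})^{\ast}=\mathcal{M}$, density of $\bigcup_m L_1(\mathcal{M}_m)$ in $L_1(\mathcal{M})$ is equivalent to $\bigcap_m\ker(\mathcal{E}_m|_{\mathcal{M}})=\{0\}$. Suppose $a\in\mathcal{M}$ with $\mathcal{E}_m(a)=0$ for every $m$. For $b,c\in\mathcal{M}_m\cap(L_1\cap L_2)(\mathcal{M})$, the module identities of Proposition~\ref{exp def prop}(h) give $\mathcal{E}_m(c^{\ast}ab)=c^{\ast}\mathcal{E}_m(a)b=0$, so $\tau(c^{\ast}ab)=0$ by Proposition~\ref{exp def prop}(f); since such $c$ are $L_2$-dense in $L_2(\mathcal{M}_m)$ (a routine consequence of semifiniteness of $\tau|_{\mathcal{M}_m}$, sandwiching by $\tau$-finite projections of $\mathcal{M}_m$), this says $ab\perp L_2(\mathcal{M}_m)$ in $L_2(\mathcal{M})$. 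Letting $m\to\infty$ and invoking the strong convergence $\mathcal{E}_m\to\mathrm{id}$ on $L_2(\mathcal{M})$ forces $ab=0$ for every $b$ in the $L_2$-dense set $\bigcup_m\big(\mathcal{M}_m\cap(L_1\cap L_2)(\mathcal{M})\big)$, whence $a=0$ by faithfulness of $\tau$.

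I expect the main obstacle to be exactly this density step: the soft part is the reduction in the first paragraph, whereas the content lies in upgrading the weak$^{\ast}$-density of $\bigcup_n\mathcal{M}_n$ — first to $L_2$-density of $\bigcup_n L_2(\mathcal{M}_n)$, where one must correctly invoke the identification of the commutant in the GNS representation of $\tau$, and then to $L_1$-density, where one has to juggle the various module and trace identities of Proposition~\ref{exp def prop} in order to link the conditional expectation on $\mathcal{M}$ with the orthogonal projections on $L_2(\mathcal{M})$. The passage from $L_1$ to $(L_1+L_\infty)(\mathcal{M})$ and all the $\varepsilon/3$-type arguments are then routine.
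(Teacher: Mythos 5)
Your argument is correct, but note that the paper does not actually prove this Fact: it simply quotes \cite[Theorem 2]{Tsukada} for parts (1) and (2) and remarks that (3) is a combination of the first two. So what you have written is a self-contained substitute for that citation rather than a variant of an argument in the paper. Your reduction of everything to the norm-density of $\bigcup_m L_1(\mathcal{M}_m)$ in $L_1(\mathcal{M})$ is exactly the classical martingale-convergence scheme (and is in the spirit of Tsukada's own proof), and the two stages of your density argument check out: the $L_2$-stage is sound because the invariance of $K$ under left/right multiplication by the weak$^{\ast}$-dense union passes to all of $\mathcal{M}$ by normality of the standard representation, and the commutant theorem for the tracial standard form identifies the projection onto $K$ with multiplication by a central projection, which must be $1$ since $zb=b$ on a $\sigma$-weakly dense set; the $L_1$-stage via Hahn--Banach and $L_1(\mathcal{M})^{\ast}=\mathcal{M}$ is also fine, and the identity $\mathcal{E}_m(c^{\ast}ab)=c^{\ast}\mathcal{E}_m(a)b$ you need does follow from two applications of Proposition~\ref{exp def prop}(h) as stated, precisely because you restricted $b$ to $(L_1\cap L_2)(\mathcal{M})$ so that $ab\in L_1(\mathcal{M})$ (the same duality identity $\tau(a\rho)=\tau(\mathcal{E}_m(a)\rho)$, $\rho\in L_1(\mathcal{M}_m)$, silently underlies your annihilator description, and is obtained the same way from (f) and (h)). Your treatment of part (3) coincides with the paper's one-line remark that it is a combination of (1) and (2), tested against $w\in(L_1\cap L_{\infty})(\mathcal{M})$ after splitting $x=y+z$. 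In short: no gap; the difference from the paper is only that the paper outsources (1) and (2) to the literature, whereas you prove them from the density lemma, which is where, as you correctly identify, all the content lies.
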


A sequence $(x_n)_{n\geq0}$ is called a noncommutative {\it martingale} with respect to $(\mathcal M_n)_{\geq 0}$ if $x_n\in (L_1+L_{\infty})(\mathcal{M}_n)$ for all $n\geq 0$ and
$$\mathcal{E}_{n-1}(x_n)=x_{n-1},\quad n\geq 1.$$
A sequence  $(x_n)_{n\geq0}$ is called a noncommutative {\it supermartingale} with respect to $(\mathcal{M}_n)_{n\geq0}$ if $x_n\in (L_1+L_\infty)(\mathcal{M}_n)$ and $$\mathcal{E}_{n-1}(x_{n})\leq x_{n-1},\quad n\geq1.$$
Let $1\leq p\leq \infty$ and let $(x_n)_{n\geq0}\subset L_p(\mathcal M)$ be a noncommutative martingale (or supermartingale). We say that $(x_n)_{n\geq0}$ is {\it $L_p$-bounded} if
$$\sup_{n\geq 0}\|x_n\|_p<\infty.$$
A sequence $(d_n)_{n\geq0}$ is called a sequence of noncommutative {\it martingale differences} if $d_n\in (L_1+L_{\infty})(\mathcal{M}_n)$ for all $n\geq0$ and
$$\mathcal{E}_{n-1}(d_n)=0,\quad n\geq 1.$$
For an element $y\in (L_1+L_{\infty})(\mathcal{M}),$ it is easy to check that the sequence $(\mathcal{E}_{n}y)_{n\geq0}$ is a  martingale.

\subsection{Ces\`{a}ro and Calder\'{o}n operators}\label{cesaro-calderon-def}
In this subsection, we recall some background on Ces\`{a}ro and Calder\'{o}n operators. The {\it Ces\`{a}ro} operator $C:(L_1+L_\infty)(0,\infty)\to (L_{1,\infty}+L_{\infty})(0,\infty)$ is defined by the formula
$$(Cf)(t):=\frac 1t\int_0^tf(s)ds,\quad f\in (L_1+L_\infty)(0,\infty).$$
It is easy to check that the operator $C^{\ast}$ defined by setting
$$(C^{\ast}f)(t):=\int_t^\infty\frac{f(s)}{s}ds,\quad f\in\Lambda_{\log}(0,\infty),$$
acts boundedly from $\Lambda_{\log}(0,\infty)$ to $(L_1+L_{\infty})(0,\infty).$
We may see this as follows.
\begin{fact} We have $\|C^\ast \|_{\Lambda_{\log}\to L_1+L_{\infty}}\leq c_{abs}$.
\end{fact}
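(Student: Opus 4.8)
The plan is to estimate $\|C^\ast f\|_{(L_1+L_\infty)(0,\infty)}$ directly via the Holmstedt formula (Lemma~\ref{lem-L1-Linfty} with $p=1$, $q=\infty$), which tells us that
$$\|C^\ast f\|_{L_1+L_\infty}\approx\int_0^1\mu(s,C^\ast f)\,ds.$$
Since it suffices to work with $f=\mu(f)\ge 0$ (replacing $f$ by $\mu(f)$ only increases $C^\ast f$ pointwise for nonnegative rearrangements, and $C^\ast$ is positivity preserving), and since $t\mapsto (C^\ast f)(t)=\int_t^\infty f(s)/s\,ds$ is already nonnegative and decreasing when $f$ is decreasing, we get $\mu(s,C^\ast f)=(C^\ast f)(s)$ for such $f$. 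Hence the whole problem reduces to the scalar inequality
$$\int_0^1(C^\ast f)(t)\,dt=\int_0^1\!\!\int_t^\infty\frac{f(s)}{s}\,ds\,dt\le c_{\mathrm{abs}}\int_0^\infty f(s)\,d\log(1+s)=c_{\mathrm{abs}}\|f\|_{\Lambda_{\log}}$$
for all nonnegative decreasing $f$.

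The key step is then a Fubini/Tonelli computation: interchanging the order of integration in the iterated integral gives
$$\int_0^1\!\!\int_t^\infty\frac{f(s)}{s}\,ds\,dt=\int_0^\infty\frac{f(s)}{s}\,\Big(\int_0^{\min(s,1)}dt\Big)\,ds=\int_0^1 f(s)\,ds+\int_1^\infty\frac{f(s)}{s}\,ds.$$
Now I would compare this against $\|f\|_{\Lambda_{\log}}=\int_0^\infty f(s)\,\frac{ds}{1+s}$. For the first piece, on $(0,1)$ we have $\frac{1}{1+s}\ge\frac12$, so $\int_0^1 f(s)\,ds\le 2\int_0^1 f(s)\,\frac{ds}{1+s}\le 2\|f\|_{\Lambda_{\log}}$. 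For the second piece, on $(1,\infty)$ we have $\frac1s\le\frac{2}{1+s}$, so $\int_1^\infty \frac{f(s)}{s}\,ds\le 2\int_1^\infty f(s)\,\frac{ds}{1+s}\le 2\|f\|_{\Lambda_{\log}}$. Adding up and folding the Holmstedt constant into $c_{\mathrm{abs}}$ completes the argument.

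The only mild obstacle is the preliminary reduction: one must be a little careful that $C^\ast$ is well defined on $\Lambda_{\log}(0,\infty)$ (the above computation with $f=\mu(f)$ also shows the defining integral converges for a.e.\ $t>0$ whenever $f\in\Lambda_{\log}$) and that passing to the rearrangement $\mu(f)$ is legitimate — this uses the standard fact that $C^\ast$ has a positive decreasing kernel, so $\mu(s,C^\ast f)\le\mu(s,C^\ast\mu(f))=(C^\ast\mu(f))(s)$, together with the fact that $\Lambda_{\log}$ and $L_1+L_\infty$ are symmetric spaces. Once that reduction is in place, the estimate is an elementary two-line computation, which is exactly why the authors state it as a "Fact."
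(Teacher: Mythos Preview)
Your overall strategy is correct and essentially identical to the paper's: reduce via Holmstedt to bounding $\int_0^1 \mu(s,C^\ast f)\,ds$, interchange the order of integration, and compare the result with $\int_0^\infty \mu(s,f)\,\frac{ds}{1+s}$. The computation after the reduction is exactly the one the paper carries out.

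The gap is in your reduction to $f=\mu(f)$. You justify it by asserting that $C^\ast$ has a ``positive decreasing kernel,'' so that $\mu(s,C^\ast f)\le(C^\ast\mu(f))(s)$. This is false: the kernel of $C^\ast$ is $K(t,s)=\frac{1}{s}\chi_{(t,\infty)}(s)$, which for fixed $t$ is \emph{not} decreasing in $s$ (it jumps from $0$ up to $1/t$ at $s=t$). A concrete counterexample to the claimed inequality is $f=\chi_{(1,2)}$: here $(C^\ast f)(3/2)=\ln(4/3)>0$ while $(C^\ast\mu(f))(3/2)=0$, and since both $C^\ast f$ and $C^\ast\mu(f)$ are already nonnegative and decreasing this disproves the rearrangement inequality $\mu(s,C^\ast f)\le\mu(s,C^\ast\mu(f))$ as well.

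The fix is exactly what the paper does. One first passes to $|f|$ (using $|C^\ast f|\le C^\ast|f|$ and that $C^\ast|f|$ is automatically decreasing for \emph{any} nonnegative $|f|$, not just for decreasing ones), performs your Fubini computation to obtain $\int_0^1(C^\ast|f|)(s)\,ds=\int_0^\infty|f(u)|\,\frac{\min\{1,u\}}{u}\,du$, and \emph{then} invokes the Hardy--Littlewood rearrangement inequality (the weight $u\mapsto\min\{1,u\}/u$ is decreasing, so the integral is maximised when $|f|$ is replaced by $\mu(f)$; the paper cites \cite[Theorem II.2.2]{BS1988}). After that step, your two-line comparison with $\|f\|_{\Lambda_{\log}}$ goes through verbatim.
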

\begin{proof}
Let $f\in \Lambda_{\log}(0,\infty)$. Using Lemma \ref{lem-L1-Linfty}, and noting that $C^\ast|f|$ is a decreasing function, we get
\begin{align*}
\|C^\ast f\|_{L_1+L_\infty}&\approx \int_0^1\mu(s,C^\ast f)ds\leq \int_0^1\mu(s,C^\ast |f|)ds =\int_0^1 (C^\ast |f|)(s)ds\\
&= \int_0^\infty\frac{|f(u)|}{u}\int_0^{\min\{1,u\}}dsdu=\int_0^\infty |f(u)|\min\{1,u\}\frac{du}{u}.
\end{align*}
Since the function $u\mapsto u^{-1}\min\{1,u\}$ is decreasing, it follows from \cite[Theorem II2.2, p.44]{BS1988} that
$$\int_0^\infty |f(u)|\min\{1,u\}\frac{du}{u}\leq \int_0^\infty \mu(u,f)\min\{1,u\}\frac{du}{u}.$$
Therefore,
\begin{align*}
\|C^\ast f\|_{L_1+L_\infty}& \leq\int_0^\infty \mu(u,f)\min\{1,u\}\frac{du}{u}\\
&=\int_0^1 \mu(u,f)du+\int_1^\infty \mu(u,f)\frac{du}{u}\\
&\approx \int_0^\infty \mu(u,f)(1+u)^{-1}du=\|f\|_{\Lambda_{\log}}.
\end{align*}
The assertion is proved.
\end{proof}

We refer to $C^{\ast}$ as to (formal) {\it dual of Ces\`{a}ro operator} due to the fact that
$$\langle Cf,g\rangle=\langle f,C^{\ast}g\rangle,\quad f,g\in L_2(0,\infty).$$

The {\it Calder\'{o}n} operator $S:\Lambda_{\log}(0,\infty)\to (L_1+L_{\infty})(0,\infty)$ is defined as the sum of $C$ and $C^\ast$, which is given by the formula
$$(Sf)(t):=\frac1t\int_0^t f(s)ds+\int_t^\infty\frac{f(s)}{s}ds,\quad f\in \Lambda_{\log}(0,\infty).$$
Similarly, the {\it discrete version of Ces\`{a}ro} operator $C_d:l_{\infty}(\mathbb{Z}_+)\to l_{\infty}(\mathbb{Z}_+)$ is given by the formula
$$(C_dx)(n)=\frac1{n+1}\sum_{k=0}^nx(k),\quad n\geq0.$$
The {\it discrete version of Calder\'{o}n} operator $S_d:\Lambda_{{\rm log}}(\mathbb{Z}_+)\to l_{\infty}(\mathbb{Z}_+)$ is defined as the sum of $C_d$ and $C_d^{\ast},$ which is given by the formula
$$(S_dx)(n)=\frac1{n+1}\sum_{k=0}^nx(k)+\sum_{k=n}^{\infty}\frac{x(k)}{k+1}.$$
We refer to \cite[Chapter III]{BS1988} and \cite[Chapter II]{KPS} for more discussion  on the above operators.

We end this subsection with the following  well known results for Ces\`{a}ro and Calder\'{o}n operators; see \cite[Theorem 327]{HLP} for the detailed proof.
\begin{lem}\label{cesaro-norm} The following are true:
$$\|C\|_{L_p\to L_p}=p',\quad 1<p\leq\infty,$$
$$\|C^{\ast}\|_{L_p\to L_p}=p,\quad 1\leq p<\infty,$$
$$\|S\|_{L_p\to L_p}\approx \max\{p,p'\}, \quad 1<p<\infty.$$

\end{lem}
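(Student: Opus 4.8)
The plan is to establish the three assertions in sequence: the sharp norm of the Ces\`aro operator $C$ via Hardy's inequality, then the sharp norm of its dual $C^{\ast}$, and finally the two-sided bound for $S=C+C^{\ast}$ by combining the first two.

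First I would prove $\|C\|_{L_p\to L_p}=p'$ for $1<p\le\infty$. The endpoint $p=\infty$ (where $p'=1$) is immediate since $(Cf)(t)$ is an average of $f$ over $(0,t)$. For $1<p<\infty$, I would write $(Cf)(t)=\int_0^1 f(tu)\,du$ after the substitution $s=tu$, and apply Minkowski's integral inequality together with the dilation identity $\big(\int_0^\infty|f(tu)|^p\,dt\big)^{1/p}=u^{-1/p}\|f\|_p$:
$$\|Cf\|_p\le\int_0^1 u^{-1/p}\,du\;\|f\|_p=\frac{p}{p-1}\|f\|_p=p'\|f\|_p.$$
For sharpness I would test against the truncated powers $f_N=t^{-1/p}\chi_{(1,N)}$, where $\|f_N\|_p^p=\log N$ and a short computation gives $\|Cf_N\|_p^p=(p')^p\log N+O(1)$, so that $\|Cf_N\|_p/\|f_N\|_p\to p'$.

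Next I would prove $\|C^{\ast}\|_{L_p\to L_p}=p$ for $1\le p<\infty$. The same Minkowski argument applies after writing $(C^{\ast}f)(t)=\int_1^\infty u^{-1}f(tu)\,du$, yielding $\|C^{\ast}f\|_p\le\int_1^\infty u^{-1-1/p}\,du\,\|f\|_p=p\|f\|_p$, with sharpness witnessed by the same family $f_N$. (Alternatively, for $1<p<\infty$ one can deduce this from the duality $\langle Cf,g\rangle=\langle f,C^{\ast}g\rangle$ on $L_2(0,\infty)$ and the first part at the conjugate exponent, since $\|C^{\ast}\|_{L_p\to L_p}=\|C\|_{L_{p'}\to L_{p'}}=(p')'=p$; for $p=1$ one even has the exact identity $\|C^{\ast}f\|_1=\|f\|_1$ for $f\ge0$ by Fubini.) Finally, from $S=C+C^{\ast}$ the triangle inequality gives $\|S\|_{L_p\to L_p}\le p+p'\le 2\max\{p,p'\}$, while positivity-preservation of $C$ and $C^{\ast}$ --- so that $Sf\ge Cf\ge0$ and $Sf\ge C^{\ast}f\ge0$ for $f\ge0$, the $L_p$-operator norm of a positivity-preserving operator being attained on the positive cone --- gives $\|S\|_{L_p\to L_p}\ge\max\{\|C\|_{L_p\to L_p},\|C^{\ast}\|_{L_p\to L_p}\}=\max\{p,p'\}$; combining the two directions yields the claimed equivalence. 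I expect no serious obstacle here --- this is essentially \cite[Theorem 327]{HLP} --- the only delicate point being the sharpness half of Hardy's inequality and its dual, i.e.\ exhibiting the near-extremal family and computing the limiting ratio precisely; the boundedness halves are one-line consequences of Minkowski's inequality, and the estimate for $S$ is purely formal once the norms of $C$ and $C^{\ast}$ are in hand.
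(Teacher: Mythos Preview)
Your proposal is correct and follows the classical route (Minkowski's integral inequality for the upper bounds, near-extremal powers for sharpness, and the obvious combination for $S$); the paper itself gives no proof at all, merely citing \cite[Theorem~327]{HLP}, which is precisely the argument you have written out.
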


\subsection{Triangular truncation operator}\label{truncation subsection}

Define the continuous triangular truncation as usual: if the operator $A$ is an integral operator on the Hilbert space $L_2(a,b),$ $-\infty\leq a<b\leq\infty,$ with the integral kernel $K,$ i.e.
$$(Af)(t)=\int_a^bK(t,s)f(s)ds,\quad t>0,\quad f\in L_2(a,b),$$
then $P(A)$ is an integral operator with truncated integral kernel
$$((P(A))f)(t)=\int_t^b K(t,s)f(s)ds,\quad t\in(a,b),\quad f\in L_2(a,b).$$

The properties of the operator $P$ were investigated in great detail in \cite{STZ}. In particular, the operator $P$ maps the ideal $\Lambda_{\log}(B(L_2(a,b)))$ into $B(L_2(a,b)).$ It is also stated there that $\Lambda_{\log}(B(L_2(a,b)))$ is the maximal domain of $P$ (the proof in \cite{STZ} contains a mistake, fixed in \cite{STZ-corr}).

\section{Marcinkiewicz spaces and conditions on momenta}\label{mar-sec}

In this section, we demonstrate that a random variable belongs to a Marcinkiewicz space if and only if certain condition on its momenta holds. This assertion plays a key role in the proof of extrapolation theorems in the next section.

In preparation for the proof of Theorem  \ref{first extrapolation theorem} and Corollary \ref{first extrapolation corollary}, we introduce the following concave increasing function:
\begin{equation}\label{def-phi}\phi(t):=
\begin{cases}
t\log(\frac{e^2}{t}),& t\in[0,1],\\
\log(e^2t),& t\in(1,\infty).
\end{cases}
\end{equation}

\begin{prop}\label{momenta proposition} For every function $x\in L_0(0,\infty),$ we have
$$\sup_{1<p<\infty}\min\{p^{-1},p'^{-1}\}\|x\|_p\approx\|x\|_{M_{\phi}}.$$
\end{prop}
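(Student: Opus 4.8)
The plan is to prove the equivalence by establishing the two inequalities separately, interpreting the supremum over $p$ as a comparison with the norm of $x$ in the Marcinkiewicz space $M_\phi$ with $\phi$ given by \eqref{def-phi}. The key observation is that $\min\{p^{-1},p'^{-1}\}$ behaves like $p^{-1}$ for large $p$ and like $p'^{-1}=(p-1)/p \approx p-1$ for $p$ near $1$, and that these two regimes should match the two branches of $\phi$: the branch $\log(e^2t)$ on $(1,\infty)$ governs the large-$t$ behaviour of Cesàro averages of $\mu(x)$ (controlling the $L_p$ norm for $p$ close to $1$ via the tail), while the branch $t\log(e^2/t)$ on $[0,1]$ governs the small-$t$ behaviour (controlling $\|x\|_p$ for large $p$). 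I would first recall that, for a symmetric space, $\|x\|_{M_\phi} = \sup_{t>0}\frac{1}{\phi(t)}\int_0^t\mu(s,x)\,ds$, and that $\|x\|_p^p = \int_0^\infty \mu(s,x)^p\,ds = p\int_0^\infty s^{p-1}n_{|x|}(s)\,ds$; these two representations are what I would juggle throughout.

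For the direction $\|x\|_{M_\phi}\lesssim \sup_p \min\{p^{-1},p'^{-1}\}\|x\|_p$, I would argue that it suffices to bound $\frac{1}{\phi(t)}\int_0^t\mu(s,x)\,ds$ uniformly in $t$, splitting into $t\le 1$ and $t>1$. For $t\le 1$: by Hölder (or simply $\mu$ decreasing), $\int_0^t\mu(s,x)\,ds \le t^{1/p'}\|x\|_p$ for any $p$, and then one optimizes the choice of $p$ as a function of $t$ — taking $p$ roughly of order $\log(e^2/t)$ makes $t^{1/p'}$ comparable to a constant and $\min\{p^{-1},p'^{-1}\} = p^{-1}$, yielding $\int_0^t \mu(s,x)\,ds \lesssim p^{-1}\cdot(\text{const})\cdot\|x\|_p \cdot p \approx \phi(t)\sup_p\min\{\cdots\}\|x\|_p$; here one must track that $t\log(e^2/t)\approx t/p$ for this choice of $p$. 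For $t>1$: here $\phi(t)=\log(e^2t)$, and one wants $\int_0^t\mu(s,x)\,ds \lesssim \log(e^2t)\sup_p\min\{\cdots\}\|x\|_p$; writing $\int_0^t\mu(s,x)\,ds \le \int_0^1\mu + \int_1^t\mu(s,x)\,ds$, the first term is $\le\|x\|_p$ for any $p>1$ (bounded, absorbed into the $t\le 1$ case with $t=1$), and the second is handled by Hölder $\int_1^t\mu(s,x)\,ds \le \|x\|_p\,(t-1)^{1/p'}\le \|x\|_p\, t^{1/p'}$, now choosing $p' \approx \log(e^2 t)$ (so $p$ near $1$, $\min\{\cdots\}=p'^{-1}$) to make $t^{1/p'}$ a constant and $p'^{-1}\cdot p' \cdot (\text{const})\approx \log(e^2 t)$. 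I expect the bookkeeping of constants across the optimization in $p$, and ensuring the two branches glue correctly at $t=1$, to be the main technical nuisance here.

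For the reverse direction $\sup_p \min\{p^{-1},p'^{-1}\}\|x\|_p \lesssim \|x\|_{M_\phi}$, I would use that $M_\phi$ embeds into suitable $L_p$ spaces with controlled constants. Concretely, assuming $\|x\|_{M_\phi}\le 1$, so $\int_0^t\mu(s,x)\,ds \le \phi(t)$ for all $t$, I would estimate $\|x\|_p^p = \int_0^\infty \mu(s,x)^p\,ds$ by integration by parts / Hardy-type arguments: since $\mu(s,x)$ is decreasing, $\mu(s,x)\le \frac{1}{s}\int_0^s\mu(u,x)\,du \le \frac{\phi(s)}{s}$, and more refined, one splits $\int_0^1 \mu^p + \int_1^\infty\mu^p$. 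On $(0,1)$, using $\mu(s,x)\lesssim \frac{\phi(s)}{s}=\log(e^2/s)$ one gets $\int_0^1 \mu(s,x)^p\,ds \lesssim \int_0^1 (\log(e^2/s))^p\,ds \approx p!\,\lesssim (Cp)^p$, hence $\|x\|_{L_p(0,1)}\lesssim p$, i.e. $p^{-1}\|x\|_{L_p(0,1)}\lesssim 1$. On $(1,\infty)$, using $\mu(s,x)\lesssim \frac{\phi(s)}{s}=\frac{\log(e^2 s)}{s}$ one gets $\int_1^\infty \mu(s,x)^p\,ds \lesssim \int_1^\infty s^{-p}(\log(e^2 s))^p\,ds$, which for $p>1$ is of order $(C/(p-1))^p$ roughly, giving $\|x\|_{L_p(1,\infty)}\lesssim p'$, i.e. $p'^{-1}\|x\|_{L_p(1,\infty)}\lesssim 1$. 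Combining, $\min\{p^{-1},p'^{-1}\}\|x\|_p \lesssim \min\{p^{-1},p'^{-1}\}(\|x\|_{L_p(0,1)}+\|x\|_{L_p(1,\infty)})\lesssim p^{-1}\cdot p + p'^{-1}\cdot p' = $ bounded. I would need the pointwise bound $\mu(s,x)\le \phi(s)/s$ (immediate from $M_\phi$) but, to get the sharp $p^p$ versus $(Cp)^p$-type growth rather than something weaker, a slightly more careful Hardy-operator estimate applied to $\phi(s)/s$ may be needed; I would invoke boundedness of the Cesàro-type operator on the relevant $L_p$ scale, or just compute the elementary integrals $\int_0^1(\log(e^2/s))^p ds$ and $\int_1^\infty s^{-p}(\log(e^2 s))^p ds$ directly via the substitution $s=e^{2-u}$ and $s=e^{u-2}$ respectively.

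The main obstacle I anticipate is not any single deep step but rather producing the sharp constants in the $p$-optimizations: the function $\phi$ has been hand-tuned (note the $e^2$ inside the logarithm) precisely so that both the forward estimate — choosing $p\approx\log(e^2/t)$ or $p'\approx\log(e^2t)$ — and the reverse estimate — bounding $\int(\log)^p$-integrals — land on the same weight, and verifying this match requires carrying Stirling-type asymptotics $\int_0^1(\log(1/s))^p\,ds=\Gamma(p+1)$ and the growth rate $(p/e)^p$ through the argument. I would organize the whole proof around the single pointwise inequality chain $\mu(s,x)\le \frac1s\int_0^s\mu \le \frac{\phi(s)}{s}$ for the easy direction, and around Hölder with optimized exponent for the hard direction, flagging that the role of Proposition~\ref{momenta proposition} downstream is exactly to convert the momentum condition $\sup_p\min\{p^{-1},p'^{-1}\}\|x\|_p<\infty$ appearing in Lykov-type extrapolation into membership in $M_\phi$, which is the hypothesis space for the extrapolation theorems in Section~\ref{extrapolation section}.
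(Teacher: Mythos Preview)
Your forward direction (bounding $\|x\|_{M_\phi}$ by the supremum via H\"older with exponent $p\approx\log(e^2/t)$ for $t\le 1$ and $p'\approx\log(e^2t)$ for $t>1$) is correct and is more direct than the paper's route. The paper instead splits $\mu(x)$ into its head $\mu(x)\chi_{(0,1)}$ and tail sequence $(\mu(k,x))_{k\ge 1}$, treats the ranges $1<p\le 2$ and $2\le p<\infty$ separately via a sequence-space result of Lykov (Lemma~\ref{momenta seq}) and an Orlicz identification $L_\Phi(0,1)=M_\phi(0,1)$ (Lemmas~\ref{momenta 01} and~\ref{expl1 lemma}), and then reassembles.

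There is, however, a real gap in your reverse direction. On $(1,\infty)$ the pointwise bound $\mu(s,x)\le\phi(s)/s=\log(e^2 s)/s$ is too crude: for $p=1+\varepsilon$ one has
\[
\int_1^\infty s^{-p}\big(\log(e^2 s)\big)^p\,ds \;=\; \int_0^\infty e^{-\varepsilon u}(2+u)^{1+\varepsilon}\,du \;\approx\; \varepsilon^{-2},
\]
not $\varepsilon^{-1}$ as you assert, so this route only yields $p'^{-1}\|x\|_{L_p(1,\infty)}\lesssim (p-1)^{-1}\to\infty$ as $p\downarrow 1$. The loss happens precisely at the step $\mu(s,x)\le (C\mu(x))(s)\le \phi(s)/s$: replacing the submajorization bound by a pointwise one costs an extra factor of $p'$ on the tail, and no ``Hardy-operator estimate applied to $\phi(s)/s$'' recovers it, since $\phi(\cdot)/(\cdot)$ genuinely has the wrong $L_p$ norm.

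The repair is to keep the submajorization: $\|x\|_{M_\phi}\le 1$ says exactly $\mu(x)\prec\prec\phi'$, and since $\|\cdot\|_p$ is monotone for $\prec\prec$ one gets $\|x\|_p\le\|\phi'\|_p$. Now $\phi'(s)=1/s$ on $(1,\infty)$ gives $\|\phi'\|_{L_p(1,\infty)}=(p-1)^{-1/p}\approx p'$ near $p=1$, while $\phi'(s)=\log(e/s)$ on $(0,1)$ gives $\|\phi'\|_{L_p(0,1)}^p\le e\,\Gamma(p+1)$, hence $\|\phi'\|_{L_p(0,1)}\lesssim p$ for large $p$. With this one change your direct argument goes through and is arguably cleaner than the paper's decomposition.
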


The proof of Proposition~\ref{momenta proposition} is delayed until we first show some necessary function space isomorphism lemmas.

In the following lemma, we describe elements of the Orlicz space $L_{\Phi}(0,1)$ in terms of their moments.

\begin{lem}\label{momenta 01} For every function $y\in L_0(0,1),$ we have
$$\sup_{2\leq p<\infty}p^{-1}\|y\|_p\approx\|y\|_{L_{\Phi}(0,1)}.$$
Here, $\Phi(t)=e^t-1,$ $t>0.$
\end{lem}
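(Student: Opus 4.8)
The plan is to establish the two-sided equivalence $\sup_{2\leq p<\infty}p^{-1}\|y\|_p\approx\|y\|_{L_{\Phi}(0,1)}$ with $\Phi(t)=e^t-1$ by unwinding the definition of the Orlicz norm and comparing the $L_p$-norms of $y$ with the $L_p$-norms of an exponential function. Since both quantities depend only on $\mu(y)$ (the decreasing rearrangement on $(0,1)$), I would assume without loss of generality that $y=\mu(y)$ is a decreasing nonnegative function on $(0,1)$; then $\|y\|_p^p=\int_0^1 y(s)^p\,ds$. I expect to compute everything against the model function: if $\|y\|_{L_\Phi(0,1)}=\lambda$, then $\int_0^1(e^{y(s)/\lambda}-1)\,ds\le 1$, i.e. $\int_0^1 e^{y(s)/\lambda}\,ds\le 2$, and by expanding the exponential in its power series $\sum_{p\ge 0}\frac{1}{p!}\int_0^1 (y(s)/\lambda)^p\,ds\le 2$, so $\frac{1}{p!}\lambda^{-p}\|y\|_p^p\le 2$ for every integer $p\ge 1$. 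Stirling's formula $p!\approx (p/e)^p\sqrt{p}$ then gives $\|y\|_p\le \lambda (2\,p!)^{1/p}\lesssim \lambda p$ for integer $p$, and the case of general real $p\in[2,\infty)$ follows by monotonicity of $p\mapsto\|y\|_p$ (or by log-convexity / interpolation between consecutive integers). This yields $\sup_{2\le p<\infty}p^{-1}\|y\|_p\lesssim\|y\|_{L_\Phi(0,1)}$.

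For the reverse inequality, suppose $M:=\sup_{2\le p<\infty}p^{-1}\|y\|_p<\infty$, so $\|y\|_p\le Mp$ for all $p\ge 2$ (and $\|y\|_p\le\|y\|_2\le 2M$ for $p\le 2$ since the measure space has mass $1$). I want to show $\|y\|_{L_\Phi(0,1)}\lesssim M$, i.e. that for a suitable absolute constant $c$ one has $\int_0^1 e^{y(s)/(cM)}\,ds\le 2$. Expanding the exponential again, $\int_0^1 e^{y(s)/(cM)}\,ds=\sum_{p\ge 0}\frac{1}{p!}(cM)^{-p}\|y\|_p^p\le 1+\sum_{p\ge 1}\frac{(Mp)^p}{p!(cM)^p}=1+\sum_{p\ge 1}\frac{p^p}{p!\,c^p}$; by Stirling $\frac{p^p}{p!}\le e^p$, so the tail is bounded by $\sum_{p\ge 1}(e/c)^p$, which is $\le 1$ once $c$ is a large enough absolute constant (e.g. $c=2e$ gives a geometric series summing to $1$). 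This gives $\|y\|_{L_\Phi(0,1)}\le cM=c\sup_{2\le p<\infty}p^{-1}\|y\|_p$, completing the equivalence.

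The only genuinely delicate point is matching the real-parameter supremum on the left with the integer-$p$ estimates that come naturally out of the Taylor expansion of $e^t$; I would handle this via the elementary fact that $p\mapsto\log\|y\|_{1/t}$ type quantities are well-behaved, or more simply by noting $\|y\|_p\le\|y\|_{\lceil p\rceil}$ on a probability space and $\lceil p\rceil\le 2p$ for $p\ge 1$, so nothing is lost up to absolute constants. Everything else is bookkeeping with Stirling's formula and the geometric series, and the restriction to the finite-measure interval $(0,1)$ is what makes the ``$-1$'' term in $\Phi(t)=e^t-1$ harmless (it only shifts the integral by the constant $1$). I do not anticipate any obstruction beyond choosing the absolute constants consistently.
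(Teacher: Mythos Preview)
Your proof is correct and follows essentially the same approach as the paper's: both directions proceed by Taylor-expanding $e^{|y|/\lambda}$ term by term and invoking Stirling's bound $k!\ge (k/e)^k$, with the passage from integer to real $p$ handled via $\|y\|_p\le\|y\|_{\lceil p\rceil}$ on a probability space. The only slip is that your displayed bound $\sum_{p\ge 1}\frac{(Mp)^p}{p!(cM)^p}$ tacitly uses $\|y\|_1\le M$ rather than the $\|y\|_1\le 2M$ you actually established; as the paper also does, treating the $p=1$ term separately and absorbing the extra constant into $c$ fixes this immediately.
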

\begin{proof} Fix a measurable function $y$ on the interval $(0,1).$
	
Assume first that $\|y\|_{L_{\Phi}}\leq 1$ or, equivalently, $\|\Phi(|y|)\|_1\leq 1.$ Observe that
$$\frac{|y|^k}{k^k}\leq\frac{|y|^k}{k!}\leq \Phi(|y|),\quad k\in\mathbb{N}.$$
Therefore, $$\frac{\|y\|_k^k}{k^k}=\Big\|\frac{|y|^k}{k^k}\Big\|_1\leq\|\Phi(|y|)\|_1\leq 1, \quad k\in\mathbb{N}.$$
Taking the $k$-th root, we obtain
$$\frac{\|y\|_k}{k}\leq 1,\quad k\in\mathbb{N}.$$
For every $2\leq p<\infty,$ there exists $k\in\mathbb{N}$ such that $k\leq p<2k.$ It follows that
$$\frac{\|y\|_p}{p}\leq\frac{\|y\|_{2k}}{p}\leq\frac{\|y\|_{2k}}{k}\leq 2.$$
This proves that
\begin{equation}\label{expl1 eq1}
\sup_{2\leq p<\infty}\frac1p\|y\|_p\leq 2\|y\|_{L_{\Phi}}.
\end{equation}

Conversely, suppose that
$$\sup_{2\leq p<\infty}\frac1p\|y\|_p\leq\frac1{2e}.$$
It is immediate that
$$\||y|^k\|_1=\|y\|_k^k\leq \frac{k^k}{(2e)^k},\quad 1\neq k\in\mathbb{N}.$$
Also,
$$\||y|\|_1\leq\|y\|_2\leq\frac1e.$$
Therefore,
$$\|\Phi(|y|)\|_1=\sum_{k=1}^{\infty}\frac{\||y|\|_k^k}{k!}\leq\frac1e+\sum_{k=2}^{\infty}\frac{k^k}{(2e)^k\cdot k!}.$$
By Stirling inequality, we have
$$k!\geq \sqrt{2\pi k}(\frac{k}{e})^k\geq(\frac{k}{e})^k,\quad k\geq1.$$
we obtain
$$\|\Phi(|y|)\|_1\leq\frac1e+\sum_{k=2}^{\infty}2^{-k}=\frac1e+\frac12<1.$$
This means $\|y\|_{L_{\Phi}}\leq 1.$ Hence,
\begin{equation}\label{expl1 eq2}
\|y\|_{L_{\Phi}}\leq 2e\sup_{2\leq p<\infty}p^{-1}\|y\|_p.
\end{equation}

A combination of the estimates \eqref{expl1 eq1} and \eqref{expl1 eq2} yields the claim.
\end{proof}

\begin{lem}\label{momenta seq} For every $a\in \ell_{\infty}$, we have
$$\sup_{1<p\leq 2}(p-1)\|a\|_p\approx\|a\|_{m_{{\rm log}}}.$$
\end{lem}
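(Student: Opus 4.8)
The plan is to prove the two halves of the equivalence separately, in each case first replacing $a$ by its decreasing rearrangement $b=(b_j)_{j\ge 0}$, $b_j:=\mu(j,a)$, since both sides depend on $a$ only through $b$. The organising observation is that, writing $d=(d_j)_{j\ge 0}$ with $d_j:=\log\frac{j+2}{j+1}$ — a positive decreasing sequence with $\sum_{j=0}^k d_j=\log(k+2)=\phi(k+1)$ for $\phi(t)=\log(1+t)$ and with $d_j\le\frac1{j+1}$ — the bound $\|a\|_{m_{\log}}\le 1$ is \emph{exactly} the submajorization $\mu(a)\prec\prec d$. Moreover $\|d\|_p^p\le\sum_{j\ge 0}(j+1)^{-p}=\zeta(p)\le 1+(p-1)^{-1}\le 2(p-1)^{-1}$ for $1<p\le 2$.

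For $\sup_{1<p\le 2}(p-1)\|a\|_p\lesssim\|a\|_{m_{\log}}$ I would normalise $\|a\|_{m_{\log}}\le 1$, so $\mu(a)\prec\prec d$. Since $t\mapsto t^p$ is convex, increasing and vanishes at $0$ (for $p\ge 1$), submajorization is preserved, so $(\mu(j,a)^p)_j\prec\prec(d_j^p)_j$ and hence $\|a\|_p^p=\sum_j\mu(j,a)^p\le\sum_j d_j^p=\|d\|_p^p\le 2(p-1)^{-1}$; thus $(p-1)\|a\|_p\le 2(p-1)^{1-1/p}\le 2$. (Equivalently, and self-contained: bound $\sum_{j\le N}b_j^p=\sum_{j\le N} b_j\,b_j^{p-1}$ by two Abel summations — the first moves one factor onto the partial sums $\sum_{j\le k}b_j\le\log(k+2)$, the second produces $1+\sum_{j\ge1}\frac{b_j^{p-1}}{j+1}$, which is then estimated by inserting the crude bound $b_j\le\frac{\log(j+2)}{j+1}$ and the substitution $u=\log(j+2)$ to get $\lesssim\Gamma(p)(p-1)^{-p}$.)

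For the reverse inequality $\|a\|_{m_{\log}}\lesssim\sup_{1<p\le 2}(p-1)\|a\|_p=:M$, normalise $M\le 1$, so $\|a\|_p\le(p-1)^{-1}$ for every $1<p\le 2$. For each $k$ with $\log(k+2)\ge 2$, apply Hölder on $\mathbb Z_+$ with the exponent $p_k$ chosen so that $1-\frac1{p_k}=\frac1{\log(k+2)}$ (which lies in $(1,2]$ precisely because $\log(k+2)\ge 2$): then $\sum_{j=0}^k b_j\le(k+1)^{1/p_k'}\|a\|_{p_k}\le\exp\!\big(\tfrac{\log(k+1)}{\log(k+2)}\big)\cdot(\log(k+2)-1)\le e\log(k+2)$, using $\frac1{p_k-1}=\log(k+2)-1$. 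The finitely many remaining indices ($\log(k+2)<2$, i.e.\ $k\le 5$) are handled by $\sum_{j=0}^k b_j\le(k+1)\,\|a\|_\infty\le(k+1)\,\|a\|_2\le k+1$. Combining, $\|a\|_{m_{\log}}=\sup_k\frac1{\log(k+2)}\sum_{j=0}^k b_j$ is bounded by an absolute constant.

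The delicate point is the first inequality as $p\to 1^+$: the pointwise bound $\mu(j,a)\le\frac{\log(j+2)}{j+1}$ — all that follows termwise from $\|a\|_{m_{\log}}\le 1$ — is too lossy, yielding only $\|a\|_p\lesssim(p-1)^{-2}$ and a divergent $(p-1)\|a\|_p$; one must instead use the full submajorization $\mu(a)\prec\prec d$ (equivalently, the Abel summation that shifts one factor $\mu(j,a)$ onto the partial sums \emph{before} any pointwise estimate). This lemma is the sequence-space counterpart of Lemma~\ref{momenta 01}, describing the near-$L_1$ Marcinkiewicz endpoint rather than the near-$L_\infty$ exponential-Orlicz one, and it is exactly this dictionary that feeds into the proof of Proposition~\ref{momenta proposition}.
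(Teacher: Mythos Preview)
Your proof is correct. The paper does not actually prove this lemma from scratch: it simply invokes \cite[Corollary 3]{Lykov} (with $\alpha=1$) and remarks that a closely related result appears in \cite[Theorem 4.5]{CRSS}. Your argument, by contrast, is self-contained and elementary.

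The key idea in your approach --- identifying $\|a\|_{m_{\log}}\le 1$ with the submajorization $\mu(a)\prec\prec d$, where $d_j=\log\frac{j+2}{j+1}$, and then using that $\ell_p$ is closed under submajorization together with the explicit bound $\|d\|_p^p\le\zeta(p)\le 2(p-1)^{-1}$ --- is clean and gives the upper bound in two lines. (Your parenthetical Abel-summation alternative also works and in fact yields the identity $\sum_{j\le N}b_j^p\le\sum_{j\le N}d_j\,b_j^{p-1}$, but the submajorization route is shorter.) For the converse, your choice $1-\tfrac{1}{p_k}=\tfrac{1}{\log(k+2)}$ together with H\"older is exactly the right optimisation, and the small-$k$ cases are handled correctly via $\|a\|_\infty\le\|a\|_2\le 1$.

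What your approach buys over the paper's is independence from the external extrapolation machinery of Lykov; what the citation buys is brevity and a pointer to the more general family of results (arbitrary power weights $\alpha$) of which this is the $\alpha=1$ instance. Your closing remark about why the naive pointwise bound $b_j\le\frac{\log(j+2)}{j+1}$ fails (giving only $\|a\|_p\lesssim(p-1)^{-1-1/p}$) is accurate and a useful diagnostic. One small stylistic point: the intermediate claim ``$(\mu(j,a)^p)_j\prec\prec(d_j^p)_j$'' is stronger than you need --- only the total sum $\|a\|_p^p\le\|d\|_p^p$ is used, and that follows immediately from the fact that $\ell_p$ is fully symmetric.
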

\begin{proof} This assertion is a special case of \cite[Corollary 3]{Lykov} (see also a very similar earlier result in \cite[Theorem 4.5]{CRSS}). Indeed, take $\alpha=1$ in \cite[Corollary 3]{Lykov}. We have
$$\sup_{1<p\leq 2}(p-1)\|a\|_p \approx \sup_{n\in\mathbb{Z}_+}\frac{1}{\log(e(n+1))} \sum_{k=0}^n\mu(k,a)\approx\|a\|_{m_{{\rm log}}}.$$
\end{proof}

\begin{cor}\label{momenta cor} For every function $x\in L_0(0,\infty),$ we have
$$\sup_{1<p\leq 2}(p-1)\|x\|_p\approx\|\mu(x)\chi_{(0,1)}\|_2+\|(\mu(k,x))_{k\geq1}\|_{m_{{\rm log}}},$$
$$\sup_{2\leq p<\infty}\frac1p\|x\|_p\approx\|\mu(x)\chi_{(0,1)}\|_{L_{\Phi}(0,1)}+\|(\mu(k,x))_{k\geq1}\|_{\ell_2}.$$
\end{cor}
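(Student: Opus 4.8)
The plan is to reduce both equivalences to the two preceding results, Lemma \ref{momenta 01} and Lemma \ref{momenta seq} (via Corollary... wait, Lemma \ref{momenta seq}), by splitting a function $x\in L_0(0,\infty)$ into its restriction to $(0,1)$ and the "tail" $\mu(x)\chi_{(1,\infty)}$, which is then compared with the sequence $(\mu(k,x))_{k\geq 1}$. Since all quantities appearing depend on $x$ only through $\mu(x)$, we may assume from the outset that $x=\mu(x)$ is a decreasing function on $(0,\infty)$. Write $y=\mu(x)\chi_{(0,1)}$ and $z=\mu(x)\chi_{(1,\infty)}$, so that $\mu(x)=y+z$ with disjoint supports; hence $\|x\|_p^p=\|y\|_p^p+\|z\|_p^p$ for every $p$, and in particular $\|x\|_p\approx\|y\|_p+\|z\|_p$ uniformly in $p$.

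First I would handle the range $2\le p<\infty$. For the head term, $\|y\|_p$ is monotone in $p$ on a probability space, so $\sup_{2\le p<\infty}p^{-1}\|y\|_p\approx\|y\|_{L_\Phi(0,1)}$ by Lemma \ref{momenta 01}. For the tail term, since $z$ is decreasing and supported on $(1,\infty)$, an elementary integral comparison gives $\|z\|_p\approx\|(\mu(k,x))_{k\ge1}\|_{\ell_p}$ with constants independent of $p\ge 2$ (compare $\int_k^{k+1}\mu(s,x)^p\,ds$ with $\mu(k,x)^p$ and $\mu(k+1,x)^p$); then $\sup_{2\le p<\infty}p^{-1}\|(\mu(k,x))_k\|_{\ell_p}\approx\|(\mu(k,x))_k\|_{\ell_2}$, the $\sup$ being attained (up to a constant) at $p=2$ because $\ell_p$-norms decrease in $p$ and $p^{-1}$ decreases. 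Combining, $\sup_{2\le p<\infty}p^{-1}\|x\|_p\approx\sup_p p^{-1}\|y\|_p+\sup_p p^{-1}\|z\|_p$, which is the second claimed equivalence. The passage between $\sup$ of a sum and sum of $\sup$'s is harmless here because each summand is, up to constants, a nondecreasing function of $p$ times $p^{-1}$ with a genuine maximum, but one should check that the two suprema are essentially attained in overlapping ranges of $p$ — this is the only mildly delicate point and is resolved by the monotonicity observations above.

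For $1<p\le 2$ the argument is symmetric in spirit. The tail term: $z$ is decreasing and supported away from $0$, so $\|z\|_p\approx\|(\mu(k,x))_{k\ge1}\|_{\ell_p}$ again, uniformly for $1<p\le2$, and Lemma \ref{momenta seq} gives $\sup_{1<p\le2}(p-1)\|(\mu(k,x))_{k\ge1}\|_{\ell_p}\approx\|(\mu(k,x))_{k\ge1}\|_{m_{\log}}$. The head term: on the probability space $(0,1)$ the norms $\|y\|_p$ are increasing in $p$, so $\sup_{1<p\le2}(p-1)\|y\|_p\approx\|y\|_2=\|\mu(x)\chi_{(0,1)}\|_2$, with the supremum attained (up to a constant) near $p=2$ since $(p-1)$ is bounded and $\|y\|_p\le\|y\|_2$. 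Adding the two contributions and using $\|x\|_p\approx\|y\|_p+\|z\|_p$ yields the first claimed equivalence.

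The main obstacle — really the only nontrivial bookkeeping — is making rigorous the two interchanges just described: (a) $\sup_p$ of $\|x\|_p$ versus $\sup_p$ of $\|y\|_p$ plus $\sup_p$ of $\|z\|_p$, and (b) identifying where each individual supremum is attained so that the head and tail pieces can be recombined into the stated sum rather than a max over a single common $p$. Both are settled purely by monotonicity: $p\mapsto\|y\|_p$ is nondecreasing, $p\mapsto\|z\|_p=\|(\mu(k,x))\|_{\ell_p}$ is nonincreasing, and the weights $p^{-1}$, $(p-1)$, $\min\{p^{-1},p'^{-1}\}$ are all comparable to $1$ on the bounded sub-ranges $[2,\infty)$ and $(1,2]$ after the split, so no weight can "hide" a large norm. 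Once this is in place, the two lemmas apply verbatim and the corollary follows. I would also remark that this corollary, combined with Proposition \ref{momenta proposition}'s reasoning, is exactly the decomposition of $M_\phi$ into a "$(0,1)$ part" and a "sequence part" that the extrapolation theorems in Section \ref{extrapolation section} will exploit.
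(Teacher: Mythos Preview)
Your proposal is correct and follows essentially the same route as the paper: split $\mu(x)$ into its head on $(0,1)$ and its tail (compared pointwise to the sequence $(\mu(k,x))_{k\ge1}$), invoke Lemma~\ref{momenta 01} and Lemma~\ref{momenta seq} for the nontrivial suprema, and observe that the remaining two suprema are attained at $p=2$. Your discussion of ``where the suprema are attained'' and of weights being ``comparable to $1$'' is unnecessary (and the latter remark is not literally true, since $p^{-1}\to0$ and $p-1\to0$ at the endpoints): the paper simply uses the elementary inequalities $\sup_p(f+g)\le\sup_p f+\sup_p g$ and $\sup_p(f+g)\ge\sup_p\max\{f,g\}\ge\tfrac12(\sup_p f+\sup_p g)$, which settle point~(a) of your ``bookkeeping'' without any monotonicity analysis.
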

\begin{proof} For any $1<p<\infty$, it is clear that
$$\|x\|_p\geq\|(\mu(k,x))_{k\geq1}\|_p,\quad \|x\|_p\geq \|\mu(x)\chi_{(0,1)}\|_p.$$
On the other hand, it follows from the triangle inequality that
$$\|x\|_p\leq\|\mu(x)\chi_{(0,1)}\|_p+\|(\mu(k,x))_{k\geq1}\|_p.$$
Using standard inequalities
$$\sup_{p>1}f(p)+g(p)\leq\sup_{p>1}f(p)+\sup_{p>1}g(p),$$
$$\sup_{p>1}\max\{f(p),g(p)\}\geq\max\{\sup_{p>1}f(p),\sup_{p>1}g(p)\}\geq\frac12\big(\sup_{p>1}f(p)+\sup_{p>1}g(p)\big),$$
we obtain
$$\sup_{1<p\leq 2}(p-1)\|x\|_p\approx\sup_{1<p\leq 2}(p-1)\|\mu(x)\chi_{(0,1)}\|_p+\sup_{1<p\leq 2}(p-1)\|(\mu(k,x))_{k\geq1}\|_p,$$
$$\sup_{2\leq p<\infty}\frac1p\|x\|_p\approx\sup_{2\leq p<\infty}\frac1p\|\mu(x)\chi_{(0,1)}\|_p+\sup_{2\leq p<\infty}\frac1p\|(\mu(k,x))_{k\geq1}\|_p.$$
	
It is immediate that
$$\sup_{1<p\leq 2}(p-1)\|\mu(x)\chi_{(0,1)}\|_p=\|\mu(x)\chi_{(0,1)}\|_2.$$
Applying Lemma \ref{momenta seq} to the sequence $(\mu(k,y))_{k\geq1},$ we obtain
$$\sup_{1<p\leq 2}(p-1)\|(\mu(k,x))_{k\geq1}\|_p\approx\|(\mu(k,x))_{k\geq1}\|_{m_{{\rm log}}}.$$
Substituting these expressions, we obtain the first assertion.

By Lemma \ref{momenta 01}, we have
$$\sup_{2\leq p<\infty}\frac1p\|\mu(x)\chi_{(0,1)}\|_p\approx\|\mu(x)\chi_{(0,1)}\|_{L_{\Phi}(0,1)}.$$
It is immediate that
$$\sup_{2\leq p<\infty}\frac1p\|(\mu(k,x))_{k\geq1}\|_p=\frac12\|(\mu(k,x))_{k\geq1}\|_2.$$
Substituting these expressions, we obtain the second assertion.
\end{proof}

\begin{lem}\label{expl1 lemma} The Marcinkiewicz space $M_{\phi}(0,1)$ coincides with the Orlicz space $L_{\Phi}(0,1),$ where $\Phi(t)=e^t-1,$ $t>0.$
\end{lem}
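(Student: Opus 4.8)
The plan is to prove the claimed coincidence $M_\phi(0,1) = L_\Phi(0,1)$ by verifying a two-sided norm equivalence, using the moment characterizations already at hand. Concretely, I would combine Lemma~\ref{momenta 01} (which identifies $\|y\|_{L_\Phi(0,1)}$ with $\sup_{2\le p<\infty} p^{-1}\|y\|_p$) with the second formula of Proposition~\ref{momenta proposition}, or rather with a restricted-to-$(0,1)$ version of the moment description of $\|\cdot\|_{M_\phi}$. The key observation is that for a function $x$ supported on $(0,1)$, the quantity $\sup_{1<p<\infty}\min\{p^{-1},p'^{-1}\}\|x\|_p$ is governed by the large-$p$ behaviour: since $\|x\|_p \le \|x\|_\infty^{?}$ is not available, but on the unit interval $p\mapsto\|x\|_p$ is increasing, the factor $p'^{-1} = (p-1)/p$ stays bounded below as $p\to1^+$ while $\|x\|_p$ decreases to $\|x\|_1$, so the supremum over $1<p\le 2$ contributes only a term comparable to $\|x\|_2$, which is in turn dominated by $\sup_{2\le p<\infty}p^{-1}\|x\|_p$. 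Hence for $x$ on $(0,1)$ one has $\sup_{1<p<\infty}\min\{p^{-1},p'^{-1}\}\|x\|_p \approx \sup_{2\le p<\infty}p^{-1}\|x\|_p$.

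The steps I would carry out, in order, are: (1) Record that $\phi$ restricted to $[0,1]$ equals $t\log(e^2/t)$, and recall from standard symmetric-space theory (e.g.\ \cite[Chapter II]{KPS}) that the Marcinkiewicz space $M_\phi(0,1)$ defined via $\sup_{0<t\le 1}\frac1{\phi(t)}\int_0^t\mu(s,x)\,ds$ depends only on the germ of $\phi$ at $0$ on the finite interval, so working with $t\log(e^2/t)$ is legitimate. (2) Establish the elementary claim that for $x\in L_0(0,1)$, $\|\mu(x)\chi_{(0,1)}\|_2 \lesssim \sup_{2\le p<\infty}p^{-1}\|x\|_p$ (take $p=2$) and conversely the full Proposition~\ref{momenta proposition}-type supremum over $(1,\infty)$ is equivalent to the supremum over $[2,\infty)$ plus a harmless $\|x\|_2$ term — this is exactly the pattern already used in Corollary~\ref{momenta cor}. (3) Invoke Lemma~\ref{momenta 01} to replace $\sup_{2\le p<\infty}p^{-1}\|x\|_p$ by $\|x\|_{L_\Phi(0,1)}$. (4) Invoke Proposition~\ref{momenta proposition} (or its proof, restricted to functions on $(0,1)$) to replace $\sup_{1<p<\infty}\min\{p^{-1},p'^{-1}\}\|x\|_p$ by $\|x\|_{M_\phi(0,1)}$, i.e.\ by $\|x\|_{M_\phi}$ since $x$ is supported on the unit interval. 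Chaining (2)--(4) gives $\|x\|_{M_\phi(0,1)} \approx \|x\|_{L_\Phi(0,1)}$, and since both are symmetric function spaces on $(0,1)$, equality of the spaces follows from equivalence of the norms.

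There is a mild circularity concern worth addressing head-on: Proposition~\ref{momenta proposition} is stated before this lemma and its proof is explicitly ``delayed until we first show some necessary function space isomorphism lemmas'', so I should be careful not to cite Proposition~\ref{momenta proposition} in a way that makes the logical order collapse. The clean fix is to not use Proposition~\ref{momenta proposition} at all here, and instead prove directly that $\|x\|_{M_\phi(0,1)} \approx \sup_{2\le p<\infty}p^{-1}\|x\|_p$ for $x\in L_0(0,1)$: the $\lesssim$ direction amounts to the well-known fact that $L_p(0,1)\hookrightarrow M_\phi(0,1)$ with $\|\cdot\|_{M_\phi}$-norm $\lesssim p^{-1}\|\cdot\|_p$ uniformly (because $\frac1{\phi(t)}\int_0^t \mu(s,x)\,ds \le \frac1{\phi(t)} t^{1/p'}\|x\|_p$ by H\"older, and $t^{1/p'}/\phi(t) = t^{1/p'}/(t\log(e^2/t))$ is maximized at a point giving the factor $\approx p^{-1}$), while the $\gtrsim$ direction uses that for the specific $\phi$ the fundamental function forces $\mu(t,x) \lesssim \|x\|_{M_\phi}\cdot t^{-1}(\log(e^2/t))^{-1}\cdot(\text{correction})$, from which $\|x\|_p$ can be estimated by a direct integration yielding $\|x\|_p \lesssim p\,\|x\|_{M_\phi}$. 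Then Lemma~\ref{momenta 01} closes the argument. The main obstacle I anticipate is getting the constants in step (2)/(the $\gtrsim$ direction) genuinely uniform in $p$: one must check that the $\sup$ over small $t$ in the Marcinkiewicz norm really does control $\mu(t,x)$ pointwise up to the logarithmic weight, which requires the standard (but slightly fiddly) maximal-type argument comparing $\frac1{\phi(t)}\int_0^t\mu(s,x)\,ds$ with $\frac{t\mu(t,x)}{\phi(t)}$ and exploiting monotonicity of $\mu(x)$; everything else is bookkeeping.
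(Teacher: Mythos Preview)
Your proposal is correct and you rightly flag (and fix) the circularity with Proposition~\ref{momenta proposition}. Your route is genuinely different from the paper's, however. You establish $\|x\|_{M_\phi(0,1)}\approx\sup_{p\ge 2}p^{-1}\|x\|_p$ by direct H\"older and integration estimates, then match this with Lemma~\ref{momenta 01}. The paper instead argues structurally: it computes the fundamental functions of $L_\Phi(0,1)$ and $M_\phi(0,1)$, observes they are equivalent, invokes \cite[Theorem II.5.7]{KPS} to get $L_\Phi\subset M_\phi$, and for the reverse inclusion checks that $\phi'(t)=\log(e/t)$ lies in $L_\Phi(0,1)$ and uses $y\prec\prec\|y\|_{M_\phi}\phi'$ together with closedness of Orlicz spaces under submajorization. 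The paper's argument is shorter and relies on off-the-shelf symmetric-space machinery; yours is computational and self-contained, and meshes naturally with the moment framework of the surrounding lemmas. One slip in your sketch: the pointwise bound from the Marcinkiewicz norm is $\mu(t,x)\le\frac{\phi(t)}{t}\|x\|_{M_\phi}=\log(e^2/t)\,\|x\|_{M_\phi}$, not $t^{-1}(\log(e^2/t))^{-1}$; with that correction your $\gtrsim$ direction goes through via $\int_0^1(\log(e^2/t))^p\,dt\lesssim\Gamma(p+1)$ and Stirling, giving $\|x\|_p\lesssim p\,\|x\|_{M_\phi}$ as required.
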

\begin{proof} Recall that the fundamental function of an Orlicz space $L_{\Phi}$ is given by the formula (see e.g. \cite[page 101]{KPS})
$$t\mapsto\frac1{\Phi^{-1}(t^{-1})},\quad t\in(0,1).$$
For $\Phi(t)=e^t-1,$ $t\in(0,1)$, the fundamental function of $L_{\Phi}(0,1)$ is	$$t\mapsto\frac1{\log(1+\frac1t)},\quad t\in(0,1).$$
On the other hand, the fundamental function of the Marcinkiewicz space $M_{\phi}$ is given by the formula (see e.g. \cite[page 114]{KPS})
$$t\mapsto\frac{t}{\phi(t)},\quad t\in(0,1).$$
For our choice of $\phi,$ the fundamental function of $M_{\phi}(0,1)$ is
$$t\mapsto\frac1{\log(\frac{e^2}{t})},\quad t\in(0,1).$$
Obviously, the fundamental functions of our spaces $L_{\Phi}(0,1)$ and $M_{\phi}(0,1)$ are equivalent. By \cite[Theorem II.5.7]{KPS}, we have $L_{\Phi}(0,1)\subset M_{\phi}(0,1).$
	
On the other hand, for the function
$$\phi':t\mapsto \log(\frac{e}{t}),\quad t\in(0,1),$$
we have that
$$\Phi(\frac12\phi'):t\to (\frac{e}{t})^{\frac12}-1,\quad t\in(0,1),$$
is integrable. In partiular, $\phi'\in L_{\Phi}.$ By the definition of Marcinkiewicz space, we have
$$y\prec\prec \|y\|_{M_{\phi}}\phi',\quad y\in M_{\phi}(0,1).$$
Since every Orlicz space is closed under the Hardy-Littlewood-Polya submajorization, it follows that $M_{\phi}(0,1)\subset L_{\Phi}(0,1).$
	
The result then follows by double inclusion.
\end{proof}

\begin{proof}[Proof of Proposition \ref{momenta proposition}] It is clear that
$$\sup_{1<p<\infty}f(p)=\max\{\sup_{1<p\leq 2}f(p),\sup_{2\leq p<\infty}f(p)\},$$
we write
$$\sup_{1<p<\infty}\min\{p^{-1},p'^{-1}\}\|x\|_p=\max\left\{\sup_{2\leq p<\infty}p^{-1}\|x\|_p,\sup_{1<p\leq 2}p'^{-1}\|x\|_p\right\}.$$
Since
$$(p-1)\leq p'^{-1}\leq 2(p-1),\quad 1<p\leq 2,$$
it follows that
$$\sup_{1<p\leq 2}p'^{-1}\|x\|_p\approx \sup_{1<p\leq 2}(p-1)\|x\|_p\stackrel{C.\ref{momenta  cor}}{\approx}\|\mu(x)\chi_{(0,1)}\|_2+\|(\mu(k,x))_{k\geq1}\|_{m_{{\rm log}}}.$$
On the other hand, we have
$$\sup_{2\leq p<\infty}p^{-1}\|x\|_p\stackrel{C.\ref{momenta cor}}{\approx}\|\mu(x)\chi_{(0,1)}\|_{L_{\Phi}(0,1)}+\|(\mu(k,x))_{k\geq1}\|_2.$$
Substituting these expressions and using Lemma \ref{expl1 lemma}, we obtain
$$\sup_{1<p<\infty}\min\{p^{-1},p'^{-1}\}\|x\|_p\approx\|\mu(x)\chi_{(0,1)}\|_2+\|(\mu(k,x))_{k\geq1}\|_{m_{\phi}}+$$
$$+\|\mu(x)\chi_{(0,1)}\|_{M_{\phi}(0,1)}+\|(\mu(k,x))_{k\geq1}\|_2\approx\|x\|_{M_{\phi}}.$$
\end{proof}

\section{Extrapolation in Pursuit of Optimal Bounds}\label{extrapolation section}
In this section, we state and prove two new type of extrapolation theorems. As key ingredients, they are applied to prove the main results of this paper (see Section \ref{main section}). These extrapolation results are of independent interests and should be compared with the one given in \cite[Theorem 14]{STZ}.

Through the rest of the paper, the following convention is employed.

\begin{conv} Let $X$ and $Y$ be linear spaces, let $(X_i)_{i\in I}\subset X$ be a family of linear subspaces; let $(T_i:X_i\to Y)_{i\in I}$ be a family of linear maps. If
$$T_i|_{X_i\cap X_j}=T_j|_{X_i\cap X_j},\quad i,j\in I,$$
then there exists a linear map $T:\sum_{i\in I}X_i\to Y$ such that $T_i=T|_{X_i}$ for every $i\in I.$ In this case, we simply write $T:X_i\to Y.$
\end{conv}

\subsection{First extrapolation theorem}
Assume that $(\mathcal{N}_1,\nu_1)$ and $(\mathcal{N}_2,\nu_2)$ are semifinite von Neumann algebras. The first extrapolation theorem is based on the following assumption.

\begin{feco}\label{first extrapolation condition} Suppose that $T:L_p(\mathcal{N}_1,\nu_1)\to L_p(\mathcal{N}_2,\nu_2)$ is a linear bounded operator for all $1<p<\infty$ and
$$\|T\|_{L_p\to L_p}\leq \max\{p,p'\},\quad 1<p<\infty,$$
where $p'$ is the conjugate index of $p$.
\end{feco}

Under Condition \ref{first extrapolation condition}, we prove the following extrapolation theorem, which is the main result of this subsection. As a tool, this result allows us to obtain the (DST) and (DMT) inequalities in a convenient way.
Note that our first result, Theorem~\ref{first extrapolation theorem}, implies the classical extrapolation result of Yano \cite{Yano}.

\begin{thm}\label{first extrapolation theorem} If $T$ satisfies First Extrapolation Condition, then
$$Tx\prec\prec c_{{\rm abs}}S\mu(x),\quad x\in \Lambda_{\log}(\mathcal N_1,\nu_1).$$
\end{thm}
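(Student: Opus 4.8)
The plan is to reduce the statement to a scalar estimate on singular value functions and then invoke the momentum characterization of the Marcinkiewicz space $M_\phi$ from Proposition~\ref{momenta proposition}. Concretely, fix $x\in\Lambda_{\log}(\mathcal N_1,\nu_1)$ and set $y=Tx\in L_0(\mathcal N_2,\nu_2)$. The submajorization $y\prec\prec c_{\rm abs}S\mu(x)$ is, by definition, a family of integral inequalities $\int_0^t\mu(s,y)\,ds\le c_{\rm abs}\int_0^t (S\mu(x))(s)\,ds$ for all $t>0$. The first step is to note that the right-hand side is, up to absolute constants, controlled by weighted Marcinkiewicz-type quantities attached to $\mu(x)$: since $S=C+C^\ast$ and both $C,C^\ast$ are well understood on the scale of $L_p$ spaces (Lemma~\ref{cesaro-norm}), the function $S\mu(x)$ should be comparable, in the submajorization order, to a suitable rearrangement-invariant envelope of $\mu(x)$. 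The natural way to see this quantitatively is via duality: it suffices to pair against an arbitrary decreasing test function $g=\mu(g)\in\Lambda_{\log}$-predual or against indicator functions $\chi_{(0,t)}$, and use the self-duality $\langle Cf,g\rangle=\langle f,C^\ast g\rangle$ together with $\|C\|_{L_p\to L_p}=p'$, $\|C^\ast\|_{L_p\to L_p}=p$.

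The heart of the argument is then the following: by the First Extrapolation Condition, for every $1<p<\infty$ we have $\|Tx\|_p\le\max\{p,p'\}\|x\|_p$, i.e. $\min\{p^{-1},p'^{-1}\}\|Tx\|_p\le\|x\|_p$. Taking the supremum over $1<p<\infty$ and applying Proposition~\ref{momenta proposition} to $Tx$ gives
\[
\|Tx\|_{M_\phi}\approx\sup_{1<p<\infty}\min\{p^{-1},p'^{-1}\}\|Tx\|_p\le c_{\rm abs}\sup_{1<p<\infty}\|x\|_p.
\]
This is not yet enough — the right side is a plain supremum of $L_p$-norms, not an $M_\phi$-norm — so the real work is to run the same extrapolation \emph{locally in $t$}, i.e. to apply the $L_p$ bound not to $x$ itself but to suitable truncations/dilations of $x$ so that the resulting estimate sees only the ``piece'' of $\mu(x)$ relevant to the scale $t$. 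This is where the scaling technique advertised in the introduction enters: for each $t>0$ one estimates $\int_0^t\mu(s,Tx)\,ds$ by splitting $x=x_0+x_1$ (roughly, the part with $\mu$-support in $(0,t)$ and the part in $(t,\infty)$), applying the $L_p$ bound to $x_0$ with $p$ close to $1$ and to $x_1$ with $p$ close to $\infty$, optimizing the choice of $p$ in terms of $t$, and recognizing the two resulting contributions as precisely $C\mu(x)$ and $C^\ast\mu(x)$ evaluated near $t$. Assembling these scale-by-scale estimates yields $\mu(Tx)\prec\prec c_{\rm abs}S\mu(x)$.

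I expect the main obstacle to be precisely this localization step: turning the global extrapolation inequality $\|Tx\|_p\le\max\{p,p'\}\|x\|_p$ into a statement that controls $\int_0^t\mu(s,Tx)\,ds$ in terms of $(S\mu(x))$ near $t$, uniformly in $t$. The difficulty is that $T$ need not commute with dilations or truncations, so one cannot simply ``scale $x$ down to the unit interval''; instead one must work with the $L_p$-norms of $x$ restricted via spectral projections and carefully track how the optimal Hölder exponent depends on $t$. A secondary technical point is the passage between the Marcinkiewicz space $M_\phi$ on $(0,\infty)$ and the pair of spaces $(M_\phi(0,1), \ell_2)$ appearing in Corollary~\ref{momenta cor} — one must ensure the small-$t$ (Orlicz $L_\Phi$) and large-$t$ ($\ell_2$, i.e. $L_2$-tail) behaviours of $\mu(Tx)$ are each dominated by the corresponding behaviour of $S\mu(x)$, which is where Lemma~\ref{expl1 lemma} and the boundedness $\|C^\ast\|_{\Lambda_{\log}\to L_1+L_\infty}\le c_{\rm abs}$ get used. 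Once these are in place, closedness of Marcinkiewicz spaces under submajorization (Proposition~\ref{M-L-Properties}(iv)) and the order-continuity machinery for noncommutative symmetric spaces finish the proof; the implication of Yano's theorem is then the special case $\mathcal N_1=\mathcal N_2=L_\infty(0,1)$.
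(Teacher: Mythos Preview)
Your proposal identifies the correct ingredients --- Proposition~\ref{momenta proposition}, the need to localize in the scale parameter $t$, and the fact that the constants $\max\{p,p'\}$ are scale-invariant --- but the localization mechanism you describe (split $x=x_0+x_1$ according to whether $\mu$-support lies in $(0,t)$ or $(t,\infty)$, then choose different exponents $p$ for each piece) is not what the paper does, and the obstacle you yourself flag is real: $T$ does not commute with spectral truncations, and piecing together $\mu(t,T(x_0+x_1))$ from $\mu(\cdot,Tx_0)$ and $\mu(\cdot,Tx_1)$ costs you exactly the sharpness you need.

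The paper sidesteps this entirely with a two-step manoeuvre you have not quite landed on. First, it does not stop at $\|Tx\|_{M_\phi}\lesssim\|x\|_{L_1\cap L_\infty}$ (which is what your momentum argument gives); it applies that bound to the \emph{adjoint} $T^\ast$ --- which also satisfies the First Extrapolation Condition --- and then dualizes, obtaining the single endpoint estimate $\|Tx\|_{L_1+L_\infty}\le c_{\rm abs}\|x\|_{\Lambda_\phi}$ (Lemma~\ref{first t maximal lemma}). Second, rather than localizing $x$, the paper localizes the \emph{norms}: it replaces the traces $\nu_1,\nu_2$ by $t^{-1}\nu_1,t^{-1}\nu_2$. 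The hypothesis $\|T\|_{L_p\to L_p}\le\max\{p,p'\}$ is invariant under this rescaling, so the $\Lambda_\phi\to L_1+L_\infty$ bound holds verbatim in the rescaled algebras. A direct computation of the scaled norms then gives $(C\mu(Tx))(t)\le c_{\rm abs}\bigl((C^2\mu(x))(t)+(C^\ast\mu(x))(t)\bigr)$ for every $t>0$, which is precisely $C\mu(Tx)\le c_{\rm abs}\,CS\mu(x)$, i.e.\ the desired submajorization.

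So the missing idea is: dualize first to convert the $M_\phi$ output into a $\Lambda_\phi$ input, and then scale the trace rather than the operator or the function. This makes the ``main obstacle'' you anticipated disappear, and the small-$t$/large-$t$ bookkeeping you worried about in the last paragraph is absorbed into the single computation of $\|x\|_{\Lambda_\phi(\mathcal N_1,t^{-1}\nu_1)}$.
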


\begin{cor}\label{first extrapolation corollary} If $T$ satisfies First Extrapolation Condition and if $\|T\|_{L_1\to L_{1,\infty}}\leq 1,$ then
$$\mu(Tx)\leq c_{{\rm abs}}S\mu(x),\quad x\in \Lambda_{\log}(\mathcal N_1,\nu_1).$$
\end{cor}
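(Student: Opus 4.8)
The plan is to deduce Corollary~\ref{first extrapolation corollary} from Theorem~\ref{first extrapolation theorem} by a soft interpolation/decomposition argument. First recall the shape of the Calder\'on operator: for a decreasing function $f$ one has $Sf(t)=\tfrac1t\int_0^t f(s)\,ds+\int_t^\infty \tfrac{f(s)}{s}\,ds$, and the right-hand side of both statements involves $S\mu(x)$, which is itself a decreasing function. Theorem~\ref{first extrapolation theorem} gives us $Tx\prec\prec c_{\mathrm{abs}}S\mu(x)$, i.e.\ control of $\int_0^t\mu(s,Tx)\,ds$; the corollary asks for the pointwise bound $\mu(t,Tx)\le c_{\mathrm{abs}}(S\mu(x))(t)$, a genuinely stronger conclusion which therefore needs the extra hypothesis $\|T\|_{L_1\to L_{1,\infty}}\le 1$.

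The key step I would carry out is a Calder\'on--Zygmund-type splitting of $x$ at the level $t$. Given $t>0$, write $x=x'+x''$ where $x'$ corresponds (via spectral projections of $|x|$, together with the polar decomposition of $x$) to the part of $x$ with singular values exceeding $\mu(t,x)$ and $x''$ to the part at most $\mu(t,x)$; concretely $x' = x\,\chi_{(\mu(t,x),\infty)}(|x|)$ and $x''=x-x'$. Then $x'\in L_1(\mathcal N_1,\nu_1)$ with $\mu(s,x')=\mu(s,x)\chi_{(0,t)}(s)$, so $\|x'\|_1=\int_0^t\mu(s,x)\,ds$, and $x''\in L_2(\mathcal N_1,\nu_1)$ (in fact $x''\in L_p$ for all $p>1$, but $L_2$ suffices) with $\mu(s,x'')=\min\{\mu(s,x),\mu(t,x)\}$. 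Apply $\|T\|_{L_1\to L_{1,\infty}}\le 1$ to $x'$ to get $\mu(t,Tx')\le \tfrac1t\|x'\|_1=\tfrac1t\int_0^t\mu(s,x)\,ds=(C\mu(x))(t)$, and apply the $L_2$ bound $\|T\|_{L_2\to L_2}\le 2$ (from the First Extrapolation Condition with $p=2$) to $x''$ together with $\mu(2t,Tx'')\le t^{-1/2}\|Tx''\|_2\le 2t^{-1/2}\|x''\|_2 = 2t^{-1/2}(\int_0^\infty \mu(s,x'')^2\,ds)^{1/2}$, and check that this last quantity is $\lesssim (C\mu(x))(t)+(C^\ast\mu(x))(t)=(S\mu(x))(t)$ by splitting the $L_2$-integral at $s=t$: the piece $\int_0^t\mu(s,x)^2\,ds\le \mu(t,x)\int_0^t\mu(s,x)\,ds$ handles one part, and $\int_t^\infty \mu(t,x)^2\,ds$ needs a little care but is controlled using that $\mu(t,x)\le (C^\ast\mu(x))(t)\cdot(\text{something})$—more cleanly, I would instead split $x''$ itself dyadically, $x''=\sum_j x''_j$ with $x''_j$ the part of $x$ with singular values in $(2^{-j-1}\mu(t,x),2^{-j}\mu(t,x)]$, and sum the $L_1$-estimates $\mu(\text{const}\cdot t, Tx''_j)\le t^{-1}\|x''_j\|_1$, exploiting that $\|x''_j\|_1\le 2^{-j}\mu(t,x)\cdot n_j$ where $n_j$ is the measure of the relevant spectral set. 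Finally combine using $\mu(t_1+t_2, Tx)\le\mu(t_1,Tx')+\mu(t_2,Tx'')$ from \eqref{singular-triangle} and a harmless rescaling $t\mapsto t/2$ absorbed into $c_{\mathrm{abs}}$.

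There is an alternative, perhaps slicker route that avoids reproving a weak-type interpolation: since $\|T\|_{L_1\to L_{1,\infty}}\le 1$ and (by the First Extrapolation Condition at, say, $p=2$) $\|T\|_{L_2\to L_2}\le 2$, the Calder\'on--Mityagin theorem / real interpolation gives $\mu(t,Tx)\lesssim \tfrac1t\int_0^t\mu(s,x)\,ds + \big(\tfrac1t\int_t^\infty\mu(s,x)^2\,ds\big)^{1/2}$ for $x\in L_1+L_2$; one then checks the elementary pointwise inequality $\big(\tfrac1t\int_t^\infty\mu(s,x)^2\,ds\big)^{1/2}\le c_{\mathrm{abs}}\,(S\mu(x))(t)$ for decreasing $\mu(x)$, and separately that $x\in\Lambda_{\log}$ implies $S\mu(x)\in (L_1+L_\infty)$ (which is exactly the content of the Fact and the definition of $S$) so everything is finite and the submajorization upgrade is unnecessary. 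Combining the two displayed terms with the definition $S=C+C^\ast$ finishes the proof.

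I expect the main obstacle to be the pointwise inequality $\big(\tfrac1t\int_t^\infty\mu(s,x)^2\,ds\big)^{1/2}\lesssim (S\mu(x))(t)$: the quantity $\big(\tfrac1t\int_t^\infty\mu(s,x)^2\,ds\big)^{1/2}$ is the $L_2$-averaging operator applied to $\mu(x)$ on the tail, and it need \emph{not} be dominated by $C^\ast\mu(x)$ alone for a general decreasing function (consider $\mu(x)$ essentially constant on a long interval), which is precisely why the $\tfrac1t\int_0^t$ term, i.e.\ $C\mu(x)$, must also appear—this is the structural reason the Calder\'on operator $S$ rather than $C^\ast$ shows up in the conclusion. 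Once one fixes $t$ and splits the tail integral at the point where $\mu(s,x)$ drops below, say, $(C\mu(x))(t)$, the estimate becomes a routine comparison, but getting the bookkeeping right (and confirming that no logarithmic loss creeps in) is the delicate part. The rest—the $L_1\to L_{1,\infty}$ truncation estimate, the quasi-triangle inequality for $\mu$, and absorbing dilations into absolute constants—is standard.
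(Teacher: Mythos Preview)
Your decomposition $x=x'+x''$ at the level $\mu(t,x)$ and the use of the weak $(1,1)$ bound on the large piece $x'$ match the paper's argument exactly. The gap is in your treatment of the bounded piece $x''$.

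You propose to control $\mu(t,Tx'')$ using only the single bound $\|T\|_{L_2\to L_2}\le 2$ (or, in the alternative route, Calder\'on--Mityagin for the couple $(L_1,L_2)$). Either way this produces the quantity $\big(\tfrac1t\int_t^\infty \mu(s,x)^2\,ds\big)^{1/2}$, and you correctly flag the pointwise inequality
\[
\Big(\tfrac1t\int_t^\infty \mu(s,x)^2\,ds\Big)^{1/2}\ \lesssim\ (S\mu(x))(t)
\]
as the crux. Unfortunately this inequality is \emph{false}. Take $\mu(s,x)=s^{-1/2}(\log(e+s))^{-1}$ (decreasing, and in $\Lambda_{\log}$): for large $t$ one has $(S\mu(x))(t)\asymp t^{-1/2}(\log t)^{-1}$ while $\big(\tfrac1t\int_t^\infty \mu^2\big)^{1/2}\asymp t^{-1/2}(\log t)^{-1/2}$, so the ratio blows up like $(\log t)^{1/2}$. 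The Calder\'on operator for the pair $(L_1,L_2)$ is genuinely larger than $S=C+C^\ast$; no splitting of the tail at a threshold such as $(C\mu(x))(t)$ rescues this, since in the example $\mu(t,x)$ and $(C\mu(x))(t)$ are already comparable. Your dyadic alternative fares no better: summing infinitely many weak-$L_1$ pieces via the quasi-triangle inequality for $\mu$ costs an unbounded dilation factor.

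The paper's fix is to apply Theorem~\ref{first extrapolation theorem} itself to the bounded piece $x_2=\min\{|x|,\mu(t,x)\}$, i.e.\ to use the submajorization $Tx_2\prec\prec c_{\mathrm{abs}}S\mu(x_2)$, which encodes \emph{all} the $L_p$ bounds simultaneously, not just $p=2$. Because $\mu(x_2)$ is explicitly $\mu(t,x)$ on $(0,t)$ and $\mu(x)$ on $(t,\infty)$, one can compute $(CS\mu(x_2))(t)$ in closed form and find it equals $(C^\ast\mu(x))(t)+2\mu(t,x)\le 3(S\mu(x))(t)$. Thus $\mu(t,Tx_2)\le (C\mu(Tx_2))(t)\le c_{\mathrm{abs}}(S\mu(x))(t)$, and combining with the $x_1$ estimate finishes the proof. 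The moral: a single $L_p$ endpoint on the truncated piece is too weak; you must feed the full extrapolation input back in.
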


\begin{rem}\label{first extrapolation remark} Take $\mathcal{N}_1=\mathcal{N}_2=L_{\infty}(0,\infty)$ (equipped with the usual Lebesgue integral). By Lemma \ref{cesaro-norm}, the operator $T=S$ satisfies First Extrapolation Condition. Obviously, one cannot improve the inequalities
$$Sx\prec\prec c_{{\rm abs}}S\mu(x),\quad \mu(Sx)\prec\prec c_{{\rm abs}}S\mu(x).$$
Thus, results in Theorem \ref{first extrapolation theorem} and Corollary \ref{first extrapolation corollary} are optimal.
\end{rem}

The following lemma describes the behavior of the operator $T$ on  $(L_1\cap L_\infty)(\mathcal{N}_1).$

\begin{lem}\label{first t minimal lemma} If $T$ satisfies First Extrapolation Condition,
then
$$\|T\|_{L_1\cap L_{\infty}\to M_{\phi}}\leq c_{{\rm abs}}.$$
\end{lem}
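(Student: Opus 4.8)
The plan is to reduce everything to Proposition~\ref{momenta proposition}, which identifies the Marcinkiewicz norm $\|\cdot\|_{M_\phi}$ with the quantity $\sup_{1<p<\infty}\min\{p^{-1},p'^{-1}\}\|\cdot\|_p$. Fix $x\in(L_1\cap L_\infty)(\mathcal N_1)$. First I would note that, by the Convention preceding the statement together with the fact that $L_1\cap L_\infty\subset L_p$ for every $1<p<\infty$, the operator $T$ is unambiguously defined on $x$, with $Tx\in L_p(\mathcal N_2,\nu_2)$ for all $1<p<\infty$. Since $M_\phi(\mathcal N_2,\nu_2)$ is the noncommutative symmetric space built on the function space $M_\phi(0,\infty)$, we have $\|Tx\|_{M_\phi}=\|\mu(Tx)\|_{M_\phi(0,\infty)}$, and Proposition~\ref{momenta proposition} applied to the scalar function $\mu(Tx)\in L_0(0,\infty)$ yields
$$\|Tx\|_{M_\phi}\approx\sup_{1<p<\infty}\min\{p^{-1},p'^{-1}\}\,\|\mu(Tx)\|_p=\sup_{1<p<\infty}\min\{p^{-1},p'^{-1}\}\,\|Tx\|_p,$$
where I use the standard identity $\|\mu(Tx)\|_{L_p(0,\infty)}=\|Tx\|_{L_p(\mathcal N_2)}$.

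Next I would bound each term of the supremum. For a fixed $1<p<\infty$, the interpolation inequality $\|x\|_p\le\|x\|_\infty^{1-1/p}\|x\|_1^{1/p}\le\|x\|_{L_1\cap L_\infty}$ holds (for instance from $\tau(|x|^p)=\int_0^\infty\mu(t,x)^p\,dt\le\|\mu(x)\|_\infty^{p-1}\|\mu(x)\|_1$). Combining this with the First Extrapolation Condition gives
$$\|Tx\|_p\le\|T\|_{L_p\to L_p}\,\|x\|_p\le\max\{p,p'\}\,\|x\|_{L_1\cap L_\infty}.$$
Since $\min\{p^{-1},p'^{-1}\}=1/\max\{p,p'\}$, multiplying through produces $\min\{p^{-1},p'^{-1}\}\|Tx\|_p\le\|x\|_{L_1\cap L_\infty}$ for every $p$, and taking the supremum over $1<p<\infty$ I conclude $\|Tx\|_{M_\phi}\le c_{\mathrm{abs}}\|x\|_{L_1\cap L_\infty}$, i.e.\ $\|T\|_{L_1\cap L_\infty\to M_\phi}\le c_{\mathrm{abs}}$.

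In truth there is no serious obstacle here: the content of the lemma is entirely packaged inside Proposition~\ref{momenta proposition}, and once that is available the argument is a one-line computation exploiting the exact cancellation $\min\{p^{-1},p'^{-1}\}\cdot\max\{p,p'\}=1$. The only points that require a moment's care are (i) that Proposition~\ref{momenta proposition}, stated for scalar functions, transfers to $Tx$ through its singular value function $\mu(Tx)$, and (ii) that $T$ is legitimately defined on all of $(L_1\cap L_\infty)(\mathcal N_1)$ --- both of which are immediate from the definitions and the Convention. This lemma is precisely the step where the ``momentum description'' of $M_\phi$ is leveraged to convert the family of $L_p$ bounds into a single endpoint bound on $(L_1\cap L_\infty)(\mathcal N_1)$.
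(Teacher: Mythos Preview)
Your proof is correct and follows essentially the same approach as the paper: both apply Proposition~\ref{momenta proposition} to identify $\|Tx\|_{M_\phi}$ with $\sup_{1<p<\infty}\min\{p^{-1},p'^{-1}\}\|Tx\|_p$, invoke the First Extrapolation Condition, and use $\|x\|_p\le\|x\|_{L_1\cap L_\infty}$ to conclude. Your version is slightly more explicit about the interpolation inequality and about transferring Proposition~\ref{momenta proposition} via $\mu(Tx)$, but the argument is identical in substance.
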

\begin{proof} First note that $(L_1\cap L_{\infty})(\mathcal{N}_1,\nu_1)\subset L_p(\mathcal{N}_1,\nu_1)$ for all $1<p<\infty.$ In particular, it follows from the Condition \ref{first extrapolation condition} that $Tx$ is well defined for all $x\in (L_1\cap L_{\infty})(\mathcal{N}_1,\nu_1).$ Moreover, we have
$$\min\{p^{-1},p'^{-1}\}\|Tx\|_p\leq \|x\|_p,\quad 1<p<\infty,\quad x\in (L_1\cap L_{\infty})(\mathcal{N}_1,\nu_1).$$
Taking supremum over all $1<p<\infty,$ we obtain
$$\sup_{1<p<\infty}\min\{p^{-1},p'^{-1}\}\|Tx\|_p\leq\sup_{1<p<\infty}\|x\|_p,\quad x\in (L_1\cap L_{\infty})(\mathcal{N}_1,\nu_1).$$
Obviously,
$$\sup_{1<p<\infty}\|x\|_p\approx\|x\|_{L_1\cap L_{\infty}}.$$
By Proposition \ref{momenta proposition}, we have
$$\sup_{1<p<\infty}\min\{p^{-1},p'^{-1}\}\|Tx\|_p\approx\|Tx\|_{M_{\phi}}.$$
Therefore,
$$\|Tx\|_{M_{\phi}}\lesssim \|x\|_{L_1\cap L_{\infty}},\quad x\in (L_1\cap L_{\infty})(\mathcal{N}_1,\nu_1),$$
which is the desired result.
\end{proof}

The following lemma is routine and, therefore, its proof is omitted.

\begin{lem}\label{first duality operator lemma} If $T$ satisfies First Extrapolation Condition, then there exists a linear operator $T^{\ast}:L_p(\mathcal{N}_2,\nu_2)\to L_p(\mathcal{N}_1,\nu_1),$ $1<p<\infty,$ such that
\begin{enumerate}[{\rm (i)}]
\item for every $1<p<\infty,$ we have
$$\|T^{\ast}\|_{L_p\to L_p}\leq \max\{p,p'\}.$$
\item for every $1<p<\infty,$ the operator $T^{\ast}:L_{p'}(\mathcal{N}_2,\nu_2)\to L_{p'}(\mathcal{N}_1,\nu_1)$ is the Banach adjoint of the operator $T:L_p(\mathcal{N}_1,\nu_1)\to L_p(\mathcal{N}_2,\nu_2).$
\end{enumerate}
\end{lem}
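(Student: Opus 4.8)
The plan is to build $T^{\ast}$ one exponent at a time from Banach-space adjoints and then to glue the pieces together by means of the Convention. Fix $p\in(1,\infty)$. By the First Extrapolation Condition, $T:L_p(\mathcal{N}_1,\nu_1)\to L_p(\mathcal{N}_2,\nu_2)$ is bounded with $\|T\|_{L_p\to L_p}\leq\max\{p,p'\}$. Invoking the standard trace duality $L_p(\mathcal{N})^{\ast}=L_{p'}(\mathcal{N})$, valid for $1<p<\infty$, the Banach adjoint of this map is identified with a bounded operator
$$T^{\ast}_{p'}:L_{p'}(\mathcal{N}_2,\nu_2)\to L_{p'}(\mathcal{N}_1,\nu_1),\qquad \nu_1\big(x\,T^{\ast}_{p'}y\big)=\nu_2\big((Tx)\,y\big)\ \ (x\in L_p(\mathcal{N}_1),\ y\in L_{p'}(\mathcal{N}_2)),$$
with $\|T^{\ast}_{p'}\|_{L_{p'}\to L_{p'}}=\|T\|_{L_p\to L_p}\leq\max\{p,p'\}=\max\{p',(p')'\}$. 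As $p$ runs through $(1,\infty)$ so does $p'$, so after renaming we have produced, for every $r\in(1,\infty)$, a bounded operator $T^{\ast}_{r}:L_{r}(\mathcal{N}_2,\nu_2)\to L_{r}(\mathcal{N}_1,\nu_1)$ with $\|T^{\ast}_{r}\|_{L_r\to L_r}\leq\max\{r,r'\}$; this is the bound demanded in (i), and (ii) will hold tautologically once the family is glued.

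The substantive point is compatibility: I need $T^{\ast}_{r}$ and $T^{\ast}_{s}$ to agree on $L_{r}(\mathcal{N}_2,\nu_2)\cap L_{s}(\mathcal{N}_2,\nu_2)$ for all $r,s\in(1,\infty)$, so that the Convention delivers a single operator $T^{\ast}$ on $\sum_{1<r<\infty}L_r(\mathcal{N}_2,\nu_2)$ restricting to each $T^{\ast}_{r}$. Fix such $r,s$ and $y\in L_{r}(\mathcal{N}_2)\cap L_{s}(\mathcal{N}_2)$. For every $x\in L_{r'}(\mathcal{N}_1)\cap L_{s'}(\mathcal{N}_1)$ the element $Tx$ is unambiguous, the Convention already being in force for $T$; hence
$$\nu_1\big(x\,T^{\ast}_{r}y\big)=\nu_2\big((Tx)\,y\big)=\nu_1\big(x\,T^{\ast}_{s}y\big).$$
Thus $z:=T^{\ast}_{r}y-T^{\ast}_{s}y$ lies in $L_{r}(\mathcal{N}_1)+L_{s}(\mathcal{N}_1)$ and annihilates every element of $L_{r'}(\mathcal{N}_1)\cap L_{s'}(\mathcal{N}_1)$. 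Now by Lemma~\ref{general koethe fact} and the identity \eqref{nc koethe eq},
$$\big(L_{r}(\mathcal{N}_1)+L_{s}(\mathcal{N}_1)\big)^{\times}=\big(L_{r'}\cap L_{s'}\big)(\mathcal{N}_1)=L_{r'}(\mathcal{N}_1)\cap L_{s'}(\mathcal{N}_1),$$
and since $L_{r}+L_{s}$ has the Fatou property the Köthe-dual norm recovers the norm of $L_{r}(\mathcal{N}_1)+L_{s}(\mathcal{N}_1)$, hence separates its points; therefore $z=0$. (Alternatively, one may test only against bounded elements of $\mathcal{N}_1$ supported under a $\nu_1$-finite projection, which already lie in every $L_p(\mathcal{N}_1)$ and separate points of $(L_r+L_s)(\mathcal{N}_1)$ by semifiniteness of $\nu_1$.)

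With compatibility in hand, the Convention provides $T^{\ast}:L_p(\mathcal{N}_2,\nu_2)\to L_p(\mathcal{N}_1,\nu_1)$, $1<p<\infty$; then (i) is exactly the bound recorded in the first paragraph, while (ii) is immediate since $T^{\ast}|_{L_{p'}(\mathcal{N}_2,\nu_2)}=T^{\ast}_{p'}$ is, by construction, the Banach adjoint of $T:L_p(\mathcal{N}_1,\nu_1)\to L_p(\mathcal{N}_2,\nu_2)$. I expect the only real obstacle to be the compatibility step, and within it the one delicate implication is passing from ``$z$ annihilates the intersection $L_{r'}\cap L_{s'}$'' to ``$z=0$''; this is precisely what the Köthe-duality identity of Lemma~\ref{general koethe fact} (together with \eqref{nc koethe eq}) is there to supply. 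Everything else is bookkeeping with the trace duality and the Convention, which is why the authors call the lemma routine.
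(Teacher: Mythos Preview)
Your proof is correct and is exactly the routine argument the paper has in mind; indeed, the paper omits the proof entirely, remarking that the lemma ``is routine and, therefore, its proof is omitted.'' You have supplied precisely the expected construction---take Banach adjoints one exponent at a time via the trace duality $L_p^{\ast}=L_{p'}$, verify compatibility on intersections using K\"othe duality (Lemma~\ref{general koethe fact} and \eqref{nc koethe eq}), and invoke the Convention to glue---and the one genuinely nontrivial point, passing from ``$z$ annihilates $L_{r'}\cap L_{s'}$'' to ``$z=0$,'' is handled correctly.
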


The following lemma describes the behavior of the maximal domain of the operator $T.$

\begin{lem}\label{first t maximal lemma} If $T$ satisfies First Extrapolation Condition, then $T$ admits a bounded linear extension $T:\Lambda_{\phi}(\mathcal{N}_1,\nu_1)\to (L_1+L_{\infty})(\mathcal{N}_2,\nu_2).$ Moreover,
$$\|T\|_{\Lambda_{\phi}\to L_1+L_{\infty}}\leq c_{{\rm abs}}.$$
\end{lem}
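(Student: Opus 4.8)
The plan is to obtain this bound by a duality argument that transfers Lemma~\ref{first t minimal lemma} from $T$ to its adjoint $T^{\ast}$ and then pairs against the K\"othe predual. First I would note that, by Lemma~\ref{first duality operator lemma}(i), the operator $T^{\ast}$ (with the roles of $(\mathcal N_1,\nu_1)$ and $(\mathcal N_2,\nu_2)$ interchanged) again satisfies the First Extrapolation Condition; hence Lemma~\ref{first t minimal lemma}, applied to $T^{\ast}$, gives
$$\|T^{\ast}y\|_{M_{\phi}(\mathcal N_1,\nu_1)}\leq c_{\rm abs}\,\|y\|_{(L_1\cap L_{\infty})(\mathcal N_2,\nu_2)},\qquad y\in (L_1\cap L_{\infty})(\mathcal N_2,\nu_2).$$
I would also record, for later use, that since $(L_1\cap L_{\infty})(\mathcal N_1,\nu_1)\subset L_p(\mathcal N_1,\nu_1)$ for every $1<p<\infty$, the First Extrapolation Condition already defines $Tx$ for $x\in (L_1\cap L_{\infty})(\mathcal N_1,\nu_1)$ and (taking $p=2$) places $Tx$ in $L_2(\mathcal N_2,\nu_2)\subset (L_1+L_{\infty})(\mathcal N_2,\nu_2)$, so that all the quantities below are meaningful.

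The heart of the proof is the estimate on the dense subspace $(L_1\cap L_{\infty})(\mathcal N_1,\nu_1)$. I would fix $x\in (L_1\cap L_{\infty})(\mathcal N_1,\nu_1)$ and $y\in (L_1\cap L_{\infty})(\mathcal N_2,\nu_2)$; both lie in the $L_2$-space of the respective algebra, so Lemma~\ref{first duality operator lemma}(ii) with $p=2$ gives the pairing identity $\nu_2\big((Tx)\,y\big)=\nu_1\big(x\,(T^{\ast}y)\big)$. Since $\Lambda_{\phi}(\mathcal N_1,\nu_1)^{\times}=M_{\phi}(\mathcal N_1,\nu_1)$ by Proposition~\ref{M-L-Properties}(ii) and \eqref{nc koethe eq}, the H\"older inequality (Lemma~\ref{holder}) combined with the displayed bound on $T^{\ast}$ yields
$$\big|\nu_2\big((Tx)\,y\big)\big|\leq \|x\|_{\Lambda_{\phi}(\mathcal N_1,\nu_1)}\,\|T^{\ast}y\|_{M_{\phi}(\mathcal N_1,\nu_1)}\leq c_{\rm abs}\,\|x\|_{\Lambda_{\phi}(\mathcal N_1,\nu_1)}\,\|y\|_{(L_1\cap L_{\infty})(\mathcal N_2,\nu_2)}.$$
Taking the supremum over all $y$ in the unit ball of $(L_1\cap L_{\infty})(\mathcal N_2,\nu_2)$ and using that, by Lemma~\ref{general koethe fact}, \eqref{nc koethe eq} and the identities $L_1^{\times}=L_{\infty}$, $L_{\infty}^{\times}=L_1$, the K\"othe dual of $(L_1\cap L_{\infty})(\mathcal N_2,\nu_2)$ is $(L_1+L_{\infty})(\mathcal N_2,\nu_2)$ (with equivalent norm), I would conclude
$$\|Tx\|_{(L_1+L_{\infty})(\mathcal N_2,\nu_2)}\leq c_{\rm abs}\,\|x\|_{\Lambda_{\phi}(\mathcal N_1,\nu_1)},\qquad x\in (L_1\cap L_{\infty})(\mathcal N_1,\nu_1).$$

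To finish, I would invoke that $\Lambda_{\phi}$ has order-continuous norm, so $(L_1\cap L_{\infty})(\mathcal N_1,\nu_1)$ is dense in $\Lambda_{\phi}(\mathcal N_1,\nu_1)$ (bounded, $\tau$-finitely supported operators already suffice); the last estimate then lets $T$ be extended by continuity to a bounded linear map $\Lambda_{\phi}(\mathcal N_1,\nu_1)\to (L_1+L_{\infty})(\mathcal N_2,\nu_2)$ with $\|T\|_{\Lambda_{\phi}\to L_1+L_{\infty}}\leq c_{\rm abs}$, and this extension agrees with the originally given $T$ on each overlap $\Lambda_{\phi}\cap L_p$ (approximate by a sequence converging simultaneously in $\Lambda_{\phi}$ and in $L_p$), so that the Convention is respected. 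I expect the main obstacle to lie not in this chain of reductions but in the two ``soft'' facts it rests on: that the supremum over the unit ball of $(L_1\cap L_{\infty})(\mathcal N_2,\nu_2)$ computes the $(L_1+L_{\infty})$-norm up to an absolute constant --- a statement about the K\"othe-dual norm of a symmetric operator space that relies on the Fatou property of $L_1+L_{\infty}$ --- and the density claim for $\Lambda_{\phi}$; both are standard, but they carry essentially all of the argument's weight.
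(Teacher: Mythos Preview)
Your proposal is correct and follows essentially the same duality argument as the paper: apply Lemma~\ref{first t minimal lemma} to $T^{\ast}$ (which satisfies the First Extrapolation Condition by Lemma~\ref{first duality operator lemma}), pair via H\"older using $\Lambda_{\phi}^{\times}=M_{\phi}$, take the supremum over the unit ball of $L_1\cap L_{\infty}$ to recover the $L_1+L_{\infty}$-norm, and extend by density. The only cosmetic difference is that the paper carries out the estimate on the dense subspace $L_2(\mathcal N_1,\nu_1)$ rather than $(L_1\cap L_{\infty})(\mathcal N_1,\nu_1)$; both choices work for the same reason, and your added remarks on compatibility with the Convention and on the two ``soft'' facts are accurate.
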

\begin{proof} Let $x\in L_2(\mathcal{N}_1,\nu_1)$ so that $Tx\in L_2(\mathcal{N}_2,\nu_2).$ By Lemma \ref{general koethe fact} and \eqref{nc koethe eq}, we have
$$\|Tx\|_{L_1+L_{\infty}}=\sup_{\|y\|_{L_1\cap L_{\infty}}\leq 1}|\langle Tx,y\rangle|.$$
By Proposition \ref{M-L-Properties}, the K\"{o}the dual of $\Lambda_\phi(0,\infty)$ is $M_\phi(0,\infty).$ By \eqref{nc koethe eq}, we have
$$\Lambda_\phi(\mathcal{N}_1,\nu_1)^\times =\Lambda_\phi^\times(\mathcal{N}_1,\nu_1)=M_\phi(\mathcal{N}_1,\nu_1).$$
It follows now from the H\"{o}lder inequality (see Lemma \ref{holder}) that
$$|\langle Tx,y\rangle|=|\langle x,T^{\ast}y\rangle|\leq\|x\|_{\Lambda_{\phi}}\|T^{\ast}y\|_{M_{\phi}}$$
for every $x\in L_2(\mathcal{N}_1,\nu_1)$ and for every $y\in (L_1\cap L_{\infty})(\mathcal{N}_2,\nu_2).$ Consequently,
$$\|Tx\|_{L_1+L_{\infty}}\leq\|x\|_{\Lambda_{\phi}}\cdot \sup_{\|y\|_{L_1\cap L_{\infty}}\leq 1}\|T^{\ast}y\|_{M_{\phi}}.$$
By Lemma \ref{first duality operator lemma}, the operator $T^{\ast}$ also satisfies First Extrapolation Condition. Applying Lemma \ref{first t minimal lemma} to the operator $T^{\ast},$ we obtain
$$\sup_{\|y\|_{L_1\cap L_{\infty}}\leq 1}\|T^{\ast}y\|_{M_{\phi}}\leq c_{{\rm abs}},\quad x\in L_2(\mathcal{N}_1,\nu_1).$$
Therefore, we have
$$\|Tx\|_{L_1+L_{\infty}}\leq c_{{\rm abs}}\|x\|_{\Lambda_{\phi}},\quad x\in L_2(\mathcal{N}_1,\nu_1).$$
Since $(L_1\cap L_{\infty})(\mathcal{N}_1,\nu_1)$ is dense in $\Lambda_{\phi}(\mathcal{N}_1,\nu_1)$ (and, hence, so is $L_2(\mathcal{N}_1,\nu_1)$), the assertion follows.
\end{proof}

\begin{lem}\label{valpha direct lemma} If $x\in(\Lambda_{\phi}+L_{\infty})(\mathcal N_1,\nu_1),$ then
$$\int_0^t\mu(s,x)\log\Big(\frac{t}{s}\Big)ds=\int_0^t\left(C\mu(x)\right)(s)ds,\quad \forall t>0.$$
\end{lem}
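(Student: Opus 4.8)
The identity is, at its core, an application of Tonelli's theorem. The plan is the following. First I would unfold the right-hand side using the definition of the Ces\`aro operator,
$$\int_0^t (C\mu(x))(s)\,ds=\int_0^t\frac1s\Big(\int_0^s\mu(u,x)\,du\Big)\,ds,$$
so that the right-hand side is the integral of the nonnegative function $(s,u)\mapsto s^{-1}\mu(u,x)$ over the triangle $\{(s,u):0<u<s<t\}$. By Tonelli's theorem we may integrate first in $s$: for each fixed $u\in(0,t)$ the variable $s$ ranges over $(u,t)$, and $\int_u^t s^{-1}\,ds=\log(t/u)$. Hence
$$\int_0^t\frac1s\Big(\int_0^s\mu(u,x)\,du\Big)\,ds=\int_0^t\mu(u,x)\log\Big(\frac tu\Big)\,du,$$
which, after renaming $u$ as $s$, is exactly the asserted equality.

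The only point requiring attention is to ensure that all the integrals above are finite, so that the statement is an identity of real numbers rather than merely of elements of $[0,\infty]$; this is precisely the role of the hypothesis $x\in(\Lambda_\phi+L_\infty)(\mathcal N_1,\nu_1)$. By the definition of the noncommutative symmetric space associated with the symmetric function space $\Lambda_\phi+L_\infty$, this hypothesis means $\mu(x)=f+g$ for some $f\in\Lambda_\phi(0,\infty)$ and $g\in L_\infty(0,\infty)$, so that $\mu(s,x)\le|f(s)|+\|g\|_\infty$. Recalling from \eqref{def-phi} that $d\phi(s)=\log(e/s)\,ds$ on $(0,1)$, a Hardy--Littlewood rearrangement argument gives $\int_0^1|f(s)|\log(e/s)\,ds\le\|f\|_{\Lambda_\phi}<\infty$; combined with the integrability of $s\mapsto\log(e/s)$ on $(0,1)$ and the fact that the decreasing function $\mu(x)$ is bounded on $[1,\infty)$, and splitting the relevant integrals at $s=1$ while using $\log(t/s)\le c_t\log(e/s)$ on $(0,1)$ for a suitable constant $c_t$, this yields $\int_0^t\mu(s,x)\,ds<\infty$ and $\int_0^t\mu(s,x)\log(t/s)\,ds<\infty$ for every $t>0$. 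In particular $\mu(x)$ is locally integrable and both sides of the claimed identity are finite, so the Tonelli computation above produces a genuine equality.

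I do not anticipate any substantive obstacle in this lemma: the mathematical content is a one-line interchange-of-integration computation, and the only care needed is the elementary bookkeeping establishing the finiteness of the integrals, which is driven entirely by the behaviour of $\phi$ near $0$ recorded in \eqref{def-phi}.
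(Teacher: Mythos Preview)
Your proposal is correct and follows essentially the same route as the paper: both arguments unfold the definition of $C$ and swap the order of integration via Tonelli to obtain $\int_0^t(C\mu(x))(s)\,ds=\int_0^t\mu(u,x)\log(t/u)\,du$. The paper handles finiteness more tersely, simply noting that $C\mu(x)\in (L_1+L_\infty)(0,\infty)$ so that the right-hand side is well defined, whereas you give a more detailed verification via the density $\phi'(s)=\log(e/s)$ on $(0,1)$; both are fine.
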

\begin{proof} Observe that $C\mu(x)\in (L_1+L_{\infty})(0,\infty)$ so that the right hand side is well defined. It follows from a basic calculation that
\begin{align*}\int_0^t(C\mu(x))(s)ds&=\int_0^t\frac1s\int_0^s\mu(u,x)duds\\
&=\int_0^t\mu(u,x)\Big(\int_u^t\frac{ds}{s}\Big)du\\
&=\int_0^t\mu(u,x)\log\Big(\frac{t}{u}\Big)du.
\end{align*}
\end{proof}

In the proof of Theorem \ref{first extrapolation theorem}, we use the following scaling construction.
\begin{construction} Let $(\mathcal{M},\tau)$ be a noncommutative measure space. Consider another noncommutative measure space $(\mathcal{M},t^{-1}\tau).$ It is immediate that  $L_0(\mathcal{M},t^{-1}\tau)=L_0(\mathcal{M},\tau).$ For every $x\in L_0(\mathcal{M},\tau)$ and for all $t>0,$ we have
$$\mu_{(\mathcal{M},t^{-1}\tau)}(s,x)=\inf\big\{\|x(1-p)\|_{\infty}:\ (t^{-1}\tau)(p)\leq s\big\}=$$
$$=\inf\big\{\|x(1-p)\|_{\infty}:\ \tau(p)\leq st\big\}=\mu(st,x),\quad s>0.$$
Thus,
\begin{equation}\label{main scaling eq}
\mu_{(\mathcal{M},t^{-1}\tau)}(x)=\sigma_{\frac1t}\mu(x).
\end{equation}
\end{construction}

\medskip
Now we are ready to prove Theorem \ref{first extrapolation theorem} and Corollary \ref{first extrapolation corollary}.
\smallskip

\begin{proof}[Proof of Theorem \ref{first extrapolation theorem}]
Let $x\in\Lambda_{\log}(\mathcal N_1,\nu_1)$ and fix $t>0$. We scale the trace in our algebra; namely, instead of the algebras $(\mathcal{N}_1,\nu_1),$ $(\mathcal{N}_2,\nu_2),$ we consider the algebras $(\mathcal{N}_1,t^{-1}\nu_1), (\mathcal{N}_2,t^{-1}\nu_2).$
Obviously, we still have
\begin{align*}
\|Tx\|_{L_p(\mathcal{N}_2,t^{-1}\nu_2)}&=t^{-\frac1p}\|Tx\|_{L_p(\mathcal{N}_2,\nu_2)}\\
&\leq t^{-\frac1p} \max\{p,p'\}\|x\|_{L_p(\mathcal{N}_1,\nu_1)}\\
&= \max\{p,p'\}\|x\|_{L_p(\mathcal{N}_1,t^{-1}\nu_1)}.
\end{align*}
Thus, applying Lemma \ref{first t maximal lemma}, we have
$$\|Tx\|_{(L_1+L_{\infty})(\mathcal{N}_2,t^{-1}\nu_2)}\leq c_{{\rm abs}}\|x\|_{\Lambda_{\phi}(\mathcal{N}_1,t^{-1}\nu_1)}.$$
We have
$$\|Tx\|_{(L_1+L_{\infty})(\mathcal{N}_2,t^{-1}\nu_2)}\stackrel{\eqref{main scaling eq}}{=}\|\sigma_{\frac1t}\mu(x)\|_{L_1+L_{\infty}}=\int_0^1\mu(st,x)ds=(C\mu(Tx))(t).$$
On the other hand, we have
$$\|x\|_{\Lambda_{\phi}(\mathcal{N}_1,t^{-1}\nu_1)}\stackrel{\eqref{main scaling eq}}{=}\|\sigma_{\frac1t}\mu(x)\|_{\Lambda_{\phi}}=\int_0^{\infty}\mu(ut,x)\phi'(u)du=$$
$$=\int_0^1\mu(ut,x)\log(\frac{e}{u})du+\int_1^{\infty}\mu(ut,x)\frac{du}{u}=$$
$$=\frac1t\int_0^t\mu(s,x)\log(\frac{et}{s})ds+\int_t^{\infty}\mu(s,x)\frac{ds}{s}=$$
$$\stackrel{L.\ref{valpha direct lemma}}{=}(C^2\mu(x))(t)+(C^{\ast}\mu(x))(t).$$
A combination of the three last equations yields
$$C\mu(Tx)\leq c_{{\rm abs}}\cdot\big(C^2\mu(x)+C^{\ast}\mu(x)\big).$$
Since $Cy\geq y$ for every decreasing function $y$ and since $C^{\ast}\mu(x)$ is decreasing, it follows that
$$C\mu(Tx)\leq c_{{\rm abs}}\cdot\big(C^2\mu(x)+CC^{\ast}\mu(x)\big)=c_{{\rm abs}}CS\mu(x).$$
The last inequality is equivalent to the assertion of Theorem \ref{first extrapolation theorem}.
\end{proof}

\begin{proof}[Proof of Corollary \ref{first extrapolation corollary}] Let $0\leq x\in \Lambda_{\log}(\mathcal N_1, \nu_1)$  and fix $t>0.$ Set
$$x_1=(x-\mu(t,x))_+\quad \mbox{and }\quad x_2=\min\{x,\mu(t,x)\}.$$
Then by the inequality \eqref{singular-triangle}, we have
$$\mu(2t,Tx)=\mu(2t,Tx_1+Tx_2)\leq \mu(t,Tx_1)+\mu(t,Tx_2).$$
By the assumption, we have $\|T\|_{L_1\to L_{1,\infty}}\leq 1.$ It follows that
$$\mu(t,Tx_1)\leq t^{-1}\|x_1\|_1=t^{-1}\int_0^t\left(\mu(s,x)-\mu(t,x)\right)ds\leq \left(C\mu(x)\right)(t).$$
By Theorem \ref{first extrapolation theorem}, we have
$$\mu(t,Tx_2)\leq (C\mu(Tx_2))(t)\leq c_{{\rm abs}}(CS\mu(x_2))(t).$$
Obviously,
$$(S\mu(x_2))(s)=\frac1s\int_0^s\mu(u,x_2)du+\int_s^{\infty}\mu(u,x_2)\frac{du}{u}=$$
$$=\frac1s\int_0^s\mu(t,x)du+\int_s^t\mu(t,x)\frac{du}{u}+\int_t^{\infty}\mu(u,x)\frac{du}{u}=$$
$$=\mu(t,x)\cdot\log(\frac{et}{s})+(C^{\ast}\mu(x))(t),\quad 0<s<t.$$
Therefore,
$$(CS\mu(x_2))(t)=\frac1t\int_0^t\big(\mu(t,x)\cdot\log(\frac{et}{s})+(C^{\ast}\mu(x))(t)\big)ds=$$
$$=(C^{\ast}\mu(x))(t)+\frac1t\int_0^t\log(\frac{et}{s})ds\cdot\mu(t,x)=(C^{\ast}\mu(x))(t)+2\mu(t,x).$$
Combining the estimates above, we obtain
$$\mu(2t,Tx)\leq (C\mu(x))(t)+c_{{\rm abs}}\cdot((C^{\ast}\mu(x))(t)+2\mu(t,x)).$$
Obviously,
$$(C\mu(x))(t)\leq (S\mu(x))(t),\quad (C^{\ast}\mu(x))(t)\leq (S\mu(x))(t),\quad \mu(t,x)\leq (S\mu(x))(t).$$
It follows that
$$\mu(2t,Tx)\leq c_{{\rm abs}}(S\mu(x))(t)\leq c_{{\rm abs}}(S\mu(x))(2t).$$
Since $t>0$ is arbitrary, the assertion follows.
\end{proof}

\subsection{Second extrapolation theorem}
We now turn to the second extrapolation theorem. Let $(\mathcal{N}_1,\nu_1)$ and $(\mathcal{N}_2,\nu_2)$ be semifinite von Neumann algebras. The second extrapolation result is based on the following condition.

\begin{seco} Suppose that $T:L_p(\mathcal{N}_1,\nu_1)\to L_p(\mathcal{N}_2,\nu_2)$ is a bounded linear operator for all $2\leq p<\infty$ and
$$\|T\|_{L_p\to L_p}\leq p,\quad 2\leq p<\infty.$$
\end{seco}

Now, the principal result of this subsection---the second extrapolation theorem, is stated below. We will make essential use of this result in the proof of (DDD), (Upper$-$DBG), (Lower$-$DBG) inequalities.

\begin{thm}\label{second extrapolation theorem} If $T$ satisfies Second Extrapolation Condition, then
$$\mu^2(Tx)\prec\prec c_{{\rm abs}}(C^{\ast}\mu(x))^2, \quad x\in (\Lambda_{\log}+L_2)(\mathcal{N}_1,\nu_1).$$
\end{thm}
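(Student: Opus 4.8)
The plan is to mimic the structure of the proof of Theorem~\ref{first extrapolation theorem}, but starting the interpolation/extrapolation at $p=2$ rather than at $p=1$. The first step is to record the behaviour of $T$ on the ``minimal'' end: since $(L_2\cap L_\infty)(\mathcal N_1,\nu_1)\subset L_p(\mathcal N_1,\nu_1)$ for all $2\le p<\infty$, the Second Extrapolation Condition gives $p^{-1}\|Tx\|_p\le \|x\|_p$ for all such $p$, and taking the supremum, together with the second equivalence in Corollary~\ref{momenta cor} and Lemma~\ref{expl1 lemma}, yields
$$\|Tx\|_{M_\phi}\lesssim \sup_{2\le p<\infty}\frac1p\|x\|_p\lesssim \|x\|_{L_2\cap L_\infty},\qquad x\in (L_2\cap L_\infty)(\mathcal N_1,\nu_1).$$
So $\|T\|_{L_2\cap L_\infty\to M_\phi}\le c_{\rm abs}$. (Here one must be slightly careful: $M_\phi$ as defined involves the full function $\phi$ on $(0,\infty)$, but $L_2\cap L_\infty$ only ``sees'' the interval $(0,1)$ together with an $L_2$-tail, which is exactly what Corollary~\ref{momenta cor} accounts for.)

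The second step is duality. As in Lemma~\ref{first duality operator lemma}, the Second Extrapolation Condition produces $T^\ast: L_{p}(\mathcal N_2,\nu_2)\to L_p(\mathcal N_1,\nu_1)$ with $\|T^\ast\|_{L_p\to L_p}\le p'$ for $1<p\le 2$, where $T^\ast$ restricted to $L_{p'}$ is the Banach adjoint of $T$ on $L_p$. Then, imitating Lemma~\ref{first t maximal lemma}: for $x\in L_2(\mathcal N_1,\nu_1)$ we have $Tx\in L_2(\mathcal N_2,\nu_2)$ and, pairing against $y\in (L_1\cap L_\infty)(\mathcal N_2,\nu_2)$ and using Lemma~\ref{holder} with the Köthe duality $\Lambda_\phi^\times=M_\phi$ from Proposition~\ref{M-L-Properties},
$$\|Tx\|_{L_1+L_\infty}=\sup_{\|y\|_{L_1\cap L_\infty}\le 1}|\langle x,T^\ast y\rangle|\le \|x\|_{\Lambda_\phi}\cdot\sup_{\|y\|_{L_1\cap L_\infty}\le1}\|T^\ast y\|_{M_\phi}.$$
To bound the last supremum I need $\|T^\ast\|_{L_1\cap L_\infty\to M_\phi}\le c_{\rm abs}$. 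But $T^\ast$ satisfies the \emph{First} Extrapolation Condition only for $1<p\le 2$; for $p\ge 2$ I only have $\|T^\ast\|_{L_p\to L_p}\le p'\le 2$, i.e.\ uniform boundedness on $L_p$ for $p\ge2$. Combining these, $\sup_{1<p<\infty}\min\{p^{-1},p'^{-1}\}\|T^\ast y\|_p\lesssim \|y\|_{L_1\cap L_\infty}$, and Proposition~\ref{momenta proposition} again gives $\|T^\ast y\|_{M_\phi}\lesssim\|y\|_{L_1\cap L_\infty}$. Hence $T$ extends boundedly: $\|T\|_{\Lambda_\phi\to L_1+L_\infty}\le c_{\rm abs}$, and by density (of $L_2\cap L_\infty$, hence $L_2$, in $\Lambda_\phi$) this extension is consistent with $T$ on $L_2$.

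The third step is the scaling argument. Given $x\in(\Lambda_{\log}+L_2)(\mathcal N_1,\nu_1)$ and $t>0$, rescale the traces $\nu_i\mapsto t^{-1}\nu_i$. Using $\|Tx\|_{L_p(\mathcal N_2,t^{-1}\nu_2)}=t^{-1/p}\|Tx\|_{L_p(\mathcal N_2,\nu_2)}$ and the two boundedness facts above, one gets both
$$\|Tx\|_{(L_1+L_\infty)(\mathcal N_2,t^{-1}\nu_2)}\le c_{\rm abs}\,\|x\|_{(\Lambda_\phi+L_2)(\mathcal N_1,t^{-1}\nu_1)}$$
(the combined extension, valid on $\Lambda_\phi+L_2$), and, via \eqref{main scaling eq}, translate the left side into $\int_0^t\mu(s,Tx)\,ds$ and the right side, using Lemma~\ref{valpha direct lemma} for the $\Lambda_\phi$ part and a direct computation for the $L_2$ part, into an expression dominated by $t\cdot\bigl((C^\ast\mu(x))(t)\bigr)^2$ up to an absolute constant and an averaging. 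Concretely I expect to arrive at
$$\frac1t\int_0^t\mu(s,Tx)\,ds\ \le\ c_{\rm abs}\,\Bigl(\frac1t\int_0^t\bigl((C^\ast\mu(x))(s)\bigr)^2 ds\Bigr)^{1/2},$$
and since $C^\ast\mu(x)$ is decreasing this right side is $\le c_{\rm abs}(C^\ast\mu(x))(t/2)$ or similar, so $\int_0^t\mu^2(s,Tx)\,ds\le c_{\rm abs}\int_0^t\bigl((C^\ast\mu(x))(s)\bigr)^2ds$ after one more application of Cauchy--Schwarz/monotonicity and a splitting-of-the-argument trick exactly as in the proof of Corollary~\ref{first extrapolation corollary}. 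That gives $\mu^2(Tx)\prec\prec c_{\rm abs}(C^\ast\mu(x))^2$.

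The main obstacle I anticipate is the bookkeeping at the interpolation endpoint $p=2$ and squaring: unlike the first theorem, the ``$L_1$ end'' is replaced by an ``$L_2$ end'', so the relevant function-space identity is $(M_\phi)^{(1/2)}$-type — i.e.\ the natural target is $\bigl(C^\ast\mu(x)\bigr)^2$ rather than $S\mu(x)$ — and one must correctly convert the $\Lambda_\phi(\mathcal N_1,t^{-1}\nu_1)$-norm (which, after scaling, produces a $C^2$-plus-$C^\ast$ term as in the first proof) versus the $L_2(\mathcal N_1,t^{-1}\nu_1)$-norm (which produces a genuine $L_2$-average of $\mu(x)$) into a single bound by $(C^\ast\mu(x))^2$. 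Handling the sum space $\Lambda_\phi+L_2$ uniformly in the scaling parameter, and making sure the two extensions of $T$ (from $L_2$, and the one from $\Lambda_\phi$) agree so that ``$Tx$'' is unambiguous on $\Lambda_{\log}+L_2$, is the delicate part; everything else is a routine adaptation of Section~\ref{extrapolation section}'s first half.
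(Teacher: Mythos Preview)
Your proposal contains a genuine gap at the duality step. You assert that ``for $p\ge 2$ I only have $\|T^\ast\|_{L_p\to L_p}\le p'\le 2$,'' but this is unjustified: the Second Extrapolation Condition gives $T$ bounded on $L_p$ only for $p\ge 2$, hence $T^\ast$ is bounded on $L_q$ only for $1<q\le 2$. Nothing is known about $T^\ast$ on $L_q$ for $q>2$, so you cannot invoke Proposition~\ref{momenta proposition} (which requires control over all $1<p<\infty$) to place $T^\ast y$ in $M_\phi$, and the bound $T:\Lambda_\phi\to L_1+L_\infty$ does not follow.

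The paper avoids this by working only with the range $1<p\le 2$ for $T^\ast$. Using the \emph{first} equivalence in Corollary~\ref{momenta cor} (not Proposition~\ref{momenta proposition}), one gets $T^\ast:(L_1\cap L_2)\to M_{\log}+(L_1\cap L_2)$ bounded, and then by K\"othe duality (Lemma~\ref{general koethe fact}) $T:\Lambda_{\log}\cap(L_2+L_\infty)\to L_2+L_\infty$ bounded. This is the key difference: the target is $L_2+L_\infty$, not $L_1+L_\infty$. After scaling, the $(L_2+L_\infty)$-norm on the left directly computes $\frac1t\int_0^t\mu^2(s,Tx)\,ds$, i.e.\ $(C\mu^2(Tx))(t)$, and the $(\Lambda_{\log}\cap(L_2+L_\infty))$-norm on the right gives $(C\mu^2(x))(t)+\bigl((C^\ast\mu(x))(t)\bigr)^2$. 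A short lemma (Lemma~\ref{valpha inverse estimate}) then bounds this by $C\bigl((C^\ast\mu(x))^2\bigr)$, yielding $\mu^2(Tx)\prec\prec c_{\rm abs}(C^\ast\mu(x))^2$ directly, with no need for the Cauchy--Schwarz manoeuvre you sketch in step three---which, incidentally, goes the wrong way: Cauchy--Schwarz bounds $\bigl(\int_0^t\mu\bigr)^2$ above by $t\int_0^t\mu^2$, not the reverse.
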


\begin{rem} Take $\mathcal{N}_1=\mathcal{N}_2=L_{\infty}(0,\infty)$ (equipped with the usual Lebesgue integral). By Lemma \ref{cesaro-norm}, the operator $T=C^{\ast}$ satisfies the Second Extrapolation Condition. Obviously, one cannot improve the inequality
$$\mu^2(C^{\ast}x)\prec\prec c_{{\rm abs}}(C^{\ast}\mu(x))^2.$$
Thus, result in Theorem \ref{second extrapolation theorem} is optimal.
\end{rem}

The following lemma is analogous to Lemma \ref{first duality operator lemma}.

\begin{lem}\label{second duality operator lemma} If $T$ satisfies Second Extrapolation Condition, then there exists a linear operator $T^{\ast}:L_p(\mathcal{N}_2,\nu_2)\to L_p(\mathcal{N}_1,\nu_1),$ $1<p\leq 2,$ such that
\begin{enumerate}[{\rm (i)}]
\item for every $1<p\leq 2,$ we have
$$\|T^{\ast}\|_{L_p\to L_p}\leq p'.$$
\item for every $1<p\leq 2,$ the operator $T^{\ast}:L_p(\mathcal{N}_2,\nu_2)\to L_p(\mathcal{N}_1,\nu_1)$ is the Banach adjoint of the operator $T:L_{p'}(\mathcal{N}_1,\nu_1)\to L_{p'}(\mathcal{N}_2,\nu_2).$
\end{enumerate}
\end{lem}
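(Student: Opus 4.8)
The plan is to obtain $T^{\ast}$ as the Banach adjoint of $T$ acting on the conjugate scale, via the standard trace-pairing duality $L_{p'}(\mathcal N)^{\ast}=L_p(\mathcal N)$, and then to glue the resulting operators using the Convention of this section. This is the exact analogue, on the range $2\le p'<\infty$, of Lemma \ref{first duality operator lemma}.

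First I would fix $1<p\le 2$, so that $2\le p'<\infty$; by the Second Extrapolation Condition the map $T:L_{p'}(\mathcal N_1,\nu_1)\to L_{p'}(\mathcal N_2,\nu_2)$ is bounded with $\|T\|_{L_{p'}\to L_{p'}}\le p'$. Since $1<p'<\infty$, the Banach dual of $L_{p'}(\mathcal N)$ is isometrically $L_p(\mathcal N)$ under the pairing $\langle x,y\rangle=\tau(xy)$ (see e.g. \cite{PX2013}). Hence the Banach adjoint of $T:L_{p'}(\mathcal N_1)\to L_{p'}(\mathcal N_2)$ is identified with a bounded operator $T^{\ast}_p:L_p(\mathcal N_2,\nu_2)\to L_p(\mathcal N_1,\nu_1)$, uniquely determined by
$$\tau_1\big((T^{\ast}_p z)\,w\big)=\tau_2\big(z\,(Tw)\big),\qquad z\in L_p(\mathcal N_2,\nu_2),\ w\in L_{p'}(\mathcal N_1,\nu_1),$$
and satisfying $\|T^{\ast}_p\|_{L_p\to L_p}=\|T\|_{L_{p'}\to L_{p'}}\le p'$. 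This establishes (i), and the displayed identity is exactly the assertion of (ii), for each individual value of $p$.

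Next I would verify that the operators $T^{\ast}_p$, $1<p\le 2$, are pairwise consistent, so that the Convention yields a single operator $T^{\ast}$. Fix $1<p_1\le p_2\le 2$ and $z\in L_{p_1}(\mathcal N_2)\cap L_{p_2}(\mathcal N_2)$; note $T^{\ast}_{p_1}z,\,T^{\ast}_{p_2}z\in(L_1+L_\infty)(\mathcal N_1)$. For every $w\in(L_1\cap L_\infty)(\mathcal N_1)\subset L_{p_1'}(\mathcal N_1)\cap L_{p_2'}(\mathcal N_1)$, the fact that $T$ is a single operator (the Convention applied to $T$ itself, using that $p_1',p_2'\ge 2$) shows that $Tw$ is the same element whether $T$ is viewed on $L_{p_1'}$ or on $L_{p_2'}$; therefore
$$\tau_1\big((T^{\ast}_{p_1}z-T^{\ast}_{p_2}z)\,w\big)=\tau_2\big(z\,(Tw)\big)-\tau_2\big(z\,(Tw)\big)=0.$$
Since $(L_1\cap L_\infty)(\mathcal N_1)$ is the K\"othe dual of $(L_1+L_\infty)(\mathcal N_1)$ (Lemma \ref{general koethe fact} and \eqref{nc koethe eq}) and hence separates its points, we conclude $T^{\ast}_{p_1}z=T^{\ast}_{p_2}z$. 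Thus $(T^{\ast}_p)_{1<p\le 2}$ satisfies the compatibility hypothesis, and the Convention produces the desired operator $T^{\ast}$ with properties (i) and (ii).

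The only genuinely delicate point is the concrete identification of the Banach adjoint with an operator on $L_p(\mathcal N)$: rather than carrying along the abstract bidual one uses the isometric trace-pairing duality $L_{p'}(\mathcal N)^{\ast}=L_p(\mathcal N)$, which is available throughout because $1<p'<\infty$ on the whole range $1<p\le 2$ (the degenerate endpoint $p'=\infty$, i.e. $p=1$, never occurs). Once this identification and the density of $(L_1\cap L_\infty)(\mathcal N_i)$ in the relevant $L_q$-spaces are in hand, the remaining steps are routine bookkeeping; in particular, at $p=2$ the construction via the $L_2$-Hilbert adjoint and the one via the $L_{p'}=L_2$ duality coincide, so no separate treatment is needed there.
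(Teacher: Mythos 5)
Your proposal is correct and is exactly the routine argument the paper has in mind: the paper omits the proof, noting only that the lemma is analogous to Lemma \ref{first duality operator lemma} (itself stated without proof as routine), and your construction -- taking the Banach adjoint of $T:L_{p'}\to L_{p'}$ under the trace-pairing duality $L_{p'}(\mathcal N)^{\ast}=L_p(\mathcal N)$, noting the norm equality, and gluing the operators $T^{\ast}_p$ after checking consistency against $(L_1\cap L_\infty)(\mathcal N_1)$, which separates points of $(L_1+L_\infty)(\mathcal N_1)$ -- is precisely that argument. No gaps; the consistency check via the Köthe duality of Lemma \ref{general koethe fact} and \eqref{nc koethe eq} is a valid way to make the "routine" step explicit.
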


The following lemma describes the behavior of the operator $T^{\ast}$ on the domain $(L_1\cap L_2)(\mathcal{N}_2,\nu_2)$. It should be compared with Lemma \ref{first t minimal lemma}.

\begin{lem}\label{second t minimal lemma} If $T$ satisfies Second Extrapolation Condition, then
$$T^{\ast}:(L_1\cap L_2)(\mathcal{N}_2,\nu_2)\to (M_{{\rm log}}+(L_1\cap L_2))(\mathcal{N}_1,\nu_1).$$
Moreover, we have
$$\|T^{\ast}\|_{L_1\cap L_2\to M_{{\rm log}}+(L_1\cap L_2)}\leq c_{{\rm abs}}.$$
\end{lem}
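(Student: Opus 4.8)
The plan is to mimic the structure of the proof of Lemma \ref{first t minimal lemma}, but working only in the range $1<p\le 2$ for the adjoint operator. By Lemma \ref{second duality operator lemma}(i) we know that $\|T^{\ast}\|_{L_p\to L_p}\le p'$ for $1<p\le 2$. First I would fix $y\in (L_1\cap L_2)(\mathcal N_2,\nu_2)$; then $y\in L_p(\mathcal N_2,\nu_2)$ for every $1<p\le 2$, so $T^\ast y$ is well defined and $p'^{-1}\|T^\ast y\|_p\le\|y\|_p$ for all such $p$. Taking the supremum over $1<p\le 2$ gives
$$\sup_{1<p\le 2}p'^{-1}\|T^\ast y\|_p\le \sup_{1<p\le 2}\|y\|_p\approx\|y\|_{L_1\cap L_2},$$
where the last equivalence holds because $\|y\|_p$ is monotone in $p$ on the interval $[1,2]$ in a way that makes $\sup_{1<p\le2}\|y\|_p$ comparable to $\max\{\|y\|_1,\|y\|_2\}\approx\|y\|_{L_1\cap L_2}$.

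The next step is to identify the left-hand side with a concrete (quasi-)norm of $T^\ast y$. Using $p'^{-1}\approx (p-1)$ for $1<p\le 2$, this is $\sup_{1<p\le 2}(p-1)\|T^\ast y\|_p$, and Corollary \ref{momenta cor} (first assertion) identifies this quantity, up to absolute constants, with
$$\|\mu(T^\ast y)\chi_{(0,1)}\|_2+\|(\mu(k,T^\ast y))_{k\ge 1}\|_{m_{{\rm log}}}.$$
Now $\|\mu(\cdot)\chi_{(0,1)}\|_2$ is (equivalent to) the norm of the $L_2(0,1)$-part, i.e. controls the $(L_1\cap L_2)$-part of $T^\ast y$ near $0$, while the sequence term $\|(\mu(k,\cdot))_{k\ge1}\|_{m_{{\rm log}}}$ governs the tail and is precisely the ingredient measured by the Marcinkiewicz space $M_{{\rm log}}$ on $(1,\infty)$. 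So the estimate says $T^\ast y$ decomposes as a sum of a piece whose rearrangement is supported near $0$ and lies in $L_1\cap L_2$, plus a piece whose rearrangement is supported away from $0$ and lies in $M_{{\rm log}}$; hence $T^\ast y\in (M_{{\rm log}}+(L_1\cap L_2))(\mathcal N_1,\nu_1)$ with the asserted bound
$$\|T^\ast y\|_{M_{{\rm log}}+(L_1\cap L_2)}\le c_{{\rm abs}}\|y\|_{L_1\cap L_2}.$$
To make this precise I would use the standard fact that for a symmetric space of the form $F_1+F_2$ with $F_1,F_2$ having disjointly-supported fundamental behaviour, splitting $\mu(T^\ast y)=\mu(T^\ast y)\chi_{(0,1)}+\mu(T^\ast y)\chi_{(1,\infty)}$ and estimating the two summands separately yields membership and the norm bound, and then appeal to \eqref{nc koethe eq}-type identifications at the operator level (the sum space $(M_{{\rm log}}+(L_1\cap L_2))(\mathcal N_1,\nu_1)$ is defined through the rearrangement, so it suffices to argue at the level of $\mu(T^\ast y)$).

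The main obstacle I anticipate is the bookkeeping around the endpoint $p=2$ and the passage from the two-term estimate of Corollary \ref{momenta cor} to a genuine sum-space norm estimate: one must check that $\sup_{1<p\le2}\|y\|_p$ really is comparable to $\|y\|_{L_1\cap L_2}$ (this needs $p$ bounded away from nothing dangerous since the interval is $[1,2]$, so it is in fact easy once one notes $\|y\|_p\le\|y\|_1^{2/p-1}\|y\|_2^{2-2/p}$ by interpolation, giving $\le\max\{\|y\|_1,\|y\|_2\}$), and, conversely, that the two summands produced by the rearrangement decomposition can be realised as actual operators in $M_{{\rm log}}(\mathcal N_1,\nu_1)$ and $(L_1\cap L_2)(\mathcal N_1,\nu_1)$ respectively. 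The latter is routine given that these spaces on $(\mathcal N_1,\nu_1)$ are defined via the condition $\mu(x)\in F$, so exhibiting the decomposition of $\mu(T^\ast y)$ into the required pieces is all that is needed. Everything else is a direct transcription of the argument for Lemma \ref{first t minimal lemma}, now carried out on the half-range of exponents governed by the Second Extrapolation Condition.
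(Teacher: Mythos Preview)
Your proposal is correct and follows essentially the same route as the paper's proof: the paper also starts from $\|T^\ast\|_{L_p\to L_p}\le p'$, rewrites this as $(p-1)\|T^\ast x\|_p\le 2\|x\|_p$, takes the supremum over $1<p\le 2$, and then invokes Corollary~\ref{momenta cor}. The only difference is cosmetic: where you carefully explain why the two-term expression $\|\mu(\cdot)\chi_{(0,1)}\|_2+\|(\mu(k,\cdot))_{k\ge1}\|_{m_{\log}}$ is equivalent to the $M_{\log}+(L_1\cap L_2)$ norm via the split of $\mu(T^\ast y)$ at $t=1$, the paper simply asserts this equivalence in one line.
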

\begin{proof} Take $x\in (L_1\cap L_2)(\mathcal{N}_2,\nu_2)$ and note that $x\in L_p(\mathcal{N}_2,\nu_2)$ for all $1<p\leq 2.$ In particular, $T^*x$ is well defined by Lemma \ref{second duality operator lemma}. Moreover, we have
$$(p-1)\|T^{\ast}x\|_p\leq p\|x\|_p\leq 2\|x\|_p,\quad 1<p\leq 2.$$
Taking the supremum over $1<p\leq 2,$ we obtain
$$\sup_{1<p\leq 2}(p-1)\|T^{\ast}x\|_p\leq2\sup_{1<p\leq 2}\|x\|_p.$$
Clearly,
$$\sup_{1<p\leq 2}\|x\|_p\approx\|x\|_{L_1\cap L_2}.$$
By Corollary \ref{momenta cor},
$$\sup_{1<p\leq 2}(p-1)\|T^{\ast}x\|_p\approx\|T^{\ast}x\|_{M_{{\rm log}}+(L_1\cap L_2)}.$$
Combining the last $3$ equations, we arrive at
$$\|T^{\ast}x\|_{M_{{\rm log}}+(L_1\cap L_2)}\lesssim\|x\|_{L_1\cap L_2},\quad x\in(L_1\cap L_2)(\mathcal{N}_2,\nu_2).$$
\end{proof}

The following lemma describes the maximal domain of the operator $T$ (it should be compared with Lemma \ref{first t maximal lemma}).

\begin{lem}\label{second t maximal lemma} If $T$ satisfies Second Extrapolation Condition, then
$$T:(\Lambda_{{\rm log}}\cap (L_2+L_{\infty}))(\mathcal{N}_1,\nu_1)\to (L_2+ L_{\infty})(\mathcal{N}_2,\nu_2).$$
Moreover, we have
$$\|T\|_{\Lambda_{{\rm log}}\cap (L_2+L_{\infty})\to L_2+ L_{\infty}}\leq c_{{\rm abs}}.$$
\end{lem}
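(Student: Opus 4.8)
The plan is to follow the template of Lemma~\ref{first t maximal lemma}, with the K\"othe-dual pair $(\Lambda_{\phi},\,L_1+L_{\infty})$ replaced by $(\Lambda_{\log}\cap(L_2+L_{\infty}),\,L_2+L_{\infty})$ and with Lemma~\ref{second t minimal lemma} (which already concerns $T^{\ast}$) used in place of Lemma~\ref{first t minimal lemma}. First I would record the K\"othe duality identities on $(0,\infty)$: by Lemma~\ref{general koethe fact} one has $(L_2+L_{\infty})^\times=L_1\cap L_2$, and, combining Lemma~\ref{general koethe fact} with Proposition~\ref{M-L-Properties}(iii),
$$\big(M_{\log}+(L_1\cap L_2)\big)^\times=M_{\log}^\times\cap(L_1\cap L_2)^\times=\Lambda_{\log}\cap(L_\infty+L_2)=\Lambda_{\log}\cap(L_2+L_{\infty}),$$
all as equalities of symmetric function spaces with equivalent norms; by \eqref{nc koethe eq} the corresponding identities hold at the level of operator spaces over $(\mathcal N_1,\nu_1)$ and $(\mathcal N_2,\nu_2)$. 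In particular, for $z\in(L_2+L_{\infty})(\mathcal N_2,\nu_2)$,
$$\|z\|_{L_2+L_{\infty}}\approx\sup\big\{|\langle z,y\rangle|:\ y\in(L_1\cap L_2)(\mathcal N_2,\nu_2),\ \|y\|_{L_1\cap L_2}\leq1\big\}.$$

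Next I would run the duality estimate on the dense test class $L_2(\mathcal N_1,\nu_1)$. Note that $L_2(\mathcal N_1,\nu_1)\subset(\Lambda_{\log}\cap(L_2+L_{\infty}))(\mathcal N_1,\nu_1)$, since $\mu(x)\in L_2$ and, by the Cauchy-Schwarz inequality, $\int_0^\infty\mu(s,x)\frac{ds}{1+s}\leq\int_0^1\mu(s,x)\,ds+\int_1^\infty\mu(s,x)\frac{ds}{s}\lesssim\|x\|_2$. For $x\in L_2(\mathcal N_1,\nu_1)$ the Second Extrapolation Condition gives $Tx\in L_2(\mathcal N_2,\nu_2)\subset(L_2+L_{\infty})(\mathcal N_2,\nu_2)$. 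Fix $y\in(L_1\cap L_2)(\mathcal N_2,\nu_2)$ with $\|y\|_{L_1\cap L_2}\leq1$. Using the case $p=2$ of Lemma~\ref{second duality operator lemma} (so that $\langle Tx,y\rangle=\langle x,T^{\ast}y\rangle$), then H\"older's inequality (Lemma~\ref{holder}) for the pair $\big(M_{\log}+(L_1\cap L_2)\big)(\mathcal N_1,\nu_1)$ and its K\"othe dual $\big(\Lambda_{\log}\cap(L_2+L_{\infty})\big)(\mathcal N_1,\nu_1)$, and finally Lemma~\ref{second t minimal lemma}, we obtain
$$|\langle Tx,y\rangle|=|\langle x,T^{\ast}y\rangle|\leq\|x\|_{\Lambda_{\log}\cap(L_2+L_{\infty})}\,\|T^{\ast}y\|_{M_{\log}+(L_1\cap L_2)}\leq c_{{\rm abs}}\|x\|_{\Lambda_{\log}\cap(L_2+L_{\infty})}.$$
Taking the supremum over such $y$ and invoking the displayed duality formula yields $\|Tx\|_{L_2+L_{\infty}}\leq c_{{\rm abs}}\|x\|_{\Lambda_{\log}\cap(L_2+L_{\infty})}$ for every $x\in L_2(\mathcal N_1,\nu_1)$.

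It then remains to extend this bound from $L_2(\mathcal N_1,\nu_1)$ to all of $(\Lambda_{\log}\cap(L_2+L_{\infty}))(\mathcal N_1,\nu_1)$ by density. For $0\leq x=x^{\ast}$ in the latter space, put $x_n:=x\chi_{[1/n,n]}(x)$; since $x\in\Lambda_{\log}$ forces $\mu(t,x)\to0$ as $t\to\infty$, the projection $\chi_{[1/n,\infty)}(x)$ is $\tau$-finite, so $x_n\in(L_1\cap L_{\infty})(\mathcal N_1,\nu_1)\subset L_2(\mathcal N_1,\nu_1)$, and $x-x_n\downarrow0$. Order-continuity of the Lorentz norm gives $\|x-x_n\|_{\Lambda_{\log}}\to0$, while, by Holmstedt's formula (Lemma~\ref{lem-L1-Linfty}),
$$\|x-x_n\|_{L_2+L_{\infty}}\lesssim\|x\chi_{[0,1/n)}(x)\|_\infty+\Big(\int_0^1\mu(s,x)^2\chi_{\{\mu(s,x)>n\}}(s)\,ds\Big)^{1/2}\longrightarrow0,$$
the first term being $\leq1/n$ and the second tending to $0$ by dominated convergence, as $\mu(x)\chi_{(0,1)}\in L_2$ because $x\in L_2+L_{\infty}$; the general case reduces to this one via the polar decomposition. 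Hence $L_2(\mathcal N_1,\nu_1)$ is dense, so $T$ extends to a bounded operator on $(\Lambda_{\log}\cap(L_2+L_{\infty}))(\mathcal N_1,\nu_1)$ with the stated bound, and a parallel truncation argument together with order-continuity of the $L_p$-norms shows the extension is compatible with $T$ on each $L_p(\mathcal N_1,\nu_1)$, $2\leq p<\infty$. The main obstacle is exactly this density step: unlike the domain $\Lambda_{\phi}$ in Lemma~\ref{first t maximal lemma}, the space $\Lambda_{\log}\cap(L_2+L_{\infty})$ is \emph{not} order-continuous (and it properly contains $\sum_{2\leq p<\infty}L_p$, so $T$ is not a priori defined on all of it), and the density of $L_2$ genuinely rests on the two facts just used, namely that membership in $\Lambda_{\log}$ suppresses the behaviour of $x$ at infinity and that the remaining $(L_2+L_{\infty})$-tail then vanishes by dominated convergence.
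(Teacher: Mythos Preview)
Your proof is correct and follows essentially the same route as the paper: identify the K\"othe duals, run the duality estimate $|\langle Tx,y\rangle|=|\langle x,T^{\ast}y\rangle|$ on $x\in L_2(\mathcal N_1,\nu_1)$ with $y\in(L_1\cap L_2)(\mathcal N_2,\nu_2)$, invoke Lemma~\ref{second t minimal lemma}, and extend by density of $L_2$. The paper simply asserts this density in one line; you supply an explicit truncation argument, which is a genuine addition.

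One minor correction to your closing commentary: the space $\Lambda_{\log}\cap(L_2+L_{\infty})$ \emph{is} order-continuous. If $0\leq x_n\downarrow 0$ in this space then $\mu(x_n)\downarrow 0$ pointwise (membership in $\Lambda_{\log}$ forces $\mu(t,x_1)\to 0$ as $t\to\infty$, so all distribution functions are finite), and since $\mu(x_n)\leq\mu(x_1)$ with $\mu(x_1)\chi_{(0,1)}\in L_2$, dominated convergence gives $\|x_n\|_{L_2+L_{\infty}}\to 0$; the $\Lambda_{\log}$ part is order-continuous outright. This does not affect your argument --- your explicit construction of approximants $x\chi_{[1/n,n]}(x)$ is valid regardless --- but the ``main obstacle'' you flag is less severe than stated.
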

\begin{proof} Let $x\in L_2(\mathcal{N}_1,\nu_1)$ so that $Tx\in L_2(\mathcal{N}_2,\nu_2).$ By Lemma \ref{general koethe fact} and \eqref{nc koethe eq}, we have
$$\|Tx\|_{L_2+L_{\infty}}=\sup_{\|y\|_{L_1\cap L_2}\leq 1}|\langle Tx,y\rangle|.$$
We infer from Lemma \ref{general koethe fact}, Proposition \ref{M-L-Properties} and \eqref{nc koethe eq} that
$$(\Lambda_{{\rm log}}(\mathcal{N}_1,\nu_1)\cap (L_2+L_{\infty})(\mathcal{N}_1,\nu_1))^\times=$$
$$=\big(\Lambda_{{\rm log}}^\times+ (L_2+L_{\infty})^\times\big)(\mathcal{N}_1,\nu_1)=\big(M_{{\rm log}}+(L_1\cap L_2)\big)(\mathcal{N}_1,\nu_1).$$
Applying the H\"{o}lder inequality (see Lemma \ref{holder}), we get
$$|\langle Tx,y\rangle|=|\langle x,T^{\ast}y\rangle|\leq\|x\|_{\Lambda_{{\rm log}}\cap (L_2+L_{\infty})}\|T^{\ast}y\|_{M_{{\rm log}}+(L_1\cap L_2)}$$
for every $x\in L_2(\mathcal{N}_1,\nu_1)$ and for every $y\in (L_1\cap L_2)(\mathcal{N}_2,\nu_2).$ Thus,
$$\|Tx\|_{L_2+L_{\infty}}\leq\|x\|_{\Lambda_{{\rm log}}\cap (L_2+L_{\infty})}\cdot \sup_{\|y\|_{L_1\cap L_2}\leq 1}\|T^{\ast}y\|_{M_{{\rm log}}+(L_1\cap L_2)}.$$
Using Lemma \ref{second t minimal lemma}, we infer that
$$\|Tx\|_{L_2+L_{\infty}}\leq c_{\rm abs}\|x\|_{\Lambda_{{\rm log}}\cap (L_2+L_{\infty})},\quad x\in L_2(\mathcal{N}_1,\nu_1).$$
Since $L_2(\mathcal{N}_1,\nu_1)$ is dense in $(\Lambda_{{\rm log}}\cap (L_2+L_{\infty}))(\mathcal{N}_1,\nu_1),$ the assertion follows.
\end{proof}

The purpose of the following lemma is similar to that of Lemma \ref{valpha direct lemma}.

\begin{lem}\label{valpha inverse estimate} Let $x\in (L_1+L_{\infty})(\mathcal{N}_1,\nu_1).$ We have
$$C\mu^2(x)+(C^{\ast}\mu(x))^2\leq 2C\big((C^{\ast}\mu(x))^2\big).$$	
\end{lem}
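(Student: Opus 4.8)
The plan is to prove the pointwise inequality
$$C\mu^2(x)(t)+(C^{\ast}\mu(x))^2(t)\leq 2C\big((C^{\ast}\mu(x))^2\big)(t),\quad t>0,$$
by handling the two summands on the left separately and showing each is dominated by $C\big((C^{\ast}\mu(x))^2\big)(t)$. Write $f=\mu(x)$, which is a nonnegative decreasing right-continuous function, and $g=C^{\ast}f$, so that $g(s)=\int_s^{\infty}f(u)\frac{du}{u}$ is also nonnegative and decreasing. The first summand is immediate: since $g=C^{\ast}f\geq f$ pointwise (because $C^{\ast}f(s)=\int_s^\infty f(u)\frac{du}{u}\geq f(s)\int_s^\infty\chi_{\{u\le ?\}}$... more precisely $C^*f(s)\ge f(s)$ fails in general, so instead observe $g^2\geq f^2$ is what we actually need — and indeed $g(s)=\int_s^\infty f(u)u^{-1}du\ge \int_s^{es}f(u)u^{-1}du\ge f(es)\ge$ does not directly give $f(s)$). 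The clean route is: $C^{\ast}f\geq f$ does hold since for decreasing $f$, $\int_s^\infty f(u)\frac{du}{u}\geq f(s)\cdot 0$ is vacuous; instead use that $C^*$ is known (Subsection~\ref{cesaro-calderon-def}) to satisfy $C^*f\ge$ the ``tail average'' — concretely one shows $g(s)\ge f(2s)$ is too weak, so I will instead directly compare $C(f^2)$ with $C(g^2)$ using $g\ge$ a suitable truncation of $f$. The cleanest statement is simply $f\le g$ pointwise, which is true because $f$ decreasing gives $g(s)=\int_s^\infty f(u)u^{-1}du\ge f(s)\int_s^{es}u^{-1}du=f(s)$. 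Hence $f^2\le g^2$, and applying the positive operator $C$ gives $C\mu^2(x)\le C\big((C^*\mu(x))^2\big)$.

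For the second summand I would show $(C^{\ast}f)^2(t)\leq C\big((C^{\ast}f)^2\big)(t)$, i.e. $g(t)^2\leq \frac1t\int_0^t g(s)^2\,ds$. This is exactly the statement that $Ch\geq h$ for every nonnegative decreasing function $h$, applied to $h=g^2$ (which is decreasing since $g$ is). This standard fact is already invoked in the proof of Theorem~\ref{first extrapolation theorem} (``$Cy\geq y$ for every decreasing function $y$''), so I may cite it directly. Adding the two estimates yields
$$C\mu^2(x)(t)+(C^{\ast}\mu(x))^2(t)\le C\big((C^{\ast}\mu(x))^2\big)(t)+C\big((C^{\ast}\mu(x))^2\big)(t)=2C\big((C^{\ast}\mu(x))^2\big)(t),$$
which is the claim.

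The only point requiring care — and the one I would expect to be the main obstacle — is the well-definedness and integrability bookkeeping: one must check that for $x\in(L_1+L_\infty)(\mathcal N_1,\nu_1)$ the function $(C^{\ast}\mu(x))^2$ lies in $L_1+L_\infty$ so that all the operators $C$ here act on legitimate inputs and the displayed pointwise inequality is between finite (or consistently $+\infty$) quantities. Since $\mu(x)\in L_1+L_\infty$ need not imply $\mu(x)\in\Lambda_{\log}$, the quantity $C^{\ast}\mu(x)(t)$ may be $+\infty$ for small $t$; but the inequality is still valid in $[0,+\infty]$, interpreting $C$ of a $[0,+\infty]$-valued decreasing function in the obvious way, and this is all that is needed at the points where the lemma is later applied. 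I would therefore state the inequality as one between measurable functions valued in $[0,\infty]$ and note that both sides are finite a.e. whenever $\mu(x)\in(\Lambda_{\log}+L_2)$, which is the regime in which the lemma is used in the proof of Theorem~\ref{second extrapolation theorem}.
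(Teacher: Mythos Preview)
Your argument for the second summand is fine and matches the paper: $(C^{\ast}\mu(x))^2$ is nonnegative and decreasing, so $Cy\geq y$ applied to $y=(C^{\ast}\mu(x))^2$ gives $(C^{\ast}\mu(x))^2\leq C\big((C^{\ast}\mu(x))^2\big)$.

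Your argument for the first summand, however, contains a genuine error. You claim that for decreasing $f$ one has $C^{\ast}f\geq f$ pointwise, via
\[
g(s)=\int_s^\infty f(u)\,\frac{du}{u}\;\geq\; f(s)\int_s^{es}\frac{du}{u}=f(s).
\]
This step is wrong: since $f$ is \emph{decreasing}, on $[s,es]$ one has $f(u)\leq f(s)$, not $f(u)\geq f(s)$, so the displayed inequality goes the wrong way. In fact the pointwise bound $C^{\ast}f\geq f$ is false for decreasing $f$: take $f=\chi_{(0,1)}$, so that $(C^{\ast}f)(s)=\log(1/s)$ for $0<s<1$, and at $s=1/2$ one gets $\log 2<1=f(1/2)$. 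Hence $f^2\leq g^2$ need not hold, and you cannot deduce $C\mu^2(x)\leq C\big((C^{\ast}\mu(x))^2\big)$ this way.

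The paper fixes this by working with submajorization rather than a pointwise comparison. A direct computation gives
\[
\int_0^t(C^{\ast}\mu(x))(s)\,ds
=\int_0^{\infty}\mu(u,x)\,\frac{\min\{t,u\}}{u}\,du
\geq\int_0^t\mu(u,x)\,du,
\]
so $\mu(x)\prec\prec C^{\ast}\mu(x)$. One then invokes the standard fact that $g\prec\prec f$ implies $g^2\prec\prec f^2$ (convexity of $t\mapsto t^2$), which yields $\mu^2(x)\prec\prec (C^{\ast}\mu(x))^2$, i.e.\ $C\mu^2(x)\leq C\big((C^{\ast}\mu(x))^2\big)$. This is the missing idea you need to replace the incorrect pointwise step.
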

\begin{proof} By definition of $C^{\ast},$ we have
$$\int_0^t(C^{\ast}\mu(x))(s)ds=\int_0^t\int_s^{\infty}\mu(u,x)\frac{du}{u}ds=$$
$$=\int_0^{\infty}\mu(u,x)\Big(\int_0^{\min\{t,u\}}ds\Big)\frac{du}{u}\geq\int_0^t\mu(u,x)du.$$
Since $t>0$ is arbitrary, it follows that $\mu(x)\prec\prec C^{\ast}\mu(x).$

Note that $g\prec\prec f$ always implies $g^2\prec\prec f^2.$ From the preceding paragraph, we obtain
$$\mu^2(x)\prec\prec (C^{\ast}\mu(x))^2.$$
Equivalently,
$$C\mu^2(x)\leq C\big((C^{\ast}\mu(x))^2\big).$$

Since $Cy\geq y$ for every decreasing function $y$ and since $C^{\ast}\mu(x)$ is decreasing, it follows that
$$(C^{\ast}\mu(x))^2\leq C\big(C^{\ast}\mu(x))^2\big).$$
The assertion follows by combining the last $2$ estimates.
\end{proof}

We are now ready to prove Theorem \ref{second extrapolation theorem}.

\begin{proof}[Proof of Theorem \ref{second extrapolation theorem}] Scaling argument is similar to the one used in Theorem \ref{first extrapolation theorem}. We include the details for convenience of the reader. Fix $t>0$. Instead of the algebras $(\mathcal{N}_1,\nu_1),$ $(\mathcal{N}_2,\nu_2),$ consider the algebras $(\mathcal{N}_1,t^{-1}\nu_1).$ $(\mathcal{N}_2,t^{-1}\nu_2).$ We then have
\begin{equation*}
\|Tx\|_{L_p(\mathcal{N}_2,t^{-1}\nu_2)}=t^{-\frac1p}\|Tx\|_{L_p(\mathcal{N}_2,\nu_2)}\leq t^{-\frac1p}p\|x\|_{L_p}= p\|x\|_{L_p(\mathcal{N}_1,t^{-1}\nu_1)}.
\end{equation*}
Therefore, by Lemma \ref{second t maximal lemma}, we have
\begin{equation}\label{set eq1}
\|Tx\|_{(L_2+L_{\infty})(\mathcal{N}_2,t^{-1}\nu_2)}\leq c_{\rm abs}\|x\|_{(\Lambda_{{\rm log}}\cap (L_2+L_{\infty}))(\mathcal{N}_1,t^{-1}\nu_1)}.
\end{equation}

We have
$$\|Tx\|_{L_2+L_{\infty}(\mathcal{N}_2,t^{-1}\nu_2)}\stackrel{\eqref{main scaling eq}}{=}\|\sigma_{\frac1t}\mu(Tx)\|_{L_2+L_{\infty}}.$$
By Lemma \ref{lem-L1-Linfty}, we have
$$\|z\|_{L_2+L_{\infty}}^2\approx \int_0^1\mu^2(u,z)du.$$
Thus,
\begin{equation}\label{set eq2}
\|Tx\|_{L_2+L_{\infty}(\mathcal{N}_2,t^{-1}\nu_2)}^2\approx \int_0^1\mu^2(tu,z)du=\frac1t\int_0^t\mu^2(s,x)ds.
\end{equation}

Similarly,
$$\|x\|_{(\Lambda_{{\rm log}}\cap (L_2+L_{\infty}))(\mathcal{N}_1,t^{-1}\nu_1)}=\|\sigma_{\frac1t}\mu(x)\|_{\Lambda_{{\rm log}}\cap (L_2+L_{\infty})}^2.$$
We have
$$\|z\|_{\Lambda_{{\rm log}}\cap (L_2+L_{\infty})}^2\approx\int_0^1\mu^2(u,z)du+\big(\int_1^{\infty}\mu(u,z)\frac{du}{u}\big)^2.$$
Thus,
\begin{multline}\label{set eq3}
\|x\|_{(\Lambda_{{\rm log}}\cap (L_2+L_{\infty}))(\mathcal{N}_1,t^{-1}\nu_1)}^2\approx \\
\approx \int_0^1\mu^2(tu,z)du+\big(\int_1^{\infty}\mu(tu,z)\frac{du}{u}\big)^2=\\
=\frac1t\int_0^t\mu^2(s,z)ds+\big(\int_t^{\infty}\mu(s,z)\frac{ds}{s}\big)^2.
\end{multline}

Substituting \eqref{set eq1} and \eqref{set eq2} into \eqref{set eq3}, we obtain
$$\frac1t\int_0^t\mu^2(s,Tx)ds\lesssim\frac1t\int_0^t\mu^2(s,x)ds+\Big(\int_t^{\infty}\mu(s,x)\frac{ds}{s}\Big)^2,\quad t>0.$$
Since $t>0$ is arbitrary, it follows that
$$C\mu^2(Tx)\lesssim C\mu^2(x)+(C^{\ast}\mu(x))^2.$$
It follows now from Lemma \ref{valpha inverse estimate} that
$$C\mu^2(Tx)\lesssim C\big((C^{\ast}\mu(x))^2\big).$$
The last inequality is equivalent to the assertion of Theorem \ref{first extrapolation theorem}.
\end{proof}

\section{Proof of the main results}\label{main section}

In this section, we provide the proof of the distributional Stein, dualised Doob, Burkholder-Gundy inequalities, and also the distributional estimate for martingale transforms. We start with the following lemma, which is a simple exercise on convergence in the strong operator topology and convergence in $L_p.$ We provide the proof for the convenience of the reader.
\begin{lem}\label{convergence fact} Let $\mathcal{M}\subset B(H)$ be a semifinite von Neumann algebra.
\begin{enumerate}[{\rm (i)}]
\item If $(b_k)_{k\geq0}\subset L_{\infty}(\mathcal{M})$ are such that $\sum_{k\geq0}|b_k|^2\in L_{\infty}(\mathcal{M}),$ then the series $\sum_{k\geq0}b_k\otimes e_{k0}$ converges in the strong operator topology in $L_{\infty}(\mathcal{M}\overline\otimes B(\ell_2)).$
\item If $(b_k)_{k\geq0}\subset L_p(\mathcal{M}),$ $0<p<\infty,$ are such that $\sum_{k\geq0}|b_k|^2\in L_{\frac{p}{2}}(\mathcal{M}),$ then the series $\sum_{k\geq0}b_k\otimes e_{k0}$ converges in $L_p(\mathcal{M}\overline\otimes B(\ell_2)).$
\end{enumerate}
\end{lem}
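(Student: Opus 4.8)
The statement splits into two parts, and I would handle them separately since part (ii) has genuine content (an $L_p$-convergence claim) while part (i) is essentially a boundedness observation about matrix amplifications.

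\smallskip

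\textbf{Part (i).} The plan is to recognize the series $\sum_{k\geq 0} b_k\otimes e_{k0}$ as a column operator. For a finite partial sum $S_N = \sum_{k=0}^{N} b_k\otimes e_{k0}$ one computes directly that $S_N^\ast S_N = \sum_{k=0}^{N} |b_k|^2 \otimes e_{00}$, hence $\|S_N\|_\infty^2 = \|S_N^\ast S_N\|_\infty = \|\sum_{k=0}^N |b_k|^2\|_\infty \le \|\sum_{k\geq 0}|b_k|^2\|_\infty < \infty$, so the partial sums are uniformly bounded in the operator norm. To upgrade uniform boundedness to strong-operator convergence, I would note that the increasing net of positive operators $\sum_{k=0}^{N}|b_k|^2$ converges to $\sum_{k\geq 0}|b_k|^2$ in the order sense (this sup exists in $L_\infty(\mathcal M)$ by hypothesis), hence by Vigier's theorem (cited in Subsection~\ref{svf subsection}) it converges in the strong operator topology; consequently for $\xi$ in the column Hilbert module one gets $\|(S_N - S_M)\xi\|^2 = \langle \xi, (\sum_{M< k\le N}|b_k|^2\otimes e_{00})\xi\rangle \to 0$, so $(S_N\xi)$ is Cauchy and $S_N\xi$ converges for every $\xi$. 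Since the $S_N$ are also uniformly bounded, the limit defines a bounded operator $S$ and $S_N \to S$ strongly.

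\smallskip

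\textbf{Part (ii).} Here I would argue that $(S_N)_{N\geq 0}$ is Cauchy in $L_p(\mathcal M\overline\otimes B(\ell_2))$. The key identity is again $|S_N - S_M|^2 = (S_N-S_M)^\ast(S_N-S_M) = \sum_{M<k\le N}|b_k|^2 \otimes e_{00}$ for $M<N$, so that
$$\|S_N - S_M\|_p^p = \Big\| \sum_{M<k\le N}|b_k|^2 \Big\|_{p/2}^{p/2}.$$
Now $\sum_{M<k\le N}|b_k|^2 \uparrow$ in the order sense as a tail of the series $\sum_{k\geq 0}|b_k|^2$, whose partial sums form an increasing net bounded above (in $L_{p/2}(\mathcal M)$) by the hypothesis $\sum_{k\geq 0}|b_k|^2 \in L_{p/2}(\mathcal M)$. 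Since the quasi-norm of $L_{p/2}(\mathcal M)$ has the Fatou property (recalled in Subsection on symmetric operator spaces, as $L_{p/2}$ corresponds to the function space $L_{p/2}(0,\tau(1))$ which has Fatou property), the quantities $\|\sum_{k=0}^N |b_k|^2\|_{p/2}$ increase to $\|\sum_{k\geq 0}|b_k|^2\|_{p/2} < \infty$, hence the tails $\|\sum_{M<k\le N}|b_k|^2\|_{p/2} \to 0$ as $M,N\to\infty$. Therefore $\|S_N - S_M\|_p \to 0$ and, $L_p(\mathcal M\overline\otimes B(\ell_2))$ being complete, the series converges there.

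\smallskip

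\textbf{Main obstacle.} The only delicate point is the passage from order convergence of the truncated sums $\sum_{k}|b_k|^2$ to the norm/strong convergence statements: in part (i) this is exactly Vigier's theorem as quoted, and in part (ii) it is the Fatou property of $L_{p/2}$ together with monotone convergence of the quasi-norm along the increasing net. One should be slightly careful that for $0<p<2$ one is working with a quasi-norm rather than a norm, but the Fatou property and completeness both still hold in that range, so the Cauchy argument goes through verbatim. No other step requires anything beyond the definition of the matrix units $e_{k0}$ and the elementary computation of $S_N^\ast S_N$.
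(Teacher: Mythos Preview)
Your argument follows the paper's proof closely. Part (i) matches exactly: Vigier's theorem gives strong-operator convergence of the increasing sums $\sum_{k\le N}|b_k|^2$, and the identity $(S_N-S_M)^\ast(S_N-S_M)=\sum_{M<k\le N}|b_k|^2\otimes e_{00}$ turns this into the Cauchy property for $(S_N\xi)$.

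In part (ii), however, your justification has a small logical gap. The Fatou property indeed gives $\|\sum_{k\le N}|b_k|^2\|_{p/2}\uparrow\|\sum_{k\ge0}|b_k|^2\|_{p/2}$, but this alone does \emph{not} force the tail norms $\|\sum_{k>M}|b_k|^2\|_{p/2}$ to vanish: $\ell_\infty$ has the Fatou property, yet with $C=(1,1,\dots)$ and $A_N$ the first $N$ coordinates one has $\|A_N\|_\infty=\|C\|_\infty$ for all $N$ while $\|C-A_N\|_\infty\equiv 1$. What you actually need is the \emph{order continuity} of the $L_{p/2}$ quasi-norm (valid for every $0<p<\infty$): since $\sum_{k>M}|b_k|^2\downarrow 0$ in order, order continuity gives $\|\sum_{k>M}|b_k|^2\|_{p/2}\to 0$, and then the finite tails are dominated by this. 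That is precisely the property invoked in the paper's proof; once you make this substitution the remainder of your argument is correct and coincides with the paper's.
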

\begin{proof} By Vigier's theorem \cite[Theorem 4.1.1]{Murphy90}, the series $\sum_{k\geq0}|b_k|^2$ converges in $B(H)$ in strong operator topology. Fix $\xi\in H\overline{\otimes}\ell_2$ and set $\eta\otimes e_0=(1\otimes e_{00})\xi.$ We claim that the sequence $\big((\sum_{k=0}^nb_k\otimes e_{k0})\xi\big)_{n\geq0}$ is Cauchy in $H\overline{\otimes}\ell_2.$ To see this, fix $\varepsilon>0$ and choose $n\in\mathbb{Z}_+$ such that
$$\sum_{k=n+1}^{\infty}\langle |b_k|^2\eta,\eta\rangle_H<\varepsilon.$$
For $m>n,$ we have
\begin{align*}
\Big\|&\Big(\sum_{k=0}^nb_k\otimes e_{k0}\Big)\xi-\Big(\sum_{k=0}^mb_k\otimes e_{k0}\Big)\xi\Big\|_{H\overline{\otimes}\ell_2}^2\\
&=\Big\|\Big(\sum_{k=n+1}^m|b_k|^2\Big)^{\frac12}\eta\Big\|_H^2
=\Big\langle\Big(\sum_{k=n+1}^m|b_k|^2\Big)\eta,\eta\Big\rangle_H\\
&\leq\sum_{k=n+1}^{\infty}\langle |b_k|^2\eta,\eta\rangle_H<\varepsilon.
\end{align*}
This proves the claim. By the preceding arguments, the sequence  $\big(\sum_{k=0}^nb_k\otimes e_{k0}\big)_{n\geq0}$ is strongly Cauchy. By completeness, it is strongly convergent.
	
To see the second assertion, note that for $0<p<\infty$, we have
\begin{align*}
\big\|\sum_{k=n+1}^mb_k\otimes e_{k0}\big\|_p&=\big\|\big(\sum_{k=n+1}^m|b_k|^2\big)^{\frac12}\big\|_p\\
&=\big\|\sum_{k=n+1}^m|b_k|^2\big\|_{\frac{p}{2}}^{\frac12}\leq\big\|\sum_{k=n+1}^{\infty}|b_k|^2\big\|_{\frac{p}{2}}^{\frac{1}{2}}.
\end{align*}
The sequence $\big(\sum_{k=n+1}^{\infty}|b_k|^2\big)_{n\geq0}\subset L_{p/2}(\mathcal{M})$ converges to $0$ in order (see page 5 for the definition of convergence in order). Since the (quasi-)norm in $L_{\frac{p}{2}}(\mathcal{M})$ is order-continuous, it follows that the latter sequence also converges to $0$ in (quasi-)norm.  Therefore, the sequence
$$\big(\sum_{k=0}^nb_k\otimes e_{k0}\big)_{n\geq0}\subset L_p(\mathcal{M}\overline{\otimes}B(\ell_2))$$
is Cauchy. By completeness, it converges in $L_p(\mathcal{M}\overline{\otimes}B(\ell_2)).$
\end{proof}

\begin{lem}\label{lp tensor fact} If $x\in L_p(\mathcal{M}\bar{\otimes}B(\ell_2)),$ then there exists a unique sequence $(x_{kl})_{k,l\geq0}\subset L_p(\mathcal{M})$ such that
$$x=\lim_{N\to\infty}\sum_{k,l=0}^Nx_{kl}\otimes e_{kl}$$
in $L_p(\mathcal{M}\bar{\otimes}B(\ell_2)).$
\end{lem}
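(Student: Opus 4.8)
The plan is to obtain the decomposition as the ``matrix entry'' decomposition of $x$ relative to the matrix units of $B(\ell_2)$, and to realise the truncated sums $\sum_{k,l=0}^N x_{kl}\otimes e_{kl}$ as compressions of $x$ by finite-rank projections. Write $\mathcal N:=\mathcal M\bar\otimes B(\ell_2)$, $\widetilde\tau:=\tau\otimes\mathrm{Tr}$, and set $q_N:=1\otimes\big(\sum_{k=0}^N e_{kk}\big)$, so that $(q_N)_{N\geq0}$ is an increasing sequence of projections with $q_N\uparrow 1$ in the strong operator topology and $1-q_N\downarrow 0$. The corner $(1\otimes e_{00})\mathcal N(1\otimes e_{00})$ is $\ast$-isomorphic to $\mathcal M$, with $\widetilde\tau$ restricting to $\tau$, so that $(1\otimes e_{00})L_p(\mathcal N)(1\otimes e_{00})$ is isometric to $L_p(\mathcal M)$ via $a\mapsto a\otimes e_{00}$. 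Since $L_p(\mathcal N)$ is an $\mathcal N$-bimodule, for $x\in L_p(\mathcal N)$ I would \emph{define} $x_{kl}\in L_p(\mathcal M)$ by $x_{kl}\otimes e_{00}:=(1\otimes e_{0k})\,x\,(1\otimes e_{l0})$. A direct computation with the matrix units then gives
$$\sum_{k,l=0}^N x_{kl}\otimes e_{kl}=\sum_{k,l=0}^N (1\otimes e_{k0})\big(x_{kl}\otimes e_{00}\big)(1\otimes e_{0l})=\sum_{k,l=0}^N (1\otimes e_{kk})\,x\,(1\otimes e_{ll})=q_N\,x\,q_N,$$
so the assertion reduces to showing $q_N x q_N\to x$ in $L_p(\mathcal N)$, together with uniqueness.

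For the convergence, using the quasi-triangle inequality together with the (uniform, in $N$) contractivity of left and right multiplication by $q_N$ on $L_p(\mathcal N)$, one has $\|x-q_Nxq_N\|_p\le C_p\big(\|(1-q_N)x\|_p+\|x(1-q_N)\|_p\big)$, where $C_p$ is the modulus of concavity of $\|\cdot\|_p$; hence it suffices to prove $\|(1-q_N)x\|_p\to 0$ and $\|x(1-q_N)\|_p\to 0$. I would first establish this for $x$ in the dense subset of $L_p(\mathcal N)$ consisting of bounded operators with $\widetilde\tau$-finite support: if $x=fxf$ with $f$ a projection and $\widetilde\tau(f)<\infty$, then $\|(1-q_N)x\|_p=\|(1-q_N)f\cdot xf\|_p\le\|x\|_\infty\,\|(1-q_N)f\|_p$, and
$$\|(1-q_N)f\|_p^{2}=\big\|f(1-q_N)f\big\|_{p/2}.$$
The sequence $\big(f(1-q_N)f\big)_{N\geq0}$ is decreasing, dominated by $f\in L_{p/2}(\mathcal N)$, and tends to $0$ strongly, hence decreases to $0$ in order; order-continuity of the $L_{p/2}(\mathcal N)$ (quasi-)norm then forces $\|f(1-q_N)f\|_{p/2}\to 0$. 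Applying the same estimate to $x^{\ast}$ handles $\|x(1-q_N)\|_p$, and passing to a general $x\in L_p(\mathcal N)$ is a routine $\varepsilon/3$ argument using the uniform contractivity of $x\mapsto q_Nxq_N$. I expect this convergence step (equivalently, the fact that the two-sided truncations converge to the identity in the $L_p$-norm, not merely weakly) to be the main obstacle; everything else is bookkeeping.

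For uniqueness, if $(x_{kl})$ and $(x'_{kl})$ both satisfy the conclusion, then $\sum_{k,l=0}^N(x_{kl}-x'_{kl})\otimes e_{kl}\to 0$ in $L_p(\mathcal N)$; applying the bounded linear map $z\mapsto (1\otimes e_{0a})\,z\,(1\otimes e_{b0})$ and letting $N\to\infty$ yields $(x_{ab}-x'_{ab})\otimes e_{00}=0$, i.e. $x_{ab}=x'_{ab}$ for all $a,b\geq 0$. The same computation shows that the representing sequence is necessarily the one constructed in the first paragraph, so existence and uniqueness are simultaneously obtained.
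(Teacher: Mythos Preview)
Your argument is correct and matches the paper's approach: define $x_{kl}$ via the matrix-unit compressions $(1\otimes e_{0k})\,x\,(1\otimes e_{l0})$, identify $\sum_{k,l\le N}x_{kl}\otimes e_{kl}$ with $q_Nxq_N$, prove $q_Nxq_N\to x$ using order-continuity of the $L_{p/2}$ quasi-norm, and deduce uniqueness by compressing with $1\otimes e_{0a}$ and $1\otimes e_{b0}$. The only cosmetic differences are that the paper proves the corner identification $(1\otimes e_{00})L_p(\mathcal N)(1\otimes e_{00})\cong L_p(\mathcal M)$ from scratch (via spectral projections) rather than citing it, and for the convergence step it works directly with $|x|(1-q_N)|x|\downarrow 0$ in $L_{p/2}(\mathcal N)$ for arbitrary $x\in L_p(\mathcal N)$, which avoids your density detour.
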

\begin{proof} Set
$$p_N=\sum_{k=0}^N1\otimes e_{kk}\mbox{ so that }p_N\uparrow 1\mbox{ as }N\to\infty.$$
We have
$$\|x-p_Nxp_N\|_p\leq\|(1-p_N)x\|_p+\|x(1-p_N)\|_p=$$
$$=\big\||x^{\ast}|(1-p_N)|x^{\ast}|\big\|_{\frac{p}{2}}^{\frac12}+\big\||x|(1-p_N)|x|\big\|_{\frac{p}{2}}^{\frac12}.$$
Since
$$|x^{\ast}|(1-p_N)|x^{\ast}|\downarrow 0,\quad |x|(1-p_N)|x|\downarrow 0,$$
it follows from the order-continuity of the norm in $L_{\frac{p}{2}}$ that
$$\big\||x^{\ast}|(1-p_N)|x^{\ast}|\big\|_{\frac{p}{2}}\downarrow0,\quad \big\||x|(1-p_N)|x|\big\|_{\frac{p}{2}}\downarrow 0.$$
Hence,
$$\|x-p_Nxp_N\|_p\to 0,\quad N\to\infty.$$
	
If $X\in L_{\infty}(\mathcal{M}\bar{\otimes}B(\ell_2))$ is such that
$$X=(1\otimes e_{00})\cdot X\cdot (1\otimes e_{00}),$$
then there exists a net $(X_i)_{i\in I}$ of elementary tensors such that $X_i\to X$ in weak operator topology. Thus,
$$(1\otimes e_{00})\cdot X_i\cdot (1\otimes e_{00})\to (1\otimes e_{00})\cdot X\cdot (1\otimes e_{00})=X$$
in weak operator topology. On the other hand, we have
$$(1\otimes e_{00})\cdot X_i\cdot (1\otimes e_{00})=x_i\otimes e_{00}\mbox{ for some }x_i\in\mathcal{M}.$$
Thus, $x_i\otimes e_{00}\to X$ in weak operator topology. Let $x$ be a cluster point of the net $(x_i)_{i\in I}.$ It follows that $X=x\otimes e_{00}$ as stated.
	
If $X\in L_p(\mathcal{M}\bar{\otimes}B(\ell_2))$ is such that
$$X=(1\otimes e_{00})\cdot X\cdot (1\otimes e_{00}),$$
then there exists $x\in L_p(\mathcal{M})$ such that $X=x\otimes e_{00}.$ Indeed, without loss of generality, $X\geq0.$ For every $t>0,$ we have
$$\chi_{(t,\infty)}(X)\leq\chi_{(0,\infty)}(X)\leq (1\otimes e_{00})$$
and, therefore,
$$\chi_{(t,\infty)}(X)=(1\otimes e_{00})\cdot\chi_{(t,\infty)}(X)\cdot (1\otimes e_{00}).$$
By the preceding paragraph, there exists an element $x_t\in\mathcal{M}$ such that
$$\chi_{(t,\infty)}(X)=x_t\otimes e_{00},\quad t>0.$$
It is immediate that $x_t^2=x_t^{\ast}=x_t$ and, therefore, $x_t$ is a projection $p_t\in P(\mathcal{M}).$ It is immediate that $(p_t)_{t>0}$ forms a spectral family of a positive operator $x$ affiliated with $\mathcal{M}.$ By spectral theorem, we have $X=x\otimes e_{00}.$ Thus, $x\in L_p(\mathcal{M})$ as required.

Let
$$X_{kl}=(1\otimes e_{0k})\cdot x\cdot (1\otimes e_{l0}),\quad k,l\in\geq0.$$
Obviously,
$$X_{kl}\in L_p(\mathcal{M}\bar{\otimes}B(\ell_2)),\quad X_{kl}=(1\otimes e_{00})\cdot X_{kl}\cdot (1\otimes e_{00}).$$
By the preceding paragraph, there exists $x_{kl}\in L_p(\mathcal{M})$ such that $X_{kl}=x_{kl}\otimes e_{00}.$ Therefore,
$$(1\otimes e_{kk})\cdot x\cdot (1\otimes e_{ll})=x_{kl}\otimes e_{kl}.$$
We now have
$$\sum_{k,l=0}^Nx_{kl}\otimes e_{kl}=\sum_{k,l=0}^N(1\otimes e_{kk})\cdot x\cdot (1\otimes e_{ll})=p_Nxp_N.$$
The existence claim follows now from the first paragraph.

Now, if $(y_{kl})_{k,l\geq0}\subset L_p(\mathcal{M})$ is such that
$$\sum_{k,l=0}^Ny_{kl}\otimes e_{kl}\to x,$$
then
$$(1\otimes e_{m_1m_1})\cdot \big(\sum_{k,l=0}^Ny_{kl}\otimes e_{kl}\big)\cdot (1\otimes e_{m_2m_2})\to (1\otimes e_{m_1m_1})\cdot x\cdot (1\otimes e_{m_2m_2}).$$
For $N\geq\max\{m_1,m_2\},$ we have
$$(1\otimes e_{m_1m_1})\cdot \big(\sum_{k,l=0}^Ny_{kl}\otimes e_{kl}\big)\cdot (1\otimes e_{m_2m_2})=y_{m_1m_2}\otimes e_{m_1m_2}.$$
Thus,
$$y_{m_1m_2}\otimes e_{m_1m_2}=(1\otimes e_{m_1m_1})\cdot x\cdot (1\otimes e_{m_2m_2})=x_{m_1m_2}\otimes e_{m_1m_2}.$$
The uniqueness claim now follows.
\end{proof}

\begin{cor}\label{lp tensor cor} If $x\in L_p(\mathcal{M}\bar{\otimes}B(\ell_2)),$ $0<p<\infty,$ and if $(x_{kl})_{k,l\geq0}$ is the sequence constructed in Lemma \ref{lp tensor fact}, then
$$\sum_{k\geq0}|x_{k0}|^2\in L_{\frac{p}{2}}(\mathcal{M})\mbox{ and }\Big\|\Big(\sum_{k\geq0}|x_{k0}|^2\Big)^{\frac12}\Big\|_p\leq\|x\|_p.$$
\end{cor}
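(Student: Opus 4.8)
The statement says that for $x\in L_p(\mathcal M\bar\otimes B(\ell_2))$ with associated matrix entries $(x_{kl})$ from Lemma \ref{lp tensor fact}, the first column $(x_{k0})_{k\geq 0}$ is square-summable in $L_{p/2}(\mathcal M)$ with $\big\|\big(\sum_k|x_{k0}|^2\big)^{1/2}\big\|_p\leq\|x\|_p$. My plan is to recognise the column $\sum_k x_{k0}\otimes e_{k0}$ as a compression of $x$ by the orthogonal projection $1\otimes e_{00}$ on the right, and then to compute the $L_p$-norm of that compression directly in terms of the column.

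First I would set $q=1\otimes e_{00}$, a projection in $\mathcal M\bar\otimes B(\ell_2)$, and consider $xq$. From the identity established at the end of the proof of Lemma \ref{lp tensor fact}, namely $(1\otimes e_{kk})\cdot x\cdot(1\otimes e_{ll})=x_{kl}\otimes e_{kl}$, summing over $k$ with $l=0$ gives $xq=\sum_{k\geq 0}x_{k0}\otimes e_{k0}$, with the series converging in $L_p(\mathcal M\bar\otimes B(\ell_2))$ (this convergence follows from the partial-sum argument $p_Nxp_N\to x$ together with applying $q$ on the right, or directly from Lemma \ref{convergence fact}(ii) once one knows $\sum_k|x_{k0}|^2\in L_{p/2}$, which will drop out below). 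Next I would compute $|xq|^2=q x^* x q = q|x|^2 q$. On the other hand, writing $b_k:=x_{k0}$, a direct matrix computation gives $(xq)^*(xq)=\big(\sum_{k}|b_k|^2\big)\otimes e_{00}$, since $(b_k\otimes e_{k0})^*(b_l\otimes e_{l0})=b_k^*b_l\otimes e_{00}\cdot\delta_{kl}$ — the off-diagonal terms in $e_{k0}^*e_{l0}=e_{0k}e_{l0}=\delta_{kl}e_{00}$ vanish. (One should justify passing the limit through $(\cdot)^*(\cdot)$, e.g. via continuity of multiplication in $L_{p/2}$ obtained from Hölder, or by taking the square of the norm of the partial sums and using order-continuity; this is the routine technical point rather than the conceptual one.) Hence $\sum_k|b_k|^2\in L_{p/2}(\mathcal M)$ and $\big\||xq|\big\|_p^2=\big\|(xq)^*(xq)\big\|_{p/2}=\big\|\sum_k|b_k|^2\big\|_{p/2}=\big\|\big(\sum_k|b_k|^2\big)^{1/2}\big\|_p^2$.

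Finally I would invoke the elementary fact that compression by a projection is contractive on $L_p$: $\|xq\|_p\leq\|x\|_p$ for $0<p<\infty$ (a consequence of $\mu(xq)\leq\mu(x)$, which follows since $|xq|\leq|x|$ in the sense of the operator order on $q(H)$, or simply from $\mu(t,xq)=\mu(t,xq q)\le \mu(t,x)\|q\|_\infty$ together with monotonicity of singular values under right multiplication by a contraction). Combining the two displayed facts gives $\big\|\big(\sum_k|x_{k0}|^2\big)^{1/2}\big\|_p=\|xq\|_p\leq\|x\|_p$, which is precisely the claim.

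The only place demanding care — the "main obstacle", though a mild one — is the interchange of the limit $\sum_{k=0}^N x_{k0}\otimes e_{k0}\to xq$ with the nonlinear operation $z\mapsto z^*z$ (equivalently $z\mapsto|z|^2$), since a priori we only have $L_p$-convergence and $p$ may be less than $1$. I would handle this by noting that the partial sums $s_N:=\sum_{k=0}^N x_{k0}\otimes e_{k0}$ satisfy $s_N^*s_N=\big(\sum_{k=0}^N|x_{k0}|^2\big)\otimes e_{00}$, an increasing net of positive operators; since $s_N\to xq$ in $L_p$, the net $s_N^*s_N$ is $\|\cdot\|_{p/2}$-bounded (the partial-sum norms converge), hence order-bounded, so it converges in order to its supremum $\big(\sum_k|x_{k0}|^2\big)\otimes e_{00}\in L_{p/2}$, and order-continuity of $\|\cdot\|_{p/2}$ forces $s_N^*s_N\to (xq)^*(xq)$ in $L_{p/2}$, identifying the two limits. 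Everything else is bookkeeping with matrix units.
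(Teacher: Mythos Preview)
Your argument is correct and follows essentially the same route as the paper: both compress $x$ by the column projection $1\otimes e_{00}$, compute $\big|\sum_{k=0}^N x_{k0}\otimes e_{k0}\big|^2=\big(\sum_{k=0}^N|x_{k0}|^2\big)\otimes e_{00}$, and use $\|p_N x(1\otimes e_{00})\|_p\le\|x\|_p$. The paper's write-up is slightly more economical in that it stays with finite partial sums throughout and appeals to the Fatou property of $L_{p/2}$ at the very end, thereby bypassing entirely the limit-through-product issue you flag as the ``main obstacle''; your handling of that point (H\"older continuity of multiplication together with order-continuity) is fine, but unnecessary once one notices Fatou already gives the conclusion from the uniform bound $\big\|\sum_{k=0}^N|x_{k0}|^2\big\|_{p/2}\le\|x\|_p^2$.
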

\begin{proof} We have
$$\sum_{k=0}^Nx_{k0}\otimes e_{k0}=(\sum_{k=0}^N1\otimes e_{kk})\cdot x\cdot (1\otimes e_{00}).$$
Thus,
$$\|\sum_{k=0}^Nx_{k0}\otimes e_{k0}\|_p\leq\|\sum_{k=0}^N1\otimes e_{kk}\|_{\infty}\|x\|_p\|1\otimes e_{00}\|_{\infty}=\|x\|_p.$$
Further, a direct computation yields
$$\Big|\sum_{k=0}^Nx_{k0}\otimes e_{k0}\Big|^2=\sum_{k=0}^N|x_{k0}|^2\otimes e_{00}=(\sum_{k=0}^N|x_{k0}|^2)\otimes e_{00}.$$
Thus,
$$\|\sum_{k=0}^N|x_{k0}|^2\|_{\frac{p}{2}}=\|\sum_{k=0}^Nx_{k0}\otimes e_{k0}\|_p^2\leq\|x\|_p^2.$$
The assertion follows now from the Fatou property of the space $L_p(\mathcal{M}\bar{\otimes}B(\ell_2)).$
\end{proof}

\begin{proof}[Proof of Theorem \ref{stein thm}] We use First Extrapolation Theorem to show the distributional Stein inequality (DST). Consider the mapping $T$ which is formally given by the formula
\begin{equation}\label{stein t def}
T:x\to \sum_{k\geq0}\mathcal{E}_{k}x_{k0}\otimes e_{k0},\quad x\in L_p(\mathcal{M}\bar{\otimes}B(\ell_2)),
\end{equation}
where the sequence $(x_{kl})_{k,l\geq0}$ is constructed in Lemma \ref{lp tensor fact}. We claim that, for every  $x\in L_p(\mathcal{M}\bar{\otimes}B(\ell_2)),$ $1<p<\infty,$ the series on the right hand side in \eqref{stein t def} converges in $L_p(\mathcal{M}\bar{\otimes}B(\ell_2));$ moreover, the operator  $$T:L_p(\mathcal{M}\bar{\otimes}B(\ell_2))\to L_p(\mathcal{M}\bar{\otimes}B(\ell_2)),\quad 1< p<\infty,$$
is a well defined bounded mapping which satisfies  First Extrapolation Condition.

To see the claim, using (ST) and Corollary \ref{lp tensor cor}, we obtain
$$\Big\|\Big(\sum_{k\geq0}|\mathcal{E}_{k}x_{k0}|^2\Big)^{\frac12}\Big\|_p\leq c_{{\rm abs}}\max\{p,p'\} \cdot \Big\|\Big(\sum_{k\geq0}|x_{k0}|^2\Big)^{\frac12}\Big\|_p\leq c_{{\rm abs}}\max\{p,p'\} \cdot\|x\|_p.$$
By Lemma \ref{convergence fact}, the series on the right hand side in \eqref{stein t def} converges in $L_p(\mathcal{M}\overline{\otimes}B(\ell_2)).$  In particular, $T$ is well defined. Moreover, for every $1<p<\infty,$ we have
$$\|Tx\|_p\leq c_{{\rm abs}}\max\{p,p'\} \cdot\|x\|_p,\quad x\in L_p(\mathcal{M}\bar{\otimes}B(\ell_2)).$$
In other words, the operator $T$ satisfies First Extrapolation Condition.

In addition, by \cite[Theorem 3.10]{Randrianantoanina}, for every $x\in L_1(\mathcal{M}\overline{\otimes}B(\ell_2)),$
$$\Big\|\Big(\sum_{k\geq0}|\mathcal{E}_{k}x_{k0}|^2\Big)^{\frac12}\Big\|_{1,\infty}\leq c_{{\rm abs}} \Big\|\Big(\sum_{k\geq0}|x_{k0}|^2\Big)^{\frac12}\Big\|_1\leq c_{{\rm abs}} \cdot\|x\|_1.$$
Therefore,
$$\|Tx\|_{1,\infty}\leq c_{{\rm abs}}\|x\|_1.$$
Hence, Corollary \ref{first extrapolation corollary} is applicable to the operator $T.$

Applying Corollary \ref{first extrapolation corollary} to the operator $T$, we obtain
$$\mu(Tx)\leq c_{{\rm abs}}S\mu(x),\quad x\in\Lambda_{\log}(\mathcal{M}\bar{\otimes}B(\ell_2)).$$
Now taking
$$x=\sum_{k\geq 0}x_k\otimes e_{k0}\in \Lambda_{\log}(\mathcal{M}\bar{\otimes}B(\ell_2)),$$
we have
$$\mu\Big(\sum_{k\geq 0}\mathcal{E}_kx_k\otimes e_{k0}\Big)\leq c_{{\rm abs}}S\mu\Big(\sum_{k\geq 0}x_k\otimes e_{k0}\Big),$$
or, equivalently,
\begin{equation}\label{stein main eq}
\mu^{\frac12}\Big(\Big|\sum_{k\geq 0}\mathcal{E}_kx_k\otimes e_{k0}\Big|^2\Big)\leq c_{{\rm abs}}S\mu^{\frac12}\Big(\Big|\sum_{k\geq 0}x_k\otimes e_{k0}\Big|^2\Big).
\end{equation}

It is immediate that
$$\Big|\sum_{k\geq 0}x_k\otimes e_{k0}\Big|^2=\sum_{k,l\geq0}x_k^{\ast}x_l\otimes e_{0k}e_{l0}=\sum_{k\geq0}x_k^{\ast}x_k\otimes e_{00}=\big(\sum_{k\geq0}x_k^{\ast}x_k\big)\otimes e_{00}$$
and
\begin{align*}
\Big|\sum_{k\geq 0}\mathcal{E}_kx_k\otimes e_{k0}\Big|^2&=\sum_{k,l\geq0}(\mathcal{E}_kx_k)^{\ast}(\mathcal{E}_lx_l)\otimes e_{0k}e_{l0}\\
&=\sum_{k\geq0}(\mathcal{E}_kx_k)^{\ast}(\mathcal{E}_kx_k)\otimes e_{00}=\big(\sum_{k\geq0}|\mathcal{E}_kx_k|^2\big)\otimes e_{00}.
\end{align*}
Substituting these expressions into \eqref{stein main eq}, we obtain
$$\mu^{\frac12}\Big(\sum_{k\geq 0}|\mathcal{E}_kx_k|^2\Big)\leq c_{{\rm abs}}S\mu^{\frac12}\Big(\sum_{k\geq 0}|x_k|^2\Big).$$
This completes the proof.
\end{proof}

In preparation for the proof of Theorem \ref{dual doob thm}, we recall a crucial result established by Junge in \cite[Proposition 2.8]{Junge-doob} (see also the exposition in \cite{JSZZ}, Appendix, Proof of Theorem 5.6).

\begin{prop}\label{junge expectation theorem} Let $\mathcal{M}$ be a semifinite von Neumann algebra and $\mathcal{N}$ be a semifinite von Neumann subalgebra of $\mathcal{M}.$ If $\mathcal{E}:\mathcal{M}\to\mathcal{N}$ is a conditional expectation, then there is a linear isometry $u:L_2(\mathcal{M})\to L_2(\mathcal{N}\overline{\otimes} B(\ell_2))$ such that
$$u(x)^{\ast}u(y)=\mathcal{E}(x^{\ast}y)\otimes e_{00}.$$
\end{prop}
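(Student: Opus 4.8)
The plan is to prove this via the Kolmogorov decomposition of the positive-definite $\mathcal N$-valued kernel attached to $\mathcal E$, which is the classical route to such dilations. First I would equip the dense $\ast$-subalgebra $\mathcal M\cap L_2(\mathcal M)$ of $L_2(\mathcal M)$ with the $\mathcal N$-valued sesquilinear form $\langle x,y\rangle_{\mathcal N}:=\mathcal E(x^{\ast}y)$. Since $\mathcal E$ is completely positive (Proposition~\ref{exp cpp prop}), the kernel $(x,y)\mapsto\mathcal E(x^{\ast}y)$ is positive definite in the operator sense; since $\mathcal E$ is faithful (Proposition~\ref{exp def prop}(c)) it has no null vectors; and since $\mathcal E$ preserves the trace (Proposition~\ref{exp def prop}(f)) we have $\tau\big(\langle x,x\rangle_{\mathcal N}\big)=\tau\big(\mathcal E(x^{\ast}x)\big)=\tau(x^{\ast}x)=\|x\|_{L_2(\mathcal M)}^2$. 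Hence completing in this scalar norm merely recovers $L_2(\mathcal M)$, now regarded as a right $\mathcal N$-module carrying an $L_1(\mathcal N)$-valued inner product; because $H$ is separable, $L_2(\mathcal M)$ is separable and this module is countably generated.

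The core step is to decompose this kernel, i.e.\ to produce bounded linear maps $u_k\colon L_2(\mathcal M)\to L_2(\mathcal N)$, $k\geq 0$, satisfying
$$\sum_{k\geq 0}u_k(x)^{\ast}u_k(y)=\mathcal E(x^{\ast}y),\qquad x,y\in L_2(\mathcal M),$$
with the series converging in $L_1(\mathcal N)$. Concretely I would fix a countable generating family of the module, perform a Gram--Schmidt type orthogonalisation over $\mathcal N$, and circumvent the fact that the self-inner-products $\langle\xi,\xi\rangle_{\mathcal N}$ need not be invertible by the very device underlying the Kasparov stabilization theorem, namely adjoining an auxiliary copy of the standard module $\ell_2(\mathcal N)$; the outcome is a module embedding into $\ell_2(\mathcal N)$ that is not merely a Hilbert-space isometry but intertwines the $\mathcal N$-valued inner products. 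Each $u_k$ is first built on $\mathcal M\cap L_2(\mathcal M)$ as a right $\mathcal N$-module map (module linearity over $\mathcal N$ being forced by the $\mathcal N$-bimodularity of $\mathcal E$, Proposition~\ref{exp def prop}(h)) and then extended by boundedness, using that $\sum_{k}|u_k(x)|^2=\mathcal E(x^{\ast}x)$ dominates $\|u_k(x)\|_{L_2(\mathcal N)}^2$.

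Finally I would set $u(x):=\sum_{k\geq 0}u_k(x)\otimes e_{k0}$. Since $\sum_{k\geq 0}|u_k(x)|^2=\mathcal E(x^{\ast}x)\in L_1(\mathcal N)$, Lemma~\ref{convergence fact}(ii), applied in $L_2(\mathcal N\overline\otimes B(\ell_2))$, shows the series converges, so $u\colon L_2(\mathcal M)\to L_2(\mathcal N\overline\otimes B(\ell_2))$ is a well-defined linear map. A computation with matrix units gives
$$u(x)^{\ast}u(y)=\sum_{k,l\geq 0}u_k(x)^{\ast}u_l(y)\otimes e_{0k}e_{l0}=\Big(\sum_{k\geq 0}u_k(x)^{\ast}u_k(y)\Big)\otimes e_{00}=\mathcal E(x^{\ast}y)\otimes e_{00},$$
and applying the trace to both sides yields $\langle u(x),u(y)\rangle=\tau(\mathcal E(x^{\ast}y))=\tau(x^{\ast}y)=\langle x,y\rangle$, so $u$ is an isometry. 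The delicate point is the middle step: carrying out the Kolmogorov/stabilization decomposition in the semifinite, non-unital $L_2$ setting so that the embedding respects the $\mathcal N$-valued (not just the scalar) inner product and the defining series converges in $L_1(\mathcal N)$; this is exactly what Junge establishes in \cite[Proposition~2.8]{Junge-doob}, and for the present paper it suffices to invoke that result.
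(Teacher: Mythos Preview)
The paper does not prove this proposition at all: it simply records it as a result of Junge, citing \cite[Proposition~2.8]{Junge-doob} (with a pointer also to the exposition in \cite{JSZZ}). Your proposal ultimately does the same---you end by invoking Junge's result---so at the level of what the paper requires, you are in agreement.

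The sketch you give before that invocation is a correct outline of the argument behind Junge's proposition: viewing $\mathcal{E}(x^{\ast}y)$ as an $\mathcal{N}$-valued positive-definite kernel via complete positivity, realising $L_2(\mathcal{M})$ as a countably generated right Hilbert $\mathcal{N}$-module, embedding it into the standard module $\ell_2(\mathcal{N})$ by a Kasparov-type stabilisation to obtain the coordinate maps $u_k$, and then assembling $u(x)=\sum_{k\geq 0}u_k(x)\otimes e_{k0}$ with convergence in $L_2(\mathcal{N}\overline\otimes B(\ell_2))$ via Lemma~\ref{convergence fact}(ii). The only place to be careful, as you yourself flag, is making the module-level Gram--Schmidt/stabilisation precise in the semifinite setting so that the resulting embedding intertwines the $\mathcal{N}$-valued inner products and not merely the scalar ones; this is exactly the technical content of \cite[Proposition~2.8]{Junge-doob}, and since the present paper is content to cite that, so may you be.
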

\medskip

Now, we are fully equipped to present the proof of (DDD) and (DMT).

\begin{proof}[Proof of Theorem \ref{dual doob thm}] We apply Second Extrapolation Theorem to prove the distributional dual Doob inequality (DDD). Recall that $(\mathcal{M}_k)_{k\geq0}$ is an increasing sequence of von Neumann subalgebras of $\mathcal{M}$ such that the union $\bigcup_{k\geq 0} \mathcal{M}_k$ is weak$^{\ast}$ dense in $\mathcal{M}.$ Let $\mathcal{E}_{k}:\mathcal{M}\to\mathcal{M}_k$ be the conditional expectation and let $u_k:L_2(\mathcal{M})\to L_2(\mathcal{M}_k\overline{\otimes} B(\ell_2))\subset L_2(\mathcal{M}\overline{\otimes} B(\ell_2))$ be the mapping constructed in Proposition \ref{junge expectation theorem}. Then
\begin{equation*}\label{uk square}
|u_k(x)|^2=\mathcal{E}_k(x^{\ast}x)\otimes e_{00},\quad x\in L_2(\mathcal{M}).
\end{equation*}
In particular, we have
$$\|u_k(x)\|_p\leq\|x\|_p, \quad x\in(L_p\cap L_2)(\mathcal{M}),\quad 2\leq p<\infty.$$
Therefore, $u_k$ uniquely extends to a contractive linear mapping
$$u_k:L_p(\mathcal{M})\to L_p(\mathcal{M}\overline{\otimes} B(\ell_2)),\quad 2\leq p<\infty.$$

Consider the mapping $T$ which is formally given by the formula
\begin{equation}\label{doob t def}
T:x\to\sum_{k\geq0}u_k(x_{k0})\otimes e_{k0},\quad x\in L_p(\mathcal{M}\bar{\otimes}B(\ell_2)),
\end{equation}
where the sequence $(x_{kl})_{k,l\geq0}$ is constructed in Lemma \ref{lp tensor fact}. We claim that, for every  $x\in L_p(\mathcal{M}\bar{\otimes}B(\ell_2)),$ $2\leq p<\infty,$ the series on the right hand side in \eqref{doob t def} converges in $L_p(\mathcal{M}\bar{\otimes}B(\ell_2)\bar{\otimes}B(\ell_2)).$ Further, we claim that $$T:L_p(\mathcal{M}\bar{\otimes}B(\ell_2))\to L_p(\mathcal{M}\bar{\otimes}B(\ell_2)\bar{\otimes}B(\ell_2)),\quad 2\leq p<\infty,$$
is a well defined bounded mapping which satisfies Second Extrapolation Condition.

To see the claim, we Lemma \ref{lp tensor cor} and write
$$\Big\|\sum_{k\geq0}|x_{k0}|^2\Big\|_{\frac{p}{2}}^{\frac12}\leq\|x\|_p.$$
Now, using (DD), we write
$$\Big\|\sum_{k\geq0}\mathcal{E}_k(|x_{k0}|^2)\Big\|_{\frac{p}{2}}^{\frac12}\leq c_{{\rm abs}}p\Big\|\sum_{k\geq0}|x_{k0}|^2\Big\|_{\frac{p}{2}}^{\frac12}\leq c_{{\rm abs}}\|x\|_p.$$
Equivalently,
$$\Big\|\Big(\sum_{k\geq0}|u_k(x_{k0})|^2\Big)^{\frac12}\Big\|_p=\Big\|\Big(\sum_{k\geq0}\mathcal{E}_k(|x_{k0}|^2)\Big)^{\frac12}\Big\|_p\leq c_{{\rm abs}}\|x\|_p.$$
By Lemma \ref{convergence fact}, the series on the right hand side of \eqref{doob t def} converges in the space $L_p(\mathcal{M}\overline{\otimes}B(\ell_2)\bar{\otimes}B(\ell_2)),$ which means that the operator $T$ is well defined. Moreover, for all $2\leq p<\infty$,
$$\|Tx\|_p\leq c_{{\rm abs}}p\|x\|_p,\quad x\in L_p(\mathcal{M}\overline{\otimes}B(\ell_2)).$$
In other words, the operator $T$ satisfies Second Extrapolation Condition.

Applying Theorem \ref{second extrapolation theorem} to the operator $T$, we have
$$\mu^2(Tx)\prec\prec c_{{\rm abs}} (C^{\ast}\mu(x))^2, \quad x\in (\Lambda_{{\rm log}}\cap (L_2+L_{\infty}))(\mathcal{M}\overline{\otimes}B(\ell_2)).$$
Now we assume that
$$x=\sum_{k\geq0}a_k^{\frac12}\otimes e_{k0}\in (\Lambda_{{\rm log}}\cap (L_2+L_{\infty}))(\mathcal{M}\overline{\otimes}B(\ell_2)).$$
Applying the above inequality to $x,$ we obtain
$$\mu^2(\sum_{k\geq0}u_k(a_k^{\frac12})\otimes e_{k0})\prec\prec c_{{\rm abs}} (C^{\ast}\mu(\sum_{k\geq0}a_k^{\frac12}\otimes e_{k0}))^2$$
or, equivalently,
\begin{equation}\label{doob main eq}
\mu\Big(\Big|\sum_{k\geq0}u_k(a_k^{\frac12})\otimes e_{k0}\Big|^2\Big)\prec\prec c_{{\rm abs}} \Big(C^{\ast}\mu^{\frac12}\Big(\Big|\sum_{k\geq0}a_k^{\frac12}\otimes e_{k0}\Big|\Big)\Big)^2.
\end{equation}

It is immediate that
$$|\sum_{k\geq0}a_k^{\frac12}\otimes e_{k0}|^2=\sum_{k,l\geq0}a_k^{\frac12}a_l^{\frac12}\otimes e_{0k}e_{l0}=\sum_{k\geq0}a_k\otimes e_{00}=\big(\sum_{k\geq0}a_k\big)\otimes e_{00},$$
$$|\sum_{k\geq0}u_k(a_k^{\frac12})\otimes e_{k0}|^2=\sum_{k,l\geq0}u_k(a_k^{\frac12})^{\ast}u_l(a_l^{\frac12})\otimes e_{0k}e_{l0}=$$
$$=\sum_{k\geq0}u_k(a_k^{\frac12})^{\ast}u_k(a_k^{\frac12})\otimes e_{00}=\sum_{k\geq0}\mathcal{E}_k(a_k)\otimes e_{00}\otimes e_{00}=\big(\sum_{k\geq0}\mathcal{E}_ka_k\big)\otimes e_{00}\otimes e_{00}.$$
Substituting these expressions into \eqref{doob main eq}, we obtain
$$\mu\Big(\sum_{k\geq0}\mathcal{E}_ka_k\Big)\prec\prec c_{{\rm abs}} \Big(C^{\ast}\mu^{\frac12}\Big(\sum_{k\geq0}a_k\Big)\Big)^2.$$
This completes the proof.
\end{proof}

\begin{proof}[Proof of Theorem \ref{mt theorem}] Compared with (DST) and (DDD), the proof for distributional martingale transform estimates (DMT) is relatively easier. Consider the mapping $T:L_2(\mathcal{M})\to L_2(\mathcal{M})$ defined by the formula
$$T:x\to \sum_{k\geq0}\epsilon_k(\mathcal{E}_{k}x-\mathcal{E}_{{k-1}}x),\quad x\in L_2(\mathcal{M}).$$
By Theorem \ref{mt 11 theorem}, this mapping admits a bounded linear extension $T:L_1(\mathcal{M})\to L_{1,\infty}(\mathcal{M});$ in other words, we have
$$\|Tx\|_{1,\infty}\leq c_{{\rm abs}}\|x\|_1,\quad x\in L_1(\mathcal{M}).$$
Since $T$ on $L_2(\mathcal{M})$ is an isometry, it follows from Corollary \ref{mt 11 corollary} that there exists a bounded linear extension $T:L_p(\mathcal{M})\to L_p(\mathcal{M}),$ $1<p\leq 2,$ and that
$$\|Tx\|_p\leq c_{{\rm abs}}p'\|x\|_p,\quad x\in L_p(\mathcal{M}),\quad 1<p\leq 2.$$
Since $T=T^{\ast}$ on $L_2(\mathcal{M}),$  it follows by duality and a suitable analogue of Lemma \ref{first duality operator lemma} (or Lemma \ref{second duality operator lemma}) that there exists a bounded linear extension $T:L_p(\mathcal{M})\to L_p(\mathcal{M}),$ $2\leq p<\infty,$ and that
$$\|Tx\|_p\leq c_{{\rm abs}}p\|x\|_p,\quad x\in L_p(\mathcal{M}),\quad 2\leq p<\infty.$$
In other words, the operator $T$ satisfies First Extrapolation Condition. Therefore, Corollary \ref{first extrapolation corollary} applies to the operator $T$ and yields
$$\mu(Tx)\leq c_{{\rm abs}}S\mu(x),\quad x\in\Lambda_{{\rm log}}(\mathcal M)$$
which is the desired result.
\end{proof}

We now turn to the proof of distributional Burkholder-Gundy inequalities. The following operator form of Burkholder-Gundy inequality is a key ingredient in the proof of (Upper$-$DBG). The proof of this operator form of Burkholder-Gundy inequality depends on some estimates for the generalised martingale transforms whose proof are included in the Appendix (Theorem \ref{mt 11 theorem} and Corollary \ref{mt 11 corollary}).


\begin{prop}\label{bg best constant theorem} For every sequence $(x_k)_{k\geq0}\subset L_p(\mathcal{M}),$ $2\leq p<\infty,$ we have
$$\Big\|\sum_{k\geq0}\mathcal{E}_{k}x_k-\mathcal{E}_{{k-1}}x_k\Big\|_p\leq c_{{\rm abs}}p\Big\|\sum_{k\geq0}x_k\otimes\gamma_k\Big\|_p$$
whenever series on the right hand side is in $L_p(\mathcal{M}\bar{\otimes}\mathbb{F}_{\infty}).$ Here, $(\gamma_k)_{k\geq0}$ is the sequence of generators of the free group in the free group factor $L_{\infty}(\mathbb{F}_{\infty}).$
\end{prop}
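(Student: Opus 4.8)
The plan is to establish the inequality
$$\Big\|\sum_{k\geq0}\mathcal{E}_{k}x_k-\mathcal{E}_{{k-1}}x_k\Big\|_p\leq c_{{\rm abs}}p\Big\|\sum_{k\geq0}x_k\otimes\gamma_k\Big\|_p$$
by interpreting the left-hand side as the image of a suitable operator under a composition of a generalised martingale transform with the free-group embedding, and then bootstrapping optimal $L_p$-bounds from a weak $(1,1)$ estimate. First I would recall the free probability description: the map $x_k\mapsto x_k\otimes\gamma_k$ followed by the (left) conditional expectations in the crossed product $\mathcal{M}\bar\otimes\mathbb{F}_\infty$ realises the sequence $(x_k)$ as a free analogue of martingale differences. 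The key observation, exploited in \cite{Randrianantoanina} and its successors, is that the generalised martingale transform $V:\sum_k x_k\otimes\gamma_k\mapsto \sum_k(\mathcal{E}_{k}x_k-\mathcal{E}_{k-1}x_k)$ is well defined and satisfies a weak-type $(1,1)$ bound; this is precisely the content of Theorem \ref{mt 11 theorem} and Corollary \ref{mt 11 corollary} in the Appendix, which I would invoke directly.

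The main steps, in order, are as follows. \emph{Step 1:} Show that $V$ maps $L_2(\mathcal{M}\bar\otimes\mathbb{F}_\infty)\to L_2(\mathcal{M})$ boundedly with constant $c_{\rm abs}$. This is an orthogonality computation: the differences $\mathcal{E}_{k}x_k-\mathcal{E}_{k-1}x_k$ are martingale differences with respect to $(\mathcal{M}_k)$, hence pairwise orthogonal in $L_2$, so $\|Vx\|_2^2=\sum_k\|\mathcal{E}_kx_k-\mathcal{E}_{k-1}x_k\|_2^2\lesssim\sum_k\|x_k\|_2^2$, while the freeness of the $\gamma_k$ gives $\|\sum_k x_k\otimes\gamma_k\|_2^2=\sum_k\|x_k\|_2^2$. \emph{Step 2:} Combine the $L_2$-boundedness of Step 1 with the weak $(1,1)$ estimate from Theorem \ref{mt 11 theorem} and apply the (noncommutative) Marcinkiewicz interpolation, in the form of Corollary \ref{mt 11 corollary}, to obtain a bounded extension $V:L_p\to L_p$ for $1<p\leq 2$ with $\|V\|_{L_p\to L_p}\leq c_{\rm abs}p'$. \emph{Step 3:} Dualise. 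Since the free-group embedding and the conditional expectations all have readily identifiable adjoints (the $\mathcal{E}_k$ are self-adjoint as trace-preserving projections, and the embedding $x\mapsto x\otimes\gamma_k$ has adjoint given by the relevant component of the conditional expectation onto $\mathcal{M}\otimes 1$), the Banach adjoint $V^{\ast}$ is again a generalised martingale transform of the same type; so the $1<p\leq 2$ bound for $V^{\ast}$ yields $\|V\|_{L_p\to L_p}\leq c_{\rm abs}p$ for $2\leq p<\infty$. Reading off this last estimate gives exactly the stated inequality, once one notes that the right-hand side series lying in $L_p(\mathcal{M}\bar\otimes\mathbb{F}_\infty)$ guarantees $Vx$ is a legitimate element of $L_p(\mathcal{M})$ via the continuity of $V$ and a density argument on $(L_p\cap L_2)$.

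I expect the main obstacle to be \emph{Step 1 combined with the well-definedness of $V$ on the full $L_p$ range}, i.e.\ making precise that $\sum_k(\mathcal{E}_kx_k-\mathcal{E}_{k-1}x_k)$ converges and defines a bounded operator, rather than being merely a formal sum. One must argue that for $x=\sum_k x_k\otimes\gamma_k\in(L_p\cap L_2)(\mathcal{M}\bar\otimes\mathbb{F}_\infty)$ the components $x_k$ are uniquely recovered (this is the free analogue of Lemma \ref{lp tensor fact}, extracting the $\gamma_k$-coefficients via the trace pairing $\langle\,\cdot\,,\gamma_k\rangle$), that the truncated sums are Cauchy in $L_p$, and that the limit operator does not depend on the approximating scheme; the orthogonality in Step 1 handles $p=2$, and the weak $(1,1)$ bound extends the domain downward, but the passage to $p>2$ requires the duality of Step 3 to be set up carefully so that the adjoint is genuinely of the same form. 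A secondary technical point is checking the convention under which $\mathcal{E}_{-1}=0$ so that the $k=0$ term $\mathcal{E}_0x_0$ is handled consistently; this is routine but must be stated. Once these bookkeeping issues are dispatched, the optimal constant $c_{\rm abs}p$ follows mechanically from Lemma \ref{cesaro-norm}-style interpolation constants built into Corollary \ref{mt 11 corollary}.
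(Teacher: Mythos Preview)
Your proposal has a genuine gap in Step~2: you invoke Theorem~\ref{mt 11 theorem} to obtain a weak $(1,1)$ bound for the operator
\[
V:\sum_{k}x_k\otimes\gamma_k\ \longmapsto\ \sum_{k}(\mathcal{E}_kx_k-\mathcal{E}_{k-1}x_k),
\]
but that theorem concerns the \emph{different} operator
\[
T_\xi:\ x\ \longmapsto\ \sum_{k}(\mathcal{E}_kx-\mathcal{E}_{k-1}x)\otimes\xi_k,
\]
whose input is a single element of $L_1(\mathcal{M})$ and whose output lives in the tensor product. The two maps go in opposite directions between $\mathcal{M}$ and $\mathcal{M}\bar\otimes\mathcal{N}$, and, crucially, $T_\xi$ feeds the \emph{same} $x$ to every $\mathcal{E}_k$, whereas your $V$ feeds a different $x_k$ to each. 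The Gundy decomposition argument in the Appendix works precisely because the input is a single $x\in L_1(\mathcal{M})$ that can be split; it does not yield a weak $(1,1)$ estimate for $V$. Consequently the interpolation in Step~2 has no starting point, and nothing in your Step~3 repairs this: there you assert that $V^\ast$ is ``of the same type'' as $V$ and then invoke the $1<p\le2$ bound for $V^\ast$, but Step~2 only claimed such a bound for $V$, not for $V^\ast$.

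What actually salvages the argument---and what the paper does---is to observe that $V$ is essentially the \emph{adjoint} of $T_{\gamma^{-1}}$, not an operator of the same kind. Concretely, for $p\ge2$ one writes
\[
\Big\|\sum_{k}\mathcal{E}_kx_k-\mathcal{E}_{k-1}x_k\Big\|_p=\sup_{\|y\|_{p'}\le1}\Big|\sum_k\tau\big(x_k(\mathcal{E}_ky-\mathcal{E}_{k-1}y)\big)\Big|,
\]
uses the free-group trace identity $\sum_k\tau(x_kz_k)=(\tau\otimes\tau_\infty)\big((\sum_k x_k\otimes\gamma_k)(\sum_k z_k\otimes\gamma_k^{-1})\big)$ together with H\"older, and then bounds $\big\|\sum_k(\mathcal{E}_ky-\mathcal{E}_{k-1}y)\otimes\gamma_k^{-1}\big\|_{p'}$ by Corollary~\ref{mt 11 corollary} applied at the exponent $p'\in(1,2]$ with $\xi_k=\gamma_k^{-1}$; this gives the constant $(p')'=p$. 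So there is a single duality step, applied directly to reduce to the operator that the Appendix actually controls, rather than your two-stage scheme of interpolating $V$ on $(1,2]$ and then dualising. If you rewrite your outline in this order---Corollary~\ref{mt 11 corollary} for $T_{\gamma^{-1}}$ on $L_{p'}$, then one H\"older/duality step---it becomes correct; the passage to infinite sequences is then handled by truncating via the conditional expectations onto $L_\infty(\mathbb{F}_n)$.
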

\begin{proof} Suppose first that $(x_k)_{k\geq0}$ is a finite sequence, that is, $x_k=0$ for all sufficiently large $k.$ Recall the equality
$$\sum_{k\geq0}\tau(x_ky_k)=(\tau\otimes\tau_{\infty})\Big(\Big(\sum_{k\geq0}x_k\otimes\gamma_k\Big)\cdot \Big(\sum_{k\geq0}y_k\otimes\gamma_k^{-1}\Big)\Big).$$
It follows from H\"older inequality that
\begin{equation}\label{bgbc holder eq}
\big|\sum_{k\geq0}\tau(x_ky_k)\big|\leq\Big\|\sum_{k\geq0}x_k\otimes\gamma_k\Big\|_p\Big\|\sum_{k\geq0}y_k\otimes\gamma_k^{-1}\Big\|_{p'}.
\end{equation}
It follows from K\"othe duality of $L_p$ and $L_{p'}$ that
\begin{equation}\label{bgbc eq0}
\Big\|\sum_{k\geq0}\mathcal{E}_{k}x_k-\mathcal{E}_{k-1}x_k\Big\|_p=\sup_{\|y\|_{p'}\leq1}\Big|\tau\Big(\Big(\sum_{k\geq0}\mathcal{E}_{k}x_k-\mathcal{E}_{{k-1}}x_k\Big)y\Big)\Big|.
\end{equation}
Observe that
\begin{equation}\label{bgbc eq1}
\begin{split}
\tau\Big(\Big(\sum_{k\geq0}\mathcal{E}_{k}x_k-\mathcal{E}_{{k-1}}x_k\Big)y\Big)&
=\sum_{k\geq0}\tau\Big((\mathcal{E}_{k}x_k-\mathcal{E}_{{k-1}}x_k)y\Big)\\
&=\sum_{k\geq0}\tau(x_k\cdot\mathcal{E}_{k}y)-\tau(x_k\cdot\mathcal{E}_{{k-1}}y)\\
&=\sum_{k\geq0}\tau\Big(x_k\cdot (\mathcal{E}_{k}y-\mathcal{E}_{{k-1}}y)\Big).
\end{split}
\end{equation}
Combining \eqref{bgbc holder eq} and \eqref{bgbc eq1}, we obtain
$$\Big|\tau\Big(\Big(\sum_{k\geq0}\mathcal{E}_{k}x_k-\mathcal{E}_{{k-1}}x_k\Big)y\Big)\Big|\leq\Big\|\sum_{k\geq0}x_k\otimes\gamma_k\Big\|_p\cdot\Big\|\sum_{k\geq0}(\mathcal{E}_{k}y-\mathcal{E}_{{k-1}}y)\otimes\gamma_k^{-1}\Big\|_{p'}.$$
Substituting into \eqref{bgbc eq0}, we arrive at
$$\Big\|\sum_{k\geq0}\mathcal{E}_{k}x_k-\mathcal{E}_{{k-1}}x_k\Big\|_p\leq\Big\|\sum_{k\geq0}x_k\otimes\gamma_k\Big\|_p\cdot\sup_{\|y\|_{p'}\leq1}\Big\|\sum_{k\geq0}(\mathcal{E}_{k}y-\mathcal{E}_{{k-1}}y)\otimes\gamma_k^{-1}\Big\|_{p'}.$$
Applying Corollary \ref{mt 11 corollary}, we complete the proof for the case of finite sequence.

Now, let $(x_k)_{k\geq0}\subset L_p(\mathcal{M})$ be an infinite sequence. Let $L_{\infty}(\mathbb{F}_n)$ be the free group subfactor in $L_{\infty}(\mathbb{F}_{\infty})$ generated by $(\gamma_k)_{k=0}^n.$ Let $\mathbb{E}_n:L_2(\mathbb{F}_{\infty})\to L_2(\mathbb{F}_n)$ be the conditional expectation. By Proposition \ref{exp cpp prop}, the mapping $\mathbb{E}_n$ is completely positive and, therefore, completely contractive. Thus, ${\rm id}\otimes\mathbb{E}_n$ is a contraction both in $L_1$-norm and in $L_{\infty}$-norm. In particular, we have
$$\sum_{k=0}^nx_k\otimes\gamma_k=({\rm id}\otimes\mathbb{E}_n)(\sum_{k\geq0}x_k\otimes\gamma_k)\prec\prec\sum_{k\geq0}x_k\otimes\gamma_k.$$
Thus,
$$\Big\|\sum_{k=0}^nx_k\otimes\gamma_k\Big\|_p\leq \Big\|\sum_{k\geq0}x_k\otimes\gamma_k\Big\|_p,\quad n\geq0.$$
By the preceding paragraph,
$$\Big\|\sum_{k=0}^n\mathcal{E}_{k}x_k-\mathcal{E}_{{k-1}}x_k\Big\|_p\leq c_{{\rm abs}}p\Big\|\sum_{k\geq0}x_k\otimes\gamma_k\Big\|_p,\quad n\geq0.$$
Hence, the sequence
$$\Big(\sum_{k=0}^n\mathcal{E}_{k}x_k-\mathcal{E}_{{k-1}}x_k\Big)_{n\geq0}$$
is an $L_p$-bounded martingale. Since $1<p<\infty,$ it follows that the series
$$\sum_{k\geq0}\mathcal{E}_{k}x_k-\mathcal{E}_{{k-1}}x_k$$
converges in $L_p(\mathcal{M})$ and
$$\Big\|\sum_{k\geq 0}\mathcal{E}_{k}x_k-\mathcal{E}_{{k-1}}x_k\Big\|_p\leq c_{{\rm abs}}p\Big\|\sum_{k\geq0}x_k\otimes\gamma_k\Big\|_p,\quad n\geq0.$$
This completes the proof.
\end{proof}

We now are ready to prove the upper estimate in Theorem \ref{bg theorem}; namely, the inequality (Upper$-$DBG).

\begin{proof}[Proof of the upper estimate in Theorem \ref{bg theorem}] Let $2\leq p<\infty$. By Proposition \ref{bg best constant theorem}, we have
\begin{equation}\label{bgbc eq}
\Big\|\sum_{k\geq0}\mathcal{E}_{k}x_k-\mathcal{E}_{{k-1}}x_k\Big\|_p\leq c_{{\rm abs}}p\Big\|\sum_{k\geq0}x_k\otimes\gamma_k\Big\|_p
\end{equation}
whenever the right hand side is finite. Set
$$A:\sum_{k\geq0}x_k\otimes\gamma_k\to \sum_{k\geq0}(\mathcal{E}_{k}x_k-\mathcal{E}_{{k-1}}x_k),$$
and let $P:L_2(\mathbb{F}_{\infty})\to L_2(\mathbb{F}_{\infty})$ be the projection onto the linear span of the sequence $(\gamma_k)_{k\geq0}.$ One can rewrite \eqref{bgbc eq} as
\begin{equation}\label{bg eq1}
\|Az\|_p\leq c_{{\rm abs}}p\|z\|_p,\quad z\in L_p(\mathcal{M}\bar{\otimes}\mathbb{F}_{\infty}),\quad ({\rm id}\otimes P)z=z.
\end{equation}
By Proposition 1.1 in \cite{HP} and complex interpolation, we have
\begin{equation}\label{bg eq2}
\|{\rm id}\otimes P\|_{L_p(\mathcal{M}\bar{\otimes}\mathbb{F}_{\infty})\to L_p(\mathcal{M}\bar{\otimes}\mathbb{F}_{\infty})}\leq 2,\quad 2\leq p\leq\infty.
\end{equation}

Define the operator $T$ by setting
$$T=A\circ ({\rm id}\otimes P).$$
Combining \eqref{bg eq1} and \eqref{bg eq2}, we obtain
$$\|Tz\|_p\leq c_{{\rm abs}}p\|z\|_p,\quad z\in L_p(\mathcal{M}\bar{\otimes}\mathbb{F}_{\infty}),\quad 2\leq p<\infty.$$
It then follows that
$$\|Tz\|_{L_p(\mathcal{M},2\tau)}=2^{\frac1p}\|Tz\|_p\leq c_{{\rm abs}}p\|z\|_p,\quad z\in L_p(\mathcal{M}\bar{\otimes}\mathbb{F}_{\infty}),\quad 2\leq p<\infty.$$
We conclude that the operator $T:L_p(\mathcal{M}\bar{\otimes}\mathbb{F}_{\infty})\to L_p(\mathcal{M},2\tau)$ satisfies Second Extrapolation Condition so that Theorem \ref{second extrapolation theorem} is applicable. By Theorem \ref{second extrapolation theorem}, we have
$$\sigma_2\mu^2(Tz)\prec\prec c_{{\rm abs}}(C^{\ast}\mu(z))^2,\quad z\in (\Lambda_{{\rm log}}\cap (L_2+L_{\infty}))(\mathcal{M}\bar{\otimes}\mathbb{F}_{\infty}).$$
Now, take $x\in \Lambda_{{\rm log}}(\mathcal{M})$ and let $(x_k)_{k\geq0}$ be the sequence of respective martingale differences such that
$$\sum_{k\geq0}x_k^{\ast}x_k+x_kx_k^{\ast}\in (L_1+L_{\infty})(\mathcal{M}),\quad \big(\sum_{k\geq0}x_k^{\ast}x_k+x_kx_k^{\ast}\big)^{\frac12}\in\Lambda_{{\rm log}}(\mathcal{M}).$$
Set
$$z=\sum_{k\geq0}x_k\otimes\gamma_k,$$
where the series converges in measure and note that
$$({\rm id}\otimes P)z=z,\quad Tz=x.$$
Therefore,
$$\sigma_2\mu^2(x)\prec\prec c_{{\rm abs}}\Big(C^{\ast}\mu\Big(\sum_{k\geq0}x_k\otimes\gamma_k\Big)\Big)^2.$$
Lemma 2.1 in \cite{Dirksen-Ricard} asserts that
$$\mu\Big(\sum_{k\geq0}x_k\otimes\gamma_k\Big)\leq c_{{\rm abs}}\sigma_2\mu^{\frac12}\Big(\sum_{k\geq0}x_kx_k^{\ast}+x_k^{\ast}x_k\Big).$$
Combining these estimates, we arrive at
$$\sigma_2\mu^2(x)\prec\prec c_{{\rm abs}}\Big(C^{\ast}\sigma_2\mu^{\frac12}\Big(\sum_{k\geq0}x_kx_k^{\ast}+x_k^{\ast}x_k\Big)\Big)^2.$$
Since $C^{\ast}$ commutes with $\sigma_2,$ the upper estimate in Theorem \ref{bg theorem} follows.
\end{proof}

The proof of the lower estimate in Theorem \ref{bg theorem} (i.e., (Lower$-$DBG)) is given below. It is easier than the upper one since in this case (BG) itself suggests an operator which is an obvious candidate for the application of Second Extrapolation Theorem.

\begin{proof}[Proof of the lower estimate in Theorem \ref{bg theorem}] Define the operator $T$ by the formula
$$Tx=\sum_{k\geq0}(\mathcal{E}_{k}x-\mathcal{E}_{{k-1}}x)\otimes e_{k,0}.$$
By (BG), we have (recall the notation $x_k=\mathcal{E}_{k}x-\mathcal{E}_{{k-1}}x,$ $k\geq0,$ in (BG))
$$\|Tx\|_p=\Big\|(\sum_{k\geq0}|x_k|^2)^{\frac12}\Big\|_p\leq c_{{\rm abs}}p\|x\|_p,\quad x\in L_p(\mathcal{M}),\quad 2\leq p<\infty.$$
We conclude that the operator $$T:L_p(\mathcal{M})\to L_p(\mathcal{M}\bar{\otimes}B(\ell_2))$$ satisfies the Second Extrapolation Condition. By Theorem \ref{second extrapolation theorem}, we have
$$\mu^2(Tx)\prec\prec c_{{\rm abs}}(C^{\ast}\mu(x))^2,\quad x\in (\Lambda_{{\rm log}}\cap (L_2+L_{\infty}))(\mathcal{M}).$$
Hence,
$$\mu\Big(\sum_{k\geq0}x_k^{\ast}x_k\Big)\prec\prec c_{{\rm abs}}(C^{\ast}\mu(x))^2,\quad x\in (\Lambda_{{\rm log}}\cap (L_2+L_{\infty}))(\mathcal{M}).$$
Similarly, we have
$$\mu\Big(\sum_{k\geq0}x_kx_k^{\ast}\Big)\prec\prec c_{{\rm abs}}(C^{\ast}\mu(x))^2,\quad x\in (\Lambda_{{\rm log}}\cap (L_2+L_{\infty}))(\mathcal{M}).$$
Combining the estimates above, we complete the proof.
\end{proof}

\section{Distributional estimates are optimal for $B(L_2(0,\infty))$}\label{optimal section}

In this section, we prove that the distributional Stein inequality is optimal in the case $\mathcal{M}= B(L_2(0,\infty)).$

Let us first recall how (DST) can be written for this particular $\mathcal{M}.$ It is important that $\mu(x),$ $x\in\mathcal{M},$ is constant on every interval $(n,n+1).$ Therefore,
\begin{align*}
\mu\Big(n,\Big(\sum_{k\geq0}|\mathcal{E}_kx_k|^2\Big)^{\frac12}\Big)
&=\mu\Big(n+1-0,\Big(\sum_{k\geq0}|\mathcal{E}_kx_k|^2\Big)^{\frac12}\Big)\\
&\leq c_{{\rm abs}}\Big(S\mu\Big(\Big(\sum_{k\geq0}|x_k|^2\Big)^{\frac12}\Big)\Big)(n+1-0)\\
& \approx c_{{\rm abs}}\Big(S_d\mu\Big(\Big(\sum_{k\geq0}|x_k|^2\Big)^{\frac12}\Big)\Big)(n),
\end{align*}
where the notion $n+1-0$ stands for the number infinitely close to $n+1$ from the left. In other words, we can replace $S$ with $S_d$ in the statement of (DST).

\begin{thm}\label{stein optimal thm} Let $\mathcal{M}=B(L_2(0,\infty))$. For every $x\in\Lambda_{{\rm log}}(\mathbb{Z}_+),$ there exists a net of filtrations $(\mathcal{F}_t)_{t>0}=((\mathcal{M}_k^t)_{k\geq0})_{t>0}$ where $\mathcal M_k^t\subset \mathcal{M},$
a net of sequences  $((a_k^t)_{k\geq0})_{t>0},$
and elements $z_1\in\Lambda_{{\rm log}}(\mathcal{M})$ and $z_2\in L_{\infty}(\mathcal{M})$ such that
\begin{enumerate}[{\rm (i)}]
\item $\mu(z_1)=\mu(x)$ and
$$\Big(\sum_{k\geq0}a_k^t(a_k^t)^{\ast}\Big)^{\frac12}=z_1,\quad t>0;$$
\item $\mu(z_2)\geq c_{{\rm abs}}S_d\mu(x)$ and
$$\lim_{t\to0}\Big(\sum_{k\geq0}\mathcal{E}_{\mathcal{M}_k^t}(a_k^t)\mathcal{E}_{\mathcal{M}_k^t}(a_k^t)^{\ast}\Big)^{\frac12}=z_2.$$
\end{enumerate}
\end{thm}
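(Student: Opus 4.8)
The plan is to realise, inside $\mathcal{M}=B(L_2(0,\infty))$, an explicit extremal configuration for the noncommutative Stein inequality, tuned by the scale parameter $t$ so that the Stein amplification factor approaches its supremum as $t\to 0$, while the row square function on the input side stays rigidly equal to a fixed operator $z_1$.

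\textbf{Reduction via $S_d=C_d+C_d^{\ast}$.} Since $S_d\mu(x)=C_d\mu(x)+C_d^{\ast}\mu(x)$, and since for positive decreasing sequences the singular value function of a block-diagonal direct sum $u\oplus v$ satisfies $\mu(k,u\oplus v)\ge\max\{\mu(k,u),\mu(k,v)\}\ge\tfrac12(\mu(k,u)+\mu(k,v))$ (at least $k+1$ eigenvalues of $u\oplus v$ dominate $\mu(k,u)$, and likewise for $v$), it suffices to produce two constructions separately, one giving $\mu(z_2)\ge c_{\mathrm{abs}}C_d\mu(x)$ and one giving $\mu(z_2)\ge c_{\mathrm{abs}}C_d^{\ast}\mu(x)$, and to combine them inside $\mathcal{M}\cong\mathcal{M}\oplus\mathcal{M}$ (using $L_2(0,\infty)\cong L_2(0,\infty)\oplus L_2(0,\infty)$, under which the trace on the block-diagonal subalgebra is the sum of the traces). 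To force the combined $z_1$ to have singular value function \emph{exactly} $\mu(x)$, I would feed the first construction the interleaved subsequence $(\mu(2j,x))_{j\ge0}$ and the second $(\mu(2j+1,x))_{j\ge0}$; the dilations $\sigma_2$ that then appear in the output estimates are harmless, since $\sigma_2 g\ge g$ for decreasing $g$, so they are absorbed into $c_{\mathrm{abs}}$.

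\textbf{The two sub-constructions.} For each of the two and for each $t>0$ (it suffices to take $t=1/m$, $m\in\mathbb{N}$), I would exhibit a \emph{finite-depth} filtration $\mathcal{M}_0^t\subset\mathcal{M}_1^t\subset\cdots\subset\mathcal{M}_N^t=\mathcal{M}_{N+1}^t=\cdots=\mathcal{M}$, so that $\bigcup_k\mathcal{M}_k^t$ is automatically weak$^{\ast}$ dense even though the $a_k^t$ are eventually zero; and I would set $a_k^t=y_{\sigma(k)}\,v_k^t$, where $y$ is the relevant interleaved subsequence, $\sigma$ a bijection, and $(v_k^t)_k$ are partial isometries whose \emph{final} projections $p_k=v_k^t(v_k^t)^{\ast}$ are pairwise orthogonal rank-one projections \emph{independent of $t$}. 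Then $\sum_k a_k^t(a_k^t)^{\ast}=\sum_k y_{\sigma(k)}^2 p_k$ is independent of $t$, its square root $z_1$ has $\mu(z_1)=\mu(x)$ after the interleaving, and (i) holds. The role of $t$ is that the \emph{initial} projections of the $v_k^t$, and hence the operators $\mathcal{E}_{\mathcal{M}_k^t}(a_k^t)$, vary: the filtration coarsens as $t\to0$, spreading each $\mathcal{E}_{\mathcal{M}_k^t}(a_k^t)$ over an increasingly large and carefully overlapping family of directions. This is where the genuinely noncommutative mechanism enters — one must arrange the final projections of the $\mathcal{E}_{\mathcal{M}_k^t}(a_k^t)$ to overlap \emph{coherently}, so that (in contrast to the $a_k^t$ themselves) the summands $\mathcal{E}_{\mathcal{M}_k^t}(a_k^t)\mathcal{E}_{\mathcal{M}_k^t}(a_k^t)^{\ast}$ add ``in phase'', the cross terms surviving, to produce the Cesàro-type lower bound $\sum_k\mathcal{E}_{\mathcal{M}_k^t}(a_k^t)\mathcal{E}_{\mathcal{M}_k^t}(a_k^t)^{\ast}\ge c_{\mathrm{abs}}(C_d\mu(x))^2$ (resp. $(C_d^{\ast}\mu(x))^2$) in the limit. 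Existence of $z_2$ is not an issue: the partial sums $\big(\sum_{k=0}^M\mathcal{E}_{\mathcal{M}_k^t}(a_k^t)\mathcal{E}_{\mathcal{M}_k^t}(a_k^t)^{\ast}\big)_M$ form an increasing bounded net of positive operators, hence converge in order (equivalently in the strong operator topology, by Vigier's theorem as recalled in Subsection~\ref{svf subsection}); one then lets $t\to0$ along $\{1/m\}$ by a diagonal argument, and reduces a general (not finitely supported) $x$ to the finitely supported case by a further monotone approximation, invoking the order-continuity of $\|\cdot\|_{\Lambda_{\log}}$ and the Fatou property.

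\textbf{Main obstacle.} The crux is the second sub-construction together with the bookkeeping around it: writing down the explicit extremal filtration $\mathcal{M}_k^t$ and the partial isometries $v_k^t$ so that the overlap pattern of the images of the conditional expectations reproduces precisely the Cesàro (resp. dual Cesàro) profile with an \emph{absolute} constant, and simultaneously arranging that the $t$-dependence touches only $\mathcal{E}_{\mathcal{M}_k^t}(a_k^t)$ and leaves $\sum_k a_k^t(a_k^t)^{\ast}$, hence $z_1$, fixed. Checking that the amplification factor tends to the \emph{full} $S_d$-profile as $t\to0$ — rather than to a strictly smaller operator — is the quantitative heart of the matter, and it is exactly here that the gap between the commutative order $\sqrt p$ and the noncommutative order $p$ in Stein's inequality is exploited.
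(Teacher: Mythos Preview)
Your proposal is not a proof but a plan, and the plan leaves the decisive step unexecuted: you yourself flag as the ``main obstacle'' the explicit construction of the filtrations $\mathcal{M}_k^t$ and partial isometries $v_k^t$ that force the conditional expectations to overlap coherently and reproduce the $C_d$ (resp.\ $C_d^{\ast}$) profile with an absolute constant, and you never carry it out. Everything before that point is scaffolding; the theorem lives entirely inside the obstacle.

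The paper's proof is both simpler and more explicit, and avoids your $S_d=C_d+C_d^{\ast}$ splitting altogether. The mechanism is the triangular truncation operator $P$ on $B(L_2(0,\infty))$: one takes a single operator $a\in\Lambda_{\log}(\mathcal{M})$ (supplied by \cite{STZ-corr}) with $\mu(a)=\mu(x)$ and $\mu(|(Pa)^{\ast}|)\ge c_{\rm abs}S_d\mu(x)$, sets $z_1=|a^{\ast}|$, $z_2=|(Pa)^{\ast}|$, and for each $t>0$ lets $p_s=M_{\chi_{(0,s)}}$, $\mathcal{M}_k^t=p_{kt}\mathcal{M}p_{kt}+(\mathcal{D}^t)'$, and $a_k^t=a(p_{(k+1)t}-p_{kt})$. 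A one-line computation gives $\sum_k a_k^t(a_k^t)^{\ast}=aa^{\ast}$ and $\mathcal{E}_{\mathcal{M}_k^t}(a_k^t)=P_t(a)(p_{(k+1)t}-p_{kt})$, whence $\sum_k \mathcal{E}_{\mathcal{M}_k^t}(a_k^t)\mathcal{E}_{\mathcal{M}_k^t}(a_k^t)^{\ast}=P_t(a)P_t(a)^{\ast}$, where $P_t$ is the block-triangular truncation at scale $t$. The limit $P_t a\to Pa$ in $\|\cdot\|_{\infty}$ is established by a uniform-boundedness argument for the net $(P_t)_{t>0}:\Lambda_{\log}\to L_{\infty}$. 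Thus the ``coherent overlap'' you were seeking is nothing other than triangular truncation, and the quantitative lower bound $\mu(z_2)\ge c_{\rm abs}S_d\mu(x)$ is imported from the known sharpness of $P$ on $\Lambda_{\log}$ rather than built from scratch. Your block-diagonal interleaving and finite-depth reductions are unnecessary detours once this identification is made.
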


Let us first provide the construction of the net of filtrations. For every $t>0,$ we define the filtration $\mathcal{F}_t=(\mathcal{M}_k^t)_{k\geq0}$ in $\mathcal{M}$ as follows.
\begin{enumerate}[{\rm (i)}]
\item Let $p_s=M_{\chi_{(0,s)}}$ and let $\mathcal{D}^t$ be the von Neumann subalgebra generated by $(p_{kt})_{k\in\mathbb{Z}}.$
\item For $k\geq 0$, set $\mathcal{M}_k^t=p_{kt}\mathcal{M}p_{kt}+(\mathcal{D}^t)'.$
\end{enumerate}
For every $t>0$, we also introduce the following operator:
$$P_tx=\sum_{\substack{k,l\in\mathbb{Z}_+\\ k\leq l}}\Big(p_{(k+1)t}-p_{kt})x(p_{(l+1)t}-p_{lt}\Big),\quad x\in L_2(\mathcal{M}).$$

\begin{lem}\label{pt convergence lemma} $P_ta\to Pa$ in the uniform norm as $t\to0$ for every $a\in\Lambda_{{\rm log}}(\mathcal{M}).$ Here, $P$ is the triangular truncation operator defined in Subsection \ref{truncation subsection}.
\end{lem}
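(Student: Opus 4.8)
The plan is to localise the difference $P_t-P$ to the diagonal blocks of the partition of $(0,\infty)$ into the intervals $I^t_k:=[kt,(k+1)t)$, and then to use the scale invariance of the boundedness of triangular truncation on $\Lambda_{\log}$ recorded in Subsection~\ref{truncation subsection}. Throughout write $q^t_k:=p_{(k+1)t}-p_{kt}$ for the multiplication projection onto $L_2(I^t_k)$; the $(q^t_k)_{k\ge0}$ form a complete orthogonal family in $\mathcal M$ and give the block decomposition $z=\sum_{k,l}q^t_kzq^t_l$ of any bounded operator relative to $L_2(0,\infty)=\bigoplus_kL_2(I^t_k)$.

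First I would record, on the kernel level (hence for finite rank $a$, then extending by density), that the blocks of $Pa$ and of $P_ta$ coincide off the diagonal: for $k<l$ one has $v>u$ throughout $I^t_k\times I^t_l$, so $q^t_k(Pa)q^t_l=q^t_kaq^t_l=q^t_k(P_ta)q^t_l$, and for $k>l$ both vanish; on the diagonal, $q^t_k(Pa)q^t_k=P_{I^t_k}(q^t_kaq^t_k)$ while $q^t_k(P_ta)q^t_k=q^t_kaq^t_k$. Hence $P_ta-Pa=\sum_{k\ge0}\bigl(q^t_kaq^t_k-P_{I^t_k}(q^t_kaq^t_k)\bigr)$ is block-diagonal, so $\|P_ta-Pa\|_\infty=\sup_k\bigl\|q^t_kaq^t_k-P_{I^t_k}(q^t_kaq^t_k)\bigr\|_\infty$; this also shows that $P_t$ maps $\Lambda_{\log}(\mathcal M)$ into $\mathcal M$.

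Next I would obtain a bound uniform in $t$. The operator $\mathrm{id}-P_{I}$ is, after the order-reversing unitary flip of $I$, again a triangular truncation, and the affine rescaling $L_2(0,1)\to L_2(I)$ is an order-preserving unitary; since unitary conjugation preserves $\|\cdot\|_\infty$ and the singular value function, the boundedness of triangular truncation on $\Lambda_{\log}$ from \cite{STZ} transfers to $L_2(I)$ with a constant independent of $I$. As compression by a projection does not increase singular values, $\|q^t_kaq^t_k\|_{\Lambda_{\log}}\le\|a\|_{\Lambda_{\log}}$, and combining this with the block-diagonal identity gives $\|P_ta-Pa\|_\infty\le c_{{\rm abs}}\|a\|_{\Lambda_{\log}}$ for all $t>0$ and all $a\in\Lambda_{\log}(\mathcal M)$.

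Finally, since $\Lambda_{\log}$ has order-continuous norm, finite rank operators are dense in $\Lambda_{\log}(\mathcal M)$, so by the uniform bound a $3\varepsilon$-argument reduces the claim to $a=\sum_{j=1}^Ns_j\langle\,\cdot\,,\phi_j\rangle\psi_j$ with $(\phi_j),(\psi_j)$ orthonormal in $L_2(0,\infty)$. For such $a$ triangular truncation only deletes kernel entries, hence contracts the Hilbert--Schmidt norm, so $\bigl\|q^t_kaq^t_k-P_{I^t_k}(q^t_kaq^t_k)\bigr\|_\infty\le\|q^t_kaq^t_k\|_2\le\sum_{j=1}^Ns_j\|q^t_k\phi_j\|_2$; and $\sup_k\|q^t_k\phi_j\|_2=\sup_k\bigl(\int_{I^t_k}|\phi_j|^2\bigr)^{1/2}\to0$ as $t\to0$ by absolute continuity of $\int|\phi_j|^2$. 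Since the sum over $j\le N$ is finite, $\sup_k\bigl\|q^t_kaq^t_k-P_{I^t_k}(q^t_kaq^t_k)\bigr\|_\infty\to0$, i.e.\ $P_ta\to Pa$ in $\|\cdot\|_\infty$. I expect the main obstacle to be the block-diagonal reduction together with the scale invariance that makes the bound from \cite{STZ} uniform over the blocks; once these are in place the remaining estimates are routine.
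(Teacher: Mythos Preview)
Your proof is correct and takes a genuinely different route from the paper's. The paper obtains the uniform bound $\sup_{t>0}\|P_t\|_{\Lambda_{\log}\to L_\infty}<\infty$ by a duality detour: it first quotes the uniform weak $(1,1)$ estimate for $P_t$ from \cite{DDPS-IEOT}, uses the inclusion $L_{1,\infty}\subset M_{\log}$ (valid because $\mathcal M=B(L_2)$) to get $P_t:L_1\to (M_{\log})_0$ uniformly, takes K\"othe duals to obtain $P_t^\ast:\Lambda_{\log}\to L_\infty$ uniformly, and finally passes from $P_t^\ast$ to $P_t$ via the identity $P_t+P_t^\ast={\rm id}+D_t$. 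For convergence on a dense set it uses $L_2$ and the Hilbert--Schmidt convergence $P_tb\to Pb$.

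Your argument bypasses all of this by localising $P_t-P$ to the diagonal blocks, giving $\|P_ta-Pa\|_\infty=\sup_k\|({\rm id}-P_{I^t_k})(q^t_kaq^t_k)\|_\infty$, and then invoking the $\Lambda_{\log}\to L_\infty$ bound for triangular truncation directly on each block, with the scale invariance making the constant uniform in $k$ and $t$. The convergence on finite rank operators via the elementary Hilbert--Schmidt estimate $\|q^t_kaq^t_k\|_2\le\sum_j s_j\|q^t_k\phi_j\|_2\to0$ is clean. Your route is more self-contained (it does not need the external weak $(1,1)$ result from \cite{DDPS-IEOT}) and more transparently geometric; the paper's route, while more circuitous, showcases the $\Lambda_{\log}$--$M_{\log}$ K\"othe duality that is thematic elsewhere in the article.
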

\begin{proof} First, note that the net of operators $(P_t)_{t>0}:L_2\to L_2$ extends to a uniformly bounded net $(P_t)_{t>0}:L_1\to L_{1,\infty}$ (see Theorem 1.4 in \cite{DDPS-IEOT}). Next, note that $P_t(p_{kt}xp_{kt})\in L_1$ and that
$$\|P_tx-P_t(p_{kt}xp_{kt})\|_{1,\infty}\leq\|P_t\|_{L_1\to L_{1,\infty}}\|x-p_{kt}xp_{kt}\|_1\leq c_{{\rm abs}}\|x-p_{kt}xp_{kt}\|_1\to 0$$
as $k\to\infty.$ Hence, the net $(P_t)_{t>0}:L_1\to (L_{1,\infty})_0$ is uniformly bounded (here, $(L_{1,\infty})_0$ is the closure of $L_1$ in $L_{1,\infty}$). Since $\mathcal{M}=B(L_2(0,\infty)),$ it follows that $L_{1,\infty}\subset M_{{\rm log}}.$ Consequently,  the net $(P_t)_{t>0}:L_1\to (M_{{\rm log}})_0$ is uniformly bounded (here, $(M_{{\rm log}})_0$ is the closure of $L_1$ in $M_{{\rm log}}$). Taking the adjoint operators, we obtain that the net $(P_t^{\ast})_{t>0}:((M_{{\rm log}})_0)^{\ast}\to L_1^{\ast}$ is uniformly bounded. Since the spaces $(M_{{\rm log}})_0$ and $L_1$ are separable, it follows that their Banach duals coincide with their K\"othe duals. Thus, the net $(P_t^{\ast})_{t>0}:\Lambda_{{\rm log}}\to L_{\infty}$ is uniformly bounded.

Since $L_2$ is dense in $\Lambda_{{\rm log}},$ it follows that $P_t^{\ast}:\Lambda_{{\rm log}}\to L_{\infty}$ is the unique extension of $P_t^{\ast}:L_2\to L_2.$ However, on $L_2(\mathcal{M}),$ we have
$$P_t^{\ast}+P_t={\rm id}+D_t,$$
$$D_tx=\sum_{k\geq0}\Big(p_{(k+1)t}-p_{kt})x(p_{(k+1)t}-p_{t}\Big),\quad x\in L_2(\mathcal{M}).$$
The net $(D_t)_{t>0}:\Lambda_{{\rm log}}\to L_{\infty}$ is, obviously, uniformly bounded. Hence, the net $(P_t)_{t>0}:\Lambda_{{\rm log}}\to L_{\infty}$ is uniformly bounded.

Take $a\in\Lambda_{{\rm log}}(\mathcal{M})$ and fix $\varepsilon>0.$ Since the norm in $\Lambda_{{\rm log}}(\mathcal{M})$ is order-continuous, it follows that $L_2(\mathcal{M})$ is dense in $\Lambda_{{\rm log}}(\mathcal{M}).$ Choose $b\in L_2(\mathcal{M})$ such that $\|a-b\|_{\Lambda_{\rm log}}<\varepsilon.$ Obviously, $P_tb\to Pb$ in $L_2.$ Fix $t(\varepsilon)$ such that $\|P_tb-Pb\|_2<\varepsilon$ for $t<t(\varepsilon).$ We have
\begin{align*}
\|P_ta-Pa\|_{\infty}&\leq\|P_t(a-b)\|_{\infty}+\|P_tb-Pb\|_{\infty}+\|Pa-Pb\|_{\infty}\\
&\leq \|P_t\|_{\Lambda_{\rm log}\to L_{\infty}}\|a-b\|_{\Lambda_{\rm log}}+\|P_tb-Pb\|_2+\|P\|_{\Lambda_{{\rm log}}\to L_{\infty}}\|a-b\|_{\Lambda_{\rm log}}.
\end{align*}
Hence,
$$\|P_ta-Pa\|_{\infty}\leq c_{{\rm abs}}\varepsilon,\quad 0<t<t(\varepsilon).$$
Since $\varepsilon>0$ is arbitrarily small, the assertion follows.
\end{proof}

\begin{lem}\label{jx stein lemma} Let $a\in\Lambda_{{\rm log}}(\mathcal{M}).$ For every $t>0,$ there exists a sequence $(a_k^t)_{k\geq0}$ such that
$$\sum_{k\geq0}a_k^t(a_k^t)^{\ast}=aa^{\ast},\quad \sum_{k\geq0}\mathcal{E}_{\mathcal{M}_k^t}(a_k^t)\mathcal{E}_{\mathcal{M}_k^t}(a_k^t)^{\ast}=P_t(a)P_t(a)^{\ast}.$$
\end{lem}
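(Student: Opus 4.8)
The plan is to produce the sequence $(a_k^t)_{k\ge0}$ explicitly, by shifting the ``column blocks'' of $a$ up by one martingale level. Write $q_j:=p_{(j+1)t}-p_{jt}=M_{\chi_{(jt,(j+1)t)}}$ for $j\ge0$; these are mutually orthogonal projections in $\mathcal D^t$ with $\sum_{j=0}^{N-1}q_j=p_{Nt}$ and $\sum_{j\ge0}q_j=1$. I would set $a_0^t:=0$ and $a_k^t:=aq_{k-1}$ for $k\ge1$. Then $\sum_{k=1}^{N}a_k^t(a_k^t)^\ast=\sum_{j=0}^{N-1}aq_ja^\ast=ap_{Nt}a^\ast$, which increases in order to $aa^\ast$ since $p_{Nt}\uparrow1$ strongly; by the order-convergence facts recalled in Subsection~\ref{svf subsection} this already gives the first required identity $\sum_{k\ge0}a_k^t(a_k^t)^\ast=aa^\ast$.

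The substance is the evaluation of $\mathcal E_{\mathcal M_k^t}(a_k^t)$. Since $p_{kt}\in\mathcal D^t$, the projection $p_{kt}$ is central in $\mathcal M_k^t=p_{kt}\mathcal Mp_{kt}+(\mathcal D^t)'$, so $\mathcal M_k^t$ splits, relative to $p_{kt}$, as $p_{kt}\mathcal Mp_{kt}\oplus(1-p_{kt})(\mathcal D^t)'(1-p_{kt})$, with the second summand the block-diagonal subalgebra of $B(L_2(kt,\infty))$ adapted to $\bigoplus_{m\ge k}L_2(mt,(m+1)t)$. One can then identify $\mathcal E_{\mathcal M_k^t}(aq_{k-1})$ directly via the characterizing trace relation of Proposition~\ref{exp def prop}: the element $p_{kt}aq_{k-1}$ belongs to the corner $p_{kt}\mathcal Mp_{kt}\subset\mathcal M_k^t$ (as $q_{k-1}\le p_{kt}$), and for every $z\in(L_1\cap L_\infty)(\mathcal M_k^t)$ one has $\tau\bigl((aq_{k-1})z\bigr)-\tau\bigl((p_{kt}aq_{k-1})z\bigr)=\tau\bigl(aq_{k-1}z(1-p_{kt})\bigr)=\tau\bigl(aq_{k-1}(1-p_{kt})z(1-p_{kt})\bigr)=0$, using that $p_{kt}$ is central in $\mathcal M_k^t$ and $q_{k-1}(1-p_{kt})=0$. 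Hence $\mathcal E_{\mathcal M_k^t}(a_k^t)=p_{kt}aq_{k-1}=P_t(a)q_{k-1}$, the last equality because $P_t(a)q_l=\sum_{j\le l}q_jaq_l=p_{(l+1)t}aq_l$.

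It remains to sum the squares: $\sum_{k=1}^N\mathcal E_{\mathcal M_k^t}(a_k^t)\mathcal E_{\mathcal M_k^t}(a_k^t)^\ast=\sum_{l=0}^{N-1}p_{(l+1)t}aq_la^\ast p_{(l+1)t}=P_t(ap_{Nt})\bigl(P_t(ap_{Nt})\bigr)^\ast$. Because $ap_{Nt}\to a$ in the order-continuous norm of $\Lambda_{\log}(\mathcal M)$ and $P_t$ is bounded from $\Lambda_{\log}(\mathcal M)$ into $L_\infty(\mathcal M)$ (the uniform boundedness of the net $(P_t)_{t>0}$ that yields this is established inside the proof of Lemma~\ref{pt convergence lemma}), these partial sums are bounded in $L_\infty(\mathcal M)$, increase in order, and converge in the uniform norm to $P_t(a)\bigl(P_t(a)\bigr)^\ast$; an increasing net that converges in norm converges in order to the same limit, so $\sum_{k\ge0}\mathcal E_{\mathcal M_k^t}(a_k^t)\mathcal E_{\mathcal M_k^t}(a_k^t)^\ast=P_t(a)\bigl(P_t(a)\bigr)^\ast$, which is the second required identity. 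I expect the only genuinely delicate points to be the bookkeeping of the various order and uniform limits for the possibly unbounded operators $a$ and $aq_{k-1}$, together with the careful handling of the conditional expectation on $(L_1+L_\infty)(\mathcal M)$; the algebraic heart — that raising the index by one makes $\mathcal E_{\mathcal M_k^t}(aq_{k-1})$ exactly the $k$-th column block of the triangular truncation $P_t(a)$ — is immediate once the structure of $\mathcal M_k^t$ is unwound.
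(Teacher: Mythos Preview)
Your proof is correct and essentially identical to the paper's: the paper sets $a_k^t=a(p_{(k+1)t}-p_{kt})=aq_k$ without the index shift and obtains $\mathcal{E}_{\mathcal{M}_k^t}(aq_k)=p_{(k+1)t}aq_k=P_t(a)q_k$ from an explicit formula for the conditional expectation. Your shift $a_k^t=aq_{k-1}$ is a harmless relabeling (it reconciles the computation with the literal definition $\mathcal{M}_k^t=p_{kt}\mathcal{M}p_{kt}+(\mathcal{D}^t)'$, whereas the paper's own proof tacitly works with $p_{(k+1)t}$ in the conditional-expectation formula), and your trace-based verification of $\mathcal{E}_{\mathcal{M}_k^t}$ together with the explicit order-convergence arguments are sound elaborations of steps the paper leaves as one-liners.
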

\begin{proof} Set $a_k^t=a(p_{(k+1)t}-p_{kt}).$ Obviously,
$$a_k^t(a_k^t)^{\ast}=a\big(p_{(k+1)t}-p_{kt}\big)a^{\ast}.$$
Thus,
$$\sum_{k\geq0}a_k^t(a_k^t)^{\ast}=\sum_{k\geq0}a\big(p_{(k+1)t}-p_{kt}\big)a^{\ast}=a\Big(\sum_{k\geq0}(p_{(k+1)t}-p_{kt})\Big)a^{\ast}=aa^{\ast}.$$
On the other hand, we have
$$\mathcal{E}_{\mathcal{M}_k^t}x=p_{(k+1)t}xp_{(k+1)t}+\sum_{l>k}(p_{(l+1)t}-p_{lt})x(p_{(l+1)t}-p_{lt}),\quad x\in\mathcal{M}.$$
Thus,
$$\mathcal{E}_{\mathcal{M}_k^t}(a_k^t)=p_{(k+1)t}a(p_{(k+1)t}-p_{kt})=P_t(a)(p_{(k+1)t}-p_{kt}).$$
Again, we have
$$\mathcal{E}_{\mathcal{M}_k^t}(a_k^t)\mathcal{E}_{\mathcal{M}_k^t}(a_k^t)^{\ast}=P_t(a)(p_{(k+1)t}-p_{kt})P_t(a)^{\ast}.$$
Therefore,
$$\sum_{k\geq0}\mathcal{E}_{\mathcal{M}_k^t}(a_k^t)\mathcal{E}_{\mathcal{M}_k^t}(a_k^t)^{\ast}=\sum_{k\geq0}P_t(a)(p_{(k+1)t}-p_{kt})P_t(a)^{\ast}=P_t(a)P_t(a)^{\ast}.$$
The proof is complete.
\end{proof}

Now we are in a position to prove Theorem \ref{stein optimal thm}.
\begin{proof}[Proof of Theorem \ref{stein optimal thm}] Fix $x\in\Lambda_{{\rm log}}(\mathbb{Z}_+)$ and let $a$ be as in Corollary 5 in \cite{STZ-corr}. Set $z_1=|a^{\ast}|$ and $z_2=|(Pa)^{\ast}|.$ Recall that $a\in\Lambda_{{\rm log}}(\mathcal{M})$ so that $Pa\in L_{\infty}(\mathcal{M}).$
	
For every $t>0,$ let $(a_k^t)_{k\geq0}$ be a sequence constructed in Lemma \ref{jx stein lemma}. By Lemma \ref{jx stein lemma} and Lemma \ref{pt convergence lemma}, we have (the limit is taken in the uniform norm)
$$\Big(\sum_{k\geq0}\mathcal{E}_{\mathcal{M}_k^t}(a_k^t)\mathcal{E}_{\mathcal{M}_k^t}(a_k^t)^{\ast}\Big)^{\frac12}=|P_t(a)^{\ast}|\to |P(a)^{\ast}|=z_2,\quad t\downarrow 0.$$
By Lemma \ref{jx stein lemma}, we have
$$\Big(\sum_{k\geq0}(a_k^t)(a_k^t)^{\ast}\Big)^{\frac12}=|a^{\ast}|=z_1,\quad t>0.$$
By Corollary 5 in \cite{STZ-corr}, we have $\mu(z_1)=\mu(x)$ and $\mu(z_2)\geq c_{{\rm abs}}S_d\mu(x).$
\end{proof}

\section{Martingale inequalities in quasi-Banach spaces}\label{application section}

In this section, we apply Theorems \ref{stein thm}, \ref{dual doob thm}, \ref{mt theorem} and \ref{bg theorem} to obtain new martingale inequalities in symmetric quasi-Banach operator spaces (in particular, in Orlicz spaces, including the endpoint cases).

\subsection{Estimates in general quasi-Banach spaces}\label{application-1}

The following assertions are straightforward corollaries of Theorems \ref{stein thm}, \ref{dual doob thm}, \ref{mt theorem} and \ref{bg theorem}, respectively.

\begin{thm}[Stein inequality in quasi-Banach spaces]\label{qst} Let $E$ and $F$ be symmetric quasi-Banach function spaces on $(0,\infty).$ If $S:E\to F,$ then, for every sequence $(x_k)_{k\geq0}\subset E(\mathcal{M}),$ we have
$$\big\|(\sum_{k\geq0}|\mathcal{E}_{k}x_k|^2)^{\frac12}\big\|_F\leq c_{{\rm abs}}\|S\|_{E\to F}\big\|(\sum_{k\geq0}|x_k|^2)^{\frac12}\big\|_E.$$
\end{thm}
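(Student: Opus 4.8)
The plan is to deduce Theorem~\ref{qst} directly from the distributional Stein inequality (DST) of Theorem~\ref{stein thm}, using the functorial properties of symmetric spaces under submajorization and the hypothesis that $S$ acts boundedly from $E$ to $F$. First I would fix a sequence $(x_k)_{k\ge 0}\subset E(\mathcal M)$ and observe that, since $E$ is symmetric and $F$ is the target of the bounded operator $S$, we may as well assume $\big(\sum_{k\ge 0}|x_k|^2\big)^{1/2}\in E(\mathcal M)$ (otherwise the right-hand side is infinite and there is nothing to prove); in particular $\mu^{1/2}\big(\sum_{k\ge 0}|x_k|^2\big)=\mu\big(\big(\sum_{k\ge 0}|x_k|^2\big)^{1/2}\big)\in E(0,\infty)$. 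I would also note that $E(0,\infty)\subset\Lambda_{\log}(0,\infty)$ is forced by the standing hypothesis $S:E\to F$ (the operator $S$ is only defined on $\Lambda_{\log}$), so Theorem~\ref{stein thm} applies and $(x_k)_{k\ge0}\subset\Lambda_{\log}(\mathcal M)$.

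Next I would invoke (DST):
\[
\mu^{\frac12}\Big(\sum_{k\ge 0}|\mathcal E_k x_k|^2\Big)\le c_{\mathrm{abs}}\, S\mu^{\frac12}\Big(\sum_{k\ge 0}|x_k|^2\Big).
\]
Since $f\le g$ pointwise for decreasing functions certainly implies $f\prec\prec g$, and since $S\mu^{1/2}\big(\sum_{k\ge0}|x_k|^2\big)\in F(0,\infty)$ with norm at most $\|S\|_{E\to F}\,\big\|\mu^{1/2}\big(\sum_{k\ge0}|x_k|^2\big)\big\|_E$, I would apply the symmetry of $F$ (more precisely, that $F$ is a symmetric quasi-Banach \emph{function} space, so it is monotone with respect to $\mu$, hence with respect to $\prec\prec$ when combined with the quasi-triangle inequality — but here plain pointwise domination of rearrangements suffices, which only needs symmetry) to conclude
\[
\Big\|\mu^{\frac12}\Big(\sum_{k\ge0}|\mathcal E_k x_k|^2\Big)\Big\|_F\le c_{\mathrm{abs}}\Big\|S\mu^{\frac12}\Big(\sum_{k\ge0}|x_k|^2\Big)\Big\|_F\le c_{\mathrm{abs}}\|S\|_{E\to F}\Big\|\mu^{\frac12}\Big(\sum_{k\ge0}|x_k|^2\Big)\Big\|_E.
\]
Finally I would unwind the definitions of the noncommutative quasi-norms: by the definition of $E(\mathcal M)$ and the identity $\mu(t,|y|^{1/2})=\mu(t,y)^{1/2}$, the left-hand side equals $\big\|(\sum_{k\ge0}|\mathcal E_k x_k|^2)^{1/2}\big\|_{F(\mathcal M)}$ and the right-hand side equals $c_{\mathrm{abs}}\|S\|_{E\to F}\big\|(\sum_{k\ge0}|x_k|^2)^{1/2}\big\|_{E(\mathcal M)}$, which is exactly the claimed inequality.

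The only genuinely delicate point — and the one I would be most careful about — is the passage from the pointwise/submajorization estimate on singular value functions to the norm estimate in the quasi-Banach \emph{function} space $F$, and then the transfer to the quasi-Banach \emph{operator} space $F(\mathcal M)$. For this one uses that $F$ symmetric means $\mu(f)\le\mu(g)$ implies $\|f\|_F\le\|g\|_F$, together with the fact that $\|y\|_{F(\mathcal M)}=\|\mu(y)\|_F$ by definition. Since (DST) is a pointwise inequality between the decreasing functions $\mu^{1/2}(\sum|\mathcal E_k x_k|^2)$ and $c_{\mathrm{abs}}S\mu^{1/2}(\sum|x_k|^2)$ (the latter being the rearrangement of itself, as it is already decreasing), no submajorization machinery is actually needed — plain monotonicity of the symmetric quasi-norm does the job — so there is no real obstacle here beyond bookkeeping; the substantive content lies entirely in Theorem~\ref{stein thm}, which we are entitled to assume.
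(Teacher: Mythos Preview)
Your proposal is correct and follows exactly the same route as the paper's own proof: apply (DST) to bound $\mu^{1/2}\big(\sum_{k\ge0}|\mathcal E_kx_k|^2\big)$ pointwise by $c_{\mathrm{abs}}\,S\mu^{1/2}\big(\sum_{k\ge0}|x_k|^2\big)$, then use symmetry of $F$ and the boundedness of $S:E\to F$ to pass to norms. The paper's proof is a one-line version of yours; your additional remarks (that $E\subset\Lambda_{\log}$ is implicit in the hypothesis, that $S\mu(x)$ is itself decreasing so only plain symmetry is needed rather than submajorization) are correct elaborations of points the paper leaves tacit.
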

\begin{proof} By (DST), we have
$$\big\|(\sum_{k\geq0}|\mathcal{E}_{k}x_k|^2)^{\frac12}\big\|_F\leq c_{{\rm abs}}\Big\|S\mu^{\frac12}\Big(\sum_{k\geq0}|x_k|^2\Big)\Big\|_F\leq c_{{\rm abs}}\|S\|_{E\to F}\big\|(\sum_{k\geq0}|x_k|^2)^{\frac12}\big\|_E.$$
\end{proof}

\begin{thm}[Dual Doob inequality in quasi-Banach spaces]\label{qdd} Let $E$ and $F$ be symmetric quasi-Banach function spaces on $(0,\infty).$ If $F\in{\rm Int}(L_1,L_{\infty})$ and if $C^{\ast}:E^{(2)}\to F^{(2)},$ then for every positive sequence $(a_k)_{k\geq0}\subset E(\mathcal{M}),$ we have
$$\|\sum_{k\geq0}\mathcal{E}_ka_k\|_F\leq c_{{\rm abs}}\|C^{\ast}\|_{E^{(2)}\to F^{(2)}}^2\|\sum_{k\geq0}a_k\|_E.$$
\end{thm}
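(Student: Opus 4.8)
The plan is to read the inequality off from the distributional dual Doob inequality (Theorem~\ref{dual doob thm}), in exactly the way Theorem~\ref{qst} was deduced from (DST). We may assume the right-hand side is finite; then $\sum_{k\geq0}a_k$ exists as an increasing order limit in $L_0(\mathcal M)$ and lies in $E(\mathcal M)$, and it suffices to prove the displayed bound with this sum in place.

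Before invoking (DDD) one must verify its hypotheses: that each $a_k\in(L_1+L_\infty)(\mathcal M)$, each $a_k^{1/2}\in\Lambda_{\log}(\mathcal M)$, and $\mu^{1/2}(\sum_k a_k)\in\Lambda_{\log}(0,\infty)$. Since $C^\ast$ is by definition an operator with domain $\Lambda_{\log}$, the hypothesis that $C^\ast\colon E^{(2)}\to F^{(2)}$ is bounded already forces $E^{(2)}\subseteq\Lambda_{\log}(0,\infty)$, hence $E^{(2)}(\mathcal M)\subseteq\Lambda_{\log}(\mathcal M)$; this handles the $a_k^{1/2}$ and $\mu^{1/2}(\sum_k a_k)\in E^{(2)}$. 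For $a_k\in(L_1+L_\infty)(\mathcal M)$ one notes that $F\in{\rm Int}(L_1,L_\infty)$ gives $F^{(2)}\subseteq L_2+L_\infty$, and that the elementary pointwise estimate $C^\ast g\geq\tfrac12\,\sigma_{1/2}g$ for decreasing $g$ then forces $E^{(2)}\subseteq L_2+L_\infty$ (as $C^\ast$ maps $E^{(2)}$ into $F^{(2)}\subseteq L_2+L_\infty$), whence $E\subseteq L_1+L_\infty$ by $1/2$-convexification. This paragraph is pure bookkeeping, and it is the only place where a little care is needed.

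With the hypotheses in place, Theorem~\ref{dual doob thm} yields
$$\mu\Big(\sum_{k\geq0}\mathcal E_k a_k\Big)\prec\prec c_{{\rm abs}}\Big(C^\ast\mu^{1/2}\Big(\sum_{k\geq0}a_k\Big)\Big)^2 .$$
Since $F\in{\rm Int}(L_1,L_\infty)$, the symmetric space $F$ is closed under Hardy--Littlewood--P\'olya submajorization (the Calder\'on monotonicity property for the couple $(L_1,L_\infty)$), so applying $\|\cdot\|_F$ to both sides and using the defining identity $\|g^2\|_F=\|g\|_{F^{(2)}}^2$ of the $2$-convexification gives
$$\Big\|\sum_{k\geq0}\mathcal E_k a_k\Big\|_F\leq c_{{\rm abs}}\Big\|C^\ast\mu^{1/2}\Big(\sum_{k\geq0}a_k\Big)\Big\|_{F^{(2)}}^2 .$$
The boundedness of $C^\ast\colon E^{(2)}\to F^{(2)}$ together with $\big\|\mu^{1/2}(\sum_k a_k)\big\|_{E^{(2)}}^2=\big\|\mu(\sum_k a_k)\big\|_E=\big\|\sum_k a_k\big\|_E$ then completes the proof.

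Thus the only substantive inputs are Theorem~\ref{dual doob thm} and the submajorization-closure of $F$; the step that genuinely requires attention is the verification, sketched in the second paragraph, that the standing hypotheses really do place all operators involved inside the spaces where (DDD) is valid. (As usual, the ``absolute'' constant in the conclusion tacitly absorbs the interpolation constant of $F$, which is $1$ for exact interpolation spaces.)
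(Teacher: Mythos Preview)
Your proof is correct and follows essentially the same route as the paper: apply (DDD), use $F\in\mathrm{Int}(L_1,L_\infty)$ to pass the submajorization to the $F$-norm, rewrite via the $2$-convexification identity, and then invoke the boundedness of $C^\ast\colon E^{(2)}\to F^{(2)}$. Your additional bookkeeping paragraph verifying the hypotheses of Theorem~\ref{dual doob thm} is more careful than the paper (which simply applies (DDD) without comment), and is a welcome touch.
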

\begin{proof} Since $F\in{\rm Int}(L_1,L_{\infty}),$ it follows that the norm in $F$ is monotone with respect to the Hardy-Littlewood-Polya submajorization. By (DDD), we have
\begin{align*}
\|\sum_{k\geq0}\mathcal{E}_ka_k\|_F&\leq c_{{\rm abs}}\Big\|\Big(C^{\ast}\mu^{\frac12}\Big(\sum_{k\geq 0}a_k\Big)\Big)^2\Big\|_F=c_{{\rm abs}}\Big\|C^{\ast}\mu^{\frac12}\Big(\sum_{k\geq 0}a_k\Big)\Big\|_{F^{(2)}}^2\\
&\leq c_{{\rm abs}}\|C^{\ast}\|_{E^{(2)}\to F^{(2)}}^2\Big\|\mu^{\frac12}\Big(\sum_{k\geq 0}a_k\Big)\Big\|_{E^{(2)}}^2\\
&=c_{{\rm abs}}\|C^{\ast}\|_{E^{(2)}\to F^{(2)}}^2\|\sum_{k\geq0}a_k\|_E.
\end{align*}
\end{proof}

\begin{thm}[Martingale Transform estimate in quasi-Banach spaces]\label{qmt} Let $E$ and $F$ be symmetric quasi-Banach function spaces on $(0,\infty).$ If $S:E\to F,$ then, for every sequence $x\in E(\mathcal{M}),$ we have
$$\|\sum_{k\geq0}\epsilon_k\big(\mathcal{E}_{k}x-\mathcal{E}_{{k-1}}x\big)\|_F\leq c_{{\rm abs}}\|S\|_{E\to F}\|x\|_E.$$
\end{thm}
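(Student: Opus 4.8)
The plan is to read this off directly from the distributional martingale transform estimate (DMT) of Theorem~\ref{mt theorem}, in exact parallel with the proofs of Theorems~\ref{qst} and~\ref{qdd} above. Throughout, abbreviate $T_\epsilon x:=\sum_{k\geq0}\epsilon_k(\mathcal{E}_kx-\mathcal{E}_{k-1}x)$. As in the two preceding results, the hypothesis ``$S:E\to F$'' is understood to mean that $E$ is contained in the natural domain $\Lambda_{\log}(0,\infty)$ of the Calder\'{o}n operator and that $S$ maps $E$ boundedly into $F$; consequently $E(\mathcal{M})\subseteq\Lambda_{\log}(\mathcal{M})$, so that (DMT) applies to every $x\in E(\mathcal{M})$. (If one prefers to avoid this convention, the containment $E\subseteq\Lambda_{\log}$ can be recovered by a routine truncation argument together with the Holmstedt description of $\|\cdot\|_{(L_1+L_\infty)}$ from Lemma~\ref{lem-L1-Linfty}.)

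Next I would use the defining property of a symmetric quasi-Banach function space $F$: whenever $\mu(y)\leq\mu(z)$ as functions on $(0,\infty)$ one has $\|y\|_F\leq\|z\|_F$, and moreover $\|w\|_F=\|\mu(w)\|_F$ for an operator $w$. Since $S\mu(x)=C\mu(x)+C^{\ast}\mu(x)$ is a sum of two decreasing functions, it coincides with its own decreasing rearrangement, so the pointwise inequality $\mu(T_\epsilon x)\leq c_{\rm abs}\,S\mu(x)$ delivered by (DMT) is precisely a comparison of the singular value functions of $T_\epsilon x$ and of $c_{\rm abs}\,S\mu(x)$. Feeding this, and then the boundedness of $S$, into the symmetry of $F$ and $E$ yields
$$\|T_\epsilon x\|_F=\|\mu(T_\epsilon x)\|_F\leq c_{\rm abs}\,\|S\mu(x)\|_F\leq c_{\rm abs}\,\|S\|_{E\to F}\,\|\mu(x)\|_E=c_{\rm abs}\,\|S\|_{E\to F}\,\|x\|_E,$$
which is the asserted estimate.

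There is essentially no obstacle here: all the analytic content sits in the distributional inequality (DMT), which is already established, and the passage to a general symmetric quasi-Banach range space is purely formal --- indeed simpler than for (DDD) in Theorem~\ref{qdd}, since (DMT) provides a pointwise domination of singular value functions rather than merely a submajorization, so that no further structural hypothesis on $F$ (such as $F\in{\rm Int}(L_1,L_\infty)$) is needed. The one place that warrants a word is the implicit reading of ``$S:E\to F$'' that guarantees $x$ lies in the domain of (DMT); past that the proof is the single chain of inequalities displayed above.
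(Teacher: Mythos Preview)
Your proposal is correct and follows essentially the same approach as the paper: apply (DMT) to obtain $\mu(T_\epsilon x)\leq c_{\rm abs}\,S\mu(x)$, then use symmetry of $F$ and boundedness of $S:E\to F$ to conclude. The paper's proof is in fact the single displayed chain you wrote, without the surrounding commentary; your additional remarks (that $S\mu(x)$ is itself decreasing so the pointwise bound is already a bound on rearrangements, and the observation that no interpolation hypothesis on $F$ is needed because (DMT) is pointwise rather than a submajorization) are accurate clarifications the paper leaves implicit.
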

\begin{proof} By (DMT), we have
$$\|\sum_{k\geq0}\epsilon_k\big(\mathcal{E}_{k}x-\mathcal{E}_{{k-1}}x\big)\|_F\leq c_{{\rm abs}}\|S\mu(x)\|_F\leq c_{{\rm abs}}\|S\|_{E\to F}\|x\|_E.$$
\end{proof}

The next theorem is a vast generalisation of Theorem 1.3 in \cite{JSZZ}.

\begin{thm}[Burkholder-Gundy inequality in quasi-Banach spaces]\label{qbg} Let $E$ and $F$ be symmetric quasi-Banach function spaces on $(0,\infty).$ Suppose that $F\in{\rm Int}(L_2,L_{\infty})$ and $C^{\ast}:E\to F.$ Let $x\in (L_2+L_{\infty})(\mathcal{M})$ and let $(x_k)_{k\geq0}$ be the respective sequence of martingale differences (i.e., $x_0=\mathcal{E}_{_0} x$ and $x_k=\mathcal{E}_{k} x-\mathcal{E}_{k-1}x$ for $k\geq 1$).
\begin{enumerate}[{\rm (i)}]
\item If $(x_k)_{k\geq0}\subset E(\mathcal{M}),$ then $x\in F(\mathcal{M})$ and
$$\|x\|_F\leq c_{{\rm abs}}\|C^{\ast}\|_{E\to F}\Big(\big\|\big(\sum_{k\geq0}|x_k|^2\big)^{\frac12}\big\|_E+\big\|\big(\sum_{k\geq0}|x_k^{\ast}|^2\big)^{\frac12}\big\|_E\Big).$$
\item If $x\in E(\mathcal{M}),$ then
$$\big\|\big(\sum_{k\geq0}|x_k|^2\big)^{\frac12}\big\|_F+\big\|\big(\sum_{k\geq0}|x_k^{\ast}|^2\big)^{\frac12}\big\|_F\leq c_{{\rm abs}}\|C^{\ast}\|_{E\to F}\|x\|_E.$$
\end{enumerate}
\end{thm}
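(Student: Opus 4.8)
The plan is to obtain both parts by transplanting the operator-level inequalities (Lower$-$DBG) and (Upper$-$DBG) of Theorem~\ref{bg theorem} into the spaces $E$ and $F$. Two distinct features of the hypotheses serve two distinct purposes: the boundedness $C^{\ast}\colon E\to F$ absorbs the Calder\'on operator (just as $C^{\ast}\colon E^{(2)}\to F^{(2)}$ did in Theorem~\ref{qdd}), while the assumption $F\in{\rm Int}(L_2,L_\infty)$ — as opposed to ${\rm Int}(L_1,L_\infty)$ — is what turns a submajorization between the \emph{squares} of two singular value functions into a quasi-norm estimate in $F$. Concretely, I will record the following: for $F\in{\rm Int}(L_2,L_\infty)$ the $\tfrac12$-convexification $F^{(1/2)}$ is an interpolation space for the couple $((L_2)^{(1/2)},(L_\infty)^{(1/2)})=(L_1,L_\infty)$, whence — $(L_1,L_\infty)$ being a Calder\'on couple — the norm of $F^{(1/2)}$ is monotone with respect to Hardy--Littlewood--P\'olya submajorization; in particular, for positive $g$ one has $\|g\|_{F^{(1/2)}}=\|g^{1/2}\|_{F}^2$, and for $f\geq0$, $\|(C^{\ast}f)^2\|_{F^{(1/2)}}=\|C^{\ast}f\|_{F}^2$. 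I will also use that $C^{\ast}\colon E\to F$ (with $F\subset L_1+L_\infty$) forces $E\hookrightarrow\Lambda_{\log}$, which makes $C^{\ast}$ available on every element below, and that the dilation $\sigma_2$ is bounded on every symmetric quasi-Banach function space on $(0,\infty)$.

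For (ii): apply (Lower$-$DBG) to $x\in(L_2+L_\infty)(\mathcal M)$; since $\sum_k|x_k|^2\leq\sum_k(x_kx_k^{\ast}+x_k^{\ast}x_k)$ and $0\leq a\leq b$ implies $\mu(a)\leq\mu(b)$ pointwise, we get $\mu\big(\sum_k|x_k|^2\big)\prec\prec c_{\rm abs}(C^{\ast}\mu(x))^2$, and likewise with $x_k$ replaced by $x_k^{\ast}$. Using the observation above,
\[
\big\|({\textstyle\sum_k|x_k|^2})^{1/2}\big\|_F^2=\big\|{\textstyle\sum_k|x_k|^2}\big\|_{F^{(1/2)}}\leq c_{\rm abs}\big\|(C^{\ast}\mu(x))^2\big\|_{F^{(1/2)}}=c_{\rm abs}\|C^{\ast}\mu(x)\|_F^2\leq c_{\rm abs}\|C^{\ast}\|_{E\to F}^2\|x\|_E^2 ,
\]
and the same for the row term; taking square roots and adding gives (ii).

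For (i): first I verify the hypotheses of (Upper$-$DBG). From $(\sum_k|x_k|^2)^{1/2},(\sum_k|x_k^{\ast}|^2)^{1/2}\in E(\mathcal M)\hookrightarrow\Lambda_{\log}(\mathcal M)$, the singular value triangle inequality \eqref{singular-triangle} gives $\mu\big(2t,\sum_k(x_kx_k^{\ast}+x_k^{\ast}x_k)\big)^{1/2}\leq\mu\big(t,\sum_k|x_k|^2\big)^{1/2}+\mu\big(t,\sum_k|x_k^{\ast}|^2\big)^{1/2}$, so boundedness of $\sigma_2$ on $\Lambda_{\log}$ yields $\big(\sum_k(x_kx_k^{\ast}+x_k^{\ast}x_k)\big)^{1/2}\in\Lambda_{\log}(\mathcal M)$; and $x\in(L_2+L_\infty)(\mathcal M)$ yields $\sum_k(x_kx_k^{\ast}+x_k^{\ast}x_k)\in(L_1+L_\infty)(\mathcal M)$ (the endpoint lower Burkholder--Gundy bound, applied to an $L_2+L_\infty$ decomposition of $x$). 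Thus (Upper$-$DBG) applies and gives $\mu^2(x)\prec\prec c_{\rm abs}\big(C^{\ast}\mu^{1/2}(\sum_k x_kx_k^{\ast}+x_k^{\ast}x_k)\big)^2$; pushing this through $F$ exactly as in (ii),
\[
\|x\|_F^2=\|\mu^2(x)\|_{F^{(1/2)}}\leq c_{\rm abs}\big\|C^{\ast}\mu^{1/2}({\textstyle\sum_k x_kx_k^{\ast}+x_k^{\ast}x_k})\big\|_F^2\leq c_{\rm abs}\|C^{\ast}\|_{E\to F}^2\big\|({\textstyle\sum_k x_kx_k^{\ast}+x_k^{\ast}x_k})^{1/2}\big\|_E^2 .
\]
In particular $x\in F(\mathcal M)$, and a final application of \eqref{singular-triangle} together with boundedness of $\sigma_2$ on $E$ bounds $\big\|(\sum_k x_kx_k^{\ast}+x_k^{\ast}x_k)^{1/2}\big\|_E$ by $c_{\rm abs}\big(\|(\sum_k|x_k|^2)^{1/2}\|_E+\|(\sum_k|x_k^{\ast}|^2)^{1/2}\|_E\big)$, which yields (i).

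The conceptual heart — and the place where the hypothesis $F\in{\rm Int}(L_2,L_\infty)$ is genuinely used rather than the weaker ${\rm Int}(L_1,L_\infty)$ of Theorem~\ref{qdd} — is the reduction of a submajorization of squares to a quasi-norm inequality in $F$, i.e.\ the identification of $F^{(1/2)}$ as a submajorization-monotone Banach space via the Calder\'on couple $(L_1,L_\infty)$; I expect this to be the only step requiring more than routine verification. Two minor points should be noted: in (i) the constant $c_{\rm abs}$ tacitly absorbs the modulus of concavity of the quasi-norm of $E$ (used when splitting the column and row contributions), and the membership $\sum_k(x_kx_k^{\ast}+x_k^{\ast}x_k)\in(L_1+L_\infty)(\mathcal M)$, though elementary, does require the standing assumption $x\in(L_2+L_\infty)(\mathcal M)$ rather than following from $(x_k)\subset E(\mathcal M)$ alone.
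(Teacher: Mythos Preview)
Your proposal is correct and follows essentially the same route as the paper: apply (Upper$-$DBG) and (Lower$-$DBG) from Theorem~\ref{bg theorem}, convert the resulting submajorization of squares into an $F$-norm inequality via the hypothesis $F\in{\rm Int}(L_2,L_\infty)$, and then use $C^\ast\colon E\to F$. The only cosmetic difference is that you phrase the key step through the $\tfrac12$-convexification $F^{(1/2)}\in{\rm Int}(L_1,L_\infty)$, whereas the paper invokes the Lorentz--Shimogaki theorem directly (``$\mu^2(b)\prec\prec\mu^2(a)\Rightarrow\|b\|_F\le\|a\|_F$''); these are equivalent formulations. Your treatment is in fact more careful than the paper's in two respects: you explicitly verify the hypotheses of Theorem~\ref{bg theorem} (in particular $E\hookrightarrow\Lambda_{\log}$ and the membership $\sum_k(x_kx_k^\ast+x_k^\ast x_k)\in(L_1+L_\infty)$), and you justify the passage between $\|(\sum x_kx_k^\ast+x_k^\ast x_k)^{1/2}\|$ and the separate column/row square-function norms via \eqref{singular-triangle} and boundedness of $\sigma_2$, which the paper simply asserts as ``obvious''.
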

\begin{proof} By the assumption $F\in{\rm Int}(L_2,L_{\infty})$ and by the Lorentz-Shimogaki theorem, if $a\in F$ and if $a\in L_0$ are such that $\mu^2(b)\prec\prec\mu^2(a),$ then $b\in F$ and $\|b\|_F\leq\|a\|_F.$

By the upper inequality (Upper$-$DBG), we have
\begin{align*}
\|x\|_F\leq &c_{{\rm abs}}\Big\|C^{\ast}\mu^{\frac12}\Big(\sum\limits_{k\geq 0}x_kx_k^{\ast}+x_k^{\ast}x_k\Big)\Big\|_F\\
&\leq c_{{\rm abs}}\|C^{\ast}\|_{E\to F}\Big\|\mu^{\frac12}\Big(\sum\limits_{k\geq 0}x_kx_k^{\ast}+x_k^{\ast}x_k\Big)\Big\|_F\\
&=c_{{\rm abs}}\|C^{\ast}\|_{E\to F}\Big\|\Big(\sum\limits_{k\geq 0}x_kx_k^{\ast}+x_k^{\ast}x_k\Big)^{\frac12}\Big\|_F.
\end{align*}
On the other hand, the lower inequality in (Lower$-$DBG) gives that
$$\Big\|\Big(\sum\limits_{k\geq 0}x_kx_k^{\ast}+x_k^{\ast}x_k\Big)^{\frac12}\Big\|_F\leq  c_{{\rm abs}}\big\|C^{\ast}\mu(x)\big\|_F\leq c_{{\rm abs}}\|C^{\ast}\|_{E\to F}\|x\|_E.$$
Obviously,
$$\Big\|\Big(\sum\limits_{k\geq 0}x_kx_k^{\ast}+x_k^{\ast}x_k\Big)^{\frac12}\Big\|_F\approx \big\|\big(\sum_{k\geq0}|x_k|^2\big)^{\frac12}\big\|_E+\big\|\big(\sum_{k\geq0}|x_k^{\ast}|^2\big)^{\frac12}\big\|_E.$$
The proof is complete by combining the above estimates.
\end{proof}

\begin{rem} Setting $E=F=L_p(0,\infty)$ and using Lemma \ref{cesaro-norm}, we obtain that
\begin{enumerate}[{\rm (i)}]
\item Theorem \ref{qst} reduces to (ST);
\item Theorem \ref{qdd} reduced to (DD);
\item Theorem \ref{qmt} reduces to (MT);
\item Theorem \ref{qbg} provides optimal constants in (BG).
\end{enumerate}
\end{rem}

\begin{rem} Let $L\log L$ be the Orlicz space $L_{\Phi}$ with $\Phi(t)=t\log(1+t), t>0$. Set $E=L\log L(0,1)$ and $F=L_1(0,1)$, and note that $S$ is bounded from $L\log L(0,1)$ to $L_1(0,1).$ Thus, by Theorem \ref{qst}, we obtain that
$$\big\|(\sum_{k\geq0}|\mathcal{E}_{k}x_k|^2)^{\frac12}\big\|_{L_1}\leq c_{{\rm abs}}\big\|(\sum_{k\geq0}|x_k|^2)^{\frac12}\big\|_{L\log L}.$$
\end{rem}

\subsection{Example: Orlicz and weak Orlicz spaces}
In this subsection,  we establish some new noncommutative martingale inequalities which have some main interest in view of the endpoint case.
We begin with two lemmas. The special case $\mathcal M=B(H)$ of the following lemma was proved in \cite[Proposition 3.14]{DFWW}. The method applied there is much more complicated than ours.

\begin{lem}\label{dfww lemma} If $z=(\frac1k)_{k\geq1},$ then
$$\frac12C\mu(x)\leq\mu(x\otimes z)\leq C\mu(x),\quad x\in (L_1+L_\infty)(\mathcal M).$$
\end{lem}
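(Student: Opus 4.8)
The plan is to reduce the operator statement $\frac12 C\mu(x)\leq \mu(x\otimes z)\leq C\mu(x)$ to a purely commutative computation about singular value functions, using the fact that $\mu(x\otimes z)$ depends only on $\mu(x)$ and $\mu(z)$ (where $z=(1/k)_{k\ge1}$ is a fixed scalar sequence viewed as an element of $B(\ell_2)$). Concretely, $x\otimes z$ is unitarily equivalent to a direct-sum-type operator whose distribution function is governed by the ``tensor product'' of the two decreasing rearrangements, so it suffices to prove the two-sided bound in the model case $\mathcal M=L_\infty(0,\infty)\bar\otimes B(\ell_2)$, i.e. to show
\[
\tfrac12\,(C f)(t)\ \le\ \mu\bigl(t, f\otimes z\bigr)\ \le\ (Cf)(t),\qquad t>0,
\]
for an arbitrary decreasing $f=\mu(x)\in(L_1+L_\infty)(0,\infty)$. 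Here $f\otimes z$ is the measurable function on $(0,\infty)\times\mathbb Z_{\ge1}$ given by $(s,k)\mapsto f(s)/k$, with the product of Lebesgue measure and counting measure.

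First I would compute the distribution function of $f\otimes z$. For $\lambda>0$,
\[
n_{f\otimes z}(\lambda)=\sum_{k\ge1}\bigl|\{s:\ f(s)/k>\lambda\}\bigr|
=\sum_{k\ge1} n_f(k\lambda).
\]
Since $f$ is decreasing, $n_f$ is the (generalized) inverse of $f$, and the sum $\sum_{k\ge1}n_f(k\lambda)$ is comparable, up to the standard integral-comparison for monotone functions, to $\frac1\lambda\int_0^\infty n_f(u)\,du=\frac1\lambda\int_0^\infty f(s)\,ds$ when that integral is finite; more precisely one has the sandwich
\[
\int_1^\infty n_f(\lambda u)\,du\ \le\ \sum_{k\ge1}n_f(k\lambda)\ \le\ \int_0^\infty n_f(\lambda u)\,du=\frac1\lambda\int_0^\infty f(s)\,ds.
\]
Inverting these inequalities (passing from $n_{f\otimes z}$ back to $\mu(\cdot,f\otimes z)$ via $\mu(t,f\otimes z)=\inf\{\lambda:n_{f\otimes z}(\lambda)\le t\}$) is the technical heart of the argument: I want to turn ``$n_{f\otimes z}(\lambda)\le t$'' into ``$\frac1t\int_0^{?}f\le \lambda$'' with the bounds $?=t$ on one side (giving the upper estimate $\mu(t,f\otimes z)\le (Cf)(t)$) and, on the other side, the lower estimate $\mu(t,f\otimes z)\ge \tfrac12 (Cf)(t)$. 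The cleanest route for the upper bound is: if $\lambda=(Cf)(t)=\frac1t\int_0^t f$, then $n_f(k\lambda)\le$ (something summing to $\le t$), using $\int_0^t f(s)\,ds\ge \int_0^{n_f(k\lambda)} f(s)\,ds\ge \#\{\text{mass at level}\ \ge k\lambda\}\cdot k\lambda$-type estimates; for the lower bound one exploits $\lambda<\tfrac12(Cf)(t)\Rightarrow n_{f\otimes z}(\lambda)>t$ by keeping just the terms $k=1,\dots,$ a geometric block and using monotonicity of $f$.

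The main obstacle I anticipate is handling the two regimes of $f$ simultaneously: when $f\in L_1$ the sum $\sum_k n_f(k\lambda)$ genuinely behaves like $\frac1\lambda\|f\|_1$, but when $f\notin L_1$ (only in $L_1+L_\infty$), both sides are eventually infinite and one must argue with truncations $f\chi_{(0,N)}$ and pass to the limit, checking that $C$ and the tensor operation commute with the monotone limit $f\chi_{(0,N)}\uparrow f$ (this is where Proposition~1.7 of \cite{DPS} on order convergence of singular values, quoted in Subsection~\ref{svf subsection}, is used). A secondary subtlety is the factor $\tfrac12$: the integral-versus-sum comparison for $\sum_k 1/k$ is exactly what forces a constant, and one must check that $\tfrac12$ (rather than some other absolute constant) survives — I would verify this by the explicit test function $f=\chi_{(0,1)}$, for which $\mu(t,f\otimes z)$ is computed directly from $n_{f\otimes z}(\lambda)=\#\{k:1/k>\lambda\}=\lfloor 1/\lambda\rfloor$ and compared with $(Cf)(t)=\min\{1,1/t\}$, confirming the constants are sharp and the stated inequality holds as written.
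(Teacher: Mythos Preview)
Your strategy---reduce to decreasing $f=\mu(x)$ on $(0,\infty)$, compute the distribution
\[
n_{f\otimes z}(\lambda)=\sum_{k\ge1}n_f(k\lambda)=\int_0^\infty\Big\lfloor\frac{f(s)}{\lambda}\Big\rfloor\,ds,
\]
and invert---is sound and genuinely different from the paper's argument. But the upper-bound step you sketch does not close. From your chain $\int_0^t f\ge\int_0^{n_f(k\lambda)}f\ge k\lambda\, n_f(k\lambda)$ you only get $n_f(k\lambda)\le t/k$, and $\sum_{k\ge1}t/k=\infty$; so ``something summing to $\le t$'' is not produced by that estimate. The missing observation is that $\lfloor f/\lambda\rfloor$ vanishes off $\{f>\lambda\}$, and since $\lambda\ge(Cf)(t)\ge f(t)$ one has $\{f>\lambda\}\subset(0,t)$; hence
\[
\sum_{k\ge1}n_f(k\lambda)=\int_{\{f>\lambda\}}\Big\lfloor\frac{f}{\lambda}\Big\rfloor\le\frac{1}{\lambda}\int_0^t f=\frac{t\,(Cf)(t)}{\lambda}\le t,
\]
which is what you need. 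Your lower-bound sketch (``keep a geometric block'') is too vague to evaluate as written; a working version uses $\sum_k n_f(k\lambda)\ge\int_1^\infty n_f(u\lambda)\,du=\frac1\lambda\int_0^\infty(f-\lambda)_+$ and then splits according to whether $f(t)\ge\lambda$ or not. Once these points are handled, the truncation/limit apparatus you propose for $f\notin L_1$ is unnecessary: the distribution-function inequalities above are valid for arbitrary $f\in L_1+L_\infty$.

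For comparison, the paper bypasses distribution functions entirely. It introduces the continuous function $Z(t)=t^{-1}$, for which one has the exact identity $\mu(x\otimes Z)=\|x\|_1\cdot Z$, and then argues pointwise in $t$: for the upper bound, split $|x|=(|x|-\mu(t,x))_++\min\{|x|,\mu(t,x)\}$, bound the first piece via $z\le Z$ and the identity, and the second via its $L_\infty$-norm; for the lower bound, start from $\frac12(C\mu(x))(t)=\mu(2t,\mu(x)\chi_{(0,t)}\otimes Z)$ and peel off the part of $Z$ on $(0,1)$ (whose tensor with $\mu(x)\chi_{(0,t)}$ has support of measure $t$), leaving $Z\chi_{(1,\infty)}\le z$. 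This operator-theoretic splitting is shorter and works uniformly for $x\in(L_1+L_\infty)(\mathcal M)$ without any limiting argument; your approach, once repaired as above, is more hands-on but equally valid.
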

\begin{proof} Let $Z(t)=t^{-1},$ $t>0.$ The following crucial fact will be used several times:
$$\mu(x\otimes Z)=\|x\|_1Z.$$

We first prove the right hand side inequality. Fix $t>0$ and set
$$x_1=(|x|-\mu(t,x))_+\quad \mbox{and}\quad x_2=\min\{|x|,\mu(t,x)\}.$$
Obviously, $x_1+x_2=|x|.$ Therefore,
$$\mu(t,x\otimes z)\leq\mu(t,x_1\otimes z)+\mu(0,x_2\otimes z).$$
Clearly, $\mu(0,x_2\otimes z)=\mu(t,x)$ and
\begin{align*}
\mu(t,x_1\otimes z)&\leq\mu(t,x_1\otimes Z)=\frac1t\|x_1\|_1\\
&=\frac1{t}\int_0^t(\mu(s,x)-\mu(t,x))ds.
\end{align*}
Consequently,
$$\mu(t,x\otimes z)\leq\frac1{t}\int_0^t(\mu(s,x)-\mu(t,x))ds+\mu(t,x)=(C\mu(x))(t).$$
Since $t>0$ is arbitrary, the right hand side inequality follows.

Now we turn to the left hand side inequality. For every $t>0,$ we have
\begin{align*}
\frac12\big(C\mu(x)\big)(t)=\frac1{2t}\|\mu(x)\chi_{(0,t)}\|_1=\mu\left(2t,\mu(x)\chi_{(0,t)}\otimes Z\right).
\end{align*}
Then it follows from \eqref{singular-triangle} that
$$\frac12(C\mu(x))(t)\leq \mu\left(t,\mu(x)\chi_{(0,t)}\otimes Z\chi_{(0,1)}\right)+\mu\left(t,\mu(x)\chi_{(0,t)}\otimes Z\chi_{(1,\infty)}\right).$$
Observe that
$\mu(x)\chi_{(0,t)}\otimes Z\chi_{(0,1)}$
is supported on a set of measure $t.$ Therefore,
$$\mu\left(t,\mu(x)\chi_{(0,t)}\otimes Z\chi_{(0,1)}\right)=0.$$
Meanwhile, we have
\begin{align*}
\mu\left(t,\mu(x)\chi_{(0,t)}\otimes Z\chi_{(1,\infty)}\right)&\leq\mu\left(t,x\otimes Z\chi_{(1,\infty)}\right)\leq\mu\left(t,x\otimes z\right).
\end{align*}
Combining the last $3$ inequalities, we obtain
$$\frac12(C\mu(x))(t)\leq\mu(t,x\otimes z).$$
Since $t>0$ is arbitrary, the left hand side inequality follows.
\end{proof}

\begin{lem}\label{phi-lem}
Let $\Phi$ be an Orlicz function.
\begin{enumerate}[{\rm (i)}]
\item $C:L_{\Phi}\to L_{\Phi,\infty}$ is bounded.
\item If $\Phi$ is $q$-concave for some $1\leq q<\infty$, then $C^{\ast}:L_{\Phi}\to L_{\Phi}$ is bounded.
\item If $\Phi$ is $q$-concave for some $1\leq q<\infty$, then $S:L_{\Phi}\to L_{\Phi,\infty}$ is bounded.
\end{enumerate}
\end{lem}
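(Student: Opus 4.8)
The plan is to obtain (i) by reducing to decreasing functions and a single application of Jensen's inequality, to obtain (iii) immediately from (i) and (ii), and to do the only real work in (ii). First I would prove (i). Since $|Cf|\le C|f|$ pointwise and $\mu(t,C|f|)\le (C\mu(f))(t)$ by the Hardy--Littlewood inequality \cite[Theorem II.2.2]{BS1988}, while $C\mu(f)$ is decreasing, it suffices to bound $\|Cg\|_{\Phi,\infty}$ for a decreasing $0\le g=\mu(f)$ normalised so that $\int_0^\infty\Phi(g)\le 1$. Fix $\lambda>0$. Here $Cg$ is continuous and decreasing with $Cg(t)\to 0$ as $t\to\infty$ (a direct check from $g\in L_\Phi$), so $\{Cg>\lambda\}=(0,t_\lambda)$ with $\tfrac1{t_\lambda}\int_0^{t_\lambda}g=\lambda$. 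Jensen's inequality for the convex function $\Phi$ and the probability measure $t_\lambda^{-1}\,ds$ on $(0,t_\lambda)$ gives $\Phi(\lambda)\le \tfrac1{t_\lambda}\int_0^{t_\lambda}\Phi(g)\le \tfrac1{t_\lambda}=\tfrac1{n_{Cg}(\lambda)}$, i.e. $\Phi(\lambda)\,n_{Cg}(\lambda)\le 1$ for every $\lambda$. Hence $\|Cf\|_{\Phi,\infty}\le \|Cg\|_{\Phi,\infty}\le\|f\|_{\Phi}$, which is (i) with constant $1$.

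For (ii) I would argue as follows. Since $q$-concavity passes to every $q'\ge q$ (the composition of a concave function with a concave increasing power is concave), we may assume $q\ge 2$; we may also assume $f\ge 0$ and, after a routine smoothing, that $\Phi\in C^1$. Recall that $q$-concavity of $\Phi$ is equivalent to $u\mapsto\Phi(u)/u^q$ being non-increasing, i.e. $u\Phi'(u)\le q\Phi(u)$; writing $\Phi^*$ for the complementary (Young-conjugate) function we get from the Legendre identity that $\Phi^*(\Phi'(u))=u\Phi'(u)-\Phi(u)\le (q-1)\Phi(u)$. I claim that $\|f\|_\Phi\le\varepsilon:=\tfrac1{4(q-1)}$ forces $\int_0^\infty\Phi(C^*f)\le 1$, which yields $\|C^*\|_{L_\Phi\to L_\Phi}\le 4(q-1)\le c_{\rm abs}q$. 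It is enough to prove this when $f$ is bounded with compact support in $(0,\infty)$: for such $f$, $g:=C^*f$ is bounded, supported in a bounded interval, and near the origin majorised by a constant times $\log(1/t)$, so $\Phi(g)\in L_1$; the general case follows by monotone convergence on approximating a nonnegative $f\in L_\Phi$ from below by such functions. For nice $f$, integration by parts (the boundary terms vanish because $t\Phi(g(t))\lesssim t(\log(1/t))^q\to 0$ as $t\to 0$, and $g$ vanishes near $\infty$) together with $-g'(t)=f(t)/t$ gives
\[
\int_0^\infty\Phi(g(t))\,dt=\int_0^\infty\Phi'(g(t))\,f(t)\,dt\le 2\|f\|_\Phi\,\|\Phi'(g)\|_{\Phi^*},
\]
the last step by the generalised Hölder inequality in Orlicz spaces. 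Since $\int_0^\infty\Phi^*(\Phi'(g))\le (q-1)\int_0^\infty\Phi(g)$ we have $\|\Phi'(g)\|_{\Phi^*}\le 1+(q-1)\int_0^\infty\Phi(g)$, so $\int\Phi(g)\le 2\varepsilon\big(1+(q-1)\int\Phi(g)\big)$; as $2\varepsilon(q-1)=\tfrac12$ we may absorb the last term, obtaining $\int\Phi(g)\le 4\varepsilon\le 1$. (Alternatively one could argue by duality: $q$-concavity of $\Phi$ makes $\Phi^*$ $q'$-convex with $q'>1$, hence $C$ is bounded on $L_{\Phi^*}$, and then $\langle Cg,f\rangle=\langle g,C^*f\rangle$ together with $L_\Phi=(L_{\Phi^*})^\times$ transfers the bound to $C^*$.)

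Finally, (iii) is immediate: $S=C+C^*$, and by (ii) together with the continuous inclusion $L_\Phi\hookrightarrow L_{\Phi,\infty}$ the operator $C^*$ maps $L_\Phi$ boundedly into $L_{\Phi,\infty}$; combined with (i) and the fact that $L_{\Phi,\infty}$ is a quasi-Banach symmetric function space, $S:L_\Phi\to L_{\Phi,\infty}$ is bounded. The main obstacle is making the integration-by-parts step in (ii) rigorous — justifying the vanishing of the boundary terms and the a priori finiteness of $\int\Phi(C^*f)$, which is precisely why one first restricts to bounded compactly supported $f$ and then passes to the limit; after that the Young/absorption mechanism and the reduction $S=C+C^*$ are routine.
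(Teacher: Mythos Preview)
Your proof is correct and takes a genuinely different route from the paper's on both (i) and (ii). For (i), the paper does not use Jensen's inequality; instead it invokes the preceding Lemma~\ref{dfww lemma}, which shows $\frac12 C\mu(x)\le \mu(x\otimes z)\le C\mu(x)$ for $z=(1/k)_{k\ge1}$, and then bounds $n_{|x|\otimes z}(\lambda)=\sum_{k\ge1} n_{|x|}(k\lambda)$ by a summation-by-parts argument using $\Phi(k\lambda)\ge k\Phi(\lambda)$. Your Jensen argument is more elementary, avoids the tensor lemma entirely, and gives constant $1$ rather than $2$. For (ii), the paper gives a one-line interpolation proof: $q$-concavity forces $L_\Phi\in\mathrm{Int}(L_1,L_{2q})$, and $C^{\ast}$ is bounded on both endpoints by Lemma~\ref{cesaro-norm}. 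Your integration-by-parts/Young-absorption argument is longer but self-contained and delivers the explicit bound $\|C^{\ast}\|_{L_\Phi\to L_\Phi}\le 4(q-1)$; your parenthetical duality alternative (boundedness of $C$ on $L_{\Phi^{\ast}}$) is closer in spirit to the paper's interpolation route. The trade-off is clear: the paper's argument is shorter once Lemma~\ref{dfww lemma} and the interpolation fact are available, while yours needs no external machinery and tracks constants. One cosmetic remark: if $f$ has compact support in $(0,\infty)$, i.e.\ support bounded away from $0$, then $g=C^{\ast}f$ is constant near the origin, so the $\log(1/t)$ majorisation is unnecessary there --- but your boundary-term justification is valid either way, and the monotone-convergence passage to general $f$ is fine.
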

\begin{proof} Let $x$ be such that $\|x\|_{\Phi}\leq 1.$ It follows that
$$\sum_{k\geq1}\Phi(k\lambda)\cdot\int_{k\lambda<|x|\leq(k+1)\lambda}1\leq\sum_{k\geq1}\int_{k\lambda<|x|\leq(k+1)\lambda}\Phi(|x|)\leq 1.$$
Since $\Phi(k\lambda)\geq k\Phi(\lambda)$ for all $k\geq1,$ it follows that
$$\sum_{k\geq1}k\cdot (n_{|x|}(k\lambda)-n_{|x|}((k+1)\lambda))=\sum_{k\geq1}k\cdot\int_{k\lambda<|x|\leq(k+1)\lambda}1\leq\frac1{\Phi(\lambda)}.$$
Using summation by parts, we obtain
$$\sum_{k\geq1}n_{|x|}(k\lambda)\leq\frac1{\Phi(\lambda)}.$$
If $z=(\frac1k)_{k\geq1},$ then
$$n_{|x|\otimes z}(\lambda)=\sum_{k\geq1}n_{|x|}(k\lambda)\leq\frac1{\Phi(\lambda)}.$$
In other words,
$$\|x\otimes z\|_{\Phi,\infty}\leq 1.$$
Let now $x\in L_{\Phi}$ be arbitrary. By homogeneity, we have
$$\|x\otimes z\|_{\Phi,\infty}\leq\|x\|_{\Phi}.$$
By Lemma \ref{dfww lemma}, we have $\mu(Cx)\leq 2\mu(x\otimes z).$ Therefore,
$$\|Cx\|_{\Phi,\infty}\leq 2\|x\|_{\Phi}.$$
This proves the first assertion.

If $\Phi$ is $q$-concave for some $1\leq q<\infty$, then $L_{\Phi}$ is an interpolation space between $L_1$ and $L_{2q}.$ Since $C^{\ast}$ is bounded on both $L_1$ and $L_{2q},$ it follows that $C^{\ast}$ is bounded on $L_{\Phi}.$ This proves the second assertion.

The last assertion is a combination of the first two.
\end{proof}

Using Lemma \ref{phi-lem}, we immediately get the following corollaries from the results in Subsection \ref{application-1}.

\begin{cor} Let $1\leq q<\infty$ and $\Phi$ be a $q$-concave Orlicz function. For every sequence $(x_k)_{k\geq 0}\subset L_{\Phi}(\mathcal{M}),$ we have
$$\Big\|\Big(\sum_{k\geq 0}|\mathcal E_{k}(x_k)|^2\Big)^{\frac12}\Big\|_{\Phi,\infty}\lesssim_{\Phi}\Big\|\Big(\sum_{k\geq 0}|x_k|^2\Big)^{\frac12}\Big\|_{\Phi}.$$
\end{cor}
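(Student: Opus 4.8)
The plan is to deduce this directly from the quasi-Banach Stein inequality, Theorem~\ref{qst}, applied with $E=L_{\Phi}(0,\infty)$ and $F=L_{\Phi,\infty}(0,\infty)$. First I would note that, as recalled in Subsection~\ref{orlicz subsection}, $L_{\Phi}$ is a symmetric Banach function space and $L_{\Phi,\infty}$ is a symmetric quasi-Banach function space, so both are legitimate inputs for Theorem~\ref{qst}.

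The only hypothesis of Theorem~\ref{qst} left to verify is the boundedness of the Calder\'on operator $S\colon E\to F$, that is, $S\colon L_{\Phi}\to L_{\Phi,\infty}$. Since $\Phi$ is assumed $q$-concave for some $1\le q<\infty$, this is exactly Lemma~\ref{phi-lem}(iii), which yields $\|S\|_{L_{\Phi}\to L_{\Phi,\infty}}\lesssim_{\Phi}1$. Recall that the proof of Lemma~\ref{phi-lem} splits $S=C+C^{\ast}$, handles $C\colon L_{\Phi}\to L_{\Phi,\infty}$ via Lemma~\ref{dfww lemma}, and handles $C^{\ast}\colon L_{\Phi}\to L_{\Phi}$ by interpolating between $L_1$ and $L_{2q}$, using the $q$-concavity.

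Feeding these two facts into Theorem~\ref{qst} then gives, for every sequence $(x_k)_{k\ge0}\subset L_{\Phi}(\mathcal M)$,
$$\Big\|\Big(\sum_{k\ge0}|\mathcal E_k(x_k)|^2\Big)^{1/2}\Big\|_{\Phi,\infty}\le c_{\rm abs}\,\|S\|_{L_{\Phi}\to L_{\Phi,\infty}}\,\Big\|\Big(\sum_{k\ge0}|x_k|^2\Big)^{1/2}\Big\|_{\Phi}\lesssim_{\Phi}\Big\|\Big(\sum_{k\ge0}|x_k|^2\Big)^{1/2}\Big\|_{\Phi},$$
which is the assertion. I do not expect any real obstacle: the corollary is a routine specialisation of Theorem~\ref{qst} (which itself rests on the distributional Stein inequality of Theorem~\ref{stein thm}) together with Lemma~\ref{phi-lem}. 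The only mildly delicate point is the $q$-concavity hypothesis, which is precisely what makes $C^{\ast}$ — and hence $S$ — act on $L_{\Phi}$ with the stated target space; but this has already been dealt with in Lemma~\ref{phi-lem}, so nothing new is required here.
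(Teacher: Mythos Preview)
Your proposal is correct and follows exactly the paper's own approach: the paper states that this corollary follows immediately from Lemma~\ref{phi-lem} combined with the results of Subsection~\ref{application-1}, and for this Stein-type statement that means precisely applying Theorem~\ref{qst} with $E=L_{\Phi}$, $F=L_{\Phi,\infty}$, and invoking Lemma~\ref{phi-lem}(iii) for the boundedness of $S$.
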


\begin{cor} Let $1\leq q<\infty$ and $\Phi$ be a $q$-concave Orlicz function. For every positive sequence $(a_k)_{k\geq 0}\subset L_{\Phi}(\mathcal{M})$ we have
$$\|\sum_{k\geq 0}\mathcal{E}_ka_k\|_{\Phi}\lesssim_{\Phi}\|\sum_{k\geq0}a_k\|_{\Phi}.$$
\end{cor}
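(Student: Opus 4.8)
The plan is to deduce this corollary directly from Theorem~\ref{qdd} (the dual Doob inequality in quasi-Banach spaces) applied with $E=F=L_{\Phi}$, once the two hypotheses of that theorem are verified with the help of Lemma~\ref{phi-lem}(ii).

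First I would check that $L_{\Phi}\in{\rm Int}(L_1,L_{\infty})$. Since $\Phi$ is an Orlicz function, $L_{\Phi}$ is a symmetric Banach function space whose Luxemburg norm is monotone with respect to Hardy--Littlewood--P\'olya submajorization (every Orlicz space is closed under $\prec\prec$, as already invoked in the proof of Lemma~\ref{expl1 lemma}); consequently $L_{\Phi}$ is an exact interpolation space for the couple $(L_1,L_{\infty})$, which is precisely the requirement $F\in{\rm Int}(L_1,L_{\infty})$ in Theorem~\ref{qdd}.

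Next I would identify the $2$-convexification $L_{\Phi}^{(2)}$ as an Orlicz space. Put $\Psi(t):=\Phi(t^{2})$ for $t\geq 0$. Then $\Psi$ is again an Orlicz function, being the composition of the convex increasing function $\Phi$ with the convex increasing function $t\mapsto t^{2}$; and a direct comparison of the Luxemburg norms (substitute $c=\lambda^{2}$ in the defining infimum) gives $L_{\Phi}^{(2)}=L_{\Psi}$ isometrically. Moreover $\Psi$ is $2q$-concave, since $t\mapsto\Psi(t^{1/(2q)})=\Phi(t^{1/q})$ is concave by the assumed $q$-concavity of $\Phi$. Applying Lemma~\ref{phi-lem}(ii) to the $2q$-concave Orlicz function $\Psi$, we conclude that $C^{\ast}\colon L_{\Phi}^{(2)}\to L_{\Phi}^{(2)}$ is bounded, with operator norm depending only on $\Phi$ (via $q$).

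Finally I would invoke Theorem~\ref{qdd} with $E=F=L_{\Phi}$: for every positive sequence $(a_k)_{k\geq0}\subset L_{\Phi}(\mathcal M)$,
$$\Big\|\sum_{k\geq0}\mathcal E_k a_k\Big\|_{\Phi}\leq c_{{\rm abs}}\,\|C^{\ast}\|_{L_{\Phi}^{(2)}\to L_{\Phi}^{(2)}}^{2}\,\Big\|\sum_{k\geq0}a_k\Big\|_{\Phi},$$
and since the middle factor is finite and depends only on $\Phi$, the asserted estimate $\lesssim_{\Phi}$ follows. The only genuinely non-routine point is the identification $L_{\Phi}^{(2)}=L_{\Psi}$ with $\Psi(t)=\Phi(t^{2})$ together with the verification that $\Psi$ is a $q'$-concave Orlicz function for a suitable $q'$ (here $q'=2q$); everything else is bookkeeping.
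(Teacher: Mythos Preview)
Your proposal is correct and is precisely the argument the paper has in mind: the paper merely writes ``Using Lemma~\ref{phi-lem}, we immediately get the following corollaries from the results in Subsection~\ref{application-1}'' without spelling out details, and your verification that $L_{\Phi}\in{\rm Int}(L_1,L_{\infty})$ together with the identification $L_{\Phi}^{(2)}=L_{\Psi}$, $\Psi(t)=\Phi(t^2)$ (a $2q$-concave Orlicz function, so Lemma~\ref{phi-lem}(ii) applies), is exactly how one fills in ``immediately''.
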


\begin{cor}Let $1\leq q<\infty$ and $\Phi$ be a $q$-concave Orlicz function.  For every $x\in L_{\Phi}(\mathcal{M})$ and for every choice of signs $(\epsilon_k)_{k\geq0}$ we have
$$\Big\|\sum_{k\geq0}\epsilon_k\big(\mathcal{E}_kx-\mathcal{E}_{k-1}x\big)\Big\|_{\Phi,\infty}\lesssim_{\Phi} \|x\|_{\Phi}.$$
\end{cor}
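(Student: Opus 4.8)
The plan is to derive this as an immediate consequence of Theorem~\ref{qmt} (the martingale transform estimate in quasi-Banach spaces), once the relevant boundedness of the Calder\'on operator is in hand. The only work is to identify the right pair of symmetric spaces and invoke the correct case of Lemma~\ref{phi-lem}.

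First I would set $E=L_{\Phi}(0,\infty)$ and $F=L_{\Phi,\infty}(0,\infty)$. By the discussion in Subsection~\ref{orlicz subsection}, $L_{\Phi}$ is a symmetric Banach function space (hence in particular a symmetric quasi-Banach function space) and $L_{\Phi,\infty}$ is a symmetric quasi-Banach function space, so both are admissible as the spaces $E$ and $F$ in Theorem~\ref{qmt}. Next, since $\Phi$ is $q$-concave for some $1\le q<\infty$, Lemma~\ref{phi-lem}(iii) tells us that the Calder\'on operator $S$ is bounded from $L_{\Phi}$ to $L_{\Phi,\infty}$, with operator norm $\|S\|_{L_{\Phi}\to L_{\Phi,\infty}}$ depending only on $\Phi$. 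Feeding this into Theorem~\ref{qmt} with the above choice of $E$ and $F$ yields, for every $x\in L_{\Phi}(\mathcal M)$ and every choice of signs $(\epsilon_k)_{k\ge0}$,
$$\Big\|\sum_{k\ge0}\epsilon_k\big(\mathcal E_kx-\mathcal E_{k-1}x\big)\Big\|_{\Phi,\infty}\le c_{\rm abs}\,\|S\|_{L_{\Phi}\to L_{\Phi,\infty}}\,\|x\|_{\Phi},$$
which is precisely the claimed inequality, the implied constant $c_{\rm abs}\|S\|_{L_{\Phi}\to L_{\Phi,\infty}}$ depending only on $\Phi$.

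There is essentially no obstacle here beyond bookkeeping: all the analytic content has already been packaged into (DMT) (which itself rests on the First Extrapolation Theorem) and into the boundedness statement $S:L_{\Phi}\to L_{\Phi,\infty}$ from Lemma~\ref{phi-lem}. The one point worth noting explicitly is that $L_{\Phi,\infty}$ is only a quasi-Banach space, but Theorem~\ref{qmt} is formulated at exactly that level of generality, so no additional argument or completeness hypothesis is needed; similarly, no assumption on the underlying von Neumann algebra $\mathcal M$ is required since (DMT) is already valid for arbitrary semifinite $\mathcal M$.
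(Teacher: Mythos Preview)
Your proposal is correct and follows essentially the same approach as the paper: the corollary is deduced immediately from Theorem~\ref{qmt} with $E=L_\Phi$ and $F=L_{\Phi,\infty}$, using Lemma~\ref{phi-lem}(iii) to supply the required boundedness of $S:L_\Phi\to L_{\Phi,\infty}$.
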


\begin{cor} Let $2\leq q<\infty$ and $\Phi$ be a $2$-convex and $q$-concave Orlicz function. For every $x\in L_{\Phi}(\mathcal{M})$ we have (here, $x_0=\mathcal{E}_0 x$ and $x_k=\mathcal{E}_kx-\mathcal{E}_{k-1}x$ for $k\geq 1$)
$$\|x\|_{\Phi}\approx_{\Phi}\big\|\big(\sum_{k\geq0}|x_k|^2\big)^{\frac12}\big\|_{\Phi}+\big\|\big(\sum_{k\geq0}|x_k^{\ast}|^2\big)^{\frac12}\big\|_{\Phi}.$$
\end{cor}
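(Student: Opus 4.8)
The final statement to prove is the corollary asserting that for a $2$-convex and $q$-concave Orlicz function $\Phi$ (with $2\le q<\infty$), every $x\in L_\Phi(\mathcal M)$ satisfies the Burkholder-Gundy equivalence $\|x\|_\Phi\approx_\Phi\|(\sum|x_k|^2)^{1/2}\|_\Phi+\|(\sum|x_k^*|^2)^{1/2}\|_\Phi$.

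The plan is to deduce this directly from Theorem~\ref{qbg} with the choice $E=F=L_\Phi(0,\infty)$, so the bulk of the work is verifying the two hypotheses of that theorem. First I would check that $F=L_\Phi\in\mathrm{Int}(L_2,L_\infty)$: this is exactly where $2$-convexity (giving the lower Boyd index $\ge 2$) and $q$-concavity for finite $q$ (giving the upper Boyd index $<\infty$) come in; by the Boyd interpolation theorem a symmetric space with $2\le p_E\le q_E<\infty$ is an interpolation space for the couple $(L_2,L_\infty)$. Second, I would invoke Lemma~\ref{phi-lem}(ii): since $\Phi$ is $q$-concave, $C^*:L_\Phi\to L_\Phi$ is bounded, i.e. $C^*:E\to F$ is bounded with $\|C^*\|_{E\to F}\lesssim_\Phi 1$. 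With both hypotheses in hand, Theorem~\ref{qbg}(i) gives $\|x\|_\Phi\lesssim_\Phi\|(\sum|x_k|^2)^{1/2}\|_\Phi+\|(\sum|x_k^*|^2)^{1/2}\|_\Phi$ and Theorem~\ref{qbg}(ii) gives the reverse inequality $\|(\sum|x_k|^2)^{1/2}\|_\Phi+\|(\sum|x_k^*|^2)^{1/2}\|_\Phi\lesssim_\Phi\|x\|_\Phi$; combining them yields the stated two-sided equivalence.

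One small technical point to address up front is the ambient hypothesis in Theorem~\ref{qbg} that $x\in(L_2+L_\infty)(\mathcal M)$, and in part (ii) that $x\in E(\mathcal M)$. For the forward direction we are told $(x_k)_{k\ge0}\subset E(\mathcal M)=L_\Phi(\mathcal M)$; since $\Phi$ is $2$-convex, $L_\Phi\subset L_2+L_\infty$ locally, and one checks that finiteness of $\|(\sum|x_k|^2)^{1/2}\|_\Phi$ together with the upper-DBG inequality forces $x\in(L_2+L_\infty)(\mathcal M)$, so the hypothesis of Theorem~\ref{bg theorem}(ii)/Theorem~\ref{qbg}(i) is met. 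For the reverse direction, $x\in L_\Phi(\mathcal M)$ with $\Phi$ $2$-convex again places $x$ in $(L_2+L_\infty)(\mathcal M)$, so Theorem~\ref{qbg}(ii) applies verbatim. In both cases the martingale differences are those specified in the statement ($x_0=\mathcal E_0x$, $x_k=\mathcal E_kx-\mathcal E_{k-1}x$).

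The main obstacle is essentially bookkeeping rather than a deep difficulty: one must make sure the Orlicz-space convexity/concavity assumptions are translated correctly into the Boyd-index condition needed for $L_\Phi\in\mathrm{Int}(L_2,L_\infty)$ (the Lorentz–Shimogaki / Boyd theorem), and that $q\ge 2$ is exactly what makes the $2$-convexity compatible with the interpolation couple $(L_2,L_\infty)$ rather than $(L_1,L_\infty)$. Once that identification is made, the corollary is an immediate specialisation of Theorem~\ref{qbg}, and no new estimate is required. I would therefore keep the proof short: state $E=F=L_\Phi$, cite Lemma~\ref{phi-lem}(ii) for $C^*:E\to F$, cite the Boyd interpolation theorem for $F\in\mathrm{Int}(L_2,L_\infty)$, and apply Theorem~\ref{qbg}.
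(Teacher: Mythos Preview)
Your proposal is correct and follows exactly the approach the paper intends: the corollary is stated immediately after the sentence ``Using Lemma~\ref{phi-lem}, we immediately get the following corollaries from the results in Subsection~\ref{application-1},'' so the paper's own proof is simply to take $E=F=L_\Phi$ in Theorem~\ref{qbg}, invoke Lemma~\ref{phi-lem}(ii) for the boundedness of $C^\ast$, and use the $2$-convexity/$q$-concavity to place $L_\Phi$ in ${\rm Int}(L_2,L_\infty)$. Your write-up supplies precisely these verifications (and the minor observation that $2$-convexity ensures $L_\Phi\subset L_2+L_\infty$, so the ambient hypothesis of Theorem~\ref{qbg} is met); nothing further is needed.
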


\appendix

\section{Generalised martingale transform theorem}

In this appendix, we establish weak $(1,1)$ estimate for the generalised martingale transform. The proof is somewhat similar to that of weak $(1,1)$ estimate for a martingale transform given in \cite[Theorem 3.3.2]{Xu}. The similarity with the proof of Theorem 3.1 in \cite{PR} is less obvious.

\begin{thm}\label{mt 11 theorem} Let $(\mathcal{N},\nu)$ be a noncommutative probability space and let $(\xi_k)_{k\geq0}\subset\mathcal{N}$ be such that $\sup_{k\geq 0}\|\xi_k\|_{\infty}<\infty.$ For every $x\in L_1(\mathcal{M}),$ we have
$$\Big\|\sum_{k\geq0}(\mathcal{E}_{k}x-\mathcal{E}_{{k-1}}x)\otimes\xi_k\Big\|_{L_{1,\infty}(\mathcal{M}\bar{\otimes}\mathcal{N})}\leq 90\|x\|_1\cdot \sup_{k\geq0}\|\xi_k\|_{\infty}.$$
\end{thm}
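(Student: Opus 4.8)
The plan is to adapt the Cuculescu-type decomposition / Gundy decomposition strategy used in the classical proof of weak $(1,1)$ for noncommutative martingale transforms. First I would reduce to the case $\sup_k\|\xi_k\|_\infty \le 1$ by homogeneity, and normalise $\|x\|_1 = 1$; it suffices to estimate $\nu(\mathcal{M}\bar\otimes\mathcal{N})$-measure of the spectral projection of $\bigl|\sum_k(\mathcal{E}_k x-\mathcal{E}_{k-1}x)\otimes\xi_k\bigr|$ beyond level $\lambda$, uniformly in $\lambda>0$. Write $x = x^* $-part plus skew-adjoint part (or just treat $x\ge 0$ first and then split a general $x$ into its positive/negative and real/imaginary pieces, absorbing a constant factor of $4$), so we may assume $0\le x\in L_1(\mathcal{M})$ with $\tau(x)=1$. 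Fix $\lambda>0$.

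The main tool is Cuculescu's construction: for $0\le x\in L_1(\mathcal M)$ and $\lambda>0$ there is a decreasing sequence of projections $q_k\in\mathcal M_k$ with $q_k\le q_{k-1}$, $q_{-1}=1$, $q_k$ commuting with $q_{k-1}\mathcal{E}_k(x)q_{k-1}$, satisfying $q_k\mathcal{E}_k(x)q_k\le\lambda q_k$ and, crucially, $\tau(1-q_\infty)\le \lambda^{-1}\tau(x)=\lambda^{-1}$, where $q_\infty=\bigwedge_k q_k$ (or $\lim q_k$). Set $q=q_\infty$. On the ``good'' set, i.e. after compressing by $q$, one controls the transformed martingale via an $L_2$-estimate: the operator $\sum_k (\mathcal{E}_k x-\mathcal{E}_{k-1}x)\otimes\xi_k$, when suitably cut by the projections $q_{k-1}\cdot q_{k-1}$, has martingale differences $q_{k-1}(\mathcal{E}_k x-\mathcal{E}_{k-1}x)q_{k-1}\otimes\xi_k$ whose square function is dominated (using $\|\xi_k\|_\infty\le1$ and the complete positivity / contractivity of the $\mathcal{E}_k$ together with the Cuculescu bound $q_k\mathcal{E}_k(x)q_k\le\lambda q_k$) by $\lambda\,\tau(x)$ up to an absolute constant; Chebyshev then gives the measure of the exceptional set for the good part. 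The ``bad'' part is handled by noting that the differences $d_k=\mathcal{E}_k x-\mathcal{E}_{k-1}x$ where the projections change, i.e. the pieces involving $q_{k-1}-q_k$, live on a set whose total trace is $\lesssim \lambda^{-1}$ by the Cuculescu trace bound, and tensoring with the bounded $\xi_k$ does not increase this support. Summing the contributions of the exceptional sets and optimising the constants yields the bound $90\|x\|_1\sup_k\|\xi_k\|_\infty$ (the numerical constant $90$ will come from tracking the $4$ from the $\pm$/real-imaginary splitting, a factor from Chebyshev, and the Cuculescu constants, as in \cite[Theorem 3.3.2]{Xu}).

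Concretely the steps, in order: (1) normalise and reduce to $0\le x$, $\tau(x)=1$, at the cost of a factor $4$; (2) invoke Cuculescu's construction to get $(q_k)$, record $\tau(1-q)\le\lambda^{-1}$ and $q_k\mathcal{E}_k(x)q_k\le\lambda q_k$; (3) split $\sum_k d_k\otimes\xi_k$ into a ``diagonal/good'' part $\sum_k q_{k-1}d_k q_{k-1}\otimes\xi_k$ plus ``off-diagonal/bad'' corrections involving $(1-q_{k-1})d_k$, $d_k(1-q_{k-1})$, and the telescoping differences $q_{k-1}-q_k$; (4) estimate the good part in $L_2(\mathcal{M}\bar\otimes\mathcal{N})$: its square-function norm is $\le$ (abs.\ const.)$\cdot\|\,\sum_k q_{k-1}\mathcal{E}_k(|d_k|^2)q_{k-1}\,\|_{1}^{1/2}\cdot\sup_k\|\xi_k\|_\infty$, which one bounds by $(\text{abs.\ const.})(\lambda\tau(x))^{1/2}$ using $\mathcal{E}_{k-1}|d_k|^2\le\mathcal{E}_{k-1}(|\mathcal{E}_k x|^2)$ plus the Cuculescu inequality and an Abel-summation / Kadison–Schwarz argument, then apply Chebyshev at level $\lambda/C$; (5) for each off-diagonal/bad term, bound its support projection by a projection of trace $\lesssim\lambda^{-1}$ directly from the Cuculescu estimate, using that $\|({\rm id}\otimes({\rm stuff}))\|$ arguments and the boundedness of $\xi_k$ preserve the trace estimates; (6) add up, using the quasi-triangle inequality for $\|\cdot\|_{1,\infty}$ in $\mathcal{M}\bar\otimes\mathcal{N}$ (which holds with constant $2$ as recorded in the Example in Section~\ref{prelims section}), and collect constants.

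The main obstacle I anticipate is Step (4)–(5): making the off-diagonal bookkeeping rigorous in the presence of the extra tensor factor $\mathcal{N}$. The subtlety is that $\sum_k d_k\otimes\xi_k$ need not be a martingale in the product filtration, so one cannot directly quote the scalar weak-$(1,1)$ martingale-transform theorem; instead one must verify by hand that tensoring with the uniformly bounded family $(\xi_k)$ commutes with the Cuculescu projections $q_k\otimes 1$ (it does, since $q_k\in\mathcal{M}_k\subset\mathcal{M}$ and $\xi_k\in\mathcal{N}$ live in commuting algebras) and that the complete positivity of $\mathcal{E}_k$ (Proposition~\ref{exp cpp prop}) upgrades the scalar inequalities to the needed operator inequalities in $\mathcal{M}\bar\otimes\mathcal{N}$. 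Once that commutation is in place, the argument parallels the classical one line by line; I would follow \cite[Theorem 3.3.2]{Xu} closely, inserting $\otimes\xi_k$ at each stage and checking that no estimate degrades by more than the factor $\sup_k\|\xi_k\|_\infty$.
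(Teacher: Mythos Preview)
Your approach is correct but takes a different route from the paper's. The paper does not work directly with Cuculescu's projections; instead it quotes Gundy's decomposition (Theorem~\ref{Gundy}, taken from \cite{PR,JRWZ}) as a black box: for self-adjoint $x\in L_1(\mathcal M)$ and a level $\lambda>0$ one writes $x=\alpha+\beta+\gamma+\delta$ with $\|\alpha\|_2^2\le 2\lambda\|x\|_1$, $\sum_k\|\beta_k\|_1\le 4\|x\|_1$, and $\gamma,\delta$ having one-sided support projections of trace $\le\lambda^{-1}\|x\|_1$. Setting $\lambda=t^{-1}$, each summand of $T_\xi x$ is then dispatched in two lines: $T_\xi\alpha$ by $L_2$-orthogonality of martingale differences plus Chebyshev; $T_\xi\beta$ by the triangle inequality in $L_1$, using $\|\beta_k\otimes\xi_k\|_1=\|\beta_k\|_1\|\xi_k\|_1\le\|\beta_k\|_1$ since $\nu$ is a state; and $T_\xi\gamma,T_\xi\delta$ by observing that their right (resp.\ left) supports lie in $(\bigvee_k\mathrm{supp}|\gamma_k|)\otimes 1$, so that $\mu(t,T_\xi\gamma)=\mu(t,T_\xi\delta)=0$. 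Combining via $\mu(4t,\cdot)\le\sum\mu(t,\cdot)$ gives $t\mu(4t,T_\xi x)\le 4+\sqrt 2$, hence the constant $4\cdot 4(4+\sqrt 2)<90$ after passing from self-adjoint to general $x$.

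The substantive difference from your plan is the $\beta$-piece. The paper isolates a part handled purely by $L_1$-summability, not by a support bound; this is what lets the $L_2$ bound on the ``good'' part come for free from the quoted theorem rather than from the Abel-summation/Kadison--Schwarz computation you outline. Your direct-Cuculescu route is essentially Randrianantoanina's original 2002 argument and does go through, but the step you single out---proving $\sum_k\|q_{k-1}d_kq_{k-1}\|_2^2\lesssim\lambda\|x\|_1$---is exactly the telescoping that Gundy's decomposition packages, so you would in effect be re-deriving the $\alpha$-estimate of Theorem~\ref{Gundy} inside the proof. Your worry about the extra tensor factor is unfounded in either approach: since $q_k\otimes 1$ commutes with $1\otimes\xi_j$ for all $j,k$, every cut-down passes through harmlessly, and the paper's argument never invokes any filtration or martingale structure on $\mathcal N$.
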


The following decomposition appeared first as Theorem 2.1 in \cite{PR} (see also \cite[Theorem 3.1]{JRWZ}).

\begin{thm}[Gundy's decompostion]\label{Gundy} Let  $x=x^{\ast}\in L_1(\mathcal{M}).$ For a given $0<\lambda\in\mathbb{R},$ there exist $\alpha,\beta,\gamma,\delta\in L_1(\mathcal{M})$ such that
\begin{enumerate}[{\rm (i)}]
\item\label{gundya} $x=\alpha+\beta+\gamma+\delta;$
\item\label{gundyb} $\alpha\in L_2(\mathcal{M})$ and $\|\alpha\|_2^2 \leq 2\lambda\|x\|_1;$
\item\label{gundyc} $\beta$ satisfies the condition
$$\sum_{k\geq 0}\|\beta_k\|_1 \leq 4 \|x\|_1,$$
where $\beta_k=\mathcal{E}_k\beta-\mathcal{E}_{k-1}\beta$ for $k\geq0;$
\item\label{gundyd} $\gamma$ and $\delta$ satisfy the conditions
$$\tau\Big(\bigvee_{k\geq0} \mathrm{supp}|\gamma_k|\Big)\leq\lambda^{-1}\|x\|_1,\quad \tau\Big(\bigvee_{k\geq0} \mathrm{supp}|\delta_k^{\ast}|\Big)\leq\lambda^{-1}\|x\|_1,$$
where $\gamma_k=\mathcal{E}_k\gamma-\mathcal{E}_{k-1}\gamma,$ $\delta_k=\mathcal{E}_k\delta-\mathcal{E}_{k-1}\delta$ for $k\geq0;$
\end{enumerate}
\end{thm}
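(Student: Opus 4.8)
The plan is to transcribe, in the present filtration and notation, the noncommutative version of Gundy's classical stopping‑time decomposition, the substitute for the stopping time being Cuculescu's family of projections; the argument is that of \cite[Theorem 2.1]{PR} (see also \cite[Theorem 3.1]{JRWZ}). First I would reduce to $0\le x$: running Cuculescu's construction on the positive $L_1$‑martingale $f_k:=\mathcal E_k(|x|)$ (for which $\|f_\infty\|_1=\||x|\|_1=\|x\|_1$) and then decomposing $x$ itself — not $|x|$ — by the resulting projections. This yields the whole statement with the constants unchanged.

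Next I would recall Cuculescu's construction at height $\lambda$: there is a decreasing sequence of projections $q_{-1}=1\ge q_0\ge q_1\ge\cdots$ with $q_k\in\mathcal M_k$, defined recursively by $q_k=q_{k-1}\chi_{[0,\lambda]}(q_{k-1}f_kq_{k-1})$, satisfying $q_kf_kq_k\le\lambda q_k$ for every $k$, and such that $q:=\bigwedge_{k\ge0}q_k$ obeys $\tau(1-q)\le\lambda^{-1}\|x\|_1$. Writing $dx_k=\mathcal E_kx-\mathcal E_{k-1}x$ and $e_k=q_{k-1}-q_k$ (a projection in $\mathcal M_k$, with $\sum_{k\ge0}e_k=1-q$), I would split each $dx_k$ into the four corners relative to $q_{k-1}$ and $1-q_{k-1}$ on the left and right and collect terms as follows: $\alpha$ gathers the diagonal pieces truncated to stay at height $\le\lambda$ (the $q_k(\cdot)q_k$‑parts), corrected by subtracting their $\mathcal E_{k-1}$‑means so that $\alpha$ is an $L_2$‑martingale; $\beta$ gathers the ``jump'' contributions carrying a factor $e_k$ together with those conditional‑expectation corrections, so that $\beta$ is a genuine $L_1$‑martingale; and $\gamma$, resp.\ $\delta$, gathers the remaining off‑diagonal corners, the left support of $\gamma_k$, resp.\ of $\delta_k^\ast$, being dominated by $1-q$. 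Property (i) is then immediate by telescoping $\sum_k dx_k=x$.

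It remains to verify (ii)--(iv). For (ii), orthogonality of martingale increments gives $\|\alpha\|_2^2=\sum_k\|d\alpha_k\|_2^2$, and using $q_kf_kq_k\le\lambda q_k$ together with a telescoping identity of the form $\sum_k q_{k-1}(f_k^2-f_{k-1}^2)$ one arrives at $\|\alpha\|_2^2\le2\lambda\|x\|_1$. For (iii), one estimates each $\|\beta_k\|_1$ by a fixed multiple of a ``mass'' of the type $\tau(e_kf_k)$ together with $\tau\big(\mathcal E_{k-1}(e_k|dx_k|)\big)$‑type quantities, and sums, using that $\sum_k\tau(e_kf_k)$ telescopes, that $\sum_k\tau(e_k)=\tau(1-q)$, and that $\|f_\infty\|_1=\|x\|_1$, to obtain the constant $4$. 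For (iv), since by construction $\bigvee_k\mathrm{supp}\,|\gamma_k|\le1-q$ and $\bigvee_k\mathrm{supp}\,|\delta_k^\ast|\le1-q$, the bound $\tau(1-q)\le\lambda^{-1}\|x\|_1$ finishes the proof. I expect the main obstacle to be the combinatorics of the corners: in the commutative case the ``bad part'' is the single term obtained by restricting $x$ to $\{\sup_kf_k>\lambda\}$, but the non‑commuting two‑sided truncations by the $q_k$ force it into three pieces — a column‑type $\gamma$, a row‑type $\delta$, and a true martingale‑difference remainder $\beta$ — and one must allocate the three off‑diagonal corners so that simultaneously the support containments for $\gamma$ and for $\delta^\ast$ hold and the $L_1$‑masses of the $\beta_k$ telescope with the stated constant. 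Keeping the numerical constants exactly as claimed (in particular $2\lambda$, not $4\lambda$, in (ii), which is why the reduction is run through $\mathcal E_k(|x|)$ rather than $x_\pm$) is the other point requiring care.
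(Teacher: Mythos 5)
Your sketch is exactly the argument behind the paper's own treatment of this statement: the paper does not prove Theorem \ref{Gundy} but quotes it, with these constants, from \cite[Theorem 2.1]{PR} and \cite[Theorem 3.1]{JRWZ}, whose proof is precisely the Cuculescu-projection construction (run at height $\lambda$ on the positive martingale $\mathcal{E}_k(|x|)$, corner decomposition of the differences $dx_k$ by $q_{k-1}$, martingale corrections for $\alpha$ and $\beta$, and $\tau(1-q)\leq\lambda^{-1}\|x\|_1$) that you describe. The only slip is cosmetic: with your allocation of the off-diagonal corners the projection $1-q$ dominates the \emph{right} support of $\gamma_k$ (i.e.\ $\mathrm{supp}|\gamma_k|$) and the \emph{left} support of $\delta_k$ (i.e.\ $\mathrm{supp}|\delta_k^{\ast}|$), not the sides you named, which is fixed by swapping the labels $\gamma\leftrightarrow\delta$.
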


\begin{proof}[Proof of Theorem \ref{mt 11 theorem}] Set
$$T_{\xi}x=\sum_{k\geq0}(\mathcal{E}_{k}x-\mathcal{E}_{{k-1}}x)\otimes\xi_k.$$
Without loss of generality, $\xi_k=\xi_k^{\ast}$ for all $k\geq0$ and $\sup_{k\geq0}\|\xi_k\|_{\infty}=1.$

Suppose first that $x=x^{\ast}\in L_1(\mathcal{M})$ is such that $\|x\|_1=1$ and fix $t>0.$ Let $\lambda=t^{-1}$ and let $\alpha,\beta,\gamma,\delta\in L_1(\mathcal{M})$ be the elements given by Theorem \ref{Gundy}. Obviously, we have
$$T_{\xi}x=T_{\xi}\alpha+T_{\xi}\beta+T_{\xi}\gamma+T_{\xi}\delta.$$
Thus,
\begin{equation}\label{mt11 eq0}
\mu(4t,T_{\xi}x)\leq\mu(t,T_{\xi}\alpha)+\mu(t,T_{\xi}\beta)+\mu(t,T_{\xi}\gamma)+\mu(t,T_{\xi}\delta).
\end{equation}

Now,
$$t\mu^2(t,T_{\xi}\alpha)\leq \|T_{\xi}\alpha\|_2^2=(\tau\otimes\nu)\Big(\Big|\sum_{k\geq 0}\alpha_k\otimes \xi_k\Big|^2\Big)=\sum_{k\geq0}\|\alpha_k\|_2^2\|\xi_k\|_2^2.$$
Since $(\mathcal{N},\nu)$ is a noncommutative probability space, it follows that
$$\|\xi_k\|_2^2\leq \|\xi_k\|_{\infty}^2\leq 1,\quad k\geq0.$$
Therefore,
$$t\mu^2(t,T_{\xi}\alpha)\leq \sum_{k\geq0}\|\alpha_k\|_2^2=\|\alpha\|_2^2\stackrel{Th.\ref{Gundy}\eqref{gundyb}}{\leq} 2t^{-1}.$$
We conclude that
\begin{equation}\label{mt11 eq1}
t\mu(t,T_{\xi}\alpha)\leq \sqrt{2}.
\end{equation}

Next,
$$t\mu(t,T_{\xi}\beta)\leq\|T_{\xi}\beta\|_1\leq \sum_{k\geq0}\|\beta_k\otimes\xi_k\|_1=\sum_{k\geq0}\|\beta_k\|_1\|\xi_k\|_1.$$
Since $(\mathcal{N},\nu)$ is a noncommutative probability space, it follows that
$$\|\xi_k\|_1\leq \|\xi_k\|_{\infty}\leq 1,\quad k\geq0.$$
Therefore,
\begin{equation}\label{mt11 eq2}
t\mu(t,T_{\xi}\beta)\leq\sum_{k\geq0}\|\beta_k\|_1\stackrel{Th.\ref{Gundy}\eqref{gundyc}}{\leq}4.
\end{equation}

Next, observe that
$$T_{\xi}\gamma=T_{\xi}\gamma\cdot \Big(\bigvee_{k\geq0} \mathrm{supp}|\gamma_k|\otimes 1\Big).$$
Thus,
$$(\tau\otimes\nu)({\rm supp}(|T_{\xi}\gamma|))\leq (\tau\otimes\nu)\Big(\bigvee_{k\geq0} \mathrm{supp}|\gamma_k|\otimes 1\Big)\stackrel{Th.\ref{Gundy}\eqref{gundyd}}{\leq} t.$$
It follows that
\begin{equation}\label{mt11 eq3}
\mu(t,T_{\xi}\gamma)=0\mbox{ and, similarly, }\mu(t,T_{\xi}\delta)=0.
\end{equation}

Substituting \eqref{mt11 eq1}, \eqref{mt11 eq2} and \eqref{mt11 eq3} into \eqref{mt11 eq0}, we obtain
$$t\mu(4t,T_{\xi}x)\leq 4+\sqrt{2}.$$
Since $t>0$ is arbitrary, it follows that
$$\|T_{\xi}x\|_{1,\infty}\leq 4(4+\sqrt{2})$$
for every $x=x^{\ast}\in L_1(\mathcal{M})$ such that $\|x\|_1=1.$

By homogeneity and the quasi-triangle inequality in $L_{1,\infty}$, we have
$$\|T_{\xi}x\|_{1,\infty}\leq 16(4+\sqrt{2})\|x\|_1,\quad x\in L_1(\mathcal{M}).$$
\end{proof}

The following lemma is a noncommutative version of Marcinkiewicz Interpolation Theorem in the formulation of Calderon (see Theorem IV.4.13 in \cite{BS1988}). The proof is the same as in the commutative setting and is, therefore, omitted.

\begin{lem}\label{weak l1 l2 interpolation lemma} Let $(\mathcal{M},\tau)$ be a noncommutative measure space. If $T:L_1(\mathcal{M})\to L_{1,\infty}(\mathcal{M})$ and simultaneously $T:L_2(\mathcal{M})\to L_2(\mathcal{M}),$ then $T:L_p(\mathcal{M})\to L_p(\mathcal{M})$ and
$$\|T\|_{L_p\to L_p}\leq c_{{\rm abs}}p'\max\{\|T\|_{L_1\to L_{1,\infty}},\|T\|_{L_2\to L_2}\},\quad 1<p\leq 2.$$
\end{lem}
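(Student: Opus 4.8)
The statement is the classical Marcinkiewicz interpolation theorem in Calder\'on's operator form, transported verbatim to the noncommutative setting; as the paper notes, the argument is identical to the commutative one, so the plan is simply to rehearse that argument and observe that every ingredient has a direct noncommutative analogue. Fix $1<p\le 2$, set $M_1=\|T\|_{L_1\to L_{1,\infty}}$, $M_2=\|T\|_{L_2\to L_2}$, and let $M=\max\{M_1,M_2\}$; by rescaling $T$ we may assume $M_1,M_2\le 1$. Take $x\in L_1(\mathcal M)\cap L_2(\mathcal M)$ (a dense subset of $L_p(\mathcal M)$, so it suffices to bound $\|Tx\|_p$ there). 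For each height $s>0$ perform the Calder\'on--Zygmund truncation at the level of singular value functions: write $x=x_0^{(s)}+x_1^{(s)}$ where, using the spectral projection $e_s=\chi_{(s,\infty)}(|x|)$ of $|x|$, one sets $x_1^{(s)}=x e_s$ (the ``large'' part, in $L_1$) and $x_0^{(s)}=x(1-e_s)$ (the ``small'' part, bounded, hence in $L_2$). Equivalently one can phrase this via $\mu(t,x_0^{(s)})=\min\{\mu(t,x),\mu(n_{|x|}(s),x)\}$-type identities; the key facts are $\|x_1^{(s)}\|_1=\int_0^{n_{|x|}(s)}(\mu(u,x)-s)\,du\lesssim \int_0^{n_{|x|}(s)}\mu(u,x)\,du$ and $\|x_0^{(s)}\|_2^2\lesssim \int_{n_{|x|}(s)}^\infty \mu(u,x)^2\,du + s^2 n_{|x|}(s)$, both of which follow from functional calculus and the definition of $\mu$.

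Next I would estimate $\mu(t,Tx)$ for a fixed $t>0$ by choosing the truncation height $s$ as a function of $t$, namely $s=\mu(t,x)$ (the standard choice). Using the quasi-triangle inequality \eqref{singular-triangle} for singular value functions,
\begin{equation*}
\mu(2t,Tx)\le \mu(t,Tx_0^{(s)})+\mu(t,Tx_1^{(s)}).
\end{equation*}
The second term is controlled by the weak $(1,1)$ bound: $\mu(t,Tx_1^{(s)})\le t^{-1}\|Tx_1^{(s)}\|_{1,\infty}\le t^{-1}\|x_1^{(s)}\|_1 \lesssim t^{-1}\int_0^t\mu(u,x)\,du=(C\mu(x))(t)$, where I also used that $n_{|x|}(\mu(t,x))$ is comparable to $t$ in the relevant sense (right-continuity of $\mu$; a minor technical point handled by a limiting argument or by choosing $s$ slightly below $\mu(t,x)$). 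The first term is controlled by the $L_2$ bound: $\mu(t,Tx_0^{(s)})\le (t^{-1}\|Tx_0^{(s)}\|_2^2)^{1/2}\le (t^{-1}\|x_0^{(s)}\|_2^2)^{1/2}\lesssim \big(t^{-1}\int_t^\infty\mu(u,x)^2\,du\big)^{1/2}+\mu(t,x)$. Thus pointwise in $t$,
\begin{equation*}
\mu(2t,Tx)\lesssim (C\mu(x))(t)+\Big(\frac1t\int_t^\infty\mu(u,x)^2\,du\Big)^{1/2}+\mu(t,x).
\end{equation*}

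Finally I would raise this to the $p$-th power, integrate $dt$ over $(0,\infty)$, and invoke the scalar Hardy-type inequalities to recover $\|x\|_p$ with the right constant. The term $\int_0^\infty (C\mu(x))(t)^p\,dt = \|C\mu(x)\|_p^p$ is handled by Lemma~\ref{cesaro-norm}, giving a factor $\le p'$. The term $\int_0^\infty \mu(t,x)^p\,dt=\|x\|_p^p$ is free. The remaining term $\int_0^\infty \big(t^{-1}\int_t^\infty\mu(u,x)^2\,du\big)^{p/2}\,dt$ is a dual-Hardy (Ces\`aro$^*$) inequality applied to $\mu(x)^2$ at exponent $p/2\le 1$... which is not bounded at $p/2$! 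The correct route here, and the point I expect to be the main obstacle, is to instead apply Hardy's inequality in the form $\int_0^\infty \big(t^{-1}\int_t^\infty g(u)\,du\big)^{q}\,dt \le c\,q'\int_0^\infty g(t)^q\,t^{q-1}\cdot t^{-(q-1)}\dots$; the clean way is to dominate $\big(\frac1t\int_t^\infty\mu(u,x)^2du\big)^{1/2}$ by a Hardy operator acting on $\mu(x)$ directly (via Minkowski/Cauchy--Schwarz one gets $\le c\int_t^\infty \mu(u,x)\frac{du}{\sqrt{tu}}$, which is a weighted Hardy operator bounded on $L_p$ for $p>1$ with norm $O(p')$). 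Carrying out this last weighted-Hardy estimate with the explicit $O(p')$ dependence of the constant (uniform as $p\downarrow 1$ is false; uniform on $1<p\le 2$ it behaves like $p'$) is the only genuinely quantitative step, and it is exactly where the $c_{\mathrm{abs}}p'$ in the statement comes from. Everything else is bookkeeping with $\mu$ and \eqref{singular-triangle}. Combining the three pieces and taking $p$-th roots yields $\|Tx\|_p\le c_{\mathrm{abs}}p'\max\{M_1,M_2\}\|x\|_p$, first on $L_1\cap L_2$ and then on all of $L_p$ by density.
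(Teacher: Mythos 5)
Your decomposition at height $s=\mu(t,x)$ and the resulting pointwise estimate $\mu(2t,Tx)\lesssim M\big[(C\mu(x))(t)+\big(\tfrac1t\int_t^\infty\mu(u,x)^2\,du\big)^{1/2}+\mu(t,x)\big]$ are sound (the identity you write for $\|x_1^{(s)}\|_1$ is not quite right, since $xe_s$ carries the whole block $\int_0^{n_{|x|}(s)}\mu(u,x)\,du$ rather than the peak, but you only use the harmless upper bound). The genuine gap is in the final quantitative step, exactly the one you single out. The operator $(Rg)(t)=\int_t^\infty g(u)\,\tfrac{du}{\sqrt{tu}}$ has a kernel homogeneous of degree $-1$, so by the Hardy--Littlewood--P\'olya formula $\|R\|_{L_p\to L_p}=\int_1^\infty u^{-\frac12-\frac1p}\,du=\tfrac{2p}{2-p}$: this is bounded as $p\downarrow1$ and blows up as $p\uparrow2$, the opposite of your assertion that it "behaves like $p'$ on $1<p\le2$." Worse, the loss near $p=2$ is intrinsic to your route, not to this particular majorization: for $\mu(x)=\chi_{(0,1)}$ one has $\big\|\big(\tfrac1t\int_t^\infty\mu(u,x)^2du\big)^{1/2}\big\|_p^p=\int_0^1 t^{-p/2}(1-t)^{p/2}dt\approx\tfrac{2}{2-p}$ while $\|x\|_p=1$, so once you pass to the pointwise bound and estimate the middle term in $L_p$, no argument can do better than a factor of order $(2-p)^{-1/2}$, whereas the lemma demands $c_{\rm abs}p'$, which stays bounded at $p=2$. (A smaller slip: the pointwise domination $\big(\tfrac1t\int_t^\infty\mu^2\big)^{1/2}\le c\int_t^\infty\mu(u)\tfrac{du}{\sqrt{tu}}$ is false as stated --- test $\mu=\chi_{(0,1)}$, $t\uparrow1$; for decreasing functions a correct version holds only with $t/2$ in place of $t$ on the right, proved by a dyadic argument from monotonicity, not by Minkowski or Cauchy--Schwarz.)

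As written, your argument therefore yields $\|T\|_{L_p\to L_p}\le c_{\rm abs}\big(p'+\tfrac1{2-p}\big)M$, which coincides with the claimed bound only for $p$ bounded away from $2$; and the behaviour near $p=2$ matters in this paper, since the lemma feeds the First Extrapolation Condition, which requires the constant to stay of order $\max\{p,p'\}$ there. The standard repair: run your argument (or the classical distribution-function version) only for $1<p\le\tfrac32$, where $\tfrac{2p}{2-p}\le 6\lesssim p'$, so $\|T\|_{L_p\to L_p}\le c_{\rm abs}p'M$ and in particular $\|T\|_{L_{3/2}\to L_{3/2}}\le c_{\rm abs}M$; then for $\tfrac32\le p\le2$ apply complex (Riesz--Thorin) interpolation for noncommutative $L_p$-spaces between this strong $(\tfrac32,\tfrac32)$ bound and the assumed strong $(2,2)$ bound, which gives $\|T\|_{L_p\to L_p}\le c_{\rm abs}M\le c_{\rm abs}p'M$ on that range. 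With such a patch --- i.e.\ using the strong $L_2$ bound globally rather than through Chebyshev at every level $t$ --- the stated estimate follows; without it, your proposal does not prove the lemma as stated.
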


\begin{cor}\label{mt 11 corollary} Let $(\mathcal{N},\nu)$ be a finite von Neumann algebra and let $(\xi_k)_{k\geq0}\subset\mathcal{N}$ be such that $\sup_{k\geq 0}\|\xi_k\|_{\infty}<\infty$. For any $x\in L_p(\mathcal M)$, we have
$$\Big\|\sum_{k\geq0}(\mathcal{E}_{k}x-\mathcal{E}_{{k-1}}x)\otimes\xi_k\Big\|_{L_p(\mathcal M\bar{\otimes} \mathcal N)}\leq c_{{\rm abs}}p'\|x\|_p\cdot \sup_{k\geq0}\|\xi_k\|_{\infty},\quad 1<p\leq 2,$$
where $p'$ is the conjugate index of $p.$
\end{cor}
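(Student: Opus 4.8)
The plan is to interpolate between the weak-type $(1,1)$ estimate of Theorem~\ref{mt 11 theorem} and the $L_2$-boundedness of the generalised martingale transform, and then invoke Lemma~\ref{weak l1 l2 interpolation lemma}. First I would fix the bounded sequence $(\xi_k)_{k\geq0}\subset\mathcal{N}$ and set $M=\sup_{k\geq0}\|\xi_k\|_\infty$; without loss of generality we may normalise so that $M=1$ (the general case follows by homogeneity). Define the operator $T_\xi$ on, say, $L_1(\mathcal{M})\cap L_2(\mathcal{M})$ by $T_\xi x=\sum_{k\geq0}(\mathcal{E}_{k}x-\mathcal{E}_{k-1}x)\otimes\xi_k$, acting into operators affiliated with $\mathcal{M}\bar\otimes\mathcal{N}$.

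The first ingredient is already in hand: Theorem~\ref{mt 11 theorem} gives $\|T_\xi\|_{L_1(\mathcal{M})\to L_{1,\infty}(\mathcal{M}\bar\otimes\mathcal{N})}\leq 90$. The second ingredient is the $L_2$ bound. For this I would compute, for $x\in L_2(\mathcal{M})$,
\begin{align*}
\|T_\xi x\|_{L_2(\mathcal{M}\bar\otimes\mathcal{N})}^2
&=(\tau\otimes\nu)\Big(\Big|\sum_{k\geq0}(\mathcal{E}_kx-\mathcal{E}_{k-1}x)\otimes\xi_k\Big|^2\Big)\\
&=\sum_{k\geq0}\|\mathcal{E}_kx-\mathcal{E}_{k-1}x\|_2^2\,\|\xi_k\|_2^2
\le\sum_{k\geq0}\|\mathcal{E}_kx-\mathcal{E}_{k-1}x\|_2^2=\|x\|_2^2,
\end{align*}
using orthogonality of the martingale differences in $L_2$, Fubini, and $\|\xi_k\|_2\le\|\xi_k\|_\infty\le1$ (here we use that $\nu$ is a state, i.e.\ $\mathcal{N}$ is finite). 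Hence $\|T_\xi\|_{L_2\to L_2}\leq1$, and in particular $T_\xi$ extends to a bounded map $L_2(\mathcal{M})\to L_2(\mathcal{M}\bar\otimes\mathcal{N})$; the series converges in $L_2$ because it is the limit of an $L_2$-bounded martingale (its partial sums $(\sum_{k=0}^n(\mathcal{E}_kx-\mathcal{E}_{k-1}x)\otimes\xi_k)_n$ form a martingale with respect to the filtration $\mathcal{M}_n\bar\otimes\mathcal{N}$, uniformly bounded in $L_2$).

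With both endpoint estimates established, I would apply Lemma~\ref{weak l1 l2 interpolation lemma} to the operator $T_\xi$ (with target von Neumann algebra $\mathcal{M}\bar\otimes\mathcal{N}$), obtaining for $1<p\le2$
$$\|T_\xi\|_{L_p(\mathcal{M})\to L_p(\mathcal{M}\bar\otimes\mathcal{N})}\le c_{\rm abs}\,p'\max\{\|T_\xi\|_{L_1\to L_{1,\infty}},\|T_\xi\|_{L_2\to L_2}\}\le c_{\rm abs}\,p',$$
which, after reinstating the normalisation factor $M=\sup_{k\geq0}\|\xi_k\|_\infty$, is exactly the assertion. One small technical point to address is that Lemma~\ref{weak l1 l2 interpolation lemma} is stated for an operator on a single space $L_p(\mathcal{M})$, whereas here the domain and codomain live on different (though fixed) von Neumann algebras; since the proof of the Calder\'on--Marcinkiewicz theorem only uses the lattice structure of the rearrangement-invariant target norms and decomposition of the domain element by its singular-value truncation, the argument goes through verbatim with $L_p(\mathcal{M})$ on the domain side and $L_p(\mathcal{M}\bar\otimes\mathcal{N})$ on the range side. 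I expect no serious obstacle; the only mildly delicate point is making sure the series defining $T_\xi x$ is interpreted consistently (as an $L_2$-limit, matching the $L_1$-sense used in Theorem~\ref{mt 11 theorem}) before interpolating, which is handled by the martingale-convergence remark above.
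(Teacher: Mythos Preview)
Your proposal is correct and follows exactly the paper's approach: use Theorem~\ref{mt 11 theorem} for the weak $(1,1)$ endpoint, verify the $L_2$ bound via orthogonality of martingale differences, and conclude by Lemma~\ref{weak l1 l2 interpolation lemma}. You have in fact supplied more detail than the paper (which simply says ``obviously'' for the $L_2$ bound and does not comment on the domain/codomain mismatch in the interpolation lemma).
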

\begin{proof} Let
$$T_{\xi}x=\sum_{k\geq0}(\mathcal{E}_{k}x-\mathcal{E}_{{k-1}}x)\otimes\xi_k$$
By Theorem \ref{mt 11 theorem}, we have
$$\|T_{\xi}\|_{L_1\to L_{1,\infty}}\leq 90\sup_{k\geq0}\|\xi_k\|_{\infty}.$$
Obviously,
$$\|T_{\xi}\|_{L_2\to L_2}\leq \sup_{k\geq0}\|\xi_k\|_{\infty}.$$
The assertion follows from Lemma \ref{weak l1 l2 interpolation lemma}.
\end{proof}

\noindent {\bf Acknowledgements.}
Authors are grateful to Professor Sergei Astashkin and Dr Konstantin Lykov for finding a mistake in the early version of the paper and for supplying us with a tool \cite{Lykov} to fix this mistake. Authors are grateful to Professor Narcisse Randrianantoanina for his helpful comments. Authors are grateful to Dr Edward McDonald and Mr Thomas Scheckter for their careful reading of the manuscript which led to some certain improvements. Authors are grateful to their home institutions (CSU, UNSW) for never ending support of their research. The last author is grateful to CSU for the hospitality during his research visit when this paper was written.

\end{document}